\numberwithin{equation}{section}
\newtheorem{theorem}{Theorem}[section]
\newtheorem{lemma}{Lemma}[section]
\newtheorem{corollary}{Corollary}[section]
\newtheorem{proposition}{Proposition}[section]
\theoremstyle{definition}
\newtheorem{remark}{Remark}[section]
\DeclareMathOperator*{\argmax}{\arg\!\max}
\DeclareMathOperator*{\argmin}{\arg\!\min}
\DeclareMathOperator{\rank}{rank}
\begin{document}
\begin{frontmatter}
\title{General framework for projection structures}
\runtitle{General framework for projection structures}

\begin{aug}
\author{\fnms{Eduard} \snm{Belitser}$^1$} 
\and 
\author{\fnms{Nurzhan} \snm{Nurushev}$^{2,}$\thanksref{t}}
\address{$^1$VU Amsterdam and\;  $^2$Rabobank} 
\thankstext{t}{The main part of the paper was done when the second author was affiliated 
with VU Amsterdam and University of Amsterdam.}
\runauthor{Belitser, E. and Nurushev, N.}
\affiliation{VU Amsterdam and Rabobank} 
\end{aug}

\begin{abstract}
In the first part,
we develop a {\em general framework for projection structures} and study several inference 
problems 
within this framework. 
We propose procedures based on \emph{data dependent measures} (DDM) and make 
connections with {\em empirical Bayes} and {\em penalization} methods.
The main inference problem is the \emph{uncertainty quantification} (UQ), but on the way we solve 
the {\em estimation}, {\em DDM-contraction} problems, and a weak version of the \emph{structure recovery} 
problem. The  approach is {\em local} in that the quality of the inference procedures is measured by 
the local quantity, the \emph{oracle rate}, which is the best trade-off between the approximation 
error by a projection structure and the complexity of that approximating projection structure.
Like in statistical learning settings, we develop \emph{distribution-free} theory as 
no particular model is imposed, we only assume certain mild condition on the stochastic 
part of the projection predictor. 
We introduce the \emph{excessive bias restriction} (EBR) under which we establish the local 
confidence optimality of the constructed confidence ball.  

The proposed general framework unifies a very broad class of high-dimensional models 
and structures, interesting and important on their own right. 
In the second part, we apply the developed theory and demonstrate how the general results 
deliver a whole avenue of local and global minimax results (many new ones, some known results 
from the literature are improved) for particular models and structures as consequences, 
including {\em white noise model} and {\em density estimation} with {\em smoothness} structure, 
{\em linear regression} and {\em dictionary learning} with {\em sparsity} structures, 
{\em biclustering} and {\em stochastic block models} with {\em clustering} structure, 
{\em covariance matrix} estimation with {\em banding} and sparsity structures, 
and many others. Various adaptive minimax results over various scales follow also from our local results. 
\end{abstract}

\begin{keyword}[class=MSC]
\kwd[Primary ]{62G15}
\kwd{62C12}
\end{keyword}

\begin{keyword}
\kwd{confidence set}
\kwd{EBR}
\kwd{DDM} 
\kwd{oracle rate}
\kwd{projection structures}
\kwd{UQ} 
\end{keyword}
\tableofcontents
\end{frontmatter}

\part{\Large Theory}
 
\section{Introduction}
\sloppy
Suppose we observe  a random element $(Y,X)\in(\mathcal{Y}\times\mathcal{X})$:
\begin{align*}
Y \sim \mathbb{P}_\theta=\mathbb{P}_{\theta,X}, \;\; \theta\in\Theta\subseteq\mathcal{Y}, \quad \text{such that} 
\quad \mathbb{E}_\theta Y=\theta(X)=\theta,  
\end{align*}
where $\mathbb{P}_\theta$ 
is the probability measure of $Y$ 
($\mathbb{E}_\theta$ is the corresponding expectation) depending on
an unknown high-dimensional parameter of interest $\theta$.
By default, $\Theta=\mathcal{Y}=\mathbb{R}^N$, $\mathcal{X}=\mathbb{R}^{d_X}$  for  
``big'' $N,d_X\in\mathbb{N}$ (with the usual norm $\|\cdot \|$), unless stated otherwise.
In some particular models (see Part II), 
$\mathcal{Y},\Theta\subseteq \mathbb{R}^\infty$ 
can be infinite dimensional and $\Theta$ can be a proper subset of $\mathcal{Y}$, 
e.g., $\mathcal{Y}=\mathbb{R}^\infty$ and $\Theta=\ell_2$.
Those models can also be reduced to the high-dimensional case by assuming 
that any $\theta\in\Theta\subseteq\mathbb{R}^{\infty}$ can be arbitrarily well approximated 
by $\bar{\theta}\in\mathbb{R}^N$ for sufficiently large $N\in\mathbb{N}$. 

Let $\sigma \xi = Y - \mathbb{E}_\theta Y$ (any $Y$ is its expectation plus zero mean ``noise''), 
$\sigma>0$ be the  ``noise intensity'', then
\begin{align}
\label{model}
Y=\theta(X)+\sigma \xi =\theta+\sigma \xi  
\quad \text{with}\quad \mathbb{E}_\theta \xi=0.
\end{align}
The (known) parameter $\sigma$ is introduced to accommodate certain asymptotic regimes
where $\sigma \to 0$ reflects an information increase. In some particular models, some extra 
information can be converted into a smaller noise intensity $\sigma$. For example, suppose 
we originally observed $X_{ij}$'s with $\mathrm{E} X_{ij}=\theta_i$ and $\mathrm{Var}(X_{ij}) =1$, 
such that $(X_{ij},j\in[m])$ are independent for each $i\in[n]$. By taking 
$X_i=\frac{1}{m}\sum_{j=1}^{m} X_{ij}$, we obtain \eqref{model} with $\sigma^2=m^{-1}$.
In what follows, we derive {\em non-asymptotic} results, which imply asymptotic assertions if needed. 
Possible asymptotic regimes are: high-dimensional setup $N \to \infty$ 
(the leading case in the literature for high-dimensional models $\mathcal{Y}=\mathbb{R}^N$), 
decreasing noise level $\sigma \to 0$, or their combination, e.g., $\sigma = N^{-1/2}$ and $N\to \infty$.

Useful inference 
is not possible without some (approximate) \emph{structure} 
in the data, the basic idea is to reduce the ``effective'' dimensionality of the high-dimensional 
$\theta$ in \eqref{model}. The most popular structural assumptions are \emph{smoothness}, 
\emph{sparsity} and  \emph{clustering}. These structures and many others can be represented 
via appropriate families of linear spaces $\mathbb{L}_I\subseteq \mathcal{Y}$, $I\in \mathcal{I}$. 
Precisely, we introduce a finite (or countable) family $\mathcal{I}$ of possible structures $I$ 
and an associated  family of linear subspaces $\{\mathbb{L}_I, I\in \mathcal{I}\}$ 
of $\mathcal{Y}$, which express these structures. This in turn determines the family of corresponding 
projection operators $\{\mathrm{P}_I, I\in \mathcal{I}\}$ onto linear subspaces 
$\{\mathbb{L}_I, I\in \mathcal{I}\}$. The true $\theta$ is ``approximately structured'' according 
to the family $\mathcal{I}$ if $\|\theta -\mathrm{P}_{I^*} \theta\|^2
=\min_{I\in\mathcal{I}} \|\theta -\mathrm{P}_I \theta\|^2$ is close to zero. If 
$\|\theta -\mathrm{P}_{I^*} \theta\|^2=0$, the true $\theta$ happens to be exactly structured, 
i.e., $\theta\in\mathbb{L}_{I^*}$ and $I^*$ has the meaning of the ``true structure'' of the true $\theta$.
The family of structures $\mathcal{I}=\mathcal{I}(X)$ may depend on $X$,
(e.g., in the linear regression model from 
Part II). We skip the dependence of $\mathcal{I}=\mathcal{I}(X)$ (and other quantities) on 
$X$ in further notation.

The general goal is to make inference on the  parameter $\theta$ based on the data. 
For that, we propose a \emph{data dependent measure} (DDM) $\hat{\pi}(\vartheta|Y)$ on $\theta$ 
(we will use the variable $\vartheta$ in DDMs, to distinguish it from the true parameter $\theta$)
and use it to construct an estimator $\hat{\theta}$ and a structure selector $\hat{I}$, also 
making connections with \emph{empirical Bayes} and \emph{penalization} methods. 
In the Bayesian literature, the quality of Bayesian procedures is 
characterized by the posterior contraction rate: ``good'' posteriors should concentrate 
around the truth.  
DDM is an extension of the notion of posterior, as DDM does not have to result from a prior. 
By analogy with posteriors in Bayesian analysis, an accompanying problem of interest is 
therefore the \emph{contraction of the DDM} $\hat{\pi}(\vartheta|Y)$ 
to the ``true''  $\theta$ from the 
perspective of the ``true'' measure $\mathbb{P}_\theta$,
the actual distribution of the data, 
which is unknown.

Despite the rapidly growing number 
of papers about particular high-dimensional and nonparametric models and structures 
(cf.\ \cite{
Belitser&Levit:1995, Birge&Massart:2001, Belitser&Ghosal:2003, 
Baraud:2004, 
Babenko&Belitser:2010, 
Rigollet&Tsybakov:2011, Castillo&vanderVaart:2012, Nickl&vandeGeer:2013, 
Szabo&vanderVaart&vanZanten:2013, 
Martin&Walker:2014, vanderPas&Kleijn&vanderVaart:2014, Belitser&Nurushev:2015, 
Belitser:2016, Belitser&Nurushev:2017, 
Martin&etal:2017, Bellec:2018}), there are 
few approaches in both frequentist and Bayesian literature that can deal with general classes
of high-dimensional and nonparametric models: general posterior contraction rate results are studied 
in \cite{Ghosal&Ghos&Vaart:2000, vanderVaart&vanZanten:2008, Gao&vanderVaart&Zhou:2015, Han:2017},  
general frameworks for estimation in \cite{Gao&vanderVaart&Zhou:2015, Klopp&Lu&Tsybakov&Zhou&:2017}.
We should 
highlight the paper \cite{Gao&vanderVaart&Zhou:2015} which provided us 
with important insights for certain aspects of the present study (although our approach is very different).  
However, all estimation (and posterior contraction) results do not reveal  how far the optimal estimator 
(posterior) is from the ``true'' $\theta$. It is of great importance to quantify this uncertainty, 
which we cast into the problem of constructing optimal confidence sets for 
relevan quantities. 

\section{The scope of the paper} 
The main contributions of this paper are:
1) we develop a \emph{general abstract framework of projection structures}, bringing to 
culmination the research path followed by the papers 
\cite{Babenko&Belitser:2010, Belitser:2016, Belitser&Nurushev:2017, 
Belitser&Nurushev:2018, Belitser&Nurushev:2015, Belitser&Ghosal:2018};
2) within this general abstract framework, we solve the following inference problems for $\theta$:  \emph{estimation}, \emph{DDM contraction}, (weak) \emph{structure recovery}, 
and \emph{uncertainty quantification} (UQ); 
3) we derive \emph{local} results in the \emph{refined} formulation and 
in the \emph{distribution-free} setting;
4) we deal with the \emph{deceptiveness phenomenon in UQ} by introducing
the \emph{excessive bias restriction} (EBR) in the general framework.

%
%

The keywords summarizing the main novel features of our approach in this paper are 
therefore {\em general framework}, {\em distribution-free}, {\em local}, {\em refined}, 
{\em EBR}. Below we explain these features in some more detail.

\subsection{General framework}
\label{subsec_general_framework}
We develop a \emph{general framework of projection structures} 
and study above mentioned inference problems within this framework. 
 As the proposed general framework unifies a broad class of models with various 
structures (including graphical/network models), interesting and important on their own right,
the general framework results deliver a whole avenue of results (many new ones, some are 
known in the literature, some are improved) for particular models and structures as consequences.
There are numerous examples of models and structures falling into our general framework.
In Part II, we apply our general methodology to the following cases of model/structure:
\begin{itemize}
\item
signal+noise model with smoothness structure;
\item
smooth function on a graph; 
\item
density estimation with smoothness structure;
\item
regression under wavelet basis (smoothness+sparsity structure);
\item
signal+noise model with sparsity structure;
\item
signal+noise model with \emph{clustering} (or, \emph{multi-level sparsity}) structure; 
\item
signal+noise with shape structure: isotonic, unimodal and convex
regressions;
\item
linear regression with sparsity structure;
\item
linear regression with shape structure: aggregation;
\item
matrix+noise with smoothness structure: banded covariance matrix;
\item
matrix+noise with sparsity structure: sparse covariance matrix;
\item
matrix+noise with sparsity structure;
\item 
matrix+noise with clustering structure: biclustering model;
\item 
matrix linear regression with group sparsity;
\item 
matrix linear regression with group clustering (multi-task learning);
\item 
matrix linear regression with mixture structure;
\item 
matrix linear regression with unknown design: dictionary learning.
\end{itemize}


We also demonstrate how the local results imply many global results for corresponding scales. 
For example, local results for the signal+noise model with smoothness structure imply 
the minimax results for the Sobolev ellipsoids and hyperrectangles, analytic and tail classes;
local results for the signal+noise model with sparsity structure imply the minimax 
results for nearly black vector $\ell_0$, weak $\ell_q$-balls and Besov scales; etc.

For the above listed examples, almost all the results on uncertainty quantification problem are new, 
many known results on estimation and DDM (posterior) contraction are improved, as our results 
are local and hold in the refined formulation and distribution-free setting. 
For example, some obtained local rates lead to improved versions of some global ones from the literature. 
Some considered structures (like \emph{multi-level sparsity} or \emph{clustering} structure) 
are new and studied for the first time. The results on the weak structure recovery are new, note that 
this weak version of the structure recovery result holds without any extra condition. 
By assuming stronger conditions one can further strengthen weak structure recovery results 
to obtain stronger versions. 

We emphasize that the scope of our approach extends further than these specific cases. 
In fact, the results are readily obtained for any particular models and structures 
that fall into the proposed general framework.

\subsection{Distribution-free setting}
Like in \emph{statistical learning} settings, we develop \emph{distribution-free} theory 
(or, \emph{robust} theory) meaning that 
we do not assume any specific form of the underlying measure $\mathbb{P}_{\theta,X}$ of $Y$.
Clearly, a non-void theory is impossible with no condition at all, so we only assume certain 
 condition on the stochastic part $\xi$ of the observed $Y$ in \eqref{model}.
In fact, it is not really a condition, but rather a description of our 
prior knowledge of how ``bad'' the projected ``noise'' $\mathrm{P}_I\xi$ is.
For some fixed $\alpha>0$, define the quantity $d_I(\theta)=\log\big(\mathbb{E}_\theta 
\exp\big\{\alpha \|\mathrm{P}_{I}\xi\|^2\big\}\big)$, $\theta\in\Theta$, $I\in\mathcal{I}$, 
which is always defined (possibly as infinity). In a way, one can think of $d_I(\theta)$ as 
the \emph{statistical dimension} of structure $I$ at point $\theta$. Then the actual condition
is that $\sup_{\theta\in\Theta}d_I(\theta) \le d_I$ for some known $d_I$, $I \in\mathcal{I}$;
see Condition \eqref{cond_nonnormal} in Section \ref{Preliminaries}.

\subsection{Local approach}
Commonly in the literature, the quality of estimators and posteriors is measured by 
global asymptotic quantities, such as minimax estimation rate 
$r^2(\Theta_\beta)=\inf_{\tilde{\theta}}\sup_{\theta\in\Theta_\beta} 
\mathbb{E}_\theta \|\tilde{\theta}-\theta\|^2$ with respect to some scale 
$\{\Theta_\beta,\, \beta \in\mathcal{B}\}$, $\Theta_\beta \subseteq \Theta$, indexed 
by $\beta\in\mathcal{B}$, e.g., smoothness or sparsity.  For example, a typical asymptotic 
global minimax adaptive (i.e., knowledge of $\beta$ is not used) results for an estimator 
$\hat{\theta}$ and contraction rate for a posterior $\pi(\vartheta|Y)$ would be of the form: 
for sufficiently large $C_1,C_2$,
\begin{align}
\label{minimax}
\sup_{\theta\in\Theta_\beta} \mathrm{E}_\theta \| \hat{\theta}- \theta\|^2 \le 
C_1r^2(\Theta_\beta),
\quad \sup_{\theta\in\Theta_\beta} \mathrm{E}_\theta \pi\big(\|\vartheta - \theta\|^2 \ge 
C_2 r^2(\Theta_\beta) |Y\big) \to 0, 
\end{align}
as $N\to \infty$ or $\sigma\to 0$.

In this paper, we  pursue the \emph{local} approach for all the inference problems:   
instead of global $r^2(\Theta_\beta)$, the quality of the procedures is measured by 
the \emph{local} quantity, the \emph{local (oracle) rate} $r^2(\theta)=\min_{I\in\mathcal{I}} 
r^2(I,\theta)$, the best rate over the family of rates
$\{r^2(I,\theta),I\in\mathcal{I}\}$. Informally, $r^2(I, \theta)$ is the sum of 
the approximation error by the projection structure $I$ and the complexity of that approximating 
projection structure. This means that, in a way, the local rate $r^2(\theta)$ expresses the main 
statistical paradigm of trading-off the model fit against the model complexity. 
The exact definitions are given in Section \ref{main_results}. 

The local results are more powerful and flexible than global in that we do not need 
to consider any specific scale $\{\Theta_\beta,\, \beta \in\mathcal{B}\}$, local results 
essentially mean that our approach automatically extracts as much structure 
as there is in the underlying $\theta$. 
In a way, it is the \emph{local rate} $r^2(\theta)$ that measures 
the amount of structure in $\theta$: the smaller $r^2(\theta)$, the more structured $\theta$.
Importantly, the local results imply a whole panorama of global minimax adaptive results 
\emph{over various scales at once}: it suffices to verify that 
$r^2(\theta) \le C  r^2(\Theta_\beta)$ for all $\theta \in \Theta_\beta$, $\beta \in \mathcal{B}$;
see examples in Part II.

\subsection{Refined formulation of the results}
Besides being local, our approach is also {\em refined}.  
For example, in this paper we derive local DDM-contraction and estimation 
results for the DDM $\hat{\pi}(\vartheta |Y)$ and the estimator $\hat{\theta}$, respectively,  
in the following refined \emph{non-asymptotic exponential probability bound} formulation: 
\begin{align}
\label{refined1}
& \sup_{\theta\in\Theta}\mathbb{E}_\theta\hat{\pi}(\|\vartheta -\theta\|^2 \ge M_0 r^2(\theta)+
M \sigma^2|Y) \le H_0e^{-m_0 M},\\ 
\label{refined2}
& \sup_{\theta\in\Theta}\mathbb{P}_\theta\big( \|\hat{\theta}-\theta\|^2\ge M_1 r^2(\theta)+M\sigma^2\big)
\le H_1 e^{-m_1 M},
\end{align}
for some fixed $M_0, H_0, m_0, M_1, H_1, m_1>0$ and arbitrary $M \ge 0$. 
Besides, we derive the local results on (weak) structure recovery and, most importantly, 
two versions of UQ (see the exact statements in Section \ref{main_results}), 
also as non-asymptotic exponential probability bounds. These refined formulations provide rather
sharp characterizations of the quality of the DDM $\hat{\pi}(\vartheta|Y)$ and the estimator $\hat{\theta}$
(finer than, e.g., traditional oracle estimation inequalities in expectation or asymptotic claims for posterior contraction, like \eqref{minimax}), allowing subtle analysis for various asymptotic regimes. 
These results, besides being ingredients for the uncertainty 
quantification problem, are of interest and importance on its own as they 
establish the local (oracle) optimality of our DDM and estimator 
in this refined formulation. As we have mentioned already, the local results imply in turn 
the corresponding global minimax adaptive results, also in the refined formulation.

\subsection{Uncertainty quantification (UQ)  and deceptiveness phenomenon in UQ}
One of our main goals is to construct confidence sets for $\theta$ with optimal properties.
It is realized by many authors that this problem is more delicate than estimation, 
the main issue in UQ is that it suffers from the so called \emph{deceptiveness phenomenon}. 
This is explained in detail in \cite{Belitser&Nurushev:2015} for the sparsity structure, 
here we shortly outline this issue for general projection structures. 

First introduce the optimality framework for uncertainty quantification. 
Let $B(\theta_0,r)=\{\theta\in\Theta: \|\theta-\theta_0\| \le r\}$ denote 
the ball with center $\theta_0$ and radius $r\in\mathbb{R}_+ =[0,+\infty]$.
We measure the size of a confidence set by the smallest radius of 
a ball containing this set, hence it suffices to consider confidence balls. 
Let the center $\hat{\theta}=\hat{\theta}(Y):\mathcal{Y}\times\mathcal{X} \mapsto \Theta$ and radius 
$\hat{r}=\hat{r}(Y): \mathcal{Y}\times\mathcal{X} \mapsto \mathbb{R}_+$ be
measurable functions of the data.
The goal is to construct such a confidence ball $B(\hat{\theta},C\hat{r})$
that for any $\alpha_1,\alpha_2\in(0,1]$
and some functional $R(\theta)=R_{\sigma,N}(\theta)$,
$R:\Theta\mapsto\mathbb{R}_+$, there exist $C, c > 0$ such that
\begin{align}
\label{defconfball}
\sup_{\theta\in\Theta_0}\mathbb{P}_\theta\big(\theta\notin 
B(\hat{\theta},C\hat{r})\big)\le\alpha_1,\quad \sup_{\theta\in\Theta_1}
\mathbb{P}_\theta\big(\hat{r}\ge c R(\theta)\big)\le\alpha_2,
\end{align}
for some $\Theta_0,\Theta_1\subseteq\Theta$. We call the first expression 
in \eqref{defconfball} by \emph{coverage relation} and the second by \emph{size relation}.
The quantity $R(\theta)$, called \emph{radial rate}, describes the effective 
radius of the confidence ball. 
It is desirable to find the smallest  $R(\theta)$ and the biggest $\Theta_0,\Theta_1$, 
for which \eqref{defconfball} holds. These are contrary requirements, 
and we can trade them off against each other in different ways, leading to different optimality frameworks.

For example, the global (minimax adaptive) version of 
\eqref{defconfball} for a scale $\{\Theta_\beta,\, \beta \in\mathcal{B}\}$ 
(indexed by 
$\beta\in\mathcal{B}$, e.g., sparsity or smoothness ) 
would be obtained by taking 
the global radial rate $R(\theta)=r(\Theta_\beta)$,  $\theta\in\Theta_\beta$,
$\beta\in\mathcal{B}$, where $r(\Theta_\beta)$ is the 
minimax estimation rate over the sets $\Theta_\beta$. 
The traditional (global) optimality framework commonly pursued in the literature 
(in earlier papers on the topic) was to insist on $\Theta_0=\Theta_\beta$
in \eqref{defconfball}.
This means that one considers only those confidence sets that satisfy 
the coverage property uniformly over $\Theta_\beta$, 
in some papers such sets are called ``honest''. 
Then one tries to find a ``honest'' confidence set with 
the fastest radial rate $R(\theta)$ and the biggest set $\Theta_1$, preferably 
$\Theta_1\supseteq\Theta_\beta$. 
However, according to the negative results of \cite{Li:1989, Baraud:2004,Nickl&vandeGeer:2013}, 
insisting on $\Theta_0=\Theta_\beta$  in the coverage relation leads 
necessarily to the extra term $\sigma N^{1/4}$ in the expression for the effective radial rate 
$R(\theta)=r(\Theta_\beta)+\sigma N^{1/4}$. 
The optimal rate $r(\Theta_\beta)$ is hence impossible to attain for those 
(sparsities or smoothness) $\beta\in\mathcal{B}$ for which $r(\Theta_\beta)\ll \sigma N^{1/4}$.
To summarize, in general the overall uniform coverage 
and optimal size properties cannot hold together and  it is necessary to 
sacrifice at least one of these. 
This is the core of the so called \emph{deceptiveness phenomenon} in UQ,
which is well understood only for sparsity and smoothness structures; see 
\cite{Cai&Low:2004, Robins&vanderVaart:2006, Nickl&vandeGeer:2013, Bull&Nickl:2013, 
Szabo&vanderVaart&vanZanten:2015, Belitser:2016,  vanderPas&Szabo&vanderVaart:2017, Belitser&Nurushev:2015} and further references therein.

In this paper, we allow the radial rate $R(\theta)$ depend on the ``true'' $\theta$. 
The proposed UQ-framework is thus local, in contrast with global minimax 
frameworks commonly used in the literature on the UQ.
Local results, delivering also (global) adaptive 
minimax results for  smoothness and sparsity structures, are obtained in 
\cite{Belitser:2016, Belitser&Nurushev:2015}.

But the deceptiveness phenomenon manifests itself also in the local setting.
Indeed, the results of \cite{Li:1989, Baraud:2004, Nickl&vandeGeer:2013} 
(formulated for the high-dimensional setting $\Theta=\mathbb{R}^N$) basically claim 
that the radial rate $R(\theta)$ cannot be of a faster order than $\sigma N^{1/4}$ for every 
$\theta\in\Theta=\mathbb{R}^N$ and is at least of the order $\sigma N^{1/2}$ for some $\theta$.  
This means that, in the situations when the targeted optimal local size $r(\theta)$
can be of a smaller order than 
$\sigma N^{1/4}$ for some $\theta$'s (which is typically the case, e.g., 
for smoothness and sparsity structures), this optimal size cannot be attained 
in the size relation uniformly over $\Theta$ and necessarily $R(\theta) \gg r(\theta)$ 
for some $\theta\in\Theta'$. Thus, insisting on $\Theta_0=\Theta$ implies that either 
the radial rate $R(\theta)$ or the set  $\Theta_1$ in the size relation has 
to be sacrificed: $\Theta_1=\Theta$ but $R(\theta) \gg r(\theta)$ 
for $\theta\in \Theta'$, or $R(\theta)=r(\theta)$ but $\Theta_1=\Theta\backslash\Theta'$.
Another, seemingly more reasonable approach to optimality developed recently 
in the literature is to sacrifice in the set $\Theta_0=\Theta\backslash\Theta_{\rm dec}$ 
by removing a preferably small portion of  ``deceptive parameters'' $\Theta_{\rm dec}$ from 
$\Theta$ in the coverage property, so that  the size property would then hold with 
$R(\theta)=r(\theta)$ uniformly over $\Theta_1=\Theta$.

In this paper, we construct a confidence ball by using the proposed DDM $\hat{\pi}(\vartheta|Y)$. 
Since we want the size of our confidence sets to be of the order of the oracle rate $r(\theta)$, 
this comes with the price that the coverage property can hold
uniformly only over some set of parameters satisfying the so called \emph{excessive bias restriction} 
(EBR) $\Theta_0=\Theta_{\rm eb}\subseteq\Theta$. 
The main result consists in establishing the optimality \eqref{defconfball} in the refined formulation, 
with $\Theta_0=\Theta_{\rm eb}$, $\Theta_1=\Theta$ and the local radial rate 
$R(\theta)=r(\theta)$. 
It turns out  that the EBR leads to a new \emph{EBR-scale} $\{\Theta_{\rm eb}(t), t \ge 0\}$, 
which gives a slicing of the entire space: $\Theta= \cup_{t\ge 0} \Theta_{\rm eb}(t)$.
This slicing 
is very suitable for UQ and provides a new perspective at the deceptiveness 
issue within the general abstract framework: basically, each parameter $\theta$ is 
deceptive (or non deceptive) to some extent.
It is the parameter $t$ that measures the deceptiveness in $\Theta_{\rm eb}(t)$ 
and affects 
the size of the confidence ball needed to provide a guaranteed high coverage uniformly over 
$\Theta_{\rm eb}(t)$.




In addition, we also treat the optimality framework with $\Theta_0=\Theta_1=\Theta$ in \eqref{defconfball}
by constructing an alternative confidence ball such that its radius is of the order $\sigma N^{1/4} + r(\theta)$.
According to the negative results of \cite{Li:1989, Baraud:2004} 
(formulated for the high-dimensional setting $\Theta=\mathbb{R}^N$),
insisting on the overall uniformity in the coverage and size relations leads necessarily to 
the extra term $\sigma N^{1/4}$ in the expression for the effective radial rate 
$R(\theta)=r(\theta)+\sigma N^{1/4}$.
This fact has also been observed by \cite{Nickl&vandeGeer:2013} for the
case of linear regression with two sparsity classes.
Interestingly, this alternative construction of confidence ball is more preferable 
for some particular models and structures, e.g., biclustering model (stochastic block model), 
dictionary learning; 
see Part II.  The point is that, for those models and structures,
the extra term $\sigma N^{1/4}$ does not increase the order of the radial rate
because $\sigma N^{1/4} \le c r(\theta)$ for the ``majority'' of $\theta$'s, precisely,
for all $\theta\in\Theta\backslash\tilde{\Theta}$, with some ``thin'' set $\tilde{\Theta}$. 
The set $\tilde{\Theta}$ can be informally described as a set of ``highly structured'' parameters.
This means that, modulo the set $\tilde{\Theta}$ of ``highly structured'' parameters, 
there is no deceptiveness issue  for those cases. 
Speaking informally, these models and structures are already ``too difficult" for the term  
$\sigma N^{1/4}$ to spoil the radial rate.

\subsection{Organization of the rest of the paper}
The rest of the paper is organized as follows. In Section \ref{Preliminaries} we introduce the notation, 
the DDMs, make a link with the penalization method, and provide some conditions. 
Section \ref{main_results}, where we also introduce the EBR, contains the main 
results of the paper.  The proofs 
are gathered in Section \ref{sec_proofs}. 
In Part II, we demonstrate how the main general results specify to a number of 
examples of model/structure in local and minimax settings.

\section{Preliminaries}
\label{Preliminaries}

In this section we introduce some notation, notions, conditions. 
Then, we construct a data dependent measure (DDM) 
which can be associated with an empirical Bayes approach applied to the normal 
likelihood (recall that the true model does not have to be normal), 
We will use this DDM in the construction of the estimator, the structure selector 
and the confidence ball.  

At first reading, one may want to skip this section and go ahead to Section \ref{main_results} 
(one will only need to consult some definitions from Section \ref{Preliminaries}) 
which contains the main results of the paper.

\subsection{Notation}
For $n\in\mathbb{N}$, denote $[n]=\{1,2,\ldots,n\}$ and $[n]_0=\{0\}\cup[n]$;
for a Hilbert space $\mathcal{Y}$, 
$\langle y,z \rangle$ 
denotes the scalar product between $y,z \in \mathcal{Y}$, $\mathbb{R}_+=[0,+\infty]$,
$\mathbb{N}_0=\{0\}\cup \mathbb{N}$.
For an $(n_1\times n_2)$-matrix $x=(x_{ij})\in\mathbb{R}^{n_1\times n_2}$, 
we will  interchangeably use the same notation $x$ to denote the vector 
$x=\text{vec}\big[(x_{ij})\big]=(x_{11}, x_{12},\ldots , x_{n_1n_2})^T$. 
Conversely, for any $x\in \mathbb{R}^{n_1n_2}$ we can use 
matricized indexing $x=(x_{11},x_{12}, \ldots,x_{n_1n_2})^T$.
Most of the time the vector notation will be used, and 
it should be clear from the context which notation is meant in each expression.
For two nonnegative sequences $(a_l)$ and $(b_l)$, $a_l \lesssim b_l$ means $a_l \le cb_l$ 
for all $l$ (its range should be clear from the context)
with some absolute $c>0$, and $a_l \asymp b_l$ means that $a_l \lesssim b_l$ and $b_l \lesssim a_l$.

For a set $S$, $|S|$ denotes its cardinality. We will often denote matrices and operators 
by upright capital letters, the identity matrix is denoted by $\mathrm{I}$, $1_E=1\{E\}$ stands for 
the indicator function of the event $E$. As usual, $\mathrm{N}(\mu, \mathrm{\Sigma})$ is the 
multivariate normal distribution with mean $\mu$ and covariance matrix $\mathrm{\Sigma}$, 
its density at point $x$ is denoted by $\varphi(x,\mu,\mathrm{\Sigma})$. The dimensions of matrices 
and normal distributions should be clear from the context. Let $\mathrm{P}^\perp_{I}=
\mathrm{I}-\mathrm{P}_{I}$ be the projection operator onto the orthogonal complement 
$\mathbb{L}_I^\perp$ of $\mathbb{L}_I$. We use both notation $\mathrm{P}_{I}$ and 
$\mathrm{P}_{\mathbb{L}_{I}}$ ($\mathrm{P}^\perp_{I}$ and $\mathrm{P}_{\mathbb{L}_{I}}^\perp$) 
to denote the projection operator onto the linear subspace $\mathbb{L}_{I}$ 
(onto the orthogonal complement $\mathbb{L}_I^\perp$ of $\mathbb{L}_I$). 

The symbol $\triangleq$ will refer to equality by definition, for $a,b\in \mathbb{R}$, $(a\vee b)=\max\{a,b\}$, 
$(a\wedge b)=\min\{a,b\}$, $\lfloor a\rfloor=\max\{m\in\mathbb{Z}:m\le a\}$.
Throughout we assume the conventions: $|\varnothing|=0$, $\sum_{I\in\varnothing}a_I=0$ 
for any $a_I\in\mathbb{R}$ and $0\log(a/0)=0$ (hence $(a/0)^0=1$) for any $a>0$.

\subsection{Conditions} 
The structure $I\in \mathcal{I}$ on each $\theta\in\Theta$ is represented by the slicing 
$\Theta\subseteq\cup_{I \in \mathcal{I}} \mathbb{L}_I$, where 
$\mathcal{I}$ is a finite (or countable) family of possible structures and 
$\{\mathbb{L}_I, I\in \mathcal{I}\}$ is an associated  family of linear subspaces 
$\{\mathbb{L}_I, I\in \mathcal{I}\}$ of $\mathcal{Y}$.
If $\|\theta -\mathrm{P}_{I^*} \theta\|^2=\min_{I\in\mathcal{I}} \|\theta -\mathrm{P}_I \theta\|^2$ 
(for the true $\theta$) is close to zero, we say that $\theta$ is 
``approximately structured'' according 
to the family $\mathcal{I}$ . If 
$\|\theta -\mathrm{P}_{I^*} \theta\|^2=0$ (i.e., $\theta\in\mathbb{L}_{I^*}$), 
the true $\theta$ happens to be exactly structured and $I^*$ has the meaning of 
the ``true structure'' of $\theta$.
The structure $I^*$ of $\theta$ is always determined via the corresponding linear space 
$\mathbb{L}_{I^*} \ni \theta$.

\begin{remark}
\label{remark1}
There may be $\mathbb{L}_I=\mathbb{L}_{I'}$ for different $I,I'\in\mathcal{I}$, 
in other words, the family of structures $\mathcal{I}$ can have redundancy. 
Without loss of generality, we could assume that the family $\mathcal{I}$ is ``cleaned up" 
in the sense that each subspace $\mathbb{L}\in \mathcal{L}_{\mathcal{I}}$ is represented 
in $\mathcal{I}$ by only one (arbitrary) element  
$J=J(\mathbb{L})$ from the set $\{I\in\mathcal{I}:\; \mathbb{L}_I=\mathbb{L} \}$. 
Mathematically, this means that  the resulting ``cleaned up" family $\mathcal{I}$ of structures consists of 
equivalence classes on the original collection of all structures with the equivalence relation:
$I_1\sim I_2$ if and only if $\mathbb{L}_{I_1} =\mathbb{L}_{I_2}$, so that 
$|\mathcal{I}|=|\mathcal{L}_{\mathcal{I}}|$ in this case.

However, in general $|\mathcal{L}_{\mathcal{I}}|\le |\mathcal{I}|$ and 
this redundancy can be beneficial in some practical situations when searching 
(or optimizing in an inference procedure) over a possibly redundant family of structures $\mathcal{I}$  
can be described and realized easier than over the ``cleaned up" version of it. 
The only price for this redundancy is a bigger sum (because of more terms) in 
Condition \eqref{(A2)}, resulting in a bigger constant $C_\nu$ in Condition \eqref{(A2)} 
for a redundant $\mathcal{I}$. In many situations this is a mild price, as demonstrated 
for several particular models and structures in Part II. 
\end{remark}

Throughout the rest of the paper we impose the following  condition on $\xi$ from \eqref{model}.  \smallskip\\
{\sc Condition \eqref{cond_nonnormal}.}
For some nonnegative sequence $(d_I)_{I\in\mathcal{I}}$ and $\alpha>0$,  
\begin{align}
\tag{A1}
\label{cond_nonnormal}
\sup_{\theta\in\Theta}d_I(\theta) \le d_I, \quad \text{where}\quad
d_I(\theta)=\log\big(\mathbb{E}_\theta 
\exp\big\{\alpha \|\mathrm{P}_{I}\xi\|^2\big\}\big).
\end{align}  
Without loss of generality, assume $\alpha\in (0,1]$.

\begin{remark} 
\label{rem2}
It is desirable to have the bound \eqref{cond_nonnormal} in the tightest possible form, by determining 
the smallest sequence $(d_I)_{I\in \mathcal{I}}$ for which \eqref{cond_nonnormal} holds with 
a given $\alpha>0$. 
Notice that in general \eqref{cond_nonnormal} always holds for any $\alpha>0$, if the $d_I$'s are 
allowed to be infinite, but it is only useful when all the $d_I$'s are finite.
Thus, instead of \eqref{cond_nonnormal}, we could equivalently assume  
$\sup_{\theta\in\Theta}\mathbb{E}_\theta \exp\big\{\alpha \|\mathrm{P}_{I}\xi\|^2\big\} <\infty$
for all $ I\in\mathcal{I}$. Then the smallest $d_I$'s for which \eqref{cond_nonnormal} holds
are $d_I=\sup_{\theta\in\Theta}\log\big(\mathbb{E}_\theta 
\exp\big\{\alpha \|\mathrm{P}_{I}\xi\|^2\big\}\big)$, $I\in\mathcal{I}$.
The quantity $d_I$  can be seen as {\em statistical dimension} of the space $\mathbb{L}_I$,
reflecting in a way the \emph{complexity} of the structure $I$ (space $\mathbb{L}_I$): the bigger $d_I$, 
the more complex the structure $I$. 
If the distribution of $\xi$ does not depend on $\theta$, then there is no 
$\sup_{\theta\in\Theta}$ in the above definition of $d_I$.
Typically, in such cases $d_I \asymp\dim(\mathbb{L}_I)$. The bound \eqref{cond_nonnormal} holds, 
for example, for standard normal $\xi$ with $\alpha=0.43$ and $d_I=\dim(\mathbb{L}_I)$; see Remark \ref{rem_cond_A1} below.
\end{remark}

\begin{remark}
\label{rem_cond_A1}
Condition \eqref{cond_nonnormal} holds for high-dimensional independent normal  
$\xi_{i}$'s with $d_I = \dim(\mathbb{L}_{I})$, irrespective of the linear spaces 
$\mathbb{L}_I$, $I\in\mathcal{I}$. Indeed, if $\xi_{i} \overset{\rm ind}{\sim}\mathrm{N}(0,1)$, 
then $\|\mathrm{P}_{I} \xi\|^2\sim \chi_{\dim(\mathbb{L}_{I})}^2$, the chi-squared distribution with 
$d_I=\dim(\mathbb{L}_{I})$ degrees of freedom. Hence, for any $t < \frac{1}{2}$ we have that
$\mathbb{E}\exp\big\{ t\|\mathrm{P}_{I} \xi\|^2\big\}=(1-2t)^{-d_I/2}$. 
Since $(1-2t)^{-d_I/2} \le e^{d_I}$ for any $t \le (1-e^{-2})/2\approx 0.432$.
By taking  $t=0.4$, we derive $\mathbb{E} e^{0.4\|\mathrm{P}_{I} \xi\|^2}
\le e^{d_I}$. 
Hence, Condition \eqref{cond_nonnormal}  is fulfilled with $\alpha=0.4$ and $d_I=\dim(\mathbb{L}_{I})$. 
\end{remark}

\begin{remark}
Importantly, Condition \eqref{cond_nonnormal} allows quite some flexibility, 
which is crucial when treating concrete models and significantly broadens the 
range of models falling into our general framework; see the examples of models 
in Part II. The distribution of $\xi$ may depend on $\theta$,
the coordinates $\xi_i$'s of $\xi$ do not have to be iid and may even be 
non-independent. For example, for the ``signal+noise'' model with the sparsity structure, 
it was shown in \cite{Belitser&Nurushev:2015} that Condition \eqref{cond_nonnormal} is fulfilled 
for the $\xi_i$'s generated according to an autoregressive model.
In this case, in \cite{Belitser&Nurushev:2015} we showed that, for independent $\xi_i$'s,  
Condition \eqref{cond_nonnormal} is equivalent to
the so called \emph{sub-gaussianity} condition on $\xi$ (see the definition in Remark \ref{rem15}).
For dependent $\xi_i$'s, the sub-gaussianity condition and Condition \eqref{cond_nonnormal} 
are close, but in general incomparable. 
For example, if $\xi_i = \xi_0$, $i\in[n]$, for some bounded random variable $\xi_0$ (say, uniform on $[-1,1]$), then, for the sparsity structure, Condition \eqref{cond_nonnormal}  
trivially holds whereas the sub-gaussianity condition is not fulfilled.
\end{remark}

Introduce a function $\rho: \mathcal{I} \mapsto \mathbb{R}_+$, called 
\emph{majorant of the structure complexity}. The idea of introducing this function is to measure 
the amount of structure complexity, which is expressed by the following condition on this function. 
\smallskip\\
{\sc Condition \eqref{(A2)}.} For some $\nu,C_\nu>0$, the function $\rho(I)$ satisfies
\begin{align}
\tag{A2}
\label{(A2)}
\sum\nolimits_{I\in\mathcal {I}} e^{-\nu \rho(I)}\le C_\nu,
\quad \text{and} \quad \rho(I)\ge d_I, \; I \in\mathcal{I},
\end{align}
where the sequence $(d_I)_{I\in\mathcal{I}}$ is from Condition \eqref{cond_nonnormal}.

\begin{remark}
Informally, \eqref{(A2)} means that the function $\rho$ must be large enough 
(that is why called majorant)  to match the total complexity of the family of structures $\mathcal{I}$. 
The total complexity of $\mathcal{I}$  
is a combination of two parts: the ``massiveness'' part, reflected by the cardinality $|\mathcal{I}|$ 
(or rather $|\mathcal{L}_{\mathcal{I}}|$, see Remark \ref{remark1}), 
and the ``effective dimension'' part, reflected by the sequence  of the statistical dimensions 
$(d_I)_{I\in\mathcal{I}}$. 
\end{remark}

\begin{remark}
In Condition \eqref{(A2)}, we can use an up-to-a-constant majorant  $\rho(I)\gtrsim d_I$ 
instead of $\rho(I)\ge d_I$
by 
adjusting  $\alpha\in(0,1]$ in \eqref{cond_nonnormal}, but without loss of generality 
we stick to $\rho(I)\ge d_I$ for the sake of a clean mathematical exposition.
\end{remark}

\begin{remark}
\label{rem5}
For each particular model and structure, we need somehow to find a majorant $\rho(I)$ 
satisfying Condition \eqref{(A2)}, preferably in a \emph{constructive way}.  
Here we propose a way to construct a majorant $\rho(I)$. For that,
introduce a surjective function $s: \mathcal{I} \mapsto \mathcal{S}$, for some set 
$\mathcal{S}$, called the \emph{structural slicing mapping}. 
This function slices the family $\mathcal{I}$ in \emph{layers} 
$\mathcal{I}_s=\{I\in \mathcal{I}: s(I) =s\}$, $s\in \mathcal{S}$,
i.e., $\mathcal{I}=\cup_{s\in\mathcal{S}} I_s$,
$\mathcal{S}$ marks the collection of all layers $\mathcal{I}_s$.
Clearly, any partition 
of $\mathcal{I}$ can be realized by appropriate function $s(I)$, 
and the structure $I$ always belongs to the layer $\mathcal{I}_{s(I)}$.
The quantity $s(I)$ typically describes some features of the space $\mathbb{L}_{I}$,
for example, $s(I)$ can be the dimension (or some function of it) of $\mathbb{L}_I$. 
For a slicing mapping $s(I)$, denote 
\begin{align}
\label{D_s}
D_s=\max_{I\in\mathcal{I}_s} d_I, \quad \text{where} \quad \mathcal{I}_s=\{I\in \mathcal{I}: s(I) =s\}.
\end{align}
If $\sum_{s\in\mathcal{S}}  e^{-\nu D_s}\le C_\nu$ for $\nu\ge 1$, 
then Condition \eqref{(A2)} is fulfilled for any $\rho(I)\ge D_{s(I)}+|\mathcal{I}_{s(I)}|$ 
(one can think of $D_s+|\mathcal{I}_s|$ is the \emph{complexity of the layer} $\mathcal{I}_s$).
Indeed, for $\nu \ge 1$ we obtain  
\[
\sum_{I\in\mathcal {I}} e^{-\nu \rho(I)}
=\sum_{s\in\mathcal{S}}\sum_{I\in\mathcal {I}_s} e^{-\nu \rho(I)}
\le \sum_{s\in\mathcal{S}}  e^{-\nu D_s -(\nu-1)\log|\mathcal{I}_s|}
\le \sum_{s\in\mathcal{S}}  e^{-\nu D_s}\le C_\nu.
\]

Later on, the majorant $\rho(I)$ will enter the local (oracle) rate. In order to derive stronger results, 
it is therefore desirable to use the smallest possible majorant $\rho(I)$ that satisfies Condition \eqref{(A2)}.
In this light, the best majorant $\rho(I)\ge D_{s(I)}+|\mathcal{I}_{s(I)}|$  is the layer complexity itself 
$\rho(I)=D_{s(I)}+\log|\mathcal{I}_{s(I)}|$ for the ``cleaned up'' family of structures 
$\mathcal{I}$ (see Remark \ref{remark1}).
On the other hand, any majorant $\rho(I)$ that satisfies Condition \eqref{(A2)} will do the job.
The reason to allow an arbitrary majorant $\rho(I)$ is that $D_s$ and $|\mathcal{I}_{s}|$ 
may be difficult to compute, whereas some closed form 
upper bounds can be derived. Of course, this comes at the price of a bigger resulting 
local rate because this majorant will then enter the local rate.

It is desirable to use a slicing $s: \mathcal{I} \mapsto \mathcal{S}$
that is parsimonious in the sense that the maximum 
$D_s = \max_{I\in\mathcal{I}_s} d_I$ degenerates, i.e., $d_I =d_J$ for all $I,J \in\mathcal{I}_s$,
so $D_{s(I)}=d_I$. In other words, $d_I=h(s(I))$, $I\in\mathcal{I}$, for some function 
$h:\mathcal{S} \mapsto\mathbb{R}_+$.
In this case, we can choose $\rho(I)\ge d_I+ \log|\mathcal{I}_{s(I)}|$. 
Since we always use parsimonious slicings, it is this choice of majorant 
$\rho(I)$ that we used in almost all the examples from Part II.
\end{remark}

The last condition is needed for the UQ results. \smallskip\\
{\sc Condition \eqref{(A3)}.}
For any $I_0,I_1\in\mathcal{I}$ there exists  $I'=  I'(I_0,I_1)\in\mathcal{I}$ such that
\begin{align}
\tag{A3}
\label{(A3)}
(\mathbb{L}_{I_0}\cup\mathbb{L}_{I_1})\subseteq \mathbb{L}_{I'} \;\;\text{and}\;\; 
\rho(I')\le \rho(I_0)+\rho(I_1). 
\end{align}

\begin{remark}
\label{cond_A3'}
Typically, Condition \eqref{(A3)} is fulfilled with $I'=  I'(I_0,I_1)\in\mathcal{I}$ such that 
$\mathbb{L}_{I'}=\mathbb{L}_{I_0}+\mathbb{L}_{I_1}$. This is the case for almost all examples 
in Part II.

Let us formulate a slightly stronger version of Condition \eqref{(A3)} called Condition (A3'):
for any $I_0,I_1\in\mathcal{I}$ there exist $I'=I'(I_0,I_1)\in\mathcal{I}$ 
and $I''=I''(I_0,I_1)\in\mathcal{I}$ such that  $\mathbb{L}_{I'}=\mathbb{L}_{I_0}+\mathbb{L}_{I_1}$, 
$\mathbb{L}_{I''}=\mathbb{L}_{I_0}\cap\mathbb{L}_{I_1}$, $\rho(I')\le \rho(I_0)+\rho(I_1)$
and $\rho(I'')\le \rho(I_0)$.
\end{remark}

The constants $\alpha \in (0,1]$ and $\nu>0$ from Conditions \eqref{cond_nonnormal} 
and \eqref{(A2)}, respectively, will be fixed throughout and we omit  the dependence on 
these constants in all further notation. 

In the proof of Theorem \ref{th1} below, we will need a bound for 
$\big[\mathbb{E}_\theta\| \mathrm{P}_I\xi \|^4\big]^{1/2}$, for each $I\in\mathcal{I}$.  
Condition \eqref{cond_nonnormal} and $\rho(I) \ge d_I$ from \eqref{(A2)} ensure such a bound. 
Indeed, since $x^2 \le e^{2x}$ for all $x\ge 0$, by the H\"older inequality 
and \eqref{cond_nonnormal}, we obtain for any $t\in (0,1/2]$ and $I\in\mathcal{I}$,
\begin{align}
\label{moment_bound}
\mathbb{E}_\theta\| \mathrm{P}_I\xi  \|^4
\le \frac{\mathbb{E}_\theta e^{2t\alpha\| \mathrm{P}_I\xi\|^2}}{(t\alpha)^2}
\le \frac{\big(\mathbb{E}_\theta e^{\alpha\|\mathrm{P}_I\xi\|^2 }\big)^{2t}}{(t\alpha)^2}
\le \frac{e^{2t d_I}}{(t \alpha)^2}
\le \frac{e^{2t \rho(I)}}{(t\alpha)^2}. 
\end{align}
In case $\xi_i \overset{\rm ind}{\sim} \mathrm{N}(0,1)$, Condition \eqref{cond_nonnormal}
is fulfilled with $d_I=\dim(\mathbb{L}_I)$ and $\alpha=0.4$, see Remark \ref{rem_cond_A1}. 
As $\|\mathrm{P}_{I}\xi\|^2 \sim \chi^2_{d_I}$, instead of \eqref{moment_bound}, 
a better bound can be used in this case: $\big[\mathbb{E}\|\mathrm{P}_{I}\xi\|^4\big]^{1/2}
=\big(d^2_I+2d_I\big)^{1/2}\le d_I+1 \le \rho(I)+1$.

\subsection{Construction of data dependent measure (DDM)}
\label{subsec_ddm}
The following construction of the so called \emph{data dependent measures} (DDMs) 
is motivated by the Bayesian approach. On the other hand, the main and essential difference 
of DDM from posterior is that DDM does not necessarily result from a Bayesian analysis. 
DDM is an arbitrary random measure dependent on the data, 
whereas posterior distribution has special structure resulting from prior and model assumptions.
In the appendix to Part I (Section \ref{sec_empirical_Bayes}), 
we present the detailed construction of one important example of DDM 
as the result of an \emph{empirical Bayesian approach}, based on certain normal prior and normal 
model (although the true model is not known, so it certainly does not have to be normal) as building 
blocks. This DDM, explicitly constructed in Section \ref{sec_empirical_Bayes},
as empirical Bayes posterior, covers all the situations 
where the statistical dimensions $(d_I)_{I\in\mathcal{I}}$ from Condition \eqref{cond_nonnormal} are such that 
$d_I \gtrsim \dim(\mathbb{L}_I)$, $I\in\mathcal{I}$.

In this section we provide concise formal construction of the DDMs used for inference. 
Define first a DDM on $\mathcal{I}$:
\begin{align}
\label{ddm_P(I|Y)}
\tilde{\pi}(I|Y)&=\tilde{\pi}_{I}
=\frac{\lambda_{I}\exp\{-\tfrac{1}{2\sigma^2}\|(Y-\mathrm{P}_{I}Y\|^2\}}
{\sum_{J \in \mathcal{I}}\lambda_{J}\exp\{-\tfrac{1}{2\sigma^2}\|(Y-\mathrm{P}_{J}Y\|^2\}},
\quad 
\lambda_{I} =e^{- \varkappa \rho(I)} , \quad  I\in\mathcal{I},
\end{align} 
where  $\rho(I)$ satisfies Condition \eqref{(A2)}, the parameter $\varkappa$ satisfies the bound
\begin{align}
\label{cond_technical}
\varkappa>\bar{\varkappa}\triangleq (32\nu+10+\alpha)/(4\alpha),
\end{align}  
$\alpha$ and $\nu$ are from Conditions \eqref{cond_nonnormal} and \eqref{(A2)}, respectively.

Let $\mathbb{P}_Z$ be a probability measure such that if $\xi\sim \mathbb{P}_Z$,
Condition \eqref{cond_nonnormal} is fulfilled. Under Condition \eqref{cond_nonnormal}, 
such a measure must exist. Note that $\mathbb{P}_Z$ is an \emph{arbitrary} 
probability measure for which Condition \eqref{cond_nonnormal} is fulfilled, it does not have 
to coincide with the true probability law of $\xi$.
Now, introduce random vector $Z\sim \mathbb{P}_Z$. Notice that $\mathbb{E}_Z Z =0$.
For a deterministic vector $y\in\mathcal{Y}$, we can compute 
the probabilities $\mathrm{P}_Z \big(\mathrm{P}_{I}y+\sigma \mathrm{P}_{I}Z \in B \big)$
for any measurable set $B\subseteq \mathcal{Y}$. Finally, by 
substituting  the observed $Y$ instead of $y$ in 
the distribution of $\mathrm{P}_{I}y+\sigma \mathrm{P}_{I}Z$
we obtain the following family of DDMs
\begin{align}
\label{ddm_I}
\tilde{\pi}_{I}(\vartheta \in B|Y)=\tilde{\pi}(\vartheta\in B| Y,I)=
\mathbb{P}_Z\big(\mathrm{P}_Iy+\sigma\mathrm{P}_I Z \in B\big) \big|_{y=Y}\; , 
\quad I \in \mathcal{I}.
\end{align}
 
Using the two families of DDMs $\{\tilde{\pi}(\vartheta|Y,I), I\in\mathcal{I}\}$, and 
$\{\tilde{\pi}(I|Y), I\in\mathcal{I}\}$ we create the following new DDM as a mixture 
(a la Bayesian approach):
\begin{align}
\label{ddm_mixture}
\tilde{\pi}(\vartheta|Y)=\tilde{\pi}_\varkappa(\vartheta|Y)=
\sum_{I \in \mathcal{I}}
\tilde{\pi}_I(\vartheta|Y)\tilde{\pi}(I|Y),
\end{align}
called \emph{model averaging DDM} (MA-DDM).
Let $\tilde{\mathbb{E}}$ and $\tilde{\mathbb{E}}_{I}$ be the expectations 
with respect to the DDMs $\tilde{\pi}(\vartheta|Y)$ and $\tilde{\pi}_{I}(\vartheta|Y)$, 
respectively. Introduce the \emph{MA-DDM mean estimator}
\begin{align}
\label{estimator}
\tilde{\theta}&=\tilde{\mathbb{E}}(\vartheta|Y)=\sum_{I \in \mathcal{I}}
\tilde{\mathbb{E}}_{I}(\vartheta|Y) \tilde{\pi}(I|Y)=
\sum_{I \in \mathcal{I}}
(\mathrm{P}_{I}Y) \tilde{\pi}(I|Y).
\end{align}

Consider yet alternative DDM. First derive a DD structure selector $\hat{I}$ 
by maximizing the DDM $\tilde{\pi}(I|Y)$ over $ I\in\mathcal{I}$.
This boils down to
\begin{align}
\label{I_max_ddm}
 \hat{I}&=\argmax_{I\in\mathcal{I}}{\tilde{\pi}}(I|Y)=\argmin_{I\in\mathcal{I}} 
 \big\{\|Y-\mathrm{P}_IY\|^2+\sigma^2\text{pen}(I)\big\},
\end{align}
which is essentially the \emph{penalization method} with the penalty 
$\text{pen}(I)=2\varkappa \rho(I)$. 
Plugging in $\hat{I}$ into DDM $\tilde{\pi}_{I}(\vartheta|Y)$  instead of $I$
gives the corresponding \emph{model selection DDM} (MS-DDM),
and the \emph{MS-DDM mean estimator}  for $\theta$:
\begin{align}
\label{ms_ddm}
\check{\pi} (\vartheta|Y)=\tilde{\pi}_{ \hat{I}} (\vartheta|Y), \quad 
\check{\theta}= \check{\mathbb{E}}(\vartheta|Y)= \mathrm{P}_{\hat{I}} Y,
\end{align}
where $\check{\mathbb{E}}$ denotes the expectation with respect to the 
DDM $\check{\pi}(\vartheta|Y)$. Notice that, like \eqref{ddm_mixture},
$\check{\pi} (\vartheta|Y)$ defined by \eqref{ms_ddm} 
can also be seen formally as mixture 
\begin{align}
\label{ms_ddm_I}
\check{\pi} (\vartheta|Y)=\tilde{\pi}_{ \hat{I}} (\vartheta|Y)=
\sum_{I \in\mathcal{I}} \tilde{\pi}_I(\vartheta|Y)\check{\pi}(I|Y), \quad \check{\pi}(I|Y)=1\{I=\hat{I}\},
\end{align}
where the mixing distribution $\check{\pi}(I|Y)=1\{I=\hat{I}\}$, the 
DDM for $I$, is degenerate at $\hat{I}$.

\begin{remark}
\label{rem1} 
Notice that we have constructed not just one DDM but a whole family of DDMs, as we can take any distribution 
$\mathbb{P}_Z$ (satisfying Condition \eqref{cond_nonnormal}) in \eqref{ddm_I}.

In a way, the DDM $\tilde{\pi}(\vartheta|Y)$ defined by \eqref{ddm_mixture} 
is of a more Bayesian flavor than the DDM 
$\check{\pi}(\vartheta|Y)$ defined by \eqref{ms_ddm} which is more of a ``model selection'' 
flavor, as it is based on the penalization method. Note however that, while the penalization method gives only an estimator, we also provide a DDM.  
\end{remark}

From now on, by $\hat{\pi}(\vartheta|Y)$ we denote
either $\tilde{\pi}(\vartheta|Y)$ defined by \eqref{ddm_mixture} 
or $\check{\pi}(\vartheta|Y)$ defined by \eqref{ms_ddm};
by $\hat{\pi}(I|Y)$ we denote either $\tilde{\pi}(I|Y)$ defined by \eqref{ddm_P(I|Y)}
or $\check{\pi}(I|Y)$ defined by \eqref{ms_ddm_I};
and $\hat{\theta}$ will stand either for $\tilde{\theta}$  defined by 
\eqref{estimator} or for $\check{\theta}$ defined by \eqref{ms_ddm}.
In case $\hat{\pi}(I|Y) =\check{\pi}(I|Y) =1\{I=\hat{I}\}$, the meaning of $\hat{\pi}(I \in \mathcal{G}|Y)$ 
for any $\mathcal{G}\subseteq\mathcal{I}$ is as follows: $\hat{\pi}(I \in \mathcal{G}|Y)
=\check{\pi}(I \in \mathcal{G}|Y)=\mathrm{1}\{ \hat{I}\in\mathcal{G}\}$
so that $\mathbb{E}_{\theta}\check{\pi}(I\in \mathcal{G}| Y)=\mathbb{P}_\theta( \hat{I}\in\mathcal{G})$.

\section{Main results}
\label{main_results}
In this section we present the main results of the paper.

\subsection{Oracle rate}
\label{sec_oracle_rate}
For $I\in\mathcal{I}$, consider the projection estimator $\mathrm{P}_{I} Y$ for estimating $\theta$. 
By Condition \eqref{cond_nonnormal} and Jensen's inequality, 
we obtain the following upper bound for the estimator 
$\mathrm{P}_{I} Y$: for some $C>0$,
\begin{align*}
\mathbb{E}_{\theta}\|\theta-\mathrm{P}_{I} Y\|^2 &=\|\theta-\mathrm{P}_{I}\theta\|^2+\sigma^2
\mathbb{E}_{\theta}\|\mathrm{P}_I\xi\|^2\le\|\theta-\mathrm{P}_{I}\theta\|^2+C\sigma^2  d_I.
\end{align*}
Ideally, we would like to mimic the local rate for the best (oracle) choice of the projection 
structure $\min_{I\in\mathcal{I}}\big(\|\theta-\mathrm{P}_{I}\theta\|^2+\sigma^2  d_I\big)$, 
uniformly in $\theta\in\Theta$.
However, as is shown for some particular models, this is impossible unless, instead of just 
$d_I$, we use a majorant $\rho(I)\ge d_I$  that satisfies Condition \eqref{(A2)}. In particular, 
according to Remark \ref{rem5}, we can use $\rho(I)\ge D_{s(I)}+\log|\mathcal{I}_{s(I)}|$, 
where $D_s$ and $\mathcal{I}_s$ are defined by \eqref{D_s}
for some appropriate slicing mapping $s(I)$. 
The extra layer complexity term $\log|\mathcal{I}_{s(I)}|$ reflects the  ``price'' for not knowing 
the structure. This motivates the following definition. Introduce the  family of
local rates
\begin{align*}
r^2 (I,\theta)=\|\theta-\mathrm{P}_I\theta\|^2+ \sigma^2\rho(I),\quad I\in\mathcal{I},
\end{align*}
for some $\rho(I)$ satisfying Condition \eqref{(A2)}. For each $\theta$ there exists the best 
structure $I_o= I_o(\theta)=I_o(\theta,\sigma^2)$ (if not unique, take any minimizer) 
corresponding to the fastest local rate
\begin{align}
\label{oracle}
r^2(\theta)&=\min_{ I\in\mathcal{I}}  r^2(I,\theta)=r^2(I_o,\theta)
=\|\theta-\mathrm{P}_{I_o}\theta\|^2+ \sigma^2\rho(I_o),
\end{align}
representing the optimal trade-off between the approximation term 
$\|\theta-\mathrm{P}_{I_o}\theta\|^2$  
and the complexity term $\rho(I_o)$ satisfying Condition \eqref{(A2)}.
We call $I_o$ by \emph{oracle structure} (or just \emph{oracle})
and the quantity $r^2(\theta)$ by \emph{oracle rate}. 

\begin{remark}
Often we will have $D_{s(I)}=\dim(\mathbb{L}_I)=d_I$ and $\rho(I)=d_I+ \log|\mathcal{I}_{s(I)}|$
for some appropriate slicing mapping $s(I)$. 
This is the case in many particular models and structures that we consider 
in Part II. 
If $I^*\in\mathcal{I}$ is the true structure, i.e., $\theta\in\mathbb{L}_{I^*}$, 
$s^*=s(I^*)$ and $\mathcal{I}_{s^*}=\{I^*\}$, then, by the oracle definition \eqref{oracle}
and the facts that $\|\theta-\mathrm{P}_{I^*}\theta\|^2=0$ and 
$\mathbb{L}_{I^*}\subseteq \mathbb{R}^N$, we have
\begin{align}
\label{dim_N}
r^2(\theta) \le r^2(I^*,\theta) =\sigma^2 \rho(I^*)=\sigma^2 d_{I^*} 
=\sigma^2\dim(\mathbb{L}_{I^*}) \le 
N\sigma^2.
\end{align}
If such a true structure does not exist, we can assume without loss of generality that there is an
 $\bar{I}\in\mathcal{I}$ such that $\mathbb{L}_{\bar{I}} =\mathbb{R}^N$,
$\bar{s}=s(\bar{I})$ and  $\mathcal{I}_{\bar{s}}=\{\bar{I}\}$. This would lead again to the bound \eqref{dim_N}:
$r^2(\theta) \le r^2(\bar{I},\theta)=\sigma^2\dim(\mathbb{L}_{\bar{I}})= N\sigma^2$.
This is of course not surprising as the oracle performance should not be worse than that of
the simplistic procedure $\hat{\theta} =Y$.
\end{remark}

\begin{remark}
\label{family_cover}
Suppose we have two different family of structures $\mathcal{I}_1$ and $\mathcal{I}_2$,
with corresponding (different) families of linear spaces $\mathbb{L}_I$ and 
(different) majorants.
We say that the family $\mathcal{I}_1$ \emph{covers} the family $\mathcal{I}_2$ 
if for any $I\in\mathcal{I}_2$ there exists $I'=I'(I) \in \mathcal{I}_1$ such that 
$r^2(I',\theta) \le r^2(I,\theta)$ for all $\theta\in\Theta$ (up-to-a-constant relation will do as well). 
If $\mathcal{I}_1$ covers $\mathcal{I}_2$, there is no point in considering the family 
$\mathcal{I}_2$, one should use the family $\mathcal{I}_1$. 
The family $\mathcal{I}_1$ and the family $\mathcal{I}_2$ covered by $\mathcal{I}_1$ could be 
of very different natures. But sometimes $\mathcal{I}_1$ can be a subfamily of 
$\mathcal{I}_2$. 
This happens when a chunk of structures in $\mathcal{I}_2$ can be dominated 
by just one structure. Then we can remove those structures without any harm, obtaining 
a new adjusted family $\mathcal{I}_1$. The complexity term 
in the majorant gets adjusted for some $I\in\mathcal{I}_1$, leading to an \emph{elbow effect} 
in the rate and improving the resulting oracle rate. 
We will see how this \emph{elbow effect} is exhibited  for several cases of model/structure from Part II. 
\end{remark}

\subsection{Estimation and DDM contraction results with oracle rate}

Recall the quantities: the DDM $\hat{\pi}(\vartheta|Y)$, 
which is either MA-DDM $\tilde{\pi}(\vartheta|Y)$ defined by \eqref{ddm_mixture} 
or MS-DDM $\check{\pi}(\vartheta|Y)$ defined by \eqref{ms_ddm}; 
the DDM mean $\hat{\theta}$, which is either $\tilde{\theta}$ 
defined by \eqref{estimator} or $\check{\theta}$ defined by \eqref{ms_ddm};
and the oracle rate $r(\theta)$ defined by \eqref{oracle}.
The following theorem establishes that the DDM $\hat{\pi}(\vartheta|Y)$ 
contracts (from the frequentist $\mathbb{P}_\theta$-perspective) to $\theta$ with 
the oracle rate $r(\theta)$, and the DDM mean $\hat{\theta}$
converges to $\theta$ with the oracle rate $r(\theta)$, uniformly over the entire parameter space.

\begin{theorem} 
\label{th1} 
Let Conditions \eqref{cond_nonnormal} and \eqref{(A2)} be fulfilled.
Then there exist constants $M_0$,$M_1$,$H_0$,$H_1$,$m_0$,$m_1>0$ 
such that for any $\theta\in\Theta$ and any $M\ge 0$,
\begin{align}
\label{th1_i}
\mathbb{E}_{\theta}\hat{\pi}\big( \|\vartheta-\theta\|^2
\ge M_0r^2(\theta) +M\sigma^2 | Y\big)  &\le H_0 e^{-m_0 M},\\
\label{th1_ii}
\mathbb{P}_\theta\big( \|\hat{\theta}-\theta\|^2\ge M_1 r^2(\theta)+M\sigma^2\big)
&\le H_1 e^{-m_1 M}.
\end{align}
\end{theorem}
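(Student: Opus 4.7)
The strategy is to derive the contraction bound \eqref{th1_i} first and then deduce the estimation bound \eqref{th1_ii} from it via Jensen and tail integration. For \eqref{th1_i}, I start from the mixture representation
\[
\hat{\pi}(\|\vartheta-\theta\|^2 \ge t | Y) = \sum_{I \in \mathcal{I}} \tilde{\pi}_I(\|\vartheta-\theta\|^2 \ge t|Y)\, \hat{\pi}(I|Y),
\]
and partition $\mathcal{I}$ into a \emph{good} layer $\mathcal{I}_{\mathrm{g}}(M) = \{I : r^2(I,\theta) \le c_0(r^2(\theta)+M\sigma^2)\}$ and its complement $\mathcal{I}_{\mathrm{b}}(M)$, handling the two regimes separately.

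For the good part, the construction \eqref{ddm_I} gives, $\tilde{\pi}_I$-almost surely, $\vartheta - \theta = -\mathrm{P}_I^\perp \theta + \sigma \mathrm{P}_I \xi + \sigma \mathrm{P}_I Z$ with $Z\sim\mathbb{P}_Z$ independent of $\xi$, so $\|\vartheta-\theta\|^2 \le 2\|\mathrm{P}_I^\perp \theta\|^2 + 4\sigma^2\|\mathrm{P}_I \xi\|^2 + 4\sigma^2\|\mathrm{P}_I Z\|^2$. Applying Markov to the $\mathrm{P}_I Z$-tail under $\tilde{\pi}_I$ together with Condition \eqref{cond_nonnormal} produces a factor exponentially small in $M$; a second Chernoff step under $\mathbb{P}_\theta$ using \eqref{cond_nonnormal} handles $\|\mathrm{P}_I \xi\|^2$. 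The resulting per-$I$ bound, of the form $e^{-cM - c'\rho(I)}$ for $I\in\mathcal{I}_{\mathrm{g}}(M)$, sums over this layer by Condition \eqref{(A2)}.

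For the bad part the key is to show that the DDM weight $\hat{\pi}(I|Y)$ itself is small. Lower-bounding the denominator of \eqref{ddm_P(I|Y)} by the oracle term at $I_o$, one obtains
\[
\tilde{\pi}(I|Y) \le \exp\Bigl\{-\varkappa[\rho(I)-\rho(I_o)] + \tfrac{1}{2\sigma^2}\bigl[\|\mathrm{P}_{I_o}^\perp Y\|^2 - \|\mathrm{P}_I^\perp Y\|^2\bigr]\Bigr\},
\]
and expanding $Y=\theta+\sigma\xi$ decomposes the bracket into a deterministic bias gap $\|\mathrm{P}_{I_o}^\perp \theta\|^2 - \|\mathrm{P}_I^\perp \theta\|^2$, a cross term linear in $\xi$ (handled by a weighted Cauchy--Schwarz step that absorbs a fraction of the bias gap and a small multiple of $\|\mathrm{P}_I\xi\|^2+\|\mathrm{P}_{I_o}\xi\|^2$), and a quadratic term proportional to $\|\mathrm{P}_{I_o}^\perp \xi\|^2 - \|\mathrm{P}_I^\perp \xi\|^2$. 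Taking $\mathbb{E}_\theta$ of the Chernoff form of this estimate, Condition \eqref{cond_nonnormal} controls the moment generating functions of $\|\mathrm{P}_I\xi\|^2$ and $\|\mathrm{P}_{I_o}\xi\|^2$, and the lower bound $\varkappa > \bar{\varkappa} = (32\nu+10+\alpha)/(4\alpha)$ in \eqref{cond_technical} supplies exactly the margin needed for the residual exponent to decay as $-c(\rho(I)-\rho(I_o)) - cM$ once the constant $c_0$ in the definition of $\mathcal{I}_{\mathrm{b}}(M)$ is chosen large enough. Summing over $\mathcal{I}_{\mathrm{b}}(M)$ via \eqref{(A2)} produces the required $H_0e^{-m_0 M}$ tail. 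For the MS-DDM variant, $\hat{\pi}(I|Y) = 1\{I=\hat{I}\}$, and the same likelihood-ratio computation, applied to the minimizer in \eqref{I_max_ddm}, translates into $\mathbb{P}_\theta(\hat{I}\in\mathcal{I}_{\mathrm{b}}(M)) \le H_0 e^{-m_0 M}$.

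For \eqref{th1_ii}, the MA-DDM case follows from Jensen, $\|\tilde{\theta}-\theta\|^2 \le \tilde{\mathbb{E}}(\|\vartheta-\theta\|^2 | Y)$, so the contraction bound \eqref{th1_i} combined with tail integration and a final Markov step yields the exponential-in-$M$ probability bound. For the MS-DDM case, $\check{\theta}-\theta = -\mathrm{P}_{\hat{I}}^\perp \theta + \sigma \mathrm{P}_{\hat{I}}\xi$; splitting on $\{\hat{I}\in\mathcal{I}_{\mathrm{g}}(M)\}$ and using \eqref{moment_bound} together with the bad-structure control above delivers \eqref{th1_ii}. The main obstacle is the bad-structure analysis: one must simultaneously absorb the stochastic fluctuations $\|\mathrm{P}_I\xi\|^2$ (which can be as large as $\rho(I)$), the cross term linear in $\xi$, and the deterministic bias gap, uniformly over $I\in\mathcal{I}$, and the precise value of $\bar{\varkappa}$ emerges from book-keeping all constants through the final summation via Condition \eqref{(A2)}.
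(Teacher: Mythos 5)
Your proof of \eqref{th1_i} takes essentially the same route as the paper's: the split of $\mathcal{I}$ into structures with $r^2(I,\theta)$ within a multiple of $r^2(\theta)+M\sigma^2$ and the rest, the per-structure concentration of $\tilde{\pi}_I$ around $\mathrm{P}_IY$ on the good layer via Condition \eqref{cond_nonnormal} applied to $\mathbb{P}_Z$ and to $\xi$, and the fractional likelihood-ratio (Chernoff) bound on $\mathbb{E}_\theta\hat{\pi}(I|Y)$ against the oracle term on the bad layer, with the margin supplied by \eqref{cond_technical} and the summation by \eqref{(A2)}; this is exactly the content of Lemmas \ref{secondary_lemma}--\ref{main_lemma} and the $T_1+T_2+T_3$ decomposition in the paper. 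The MS-DDM half of \eqref{th1_ii} is also handled as in the paper (splitting on $\hat{I}$ being good/bad and on the size of $\|\mathrm{P}_{\hat{I}}\xi\|^2$, with \eqref{moment_bound} entering through Cauchy--Schwarz).

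The one step that needs care is your claim that the MA-DDM half of \eqref{th1_ii} follows from \eqref{th1_i} by ``Jensen, tail integration and a final Markov step.'' If the Markov step is applied to the first moment --- i.e., integrate the tail of \eqref{th1_i} to get $\mathbb{E}_\theta\tilde{\mathbb{E}}(\|\vartheta-\theta\|^2|Y)\le M_0r^2(\theta)+H_0\sigma^2/m_0$ as in \eqref{cor_est} and then apply Markov --- you only obtain a bound of order $1/M$, not $e^{-m_1M}$; this is precisely why the paper emphasizes that \eqref{th1_ii} is strictly stronger than \eqref{cor_est} and gives it a separate direct proof (the parallel $\bar{T}_1+\bar{T}_2+\bar{T}_3$ decomposition). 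Your route can, however, be repaired so that it genuinely bypasses that second decomposition for the MA-DDM: apply Jensen to $x\mapsto e^{\lambda x}$ to get $\exp\{\lambda\sigma^{-2}(\|\tilde{\theta}-\theta\|^2-M_0r^2(\theta))\}\le\tilde{\mathbb{E}}\big(\exp\{\lambda\sigma^{-2}(\|\vartheta-\theta\|^2-M_0r^2(\theta))_+\}\,|\,Y\big)$, integrate the tail bound \eqref{th1_i} against $e^{\lambda u}$ to see that the $\mathbb{P}_\theta$-expectation of the right-hand side is at most $1+\lambda H_0/(m_0-\lambda)$ for any $0<\lambda<m_0$, and finish with the exponential Markov inequality, yielding the rate $e^{-\lambda M}$. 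As written, the first-moment reading is the natural one and it does not close the argument, so you should make the exponential-moment version explicit.
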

The constants in the theorem depend only on $\alpha$ and some also on $\varkappa$, 
the exact expressions can be found in the proof.

\begin{remark}
\label{remark}
Notice that already claim \eqref{th1_i} of Theorem \ref{th1} contains an oracle bound 
for the estimator $\hat{\theta}$. Indeed, by Jensen's inequality, we get the oracle inequality 
in expectation:
\begin{align}
\mathbb{E}_{\theta}\|\hat{\theta}-\theta\|^2 &\le
\mathbb{E}_{\theta} \hat{\mathbb{E}}(\|\vartheta-\theta\|^2|Y)
\le M_0r^2(\theta)+H_0 \int_0^{+\infty} \!\!\!\! e^{-m_0u/\sigma^2} du \notag\\&
=M_0r^2(\theta)+\tfrac{H_0\sigma^2}{m_0}. 
\label{cor_est}
\end{align}
Similarly we can show that also \eqref{th1_ii} implies \eqref{cor_est}. 
This means that claim \eqref{th1_ii} is actually stronger than \eqref{cor_est} and 
therefore requires a separate proof.
\end{remark}

\begin{remark}
The non-asymptotic exponential probability bounds in the both claims 
of the theorem provide a very refined characterization of the quality of the DDM 
$\hat{\pi}(\vartheta|Y)$ and estimator $\hat{\theta}$, finer than, e.g., the traditional 
oracle inequalities in expectation like \eqref{cor_est} 
(since \eqref{cor_est} follows from \eqref{th1_ii}, see Remark \ref{remark}).
This refined formulation allows for subtle analysis in various asymptotic regimes ($N \to \infty$, 
$\sigma \to 0$, or their combination) 
as we can let $M$ depend in any way on $N$, $\sigma$, or both. 
\end{remark}

Now we give several technical definitions which we will need in the claims.
For the constants $\alpha$ from Condition \eqref{cond_nonnormal} and 
$\varkappa$ from \eqref{ddm_P(I|Y)}, define 
\begin{align}
\label{def_bar_tau}
\bar{\tau}=\bar{\tau}(\varkappa,\alpha)\triangleq
3(1+\varkappa\alpha)/\alpha.
\end{align}
Next, for some $\delta\in(0,1)$,  fix some $\tau_0 =\tau_0(\delta)$ such that 
$\tau_0> \tfrac{1+\delta}{1-\delta} \bar{\tau}$, where
$\bar{\tau}$ is defined by \eqref{def_bar_tau}. For example, take $\delta = 0.1$ and 
$\tau_0=\tfrac{11}{9}\bar{\tau}+0.1$. For this $\tau_0$ and any $\theta\in\Theta$, define 
\begin{align}
\label{I_*}
 I_*= I_*(\theta)= I_*(\theta,\delta) \triangleq  I_o^{\tau_0}(\theta)=I_o(\theta,\tau_0\sigma^2),
\end{align}
where $ I_o(\theta,\sigma^2)$ is defined by \eqref{oracle}.
We call the quantity  $I_o^\tau=I_o^\tau(\theta)=I_o(\theta, \tau\sigma^2)$, for $\tau\ge 0$, 
by \emph{$\tau$-oracle},  which is just the oracle defined by \eqref{oracle} 
with  $\sigma^2$ substituted by $\tau\sigma^2$. 
Notice that $\rho(I_o^{\tau_1})\ge \rho(I_o^{\tau_2})$ for $\tau_1\le\tau_2$. 
All $\tau$-oracle rates are related to the oracle rate by the trivial relations:
$r^2(\theta)\le r^2(I_o^\tau, \theta)\le \tau r^2(\theta)$ for $\tau\ge 1$,
and $r^2(I_o^\tau, \theta)\le r^2(\theta)\le\tau^{-1}r^2(I_o^\tau, \theta)$ for $0<\tau< 1$.

When proving Theorem \ref{th1}, as byproduct we also obtain the following theorem about the frequentist 
behavior of the DDM $\hat{\pi}(I|Y)$. 
\begin{theorem} 
\label{th2} 
Let Conditions \eqref{cond_nonnormal} and \eqref{(A2)} 
be fulfilled, $\nu, C_\nu$ be from Condition \eqref{(A2)}. The following relations hold
for any $\theta\in\Theta$ and $M\ge 0$.
\begin{itemize}	
\item[(i)] 
Let $c_1, c_2, c_3$ be the constants defined in Lemma \ref{main_lemma}. Then 
\begin{align*}
\mathbb{E}_{\theta}\hat{\pi}(  I \in \mathcal{I}: \, r^2(I,\theta)\ge c_3 r^2(\theta) + M \sigma^2\big| Y) 
\le  C_\nu e^{-c_2 M}.
\end{align*}

\item[(ii)] 
Let $\varkappa\ge\alpha^{-1}\nu$ (implied by \eqref{cond_technical})
and Condition \eqref{(A3)} be fulfilled. 
Then there exists 
$m'_1> 0$ such that 
\begin{align}
\label{th2_ii}
\mathbb{E}_{\theta}\hat{\pi}\big( I\in\mathcal{I}: \rho(I)\le 
\delta \rho(I_*)-M \big|Y\big)  \le 
C_\nu e^{-m'_1M},
\end{align}
where $I_*=I_*(\theta,\delta)$ is defined by \eqref{I_*}.

\item[(iii)] 
Let $\varkappa\ge \frac{2\nu+2\alpha+3}{2\alpha}$ (implied by \eqref{cond_technical})
and Condition {\rm (A3')} be fulfilled (given in Remark \ref{cond_A3'}).
Then there exists $M'_0>0$  such that 
\begin{align*}
\mathbb{E}_{\theta}\hat{\pi}\big(I \in \mathcal{I}: \rho(I)\ge  
M'_0 \rho(I_o)+M \big| Y\big)\le C_\nu e^{-M/2}.
\end{align*}
\end{itemize}
\end{theorem}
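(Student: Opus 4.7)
The plan is to treat all three parts uniformly. Starting from the explicit form
\[
\tilde{\pi}(I|Y)=\frac{e^{-\varkappa\rho(I)+\|\mathrm{P}_I Y\|^2/(2\sigma^2)}}{\sum_{J\in\mathcal{I}}e^{-\varkappa\rho(J)+\|\mathrm{P}_J Y\|^2/(2\sigma^2)}}
\]
(from \eqref{ddm_P(I|Y)} after cancelling $e^{-\|Y\|^2/(2\sigma^2)}$) and the uniform bound $\hat{\pi}(I|Y)\le\tilde{\pi}(I|Y)/\tilde{\pi}(I_0|Y)$ valid for both the MA-DDM (trivially, since $\tilde\pi(I_0|Y)\le 1$) and the MS-DDM (because $\hat{I}$ maximizes $\tilde{\pi}(\cdot|Y)$, so $\{\hat{I}=I\}\subseteq\{\tilde{\pi}(I|Y)\ge\tilde{\pi}(I_0|Y)\}$), one writes
\[
\mathbb{E}_\theta\hat{\pi}(\mathcal{G}|Y)\le\sum_{I\in\mathcal{G}}e^{\varkappa(\rho(I_0)-\rho(I))}\,\mathbb{E}_\theta\exp\!\Bigl\{\frac{\|\mathrm{P}_I Y\|^2-\|\mathrm{P}_{I_0}Y\|^2}{2\sigma^2}\Bigr\}
\]
for any pivot $I_0\in\mathcal{I}$ and any ``bad set'' $\mathcal{G}$. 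The right-hand side is controlled by expanding $Y=\theta+\sigma\xi$, applying Condition \eqref{cond_nonnormal} to the stochastic terms after a Young-type split of the linear $\xi$-cross-term, and summing via Condition \eqref{(A2)}; the required $\mathbb{P}_\theta$-lower bound on the denominator is imported from Lemma \ref{main_lemma}.

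The three parts then differ only in the choice of pivot and bad set. For (i), pivot at $I_0=I_o$: writing $b_I=\|\theta-\mathrm{P}_I\theta\|^2$ and $r^2(I,\theta)=b_I+\sigma^2\rho(I)$, the deterministic drift of the log-ratio equals $-[r^2(I,\theta)-r^2(I_o,\theta)]/(2\sigma^2)+(\varkappa-\tfrac{1}{2})[\rho(I_o)-\rho(I)]$, producing the factor $e^{-M/2}$ on $\mathcal{G}=\{r^2(I,\theta)\ge c_3 r^2(\theta)+M\sigma^2\}$, and the rest is a direct consequence of Lemma \ref{main_lemma}. For (ii), pivot at $I_0=I_*=I_o^{\tau_0}$: the optimality of $I_*$ forces $b_I-b_{I_*}\ge\tau_0\sigma^2[\rho(I_*)-\rho(I)]\ge\tau_0\sigma^2[(1-\delta)\rho(I_*)+M]$ on $\mathcal{G}=\{\rho(I)\le\delta\rho(I_*)-M\}$, so the drift becomes $\le-(\tau_0/2-\varkappa)[(1-\delta)\rho(I_*)+M]$, strictly negative thanks to $\tau_0>\tfrac{1+\delta}{1-\delta}\bar{\tau}>2\varkappa$; Condition \eqref{(A3)} supplies the union $I'=I'(I,I_*)$ whose projection dominates both $\mathrm{P}_I$ and $\mathrm{P}_{I_*}$ and is used to bound the stochastic increments uniformly, and the threshold $\varkappa\ge\alpha^{-1}\nu$ leaves the $e^{-\nu\rho(I)}$ margin needed for summation via \eqref{(A2)}. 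For (iii), use Condition (A3') to produce the intersection $I''=I''(I,I_o)$ with $\mathbb{L}_{I''}\subseteq\mathbb{L}_I$ and $\rho(I'')\le\rho(I_o)$; pivot at $I''$ and exploit the Pythagorean identity $\|\mathrm{P}_I Y\|^2-\|\mathrm{P}_{I''}Y\|^2=\|(\mathrm{P}_I-\mathrm{P}_{I''})Y\|^2$ together with Cauchy--Schwarz and \eqref{cond_nonnormal} (via $\|(\mathrm{P}_I-\mathrm{P}_{I''})\xi\|\le\|\mathrm{P}_I\xi\|$) to bound the expectation by a factor proportional to $e^{(\text{const}/\alpha)\rho(I)}$ modulo deterministic drift. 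The threshold $\varkappa\ge(2\nu+2\alpha+3)/(2\alpha)$ is precisely what leaves a residual $e^{-(\nu+1/2)\rho(I)}$ after all surcharges are cancelled, so that summation via \eqref{(A2)} over $\mathcal{G}=\{\rho(I)\ge M'_0\rho(I_o)+M\}$ yields $C_\nu e^{-M/2}$.

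The main obstacle is the careful bookkeeping of the stochastic $\|\mathrm{P}_I\xi\|^2$-terms as $I$ ranges over $\mathcal{G}$: Condition \eqref{cond_nonnormal} only supplies a pointwise exponential moment bound with a specific exponent $\alpha\in(0,1]$, so the Young-type split of the linear $\xi$-cross-term, the use of unions or intersections provided by \eqref{(A3)}/(A3') to reduce everything to projections onto spaces whose complexity is controlled, and the subsequent summation via \eqref{(A2)} must be arranged so that the complexity penalty $\varkappa\rho(I)$ retains strictly positive margin above $\nu\rho(I)$ plus the $\alpha$-dependent surcharges arising from the moment bounds. The distinct $\varkappa$-thresholds appearing in parts (ii) and (iii) are precisely the levels at which this balance becomes tight, while the deterministic gap analysis on each bad set is a soft oracle argument driven by the definition of $r^2(\theta)$, $I_o^{\tau_0}$, and the two set-operations supplied by \eqref{(A3)} and (A3').
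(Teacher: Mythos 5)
Your overall architecture --- bound $\hat{\pi}(I|Y)$ by the posterior-odds ratio against a well-chosen pivot, split the log-ratio into a deterministic drift plus stochastic increments controlled by Condition \eqref{cond_nonnormal}, and sum over the bad set via Condition \eqref{(A2)} --- is the paper's, and your choices of pivot and bad set in each part (the oracle for (i), the $\tau_0$-oracle together with the union $I'$ from \eqref{(A3)} for (ii), the intersection $I''$ from (A3') for (iii)) match Lemmas \ref{secondary_lemma} and \ref{main_lemma}. However, two steps as written do not go through.

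First, your master inequality keeps the odds ratio to the \emph{first} power, so its right-hand side contains terms of the form $\mathbb{E}_\theta\exp\{\|\mathrm{P}_I Y\|^2/(2\sigma^2)\}$; after expanding $Y=\theta+\sigma\xi$ and Young-splitting the cross term this requires $\mathbb{E}_\theta\exp\{c\|\mathrm{P}_I\xi\|^2\}<\infty$ for some $c>1/2$, which Condition \eqref{cond_nonnormal} does not supply (for standard Gaussian $\xi$ the admissible $\alpha$ is about $0.43<1/2$, so your displayed bound is literally $+\infty$). The fix is the paper's: apply Markov to the ratio raised to a power $h$ with $h=\alpha/4$ in the general case (Lemma \ref{secondary_lemma}) or $h=\alpha$ in the nested cases, which also rescales all the drift coefficients and produces the specific constants $c_1,c_2,c_3$ and the $\varkappa$-thresholds. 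This is also why in (ii) the pivot must be the union $I'$ itself (so that $\mathbb{L}_I\subseteq\mathbb{L}_{I'}$ and the sharper nested bound applies, with $\rho(I')\le\rho(I)+\rho(I_*)$ absorbed by the prior ratio), not $I_*$: pivoting at $I_*$ leaves an uncontrolled positive term $B_h\|\mathrm{P}_{I_*}^\perp\theta\|^2$ in the exponent. Second, in (iii) you dismiss the deterministic drift $\|(\mathrm{P}_I-\mathrm{P}_{I''})\theta\|^2=\|\mathrm{P}_{\mathbb{L}_I\cap\mathbb{L}_{I_o}^\perp}\theta\|^2$ as bookkeeping, but a priori it can be arbitrarily large. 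The essential missing idea is the contradiction-with-oracle-optimality argument, using $\mathbb{L}_{I'}=\mathbb{L}_{I_o}\oplus(\mathbb{L}_I\cap\mathbb{L}_{I_o}^\perp)$ and $\rho(I')\le\rho(I_o)+\rho(I)$, which yields $\|\mathrm{P}_{\mathbb{L}_I\cap\mathbb{L}_{I_o}^\perp}\theta\|^2\le\sigma^2\rho(I)$; only then can the drift be absorbed into the $\varkappa\alpha\rho(I)$ margin to leave the residual $e^{-\nu\rho(I)-M/2}$. Without these two ingredients the stated exponential bounds cannot be derived from Conditions \eqref{cond_nonnormal} and \eqref{(A2)}.
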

We can interpret the above theorem as \emph{structure recovery}, but in a somewhat weak sense. 
Namely, Theorem \ref{th2} says basically that the DDM $\tilde{\pi}(I|Y)$ 
and the structure selector $\hat{I}$ ``live'' in the set of structures that are,
in a sense, almost as good as the oracle structure $I_o$. 
Recall that in general the oracle structure is not the same as the true structure. 

Notice that, apart from Conditions \eqref{cond_nonnormal} and \eqref{(A2)}, the above weak 
structure recovery results 
do not require any extra conditions on $\theta$. This is in contrast with the ``strong'' 
structure recovery. 
For example, for the sparsity pattern recovery in linear regression model with sparsity structure, 
one needs the so called ``beta-min'' condition.

\subsection{Confidence ball under EBR}
\label{subsec_conf_set}

Theorem \ref{th1} establishes strong local optimal properties 
of the DDM $\hat{\pi}(\vartheta| Y)$ and the DDM mean $\hat{\theta}$, 
but this is not enough to solve the UQ problem yet.
As a first candidate for confidence ball, let us construct a credible ball by using 
the DDM $\hat{\pi}(\vartheta|Y)=\check{\pi}(\vartheta|Y)$ defined by \eqref{ms_ddm}. 
According to its definition and Condition \eqref{cond_nonnormal}, 
the DDM $\check{\pi}(\vartheta|Y)$ is concentrated around its mean $\check{\theta}$ 
with the rate $\sigma d_{\hat{I}}^{1/2}$.
Then $B(\check{\theta},M\sigma d_{\hat{I}}^{1/2})$ is a DDM credible ball for $\theta$, 
which can be guaranteed to have a given level of DDM mass 
by choosing a sufficiently large $M$. 
However, $B(\check{\theta},M\sigma d_{\hat{I}}^{1/2})$
cannot have a guaranteed coverage, since otherwise
in some particular models (cf.\ \cite{Belitser&Nurushev:2015}) this would mean that 
the estimator $\check{\theta}$ would converge to $\theta$ uniformly in $\theta\in\Theta$
at the smaller oracle rate with $\tilde{\rho}(I_o)=d_{I_o}$ instead of 
$\rho(I_o)=D_{s(I_o)}+\log |\mathcal{I}_{s(I_o)}|$. 
But $\log |\mathcal{I}_{s(I_o)}|$ can be the dominating term in $\rho(I_o)$, e.g., 
for sparsity  structures (see \cite{Belitser&Nurushev:2015} and the corresponding 
cases of model/structure from Part II). 
This would contradict the lower bounds from the literature.  
Basically, the DDM $\check{\pi}(\vartheta|Y)$ is well concentrated 
(in fact, `` too concentrated''), but not around the truth, rather around
its mean $\check{\theta}$ which can be further away from the truth than the DDM contraction rate. 
To guarantee coverage, the radius of confidence balls must be at least of the order
$\rho(I_o)$. The oracle structure $I_o$ 
is not known, but we have the structure selector 
$\hat{I}$ defined by \eqref{I_max_ddm}.

The above heuristics suggests to use $\rho(\hat{I})$ as a proxy for $\rho(I_o)$.
According to Theorem \ref{th2}, $\hat{I}$ lives in the ``complexity shell'' 
$\delta\sigma^2 \rho(I_*) -M \sigma^2 \le 
\sigma^2 \rho(\hat{I})\le M'_0\rho(I_o)+M \sigma^2$ with a large probability.
So, if we want the size of confidence ball to be not of a bigger order than oracle rate, 
it seems reasonable to use the following data dependent (quadratic) radius
\begin{align}
\label{check_r}
\hat{r}^2=\hat{r}^2(Y)= \sigma^2+\sigma^2\rho(\hat{I}).
\end{align}
%
We will show that the size property holds for the radial rate equal to the oracle rate, 
uniformly over $\theta\in\Theta$. 
But then there is an inevitable problem with coverage: the coverage property does not 
hold uniformly. Indeed, the complexity shell can be too wide if  
$\sigma^2 \rho(I_*)\ll r^2(\theta)$. 
If this happens (for deceptive $\theta$'s), then the coverage 
property of a ball with radius of order $\hat{r}$ cannot be guaranteed because its radius 
can be of a smaller order than the oracle rate $r^2(\theta)$. 
This problem will not occur for those $\theta$'s (called non-deceptive) 
for which the approximation term of the oracle rate is within a multiple of its complexity term. 
This discussion motivates introducing the following condition. 
\smallskip

\noindent
{\sc Condition EBR.}
We say that $\theta\in\Theta$ satisfies the 
\emph{excessive bias restriction} (EBR) condition with structural parameter $t\ge 0$ if  
$\theta \in \Theta_{\rm eb}(t)$, where the corresponding set (called the \emph{EBR class}) is 
\begin{align}
\label{cond_ebr}
\Theta_{\rm eb}(t)=\Theta_{\rm eb}(t,\tau_0)
=\big\{\theta\in\Theta : 
\|\theta-\mathrm{P}_{I_*}\theta\|^2\le t \sigma^2\big(1+\rho(I_*)\big)\big\},
\end{align}
where the $\tau_0$-oracle structure $I_*=I_o(\theta,\tau_0\sigma^2)$ is defined 
by \eqref{I_*}. The condition EBR essentially requires that the approximation term of the 
$\tau_0$-oracle rate $r^2(I_*,\theta)$ 
is dominated by a multiple of its complexity term (additional  $\sigma^2$ is needed to handle 
the case $\rho(I_*)=0$). Clearly, $\Theta_{\rm eb}(t_1)\subseteq\Theta_{\rm eb}(t_2)$ for $t_1 \le t_2$.

Now we use  the center $\hat{\theta}$ and the radius $\hat{r}$ to
construct a confidence ball for $\theta$. 
The following theorem 
describes the coverage and size properties of the confidence ball based on
$\hat{\theta}$ and $\hat{r}$. 

\begin{theorem}
\label{th3}
Let Conditions \eqref{cond_nonnormal}, \eqref{(A2)} and \eqref{(A3)} 
be fulfilled, $\Theta_{\rm eb}(t)$ be defined by \eqref{cond_ebr}. 
Then there exist constants $M_2, M_3, H_2, H_3, m_2, m_3>0$ such that for any 
$t,M\ge 0$, with $\hat{R}^2_M=\hat{R}^2_M(M_2)=(t+1)M_2\hat{r}^2+(t+2)M\sigma^2$,
\begin{align*}
\sup_{\theta\in\Theta_{\rm eb}(t)} \mathbb{P}_\theta
\big(\theta\notin B(\hat{\theta},\hat{R}_M)\big) 
&\le H_2 e^{-m_2M}, \\ 
\mathbb{P}_\theta\big(\hat{r}^2\ge M_3 r^2(\theta)+(M+1)\sigma^2\big) 
&\le H_3 e^{-m_3 M}.
\end{align*}
The size (second) relation  holds uniformly in $\theta\in\Theta$ without Condition \eqref{(A3)}. 

Moreover, if, instead of Condition \eqref{(A3)}, stronger Condition {\rm(A3')} is fulfilled, 
then a stronger version of the size relation holds:
$\mathbb{P}_\theta\big(\hat{r}^2\ge M'_0\rho(I_o)\sigma^2+(M+1)\sigma^2\big)
\le  C_\nu e^{-M/2}$, 
where the constants $M'_0$ and $C_\nu$ are from Theorem \ref{th2}.

\end{theorem}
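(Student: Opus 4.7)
The plan is to assemble the statement from the two main ingredients already delivered by Theorems \ref{th1} and \ref{th2}, with EBR acting as a deterministic bridge that renders the unobservable oracle rate $r^2(\theta)$ comparable to the data-driven radius $\hat r^2=\sigma^2+\sigma^2\rho(\hat I)$.

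For the coverage inequality, I start from Theorem \ref{th1}(ii), which supplies $\|\hat\theta-\theta\|^2\le M_1 r^2(\theta)+M\sigma^2$ with $\mathbb{P}_\theta$-probability at least $1-H_1e^{-m_1M}$. Under EBR, the definition \eqref{cond_ebr} together with the trivial bound $r^2(\theta)\le r^2(I_*,\theta)=\|\theta-\mathrm{P}_{I_*}\theta\|^2+\sigma^2\rho(I_*)$ gives the purely deterministic inequality $r^2(\theta)\le (t+1)\sigma^2(1+\rho(I_*))$. The only missing step is the conversion of the unobservable $\rho(I_*)$ into the observable $\rho(\hat I)$, and this is exactly what Theorem \ref{th2}(ii), invoking \eqref{(A3)}, delivers: on an event of probability at least $1-C_\nu e^{-m_1'M}$ one has $\delta\rho(I_*)\le\rho(\hat I)+M$, so $\sigma^2\rho(I_*)\le \delta^{-1}(\hat r^2+M\sigma^2)$. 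A union bound over these three events together with the bookkeeping of constants produces the claimed coverage radius $\hat R_M^2=(t+1)M_2\hat r^2+(t+2)M\sigma^2$.

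For the size relation in its weaker form (without \eqref{(A3)}), I would exploit the variational characterization of $\hat I$ in \eqref{I_max_ddm}. Comparing the penalized criterion at $\hat I$ and at the oracle $I_o$ gives
\[
2\varkappa\sigma^2\rho(\hat I)\le \|Y-\mathrm{P}_{I_o}Y\|^2-\|Y-\mathrm{P}_{\hat I}Y\|^2+2\varkappa\sigma^2\rho(I_o).
\]
Expanding $\|Y-\mathrm{P}_I Y\|^2=\|\mathrm{P}_I^\perp\theta\|^2+2\sigma\langle \mathrm{P}_I^\perp\theta,\xi\rangle+\sigma^2\|\mathrm{P}_I^\perp\xi\|^2$, the deterministic pieces aggregate into a multiple of $r^2(\theta)$, while the two stochastic pieces are controlled uniformly in $I\in\mathcal{I}$ by an exponential-Markov peeling weighted by the prior $e^{-\nu\rho(I)}$ from \eqref{(A2)} and powered by the moment generating estimate \eqref{cond_nonnormal}. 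This produces $\sigma^2\rho(\hat I)\le (M_3-1)r^2(\theta)+M\sigma^2$ with probability at least $1-H_3e^{-m_3M}$, i.e.\ the size bound for $\hat r^2$. The stronger form under (A3') is essentially immediate from Theorem \ref{th2}(iii) applied to the MS-DDM: since $\check\pi(I|Y)=1\{I=\hat I\}$ concentrates on $\hat I$, the bound $\mathbb{E}_\theta\check\pi(\{I:\rho(I)\ge M_0'\rho(I_o)+M\}|Y)\le C_\nu e^{-M/2}$ translates pointwise into $\mathbb{P}_\theta(\rho(\hat I)\ge M_0'\rho(I_o)+M)\le C_\nu e^{-M/2}$, and $\sigma^2\rho(I_o)\le r^2(\theta)$ closes the argument; in the MA case $\hat I$ is the mode of $\tilde\pi(I|Y)$ and hence a deterministic function of the data, so the same tail bound applies verbatim.

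The main obstacle I anticipate is the careful bookkeeping of constants in the coverage step. Three high-probability events must be composed, and the two deterministic conversions $r^2(\theta)\mapsto (t+1)\sigma^2(1+\rho(I_*))$ and $\sigma^2\rho(I_*)\mapsto\delta^{-1}(\hat r^2+M\sigma^2)$ have to combine so as to yield precisely $(t+1)M_2$ in front of $\hat r^2$ and the slightly amplified $(t+2)$ in front of $M\sigma^2$. The $(t+1)$ multiplier on $\hat r^2$ is automatic from EBR; the jump from $(t+1)$ to $(t+2)$ on $M\sigma^2$ absorbs the additive $M\sigma^2$ deficit produced by the Theorem \ref{th2}(ii) conversion. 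Once these factors are tracked, the remainder is essentially routine assembly on top of the two previous theorems.
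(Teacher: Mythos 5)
Your proposal is correct in its architecture, and the coverage argument is essentially the paper's: Theorem \ref{th1}(ii) for $\|\hat\theta-\theta\|^2$, the deterministic EBR bound $r^2(\theta)\le (b(\theta)+1)\sigma^2(1+\rho(I_*))$, and Theorem \ref{th2}(ii) to replace $\rho(I_*)$ by $\rho(\hat I)$, composed by a union bound. Two points of comparison. First, for the weak size relation the paper does not redo the penalized comparison at $\hat I$ versus $I_o$: it simply observes that $\{\hat r^2\ge c_3 r^2(\theta)+(M+1)\sigma^2\}=\{\hat I\in\mathcal{G}(M)\}$ with $\mathcal{G}(M)=\{I:\sigma^2\rho(I)\ge c_3 r^2(\theta)+M\sigma^2\}$, notes that $r^2(I,\theta)-c_3r^2(\theta)\ge\sigma^2\rho(I)-c_3r^2(\theta)\ge M\sigma^2$ on that set, and sums the already-proven bound $\mathbb{E}_\theta\hat p_I\le e^{-c_1\rho(I)-c_2M}$ of Lemma \ref{main_lemma} using \eqref{(A2)}. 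Your route — expanding $\|Y-\mathrm{P}_IY\|^2$ and peeling the noise terms uniformly over $\mathcal{I}$ — re-derives exactly the content of Lemmas \ref{secondary_lemma}--\ref{main_lemma}; it works (one must check that $2\varkappa$ dominates the coefficient $\tfrac{5(\nu+1)}{\alpha}$ produced by the uniform bound on $\sigma^2\|\mathrm{P}_{\hat I}\xi\|^2$, which \eqref{cond_technical} guarantees), but it is longer and buys nothing over reusing the lemma. Second, your constant bookkeeping in the coverage step is slightly off: applying Theorem \ref{th2}(ii) at level $M$ leaves a deficit of $(t+1)M_2M\sigma^2$, not $M\sigma^2$, so to land exactly on the radius $(t+1)M_2\hat r^2+(t+2)M\sigma^2$ one must apply it at level $M/M_2$ (i.e.\ work on the event $\rho(\hat I)\ge\delta\rho(I_*)-M/M_2$, at the harmless cost of replacing $m_1'$ by $m_1'/M_2$ in the exponent), which is what the paper does. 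You correctly flag this as the delicate spot, and the fix is mechanical, so I do not regard it as a gap. The (A3$'$) part is handled exactly as in the paper.
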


\begin{remark}
\label{rem_Theta_ebr}
Recall that $I_*$ from \eqref{cond_ebr} is actually the $\tau_0$-oracle. It may be
desirable to impose an EBR condition in terms of the ``standard'' oracle $I_o$ rather 
than the $\tau_0$-oracle. 
By rewriting  the original model \eqref{model} as 
$Y\tau_0^{-1/2}=\theta\tau_0^{-1/2} + \sigma\tau_0^{-1/2} \xi$,
it is not difficult to see that we can construct a confidence ball 
with the radius $\sqrt{\tau_0} \hat{R}_M$ satisfying the coverage property as above, but now uniformly 
over $\Theta_{\rm eb}(t,1)$. 
\end{remark}

\begin{remark}
\label{rem4}
When proving the coverage relation of Theorem \ref{th3}, 
we actually established the following uniform local assertion: 
there exist constants $M_2,m''_1,H_2, m_2>0$  such that for any $\theta\in\Theta$ 
and any $M\ge 0$,
\begin{align}
&\mathbb{P}_\theta
\big(\theta\notin B(\hat{\theta},[(b(\theta)+1)M_2\hat{r}^2+(b(\theta)+2)M\sigma^2]^{1/2}\big)\notag\\
& \qquad \qquad \le H_1 e^{-m_1M}+C_\nu e^{-m''_1M}
\le H_2 e^{-m_2M},
\label{th3_coverage}
\end{align}
where the constants $H_1, m_1$ are defined in Theorem \ref{th1}, $C_\nu$ is from Condition 
\eqref{(A2)}, and the quantity $b(\theta)$ (called \emph{excessive bias ratio}) is defined by
\begin{align}
\label{def_b}
b(\theta)=b(\theta,\tau_0)=\frac{\|\theta-\mathrm{P}_{I_*}\theta\|^2}
{\sigma^2+\sigma^2\rho(I_*)}.
\end{align}
Although the newly formulated coverage relation \eqref{th3_coverage} is now uniform 
over the entire space $\theta\in\Theta$, the main (and unavoidable) problem is its dependence on
$b(\theta)$. That is why we introduced the EBR condition which essentially 
provides control over the quantity $b(\theta)$: indeed, $\Theta_{\rm eb}(t)=\{\theta\in\Theta: b(\theta)\le t\}$. 
\end{remark}

\begin{remark}
Smaller the constant $\tau_0$ (involved in the definition of the EBR condition) is, 
the less restrictive the EBR condition is, the limiting case 
$\tau_0\downarrow 0$ corresponds basically to no condition. 
We treat a general situation, with only Condition \eqref{cond_nonnormal} assumed 
for $\xi$, so that we have a lower bound for $\tau_0$ in terms of $\alpha$ which is possibly too conservative 
for each specific distribution of $\xi$. 
However, even for any specific distribution of $\xi$, the value of the constant $\tau_0>0$ in the EBR condition 
is always bounded away from zero (further from zero for ``bad'' $\xi$'s).
\end{remark}
\begin{remark}
The EBR leads to the new  \emph{EBR-scale} $\{\Theta_{\rm eb}(t), t\ge 0\}$ 
which gives a slicing of the entire space $\Theta= \cup_{t\ge 0} \Theta_{\rm eb}(t)$.
This slicing is very suitable for uncertainty quantification and provides a new perspective at 
the deceptiveness issue (discussed in the Introduction): basically, each parameter 
$\theta$ is  deceptive (or non deceptive) to some extent. It is the parameter $t$ that measures 
the deceptiveness in $\Theta_{\rm eb}(t)$ and affects the size of the confidence ball needed to 
provide a guaranteed high coverage uniformly over $\Theta_{\rm eb}(t)$.
\end{remark}

\subsection{Confidence ball of $N^{1/4}$-radius without EBR}
\label{sec_without_EBR}


Suppose we want to construct a confidence ball of a full coverage uniformly 
over the whole space $\Theta$. Recall however that for ``signal+noise'' models, in view 
of the negative results of \cite{Li:1989, Cai&Low:2004, Baraud:2004, 
Nickl&vandeGeer:2013} mentioned in the Introduction, no data dependent ball can have 
uniform coverage and 
adaptive size simultaneously. When insisting on the uniform coverage, one must have 
an additional term of the order $\sigma N^{1/4}$ in the radial rate.
Let us give a heuristics behind this.
An idea is to mimic the quantity $\|\theta-\hat{\theta}\|^2$ by $\hat{R}^2=\|Y-\hat{\theta}\|^2$.
Clearly, there is a lot of bias in $\hat{R}^2$, the biggest part of which is due to the term 
$\sigma^2\|\xi\|^2$ contained in $\hat{R}$. To de-bias for that part, we need to subtract its expectation 
$\sigma^2\mathbb{E}\|\xi\|^2$.
However, even the de-biased version of $\hat{R}^2$ can only be controlled up to a margin 
of the order $\sigma^2 \sqrt{N}$. That is why a term of the order $\sigma N^{1/4}$ is necessary in the radius of the confidence ball to provide coverage uniformly over the whole space $\Theta$.


To handle some technical issues, we impose the following condition.\smallskip\\
{\sc Condition \eqref{cond_A4}.} Besides $Y$ given by \eqref{model}, we also observe 
$Y'= \theta+ \sigma \xi'$ independent of $Y$, where 
the random vector $\xi'$ satisfies the following relations:
\begin{equation}
\label{cond_A4}
\tag{A4}
\begin{aligned}
&\mathbb{P}\big(|\langle v, \xi' \rangle|\ge \sqrt{M}\big)\le\psi_1(M)\;\;
\forall\, v\in\mathbb{R}^N: \,  
\|v\| =1;\notag\\
&\mathbb{P}\big(\big|\|\xi'\|^2- V(Y',Y) 
\big| \ge M \sqrt{N}\big) \le \psi_2(M),\quad
\text{for some statistic} \;\; V(Y',Y).
\end{aligned}
\end{equation} 
Here $\psi_1(M), \psi_2(M)$ are some 
decreasing functions such that $\psi_1(M)\downarrow 0$ and $\psi_2(M)\downarrow 0$ 
as $M\uparrow \infty$.

\begin{remark}
\label{rem15}
Typically,  $\mathbb{E} \xi'_{i} =0$, $\mathrm{Var}(\xi'_{i})=1$, $i\in [N]$, then 
$V(Y',Y) =N$.
Condition \eqref{cond_A4} is satisfied for independent normals 
$\xi_i \overset{\rm ind}{\sim} \mathrm{N}(0,1)$ even if we do not have the sample $Y'$ at our 
disposal. Indeed, in this case we can ``duplicate'' the observations by randomization
at the cost of doubling the variance in the following manner: 
create samples $Y' = Y+\sigma Z$ and $Y''=Y-\sigma Z$, for a
$Z=(Z_1,\ldots,Z_N)$ (independent of $Y$) such that $Z_i\overset{\rm ind}{\sim} \mathrm{N}(0,1)$. 
Relations \eqref{cond_A4} are then fulfilled with exponential  functions
$\psi_l(M)=C_le^{-c_l M}$ for some $C_l,c_l>0$, $l=1,2$  and $V(Y',Y)=N$.
\end{remark}
\begin{remark}
The vector $Z$ is called 
\emph{sub-gaussian} with parameter $\rho >0$ if  
$\mathbb{P}( |\langle v, Z \rangle|> t) \le e^{-\rho t^2}$
for all $t\ge 0$ and $v\in\mathbb{R}^{N}$ such that $\|v\| =1$.
If the sub-gaussianity condition 
is fulfilled for $\xi'$, then the first relation in \eqref{cond_A4} holds with
$\psi_1(M) = e^{-\rho M}$.
By Chebyshev's inequality, we see that the second relation in \eqref{cond_A4} 
is fulfilled with function $\psi_2(M) = cM^{-2}$ and $V(Y',Y)=N$
for any zero mean independent  $\xi'_{i}$'s with $\mathbb{E} \xi_i'^2=1$ and 
$\mathbb{E}[\xi_{i}']^4\le C$. 
\end{remark}
\begin{remark}
\label{rem_bin_model}
For the \emph{biclustering model} (in particular, the \emph{stochastic block model}), 
given in Section \ref{sec_biclustering}, the case of binomial 
observations ($Y_{i}\overset{\rm ind}{\sim}\text{Bernoulli}(\theta_{i})$ and 
$Y'_{i}\overset{\rm ind}{\sim}\text{Bernoulli}(\theta_{i})$) is important in relation to 
\emph{network modeling}. By using Hoeffding's inequality, we see that Condition \eqref{cond_A4} 
holds with exponential  functions $\psi_l(M)=C_le^{-c_l M}$ for some $C_l,c_l>0$, $l=1,2$ 
and $V(Y',Y)=\sum_{i\in[n]} Y'_i-\sum_{i\in[n]} Y'_i Y_i$, (the function $V$ is by \cite{Ghosal:2019}).
\end{remark}

Coming back to the problem of constructing a confidence ball of full coverage 
uniformly over $\Theta$, let $\hat{\theta}$ and $\hat{I}$ be based on the sample $Y$ and 
defined as before. We propose to mimic $\|\theta-\hat{\theta}\|^2$ by the
de-biased quantity $\|Y'-\hat{\theta}\|^2- \sigma^2V(Y',Y)$ 
plus additional $\sigma^2 \sqrt{N}$-order term to control its oscillations, 
leading us to the following data dependent radius
\begin{align}
\label{radius2}
\tilde{R}^2_M=\big(\|Y'-\hat{\theta}\|^2\!-\! \sigma^2V(Y',Y)\!+\!2\sigma^2G_M\sqrt{N}
\big)_+,
\end{align}
where  $ G_M=\sqrt{M(M+M_1)}$, $x_+ =x\vee 0$ and the constant $M_1$ is from Theorem \ref{th1}.
The next theorem establishes the coverage and size properties of the confidence ball 
$B(\hat{\theta},\tilde{R}_M)$. 
\begin{theorem}
\label{th4}
Let Conditions \eqref{cond_nonnormal}, \eqref{(A2)} and \eqref{cond_A4} be fulfilled 
and $\tilde{R}^2_M$ be defined by \eqref{radius2}. Then  for any $M\ge 0$
\begin{align*}
\sup_{\theta\in\Theta} \mathbb{P}_\theta
\big(\theta\notin B(\hat{\theta},\tilde{R}_M)\big) 
&\le \psi_1(M/4)+\psi_2(M)+H_1e^{-m_1M},\\
\sup_{\theta\in\Theta}  \mathbb{P}_\theta\big(\tilde{R}_M^2\ge g_M(\theta,N)\big)
&\le\psi_1(M/4)+\psi_2(M)+2H_1e^{-m_1M},
\end{align*}
where $g_M(\theta,N)=M_1r^2(\theta)+M\sigma^2+4\sigma^2 G_M \sqrt{N}$ 
and the constants $H_1, m_1, M_1$ are defined in Theorem \ref{th1}.
\end{theorem}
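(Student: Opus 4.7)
The plan is to leverage the independence of $Y' = \theta + \sigma\xi'$ from $\hat{\theta}$ (which depends on $Y$ only) and to expand
\[
\|Y' - \hat{\theta}\|^2 = \|\theta - \hat{\theta}\|^2 + 2\sigma\langle \theta - \hat{\theta}, \xi'\rangle + \sigma^2\|\xi'\|^2.
\]
Combined with the inequality $(x)_+ \ge x$ and the definition \eqref{radius2}, this gives
\[
\tilde{R}_M^2 - \|\theta - \hat{\theta}\|^2 \ge 2\sigma\langle \theta - \hat{\theta}, \xi'\rangle + \sigma^2\bigl(\|\xi'\|^2 - V(Y', Y)\bigr) + 2\sigma^2 G_M\sqrt{N},
\]
with the reverse inequality holding (as an upper bound for $\tilde{R}_M^2$) whenever the positive part is active. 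Both the coverage and the size claim thus reduce to controlling the two deviations $2\sigma\langle \theta - \hat{\theta}, \xi'\rangle$ and $\sigma^2(\|\xi'\|^2 - V(Y', Y))$.

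I would then work on the good event $\mathcal{G}_M$ obtained as the intersection of three events of high probability: (i) the estimation bound $\|\theta - \hat{\theta}\|^2 \le M_1 r^2(\theta) + M\sigma^2$, of probability at least $1 - H_1 e^{-m_1 M}$ by Theorem \ref{th1}; (ii) the one-dimensional tail bound $|\langle v, \xi'\rangle|^2 \le M/4$ for the random unit direction $v = (\theta - \hat{\theta})/\|\theta - \hat{\theta}\|$, obtained by first conditioning on $Y$ (so that $v$ becomes deterministic while $\xi'$ retains its distribution) and then invoking Condition \eqref{cond_A4}, of probability at least $1 - \psi_1(M/4)$; (iii) the quadratic deviation $|\|\xi'\|^2 - V(Y', Y)| \le M\sqrt{N}$, of probability at least $1 - \psi_2(M)$ by Condition \eqref{cond_A4}.

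On $\mathcal{G}_M$, the two key inequalities follow from the single calculation
\[
\sigma\sqrt{M}\,\|\theta - \hat{\theta}\| \le \sigma\sqrt{M}\sqrt{M_1 r^2(\theta) + M\sigma^2} \le \sigma\sqrt{M}\sqrt{M_1 N\sigma^2 + M\sigma^2} = \sigma^2\sqrt{M^2 + M M_1 N} \le \sigma^2 G_M\sqrt{N},
\]
where I use the universal bound $r^2(\theta) \le N\sigma^2$ (attained by taking the full-dimensional space in the oracle minimum) and, in the last step, $N \ge 1$ together with $G_M^2 = M(M + M_1)$. Combined with $\sigma^2 M\sqrt{N} \le \sigma^2 G_M\sqrt{N}$ (since $G_M \ge M$), this shows that the added $2\sigma^2 G_M\sqrt{N}$ term in $\tilde{R}_M^2$ precisely absorbs the total deviation, yielding simultaneously $\tilde{R}_M^2 \ge \|\theta - \hat{\theta}\|^2$ (coverage) and $\tilde{R}_M^2 \le M_1 r^2(\theta) + M\sigma^2 + 4\sigma^2 G_M\sqrt{N} = g_M(\theta, N)$ (size) on $\mathcal{G}_M$. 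A union bound over the complements of the three events then delivers the stated probability estimates.

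The hard part is the careful handling of the cross term $2\sigma\langle \theta - \hat{\theta}, \xi'\rangle$: the direction $v$ is random and data-dependent, so one must condition on $Y$ to invoke the one-dimensional tail bound of Condition \eqref{cond_A4}, which is where independence of the auxiliary sample $Y'$ from $Y$ is essential. Equally important, the seemingly strange choice $G_M = \sqrt{M(M + M_1)}$ is not arbitrary but calibrated so that the worst-case estimation bound $\|\theta - \hat{\theta}\|^2 \le (M_1 N + M)\sigma^2$ is absorbed into precisely $\sigma^2 G_M\sqrt{N}$; this is what produces the unavoidable extra $\sigma N^{1/4}$-order term in the radial rate, matching the negative results of \cite{Li:1989, Baraud:2004}.
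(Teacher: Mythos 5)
Your proposal is correct and follows essentially the same route as the paper: the same expansion of $\|Y'-\hat\theta\|^2$, the same conditioning on $Y$ to handle the data-dependent direction in the cross term via the first relation of Condition \eqref{cond_A4}, and the same use of $r^2(\theta)\le N\sigma^2$ to calibrate $G_M$ so that the cross term is absorbed by $\sigma^2 G_M\sqrt{N}$. The only difference is organizational — you intersect the three high-probability events into a single good event rather than splitting sequentially as the paper does — which incidentally yields the slightly better constant $H_1$ in place of $2H_1$ in the size relation.
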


By taking large enough $M$, 
we can ensure the coverage and size relations uniformly over the entire space $\Theta$. 
Thus, the results of Theorem \ref{th4} are to be interpreted as the coverage and size relations 
in the optimality framework \eqref{defconfball} with $\Theta_0=\Theta_1=\Theta$ 
and the effective radial rate $R(\theta)=\sqrt{g_M(\theta,N)}\asymp r(\theta)+\sigma N^{1/4}$ 
(for now disregarding the constants and the inflating factor $M$ as we consider only the order 
of the radial rate). 
Since both sets $\Theta_0=\Theta_1=\Theta=\mathbb{R}^N$ 
are the biggest possible, the deceptiveness phenomenon manifests itself only in the effective 
radial rate $R(\theta)$,
which can be of a bigger order than the oracle rate $r(\theta)$ for 
$\theta \in \tilde{\Theta}$, where (for some $c>0$)
\begin{align}
\label{tilde_Theta}
\tilde\Theta=\tilde\Theta(c)=\{\theta\in\Theta:  r^2(\theta)\le c\sigma^2 N^{1/2}\}.
\end{align}
Equivalently, this can be seen as the optimality framework \eqref{defconfball} with 
$\Theta_0=\Theta=\mathbb{R}^N$, $\Theta_1 =\Theta\backslash \tilde{\Theta}=
\mathbb{R}^N\backslash \tilde{\Theta}$ and 
the effective radial rate  $R(\theta)\asymp r(\theta)+\sigma N^{1/4}\lesssim Cr(\theta)$
is of the oracle rate order for $\theta\in\Theta_1$. Now the deceptiveness phenomenon 
manifests itself in the fact that $\Theta_1 =\Theta\backslash \tilde{\Theta}$, not the 
whole $\Theta$.

In fact, the massiveness of the set $\tilde\Theta$ measures how much the deceptiveness 
phenomenon is present in particular models and structures.
Loosely speaking, models and structures, 
where ``good'' estimation ($r^2(\theta)\lesssim\sigma^2 N^{1/2}$) is possible for ``many'' $\theta$'s 
($\tilde\Theta$ is massive), suffer more from the deceptiveness phenomenon. For example, 
these are all models with smoothness and sparsity structures from Part II.
the set $\tilde{\Theta}$ is a substantial part of $\Theta=\mathbb{R}^N$ in those cases.  
On the other hand, the deceptiveness phenomenon becomes effectively marginal for some 
``uninformative'' particular models and structures, e.g., biclustering model (stochastic block model), 
dictionary learning 
(see Part II), 
because in these cases the set $\tilde{\Theta}$ is a very ``thin'' subset of $\mathbb{R}^N$ and 
can informally  be described as  set of \emph{highly structured} parameters. 
In these cases the extra term $\sigma N^{1/4}$ in the radial rate $R(\theta)$ 
does not increase its order as $\sigma N^{1/4} \lesssim r(\theta)$ for the ``majority'' of $\theta$'s: 
$\theta\in\Theta_1=\Theta\backslash \tilde{\Theta}$. 
This means that, modulo the set $\tilde{\Theta}$ of highly structured parameters, 
there is no deceptiveness issue  for those cases. 
Indeed, there is no payment in terms of removing deceptive parameters from the parameter space 
$\Theta$ in the coverage relation and the size relation holds uniformly over 
$\Theta_1=\Theta\backslash \tilde{\Theta}$ which is ``almost'' the whole space $\Theta$.
 


\section{Proofs} 
\label{sec_proofs}
In this section we gather all the proofs.

\subsection{Technical lemmas}
\label{proofs_lemmas}
First we provide a couple of technical lemmas used in the proofs of the main results. 
Recall that  $\hat{\pi}(I|Y)$ is either $\tilde{\pi}(I|Y)$ defined by \eqref{ddm_P(I|Y)} or 
$\check{\pi}( I|Y)=\mathrm{1}\{I=\hat{I}\}$ defined by \eqref{ms_ddm_I}.
In the latter case $\mathbb{E}_{\theta}\hat{\pi}(I|Y)=\mathbb{P}_{\theta}(\hat{I}=I)$.
In what follows, denote $\hat{p}_I=\hat{\pi}(I| Y)$ for brevity.

\begin{lemma}
\label{secondary_lemma}  
Let Condition \eqref{cond_nonnormal} be fulfilled.
Then for any $\theta\in\Theta$ and $I,I_0\in \mathcal{I}$ 
\begin{align*}
\mathbb{E}_{\theta}&\hat{p}_I 
\le \big(\tfrac{\lambda_{I}}{\lambda_{I_0}}\big)^{h}
\exp\big\{\!-\!\tfrac{1}{\sigma^2}(A_h\|\mathrm{P}_{I}^\perp\theta\|^2\!-
\!B_h\|\mathrm{P}_{I_0}^\perp\theta\|^2)+C_h\rho(I)\!+\!D_h\rho(I_0)\big\},
\end{align*}
where $h =\tfrac{\alpha}{4}$ and the constants
$A_h=\frac{\alpha}{16}$, $B_h=\frac{3\alpha}{16}$  and $C_h=\frac{5}{8}$, 
$D_h= \frac{3}{8}$.

If $\mathbb{L}_{I}\subseteq \mathbb{L}_{I_0}$,  then
\begin{align*}
\mathbb{E}_{\theta}&\hat{p}_I\le \big(\tfrac{\lambda_{I}}{\lambda_{I_0}}\big)^{\alpha} 
\exp\big\{-\tfrac{\alpha}{3}\sigma^{-2}\big(\|\mathrm{P}_{I}^\perp\theta\|^2
-\|\mathrm{P}_{I_0}^\perp\theta\|^2\big)+\rho(I_0)\big\}.
\end{align*}
	
If $\mathbb{L}_{I_0}\subseteq \mathbb{L}_{I}$,  then
\begin{align*}
\mathbb{E}_{\theta}&\hat{p}_I \le \big(\tfrac{\lambda_{I}}{\lambda_{I_0}}\big)^{\alpha}
\exp\big\{\alpha\sigma^{-2}\big(\|\mathrm{P}_{I}\theta\|^2
-\|\mathrm{P}_{I_0}\theta\|^2\big)+\rho(I)\big\}.
\end{align*}
\end{lemma}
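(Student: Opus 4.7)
The plan is to first establish a pointwise bound for $\hat p_I$ that works uniformly for both the MA-DDM \eqref{ddm_mixture} and the MS-DDM \eqref{ms_ddm}, and then to take expectations using Condition \eqref{cond_nonnormal}. For the mixture case, bounding the normalizer in \eqref{ddm_P(I|Y)} below by its single $I_0$-term gives $\tilde\pi(I|Y)\le(\lambda_I/\lambda_{I_0})\exp\{-\tfrac{1}{2\sigma^2}(\|\mathrm{P}_I^\perp Y\|^2-\|\mathrm{P}_{I_0}^\perp Y\|^2)\}$; since $\tilde\pi(I|Y)\le1$, that ratio raised to any $h\in(0,1]$ still dominates $\tilde\pi(I|Y)$. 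For the selection case, $\hat p_I=\mathbf{1}\{\hat I=I\}$, and \eqref{I_max_ddm} forces the same ratio to be $\ge1$ on $\{\hat I=I\}$, so raising it to any positive $h$ still dominates the indicator. Both choices therefore share
\begin{equation*}
\hat p_I\le(\lambda_I/\lambda_{I_0})^h\exp\Big\{-\tfrac{h}{2\sigma^2}(\|\mathrm{P}_I^\perp Y\|^2-\|\mathrm{P}_{I_0}^\perp Y\|^2)\Big\},\qquad h\in(0,1].
\end{equation*}

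Expanding $\|\mathrm{P}_J^\perp Y\|^2$ via $Y=\theta+\sigma\xi$ and using the identity $\|\mathrm{P}_I^\perp\xi\|^2-\|\mathrm{P}_{I_0}^\perp\xi\|^2=\|\mathrm{P}_{I_0}\xi\|^2-\|\mathrm{P}_I\xi\|^2$ makes the uncontrolled $\|\xi\|^2$ cancel, while the orthogonality $\mathrm{P}_I\theta-\mathrm{P}_{I_0}\theta=\mathrm{P}_{I_0}^\perp\theta-\mathrm{P}_I^\perp\theta$ lets the cross term be written with projections living inside $\mathbb{L}_I$ and $\mathbb{L}_{I_0}$:
\begin{align*}
-\tfrac{h}{2\sigma^2}(\|\mathrm{P}_I^\perp Y\|^2-\|\mathrm{P}_{I_0}^\perp Y\|^2)
&=-\tfrac{h}{2\sigma^2}(\|\mathrm{P}_I^\perp\theta\|^2-\|\mathrm{P}_{I_0}^\perp\theta\|^2) \\
&\quad+\tfrac{h}{\sigma}(\langle\mathrm{P}_I\theta,\mathrm{P}_I\xi\rangle-\langle\mathrm{P}_{I_0}\theta,\mathrm{P}_{I_0}\xi\rangle) \\
&\quad+\tfrac{h}{2}(\|\mathrm{P}_I\xi\|^2-\|\mathrm{P}_{I_0}\xi\|^2).
\end{align*}
All stochastic pieces are now controlled through $\mathbb{E}_\theta e^{\beta\|\mathrm{P}_J\xi\|^2}\le e^{\beta\rho(J)/\alpha}$, which is Jensen applied to \eqref{cond_nonnormal} for $\beta\le\alpha$.

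For the general bound I would take $h=\alpha/4$, drop the non-positive $-\tfrac{h}{2}\|\mathrm{P}_{I_0}\xi\|^2$, and treat the two inner products by AM-GM of the form $|\langle v,w\rangle|\le\tfrac{\tau}{2}\|v\|^2+\tfrac{1}{2\tau}\|w\|^2$, with the parameters tuned so that the deterministic coefficients of $\|\mathrm{P}_I^\perp\theta\|^2/\sigma^2$ and $\|\mathrm{P}_{I_0}^\perp\theta\|^2/\sigma^2$ collapse to exactly $-A_h$ and $+B_h$. Taking expectation and applying H\"older's inequality then factors the stochastic contribution into $\mathbb{E}_\theta e^{\beta_I\|\mathrm{P}_I\xi\|^2}$ and $\mathbb{E}_\theta e^{\beta_0\|\mathrm{P}_{I_0}\xi\|^2}$ with $\beta_I,\beta_0\le\alpha$, and the product assembles into the advertised $C_h\rho(I)+D_h\rho(I_0)$.

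The two nested cases collapse to a single difference projection. Under $\mathbb{L}_I\subseteq\mathbb{L}_{I_0}$, set $\mathrm{P}_\Delta:=\mathrm{P}_{I_0}-\mathrm{P}_I$, which is the orthogonal projection onto $\mathbb{L}_{I_0}\cap\mathbb{L}_I^\perp\subseteq\mathbb{L}_{I_0}$; then $\|\mathrm{P}_I^\perp Y\|^2-\|\mathrm{P}_{I_0}^\perp Y\|^2=\|\mathrm{P}_\Delta Y\|^2$ and $\|\mathrm{P}_\Delta\theta\|^2=\|\mathrm{P}_I^\perp\theta\|^2-\|\mathrm{P}_{I_0}^\perp\theta\|^2$. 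Taking $h=\alpha$ in the common bound and using Young's inequality $\|a+b\|^2\ge\tfrac{2}{3}\|a\|^2-2\|b\|^2$ on $\mathrm{P}_\Delta Y=\mathrm{P}_\Delta\theta+\sigma\mathrm{P}_\Delta\xi$ yields $-\tfrac{\alpha}{2\sigma^2}\|\mathrm{P}_\Delta Y\|^2\le-\tfrac{\alpha}{3\sigma^2}\|\mathrm{P}_\Delta\theta\|^2+\alpha\|\mathrm{P}_\Delta\xi\|^2$; the inclusion $\mathbb{L}_\Delta\subseteq\mathbb{L}_{I_0}$ and \eqref{cond_nonnormal} deliver $\mathbb{E}_\theta e^{\alpha\|\mathrm{P}_\Delta\xi\|^2}\le e^{\rho(I_0)}$, matching the claimed bound. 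The reverse inclusion is symmetric with $\mathrm{P}_\Delta=\mathrm{P}_I-\mathrm{P}_{I_0}$, the upper-side Young $\|a+b\|^2\le2\|a\|^2+2\|b\|^2$, and \eqref{cond_nonnormal} applied to $I$. The main obstacle is the constant bookkeeping in the general case: the AM-GM parameters and H\"older exponents must be tuned so that the deterministic side lands on the exact $-A_h,+B_h$ while each quadratic-in-$\xi$ coefficient stays in the admissible $\alpha$-window required for \eqref{cond_nonnormal} to deliver precisely $C_h=5/8$ and $D_h=3/8$.
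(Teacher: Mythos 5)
Your overall architecture matches the paper's: the ratio bound $\hat p_I\le(\lambda_I/\lambda_{I_0})^h\exp\{-\tfrac{h}{2\sigma^2}(\|\mathrm{P}_I^\perp Y\|^2-\|\mathrm{P}_{I_0}^\perp Y\|^2)\}$ (valid for both DDMs, with the restriction $h\in(0,1]$ needed for the mixture case), the cancellation of the unprojected $\|\xi\|^2$, the H\"older step combined with Condition \eqref{cond_nonnormal}, and both nested cases are handled exactly as in the paper, with the correct difference projections and the Young-type inequalities $(a+b)^2\ge \tfrac{2}{3}a^2-2b^2$ and $(a+b)^2\le 2a^2+2b^2$.

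There is, however, a genuine gap in the general case: you propose to bound the cross term by applying AM--GM to the two inner products $\langle\mathrm{P}_I\theta,\mathrm{P}_I\xi\rangle$ and $\langle\mathrm{P}_{I_0}\theta,\mathrm{P}_{I_0}\xi\rangle$ separately. Any such bound produces deterministic terms proportional to $\|\mathrm{P}_I\theta\|^2$ and $\|\mathrm{P}_{I_0}\theta\|^2$, and these cannot ``collapse'' to multiples of $\|\mathrm{P}_I^\perp\theta\|^2$ and $\|\mathrm{P}_{I_0}^\perp\theta\|^2$: take $\theta\in\mathbb{L}_I\cap\mathbb{L}_{I_0}$ with $\|\theta\|$ arbitrarily large, so both perpendicular projections vanish while $\|\mathrm{P}_I\theta\|=\|\theta\|$ explodes (the two inner products then cancel each other, but your term-by-term bound does not see this cancellation). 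This is not constant bookkeeping; the cross term must be kept as the single difference $\langle(\mathrm{P}_I-\mathrm{P}_{I_0})\theta,\xi\rangle=\langle(\mathrm{P}_{I_0}^\perp-\mathrm{P}_I^\perp)\theta,\xi\rangle$. You then also need the identity $\mathrm{P}_I-\mathrm{P}_{I_0}=(\mathrm{P}_I-\mathrm{P}_{I_0})\mathrm{P}_{\mathbb{L}_I+\mathbb{L}_{I_0}}$ to project the $\xi$-factor before Cauchy--Schwarz, since an unprojected $\|\xi\|^2$ is not controlled by \eqref{cond_nonnormal}; after that, $\|(\mathrm{P}_{I_0}^\perp-\mathrm{P}_I^\perp)\theta\|^2\le 2\|\mathrm{P}_I^\perp\theta\|^2+2\|\mathrm{P}_{I_0}^\perp\theta\|^2$ and $\|\mathrm{P}_{\mathbb{L}_I+\mathbb{L}_{I_0}}\xi\|^2\le\|\mathrm{P}_I\xi\|^2+\|\mathrm{P}_{I_0}\xi\|^2$, together with $2ab\le a^2/4+4b^2$, give exactly the structure the paper uses to land on $A_h=\alpha/16$, $B_h=3\alpha/16$, $C_h=5/8$, $D_h=3/8$ with $h=\alpha/4$.
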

\begin{proof}Recall that $\mathrm{P}_{I}$ is the projection onto $\mathbb{L}_{I}$.
Since $\mathrm{P}_{I}-\mathrm{P}_{I_0}=\mathrm{P}_{I_0}^\perp-\mathrm{P}_{I}^\perp$, the bound
\begin{align}
\label{equation_1}
&Y^T(\mathrm{P}_I-\mathrm{P}_{I_0})Y=
\theta^{T}(\mathrm{P}_{I}-\mathrm{P}_{I_0})\theta+2\theta^{T}(\mathrm{P}_{I}-\mathrm{P}_{I_0})\sigma\xi
+\sigma^2\xi^T(\mathrm{P}_{I}-\mathrm{P}_{I_0})\xi\notag\\
&\le  
-\|\mathrm{P}_{I}^\perp\theta\|^2+\|\mathrm{P}_{I_0}^\perp\theta\|^2
+2|\sigma\theta^{T}(\mathrm{P}_{I}-\mathrm{P}_{I_0})\xi|+\sigma^2\|\mathrm{P}_{I}\xi\|^2
-\sigma^2\|\mathrm{P}_{I_0}\xi\|^2
\end{align}
holds for any $I, I_0\in\mathcal{I}$.
Using the relations 
$\mathrm{P}_{I}-\mathrm{P}_{I_0}=(\mathrm{P}_{I}-\mathrm{P}_{I_0})
\mathrm{P}_{\mathbb{L}_{I}+\mathbb{L}_{I_0}}$, $\|\mathrm{P}_{\mathbb{L}_{I}+\mathbb{L}_{I_0}}x\|^2 \le 
\|\mathrm{P}_{I}x\|^2+\|\mathrm{P}_{I_0}x\|^2$, $x\in\mathcal{Y}$,
and the  inequality $2ab\le a^2/4+4b^2$ (for any $a,b\in\mathbb{R}$), we derive
\begin{align*}
2|\theta^{T}&(\mathrm{P}_{I}-\mathrm{P}_{I_0})\sigma\xi|
=2|\theta^{T}(\mathrm{P}_{I}-\mathrm{P}_{I_0})
\mathrm{P}_{\mathbb{L}_{I}+\mathbb{L}_{I_0}}\sigma\xi |\\
&\le 
2\|\theta^{T}\big(\mathrm{P}_{I}-\mathrm{P}_{I_0})\|
\|\sigma\mathrm{P}_{\mathbb{L}_{I}+\mathbb{L}_{I_0}}\xi\|\le \tfrac{1}{4} \|(\mathrm{P}_{I}-\mathrm{P}_{I_0})\theta\|^2+
4\sigma^2\|\mathrm{P}_{\mathbb{L}_{I}+\mathbb{L}_{I_0}}\xi\|^2\\
&=\tfrac{1}{4} \|(\mathrm{P}_{I_0}^\perp-\mathrm{P}_{I}^\perp)\theta\|^2+
4\sigma^2\|\mathrm{P}_{\mathbb{L}_{I}+\mathbb{L}_{I_0}}\xi\|^2\\
&\le \tfrac{1}{2}\|\mathrm{P}_{I}^\perp\theta\|^2
+\tfrac{1}{2}\|\mathrm{P}_{I_0}^\perp\theta\|^2
+4\sigma^2\|\mathrm{P}_{I}\xi\|^2+4\sigma^2\|\mathrm{P}_{I_0}\xi\|^2.
\end{align*}

The last bound and \eqref{equation_1}   imply that
\begin{align}
\label{equation_2}
Y^T(\mathrm{P}_I-\mathrm{P}_{I_0})Y\le -\tfrac{1}{2}\|\mathrm{P}_{I}^\perp\theta\|^2+
\tfrac{3}{2}\|\mathrm{P}_{I_0}^\perp\theta\|^2+5\sigma^2\|\mathrm{P}_{I}\xi\|^2+3\sigma^2\|\mathrm{P}_{I_0}\xi\|^2.
	\end{align}

In case $\hat{p}_I=\hat{\pi}(I|Y)=\check{\pi}(I|Y)=\mathrm{1}\{\hat{I}=I\}$, \eqref{ddm_P(I|Y)}, 
the definition \eqref{I_max_ddm} of $\hat{I}$ and the Markov inequality imply that, for any $I,I_0\in\mathcal{I}$
and any $h\ge 0$,
\begin{align}
\label{lemm2_rel2}
\mathbb{E}_{\theta}\hat{p}_I 
=\mathbb{P}_{\theta}(\hat{I}= I)\le\mathbb{P}_{\theta}\Big(\frac{\tilde{\pi}(I|Y)}
{\tilde{\pi}(I_0|Y)}\ge 1\Big)\le 
\mathbb{E}_{\theta}\Big[\frac{\tilde{\pi}(I|Y)}{\tilde{\pi}(I_0|Y)}\Big]^h.
\end{align}
In case $\hat{p}_I=\tilde{\pi}(I|Y)$, \eqref{ddm_P(I|Y)} 
implies $\mathbb{E}_{\theta}\hat{p}_I \le \mathbb{E}_{\theta}
\Big[\frac{\tilde{\pi}(I|Y)}{\tilde{\pi}(I_0|Y)}\Big]^h$
for any $I,I_0\in\mathcal{I}$, $h\in[0,1]$, which again establishes 
\eqref{lemm2_rel2}, now for any $h\in[0,1]$.

Combining \eqref{equation_2} and \eqref{lemm2_rel2}, we derive 
for  any $I,I_0\in \mathcal{I}$ and any $h\in[0,1]$,
\begin{align}
\mathbb{E}_{\theta}\hat{p}_I &\le 
\mathbb{E}_{\theta}\bigg[\frac{\lambda_{I}\exp\{-\tfrac{1}{2\sigma^2}
\|Y-\mathrm{P}_{I}Y\|^2\}}
{\lambda_{I_0}\exp\{-\tfrac{1}{2\sigma^2}
\|Y-\mathrm{P}_{I_0}Y\|^2\}}\bigg]^h 
=\big(\tfrac{\lambda_{I}}{\lambda_{I_0}}\big)^{h}
\,\mathbb{E}_{\theta}\exp\big\{\tfrac{h}{2\sigma^2} \big(Y^T(\mathrm{P}_{I}-\mathrm{P}_{I_0})Y
 \big)\big\} \notag\\
\label{formula2}
&\le\big(\tfrac{\lambda_{I}}{\lambda_{I_0}}\big)^{h}
\exp\big\{-\tfrac{h}{4\sigma^2} \|\mathrm{P}_{I}^\perp\theta\|^2+\tfrac{3h}{4\sigma^2}
\|\mathrm{P}_{I_0}^\perp\theta\|^2\big\} 
\mathbb{E}_\theta\exp\big\{\tfrac{h}{2}(5\|\mathrm{P}_{I}\xi\|^2+3\|\mathrm{P}_{I_0}\xi\|^2)\big\}. 
\end{align} 

The lemma follows for $h=\frac{\alpha}{4}$ from the last display and the relation
\begin{align*}
\mathbb{E}_\theta&\exp\big\{\tfrac{5\alpha}{8}\|\mathrm{P}_{I}\xi\|^2
+\tfrac{3\alpha}{8}\|\mathrm{P}_{I_0}\xi\|^2\big\} 
\le\big[\mathbb{E}_\theta e^{\alpha \|\mathrm{P}_{I}\xi\|^2}\big]^{\frac{5}{8}}
\big[\mathbb{E}_\theta e^{\alpha\|\mathrm{P}_{I_0}\xi\|^2}\big]^{\frac{3}{8}}\\
&\le \exp\big\{\tfrac{5}{8}\rho(I)+\tfrac{3}{8}\rho(I_0)\big\},
\end{align*}
which is in turn obtained by using the H\"older inequality and Condition \eqref{cond_nonnormal}.

In case $\mathbb{L}_{I}\subseteq \mathbb{L}_{I_0}$, 
take $h=\alpha$ in \eqref{formula2} and, 
instead of \eqref{equation_2} use $Y^T(\mathrm{P}_{I}-\mathrm{P}_{I_0})Y=
-\|\mathrm{P}_{\mathbb{L}_{I}^\perp\cap\mathbb{L}_{I_0}} Y\|^2 
\le -\frac{2}{3} \|\mathrm{P}_{\mathbb{L}_{I}^\perp\cap\mathbb{L}_{I_0}} \theta\|^2
+2\sigma^2 \|\mathrm{P}_{\mathbb{L}_{I}^\perp\cap\mathbb{L}_{I_0}}\xi\|^2
\le \frac{2}{3} \big(\|\mathrm{P}_{I}\theta\|^2-\|\mathrm{P}_{I_0}\theta\|^2\big)+
2\sigma^2 \|\mathrm{P}_{I_0}\xi\|^2=-\tfrac{2}{3}\|\mathrm{P}_{I}^\perp\theta\|^2
+\tfrac{2}{3}\|\mathrm{P}_{I_0}^\perp\theta\|^2
+2\sigma^2 \|\mathrm{P}_{I_0}\xi\|^2$ as $(a+b)^2\ge 2a^2/3-2b^2$ and 
$\mathrm{P}_{I_0}-\mathrm{P}_{I}=\mathrm{P}_{\mathbb{L}_{I}^\perp\cap\mathbb{L}_{I_0}}$. 
	
In case $\mathbb{L}_{I_0}\subseteq \mathbb{L}_{I}$, 
take $h=\alpha$ in \eqref{formula2} and, 
instead of \eqref{equation_2} use $Y^T(\mathrm{P}_{I}-\mathrm{P}_{I_0})Y=
\|\mathrm{P}_{\mathbb{L}_{I}\cap\mathbb{L}_{I_0}^\perp} Y\|^2 
\le 2 \|\mathrm{P}_{\mathbb{L}_{I}\cap\mathbb{L}_{I_0}^\perp} \theta\|^2
+2\sigma^2 \|\mathrm{P}_{\mathbb{L}_{I}\cap\mathbb{L}_{I_0}^\perp}\xi\|^2
\le 2 \big(\|\mathrm{P}_{I}\theta\|^2-\|\mathrm{P}_{I_0}\theta\|^2\big)+
2\sigma^2 \|\mathrm{P}_{I}\xi\|^2=-2\|\mathrm{P}_{I}^\perp\theta\|^2
+2\|\mathrm{P}_{I_0}^\perp\theta\|^2
+2\sigma^2 \|\mathrm{P}_{I}\xi\|^2$ as $(a+b)^2\le 2a^2+2b^2$ and 
$\mathrm{P}_{I}-\mathrm{P}_{I_0}=\mathrm{P}_{\mathbb{L}_{I}\cap\mathbb{L}_{I_0}^\perp}$. 
\end{proof}

Note that above lemma holds for any  $I_0\in\mathcal{I}$. 
By taking $I_0=I_o$ defined by \eqref{oracle}, we derive the next lemma.
\begin{lemma}
\label{main_lemma}
Let Condition \eqref{cond_nonnormal}  be fulfilled.
Then there exist positive constants $c_1=c_1(\varkappa)>2\nu, c_2$ and $c_3=c_3(\varkappa)$ 
such that for any $\theta\in\Theta$
\begin{align*}
\mathbb{E}_{\theta}\hat{p}_I
\le\exp\big\{-c_1\rho(I)-c_2\sigma^{-2} \big[r^2(I,\theta)-c_3r^2(\theta)\big]\big\}.
\end{align*}
\end{lemma}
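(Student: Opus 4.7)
The plan is to apply Lemma~\ref{secondary_lemma} with the oracle choice $I_0 = I_o$ and then rearrange the exponent to match the target form. Since $\lambda_I = e^{-\varkappa \rho(I)}$, the prefactor $(\lambda_I/\lambda_{I_0})^{h}$ becomes $\exp\{-h\varkappa\rho(I) + h\varkappa \rho(I_o)\}$. Taking the first (most general) bound of Lemma~\ref{secondary_lemma} with $h=\alpha/4$, $A_h=\alpha/16$, $B_h=3\alpha/16$, $C_h=5/8$, $D_h=3/8$, I get
\[
\mathbb{E}_{\theta}\hat{p}_I \le \exp\Bigl\{-(h\varkappa - C_h)\rho(I) + (h\varkappa + D_h)\rho(I_o) - A_h\sigma^{-2}\|\mathrm{P}_I^\perp\theta\|^2 + B_h\sigma^{-2}\|\mathrm{P}_{I_o}^\perp\theta\|^2\Bigr\}.
\]

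Next I will rewrite the target bound using $r^2(I,\theta) = \|\mathrm{P}_I^\perp\theta\|^2 + \sigma^2\rho(I)$ and $r^2(\theta) = \|\mathrm{P}_{I_o}^\perp\theta\|^2 + \sigma^2\rho(I_o)$, so that the desired exponent splits as
\[
-(c_1+c_2)\rho(I) + c_2c_3\rho(I_o) - c_2\sigma^{-2}\|\mathrm{P}_I^\perp\theta\|^2 + c_2c_3\sigma^{-2}\|\mathrm{P}_{I_o}^\perp\theta\|^2.
\]
Matching coefficients term-by-term suggests taking $c_2 = A_h = \alpha/16$, $c_1 = h\varkappa - C_h - c_2 = \alpha\varkappa/4 - 5/8 - \alpha/16$, and choosing $c_3$ large enough that $c_2c_3$ dominates both $B_h$ and $h\varkappa + D_h$. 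The binding requirement is $c_2c_3 \ge h\varkappa + D_h$, which gives $c_3 = 4\varkappa + 6/\alpha$; this is automatically $\ge 3 = B_h/c_2$, so both constraints hold and the bound is valid as an upper estimate.

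Finally, I need to check that $c_1 > 2\nu$, which is the only nontrivial claim. This is exactly where the technical assumption \eqref{cond_technical} is used: from $\varkappa > \bar{\varkappa} = (32\nu + 10 + \alpha)/(4\alpha)$ one computes $\alpha\varkappa/4 > 2\nu + 5/8 + \alpha/16$, hence $c_1 = \alpha\varkappa/4 - 5/8 - \alpha/16 > 2\nu$. No obstacle here is serious: the proof is essentially bookkeeping of exponents from Lemma~\ref{secondary_lemma} with $I_0=I_o$, with the one delicate point being the algebraic verification that the margin built into the definition of $\bar{\varkappa}$ exactly matches what is needed to push $c_1$ above $2\nu$ after absorbing the additive constants $C_h$ and $A_h$ coming from Lemma~\ref{secondary_lemma}.
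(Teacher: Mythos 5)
Your proposal is correct and follows essentially the same route as the paper: apply Lemma \ref{secondary_lemma} with $I_0=I_o$, absorb the prior ratio $(\lambda_I/\lambda_{I_o})^{h}$ into the exponent, and match coefficients to get $c_2=A_h=\alpha/16$, $c_1=h\varkappa-C_h-A_h$, and $c_3=A_h^{-1}\max\{B_h,\,D_h+h\varkappa\}=4\varkappa+6/\alpha$, with \eqref{cond_technical} giving $c_1>2\nu$. The constants and the final verification agree with the paper's proof.
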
 

\begin{proof}
With constants $h$, $A_h,B_h,C_h,D_h$ defined in Lemma \ref{secondary_lemma}, define 
the constant $c_1=c_1(\varkappa)=h\varkappa-C_{h}-A_{h} =\frac{\varkappa\alpha}{4}-\frac{5}{8}
-\frac{\alpha}{16}>2\nu$ as $\varkappa>\bar{\varkappa}$ by \eqref{cond_technical}.  
The definition \eqref{ddm_P(I|Y)} of  $\lambda_{I}$ entails that
\begin{align*}
\big(\lambda_I/\lambda_{I_o}\big)^{h}
=\exp\big\{h\varkappa\rho(I_o)-\big(c_1+C_h+A_h\big)\rho(I)\big\}.
\end{align*}
Combining the last relation with Lemma \ref{secondary_lemma} 
(for $I_0= I_o$), we derive that
\begin{align*}
\mathbb{E}_{\theta}\hat{p}_I&\le
\exp\big\{-c_1\rho(I)-\sigma^{-2}\big[A_hr^2(I,\theta)
-\max\{B_h,D_h+h\varkappa\} r^2(\theta)\big]\big\}\\
&=
\exp\big\{-c_1\rho(I)-
c_2\sigma^{-2} \big[r^2(I,\theta)-c_3r^2(\theta)\big]\big\},
\end{align*}
which completes the proof with the constants $c_1=c_1(\varkappa)
=\frac{\varkappa\alpha}{4}-\frac{5}{8}-\frac{\alpha}{16}$, 
$c_2=A_h=\frac{\alpha}{16}$ and $c_3=c_3(\varkappa)=A_h^{-1}\max\big\{B_h,D_h+h\varkappa\big\}=
\tfrac{16}{\alpha}\max\big\{\tfrac{3\alpha}{16},\tfrac{3}{8}+\tfrac{\alpha\varkappa}{4}\big\}
=\max\big\{3,\tfrac{6}{\alpha}+4\varkappa\big\}=\tfrac{6}{\alpha}+4\varkappa$
because $\varkappa>\bar{\varkappa}\ge1$ by \eqref{cond_technical}.  
\end{proof}

\subsection{Proofs of the theorems}
\label{subsec_proofs}
\label{proofs_theorems}
Here we give the proofs of all the theorems. By $C_1, C_2$ etc., we denote constants 
which are different in different proofs.
\begin{proof}[Proof of Theorem \ref{th1}]
Recall the constants $c_1, c_2, c_3$ defined in the proof of Lemma \ref{main_lemma}
and the notation  $\hat{p}_I=\hat{\pi}(I|Y)$. 
For any $\theta\in\Theta$, $M\ge 0$ and some constant $M_0$ to be chosen later, denote
$\Delta_{M}=\Delta_{M}(\theta)=M_0r^2(\theta)+M\sigma^2$.  
Next, introduce the set  $\mathcal{O}_M=\mathcal{O}_M(\theta)
=\{ I \in \mathcal{I}: \, r^2(I,\theta)\le c_3 r^2(\theta)+C_1M\sigma^2\}$
and  the events  $A_M(I)=\big\{\alpha\|\mathrm{P}_I\xi\|^2\le (\nu+1)\rho(I)
+C_2M\big\}$, $I\in\mathcal{I}$, where constants $C_1,C_2>0$ are to be chosen later. We have
\begin{align}
&\hat{\pi}(\|\vartheta-\theta\|^2 \ge \Delta_{M}| Y)=
\sum_{I\in\mathcal{I}}
\hat{\pi}_{I}(\|\vartheta-\theta\|^2 \ge \Delta_{M}| Y)\hat{p}_I\notag\\
&\le \sum_{I\in\mathcal{I}}\hat{p}_I 1_{A^c_M(I)} 
+ \sum_{I\in \mathcal{O}^c_M}\hat{p}_I 
+ \sum_{I\in\mathcal{O}_M}\hat{\pi}_{I}\big(\|\vartheta-\theta\|^2\ge \Delta_{M}| Y\big)
\hat{p}_I 1_{A_M(I)}\notag\\
&=T_1+T_2+T_3.
\label{th1_rel1}
\end{align}
Now we need to bound the quantities $\mathbb{E}_{\theta}T_1$, $\mathbb{E}_{\theta}T_2$ 
and $\mathbb{E}_{\theta}T_3$.
	
By using  the Markov inequality and Condition \eqref{cond_nonnormal}, we have
\begin{align*}
\mathbb{P}_\theta(A^c_M(I)) &=\mathbb{P}_\theta\big(e^{\alpha \|\mathrm{P}_I\xi\|^2}
>e^{(\nu+1)\rho(I)+C_2M }\big) \le e^{-\nu\rho(I)-C_2M}.
\end{align*}
The last relation and Condition \eqref{(A2)} yield the bound for $\mathbb{E}_{\theta}T_1$:   
\begin{align}
\label{th1_T1}
\mathbb{E}_{\theta}T_1
&\le 
\sum_{I\in\mathcal{I}} \mathbb{P}_\theta\big(A^c_M(I)\big)
\le \sum_{I\in\mathcal{I}} \exp\{-\!\nu\rho(I)-C_2M\}\le C_\nu e^{-C_2M}.
\end{align}

If $I\in\mathcal{O}^c_M$, then $r^2(I,\theta)>c_3 r^2(\theta)+C_1M\sigma^2$.
Using this, Lemma \ref{main_lemma} and the fact that $\sum_{I\in\mathcal{I}}
e^{-c_1\rho(I)}\le C_\nu$ (in view of Condition \eqref{(A2)} and because $c_1>2\nu$),  
we  bound $\mathbb{E}_{\theta} T_2$ as follows:
\begin{align}
\label{th1_T2}
\mathbb{E}_{\theta} T_2
&=\sum_{I\in \mathcal{O}^c_M}\mathbb{E}_{\theta}\hat{p}_I 
\le \sum_{I\in\mathcal{O}^c_M}  
\exp\big\{-c_1\rho(I)-c_2\sigma^{-2}\big[r^2(I,\theta)-c_3r^2(\theta)\big]\big\}\notag\\ &\le\sum_{I\in\mathcal{I}}
\exp\big\{-c_1\rho(I)-c_2C_1M\big\}\le C_\nu\exp\big\{-c_2C_1M\big\}.
\end{align}
	
It remains to establish the last bound for $\mathbb{E}_{\theta} T_3$. 
For $I \in \mathcal{O}_M$, we have that
\begin{align*}
A_M(I)\subseteq\Big\{\!\|\theta\!-\!\mathrm{P}_{I}\theta\|^2\!+\!\sigma^{2}\!\|\mathrm{P}_{I}\xi\|^2
\le c_3 r^2(\theta)+\tfrac{\nu+1}{\alpha}\sigma^{2}\rho(I)\!+\!(C_1\!+\!\tfrac{C_2}{\alpha})M\sigma^{2}\Big\}.
\end{align*}
Recall  the definition \eqref{ddm_I} of the DDM $\tilde{\pi}_{I}(\vartheta \in B|Y)$, 
which is expressed in terms of the measure $\mathbb{P}_Z$. 
The measure $\mathbb{P}_Z$ satisfies Condition \eqref{cond_nonnormal}, 
which implies $\mathbb{P}_Z\big(\|\mathrm{P}_IZ\|^2 \ge \alpha^{-1}d_I+M\big)\le e^{-\alpha M}$,  
$I\in\mathcal{I}$. Using this, the last display  and the fact that 
$\frac{r^2(\theta)}{\sigma^{2}}\ge c_3^{-1}(\rho(I)-C_1M)$ for 
$I\in\mathcal{O}_M$, 
we obtain that, for any $I\in\mathcal{O}_M$,
\begin{align*}
\hat{\pi}_{I}&\big(\|\vartheta-\theta\|^2\ge\Delta_M| Y\big)1_{A_M(I)}\\&
=\mathbb{P}_Z(\|\mathrm{P}_{I}Y+(1-e^{-1})^{1/2}\sigma\mathrm{P}_{I}Z-\theta\|^2 
\ge \Delta_{M} )1_{A_M(I)} \\
&\le\mathbb{P}_Z\big(2\sigma^{2}\|\mathrm{P}_{I}Z\|^2+2\|\mathrm{P}_IY-\theta\|^2
\ge M_0r^2(\theta)+M\sigma^2\big)1_{A_M(I)} \\
& =
\mathbb{P}_Z\big(\sigma^{2}\|\mathrm{P}_{I}Z\|^2+\|\theta-\mathrm{P}_{I}\theta\|^2
+\sigma^{2}\|\mathrm{P}_{I}\xi\|^2\ge\tfrac{M_0}{2}r^2(\theta)+\tfrac{M\sigma^2}{2}\big)
1_{A_M(I)}\\
& \le
\mathbb{P}_Z\Big(\|\mathrm{P}_{I}Z\|^2 \ge (\tfrac{M_0}{2}-c_3)\tfrac{r^2(\theta)}{\sigma^{2}}
-\tfrac{\nu+1}{\alpha}\rho(I)+\tfrac{M}{2}-(C_1+\tfrac{C_2}{\alpha})M\Big)\\
&\le
\mathbb{P}_Z\Big(\|\mathrm{P}_IZ\|^2\ge 
\big(\tfrac{M_0}{2c_3}-\tfrac{\nu+\alpha+1}{\alpha}\big)\rho(I) +\tfrac{M}{2}-\tfrac{C_2}{\alpha}M
-\tfrac{M_0C_1}{2c_3}M\Big)\\
&= \mathbb{P}_Z\Big(\|\mathrm{P}_IZ\|^2 \ge\alpha^{-1}\rho(I)+\tfrac{M}{4}\Big)\le e^{-\alpha M/4},
\end{align*}
where we have chosen $M_0=\tfrac{c_3(2\nu +2\alpha+4)}{\alpha}$, 
$C_1=\tfrac{\alpha}{4(2\nu +2\alpha+4)}$ and $C_2=\tfrac{\alpha}{8}$
(so that $\tfrac{M_0}{2c_3}-\tfrac{\nu+\alpha+1}{\alpha}=\tfrac{1}{\alpha}$, $\tfrac{C_2}{\alpha}=\tfrac{1}{8}$, 
$\tfrac{M_0C_1}{2c_3} =\tfrac{1}{8}$).
Thus we have derived
\begin{align*}
\mathbb{E}_{\theta} T_3 &= 
\mathbb{E}_{\theta} \sum_{I\in\mathcal{O}_M}
1_{A_M(I)}\hat{\pi}_{I}\big(\|\vartheta-\theta\|^2 
\ge \Delta_{M}| Y\big) \hat{p}_I 
\le\mathbb{E}_{\theta}\sum_{I\in\mathcal{I}}
e^{-\tfrac{\alpha M}{4}}\hat{p}_I\le e^{-\alpha M/4}.
\end{align*}
This completes the proof of the first assertion since, in view of  \eqref{th1_rel1}, 
\eqref{th1_T1}, \eqref{th1_T2} and the last display,  we established the claim 
\eqref{th1_i}: $\mathbb{E}_{\theta}\hat{\pi}\big(\|\vartheta-\theta\|^2\ge M_0
r^2(\theta)+M\sigma^2 | Y\big) \le \mathbb{E}_{\theta}(T_1+T_2+T_3) 
\le  H_0e^{-m_0 M}$, with the constants $M_0=\tfrac{c_3(2\nu +2\alpha+4)}{\alpha}  $, 
$H_0=1+2C_\nu$ and $m_0= \min\{C_2,c_2C_1, \alpha/4\}$.
\medskip	
	
The proof of the assertion \eqref{th1_ii} proceeds along similar lines.
Introduce the set  $\mathcal{J}_M=\mathcal{J}_M(\theta)=\{ I \in \mathcal{I}: \, r^2(I,\theta)\le 
2c_3 r^2(\theta)+C_3M\sigma^2\}$ and the events $B_M(I)=
\big\{\alpha\|\mathrm{P}_I\xi\|^2\le 2(\nu+1)\rho(I)+C_4M\big\}$,
$I\in\mathcal{I}$, where constants $C_3,C_4>0$ are to be chosen later.
	
	
If $M\in[0,1]$, the claim (ii) holds for $H_1=e^{m_1}$. Let  $M\ge1$.	Denote for brevity 
$R^2_{I} =R^2_{I}(\theta,Y)=\|\theta-\mathrm{P}_{I}Y\|^2
=\|\theta-\mathrm{P}_{I}\theta\|^2+\sigma^2\|\mathrm{P}_{I}\xi\|^2$, 
$\Delta'_{M}=\Delta'_M(\theta)=M_1r^2(\theta)+M\sigma^2$ and $\hat{p}_{I}=  \hat{\pi}(I|Y)$,
where $M_1>0$ is to be chosen later. Applying the Cauchy-Schwarz inequality,
we have
\begin{align}
&\mathbb{P}_\theta(\|\hat{\theta}-\theta\|^2 \ge  \Delta'_{M}) 
\le 
\mathbb{P}_\theta\Big(\sum_{I \in \mathcal{I}} R^2_{I}  \hat{p}_{I} \ge \Delta'_{M}\Big) \notag\\
&\le \mathbb{P}_\theta\Big(\sum_{I\in\mathcal{J}_M}R^2_{I}\hat{p}_{I} (1_{B_M(I)}+1_{B^c_M(I)})
+\sum_{I\in\mathcal{J}_M^c}R^2_{I} \hat{p}_I \ge \Delta'_{M}\Big)  \notag\\
&\le 
\mathbb{P}_\theta\Big(\sum_{I\in\mathcal{J}_M}R^2_{I}\hat{p}_I 1_{B_M(I)}
\ge \tfrac{\Delta'_{M}}{3}\Big)+\mathbb{P}_\theta\Big(\sum_{I\in\mathcal{J}_M}
R^2_{I}\hat{p}_I 1_{B^c_M(I)}\ge \tfrac{\Delta'_{M}}{3}\Big)  \notag\\
\label{relation11}
& \quad 
+\mathbb{P}_\theta\Big(\sum_{I\in\mathcal{J}_M^c}R^2_{I} 
\hat{p}_I \ge \tfrac{\Delta'_{M}}{3}\Big) =\bar{T}_1+\bar{T}_2+\bar{T}_3.
\end{align}

Let us evaluate $\bar{T}_1$. 
For any $I\in \mathcal{J}_M$,
under $B_M(I)$, 
we have that $R^2_{I}=\|\theta-\mathrm{P}_{I}\theta\|^2+\sigma^2\|\mathrm{P}_{I}\xi\|^2
\le \|\theta-\mathrm{P}_{I}\theta\|^2+ \tfrac{2(\nu+1)}{\alpha} \sigma^2\rho(I)+\tfrac{C_4}{\alpha}M\sigma^2
\le  \tfrac{2(\nu+1)}{\alpha} r^2(I,\theta)+\tfrac{C_4}{\alpha}M\sigma^2 
\le\tfrac{4c_3(\nu+1)}{\alpha}r^2(\theta)+\tfrac{2C_3(\nu+1)+C_4}{\alpha}M\sigma^2$. 
Using this, we derive  
\begin{align}
\bar{T}_1&=\mathbb{P}_\theta\Big(\sum_{I\in\mathcal{J}_M}
R^2_{I} \hat{p}_I1_{B_M(I)}\ge\tfrac{\Delta'_{M}}{3}\Big)\notag\\
\label{relation12}
&\le
\mathbb{P}_\theta\Big(
\tfrac{4c_3(\nu+1)}{\alpha}r^2(\theta)+\tfrac{2C_3(\nu+1)+C_4}{\alpha}M\sigma^2
\ge\tfrac{\Delta'_{M}}{3}\Big)=0,
\end{align}
as $\frac{4c_3(\nu+1)}{\alpha}=\tfrac{M_1}{3}$ and $\tfrac{2C_3(\nu+1)+C_4}{\alpha}<\tfrac{1}{3}$
because we choose $M_1=\frac{12c_3(\nu+1)}{\alpha}$, 
$C_3=\tfrac{\alpha}{12(\nu+1)}$ and $C_4=\tfrac{\alpha}{7}$.

Next, we evaluate $\bar{T}_2$. By Condition \eqref{cond_nonnormal} and the Markov inequality,
\begin{align*}
\mathbb{P}_\theta(B^c_M(I)) &= 
\mathbb{P}_\theta\big(\alpha\|\mathrm{P}_I \xi\|^2>(2\nu+2)\rho(I)+C_4M\big)
\le e^{-(2\nu+1)\rho(I)-C_4M}.
\end{align*}
It follows from  \eqref{moment_bound} with $t=\tfrac{1}{2}$ that 
$\big[\mathbb{E}_\theta\|\mathrm{P}_{I}\xi\|^4\big]^{1/2}
\le \frac{2}{\alpha}\exp\{\rho(I)/2\}$ for any  $I\in  \mathcal{I}$. 
By Condition \eqref{(A2)},
$\sum_{I\in\mathcal{I}} 
\exp\{-\nu\rho(I)\} \le  C_\nu$.
Besides, for any $I\in\mathcal{J}_M$, 
$\|\theta-\mathrm{P}_{I}\theta\|^2/\Delta'_{M} \le 
(2c_3r^2(\theta) +C_3M\sigma^2)/(M_1r^2(\theta) + M\sigma^2) \le \tfrac{2c_3}{M_1}+C_3$ and  
$\Delta'_{M} \ge M\sigma^2 \ge \sigma^2$ (as $M\ge1$).
Collecting all the derived relations for evaluating $T_2$ and using the Markov and  
Cauchy-Schwarz inequalities, we obtain
\begin{align}
\bar{T}_2&= \mathbb{P}_\theta\Big(\!\sum\nolimits_{I\in\mathcal{J}_M}
R^2_{I}\hat{p}_I1_{B^c_M(I)}\ge \Delta'_{M}/3\Big)\notag\\
&\le
\frac{\mathbb{E}_\theta \sum_{I\in\mathcal{J}_M} 
\big(\|\theta-\mathrm{P}_{I}\theta\|^2+\sigma^2\|\mathrm{P}_{I}\xi\|^2\big)\hat{p}_I1_{B^c_M(I)}}
{\Delta'_{M}/3}
\notag\\
&\le 
\frac{\sum_{I\in\mathcal{J}_M}\!\! \|\theta-\mathrm{P}_{I}\theta\|^2\mathbb{P}_\theta(B^c_M(I))}
{\Delta'_{M}/3}+\frac{\sigma^2\sum_{I\in\mathcal{J}_M} \!\!
\big[\mathbb{E}_\theta \|\mathrm{P}_{I}\xi\|^4\big]^{1/2} 
\big[\mathbb{P}_\theta(B^c_M(I))\big]^{1/2}}{\Delta'_{M}/3} \notag \\
&\le 3\big(\tfrac{2c_3}{M_1} +C_3\big)  e^{-C_4 M}
\sum_{I\in\mathcal{J}_M} e^{-(2\nu+1)\rho(I)}
+ \tfrac{6}{\alpha}e^{-C_4 M/2}
\sum_{I\in\mathcal{J}_M} e^{-\nu\rho(I)}
\notag\\
&\le 3C_\nu\big(\tfrac{2c_3}{M_1} +C_3\big) e^{-C_4 M}
+ \tfrac{6C_\nu}{\alpha}e^{-C_4 M/2}.
\label{bound_T2}
\end{align}

It remains to bound $T_3$.
Applying first the Markov inequality and then the Cauchy-Schwarz inequality, we have
\begin{align}
\label{relation13}
\bar{T}_3 &=\mathbb{P}_\theta\Big(\sum\nolimits_{I\in\mathcal{J}_M^c}R^2_{I} \hat{p}_I
\ge \Delta'_{M}/3\Big)\le \frac{\sum_{I\in\mathcal{J}_M^c} 
\|\theta-\mathrm{P}_{I}\theta\|^2 \mathbb{E}_{\theta}\hat{p}_I}{\Delta'_{M}/3} 
\notag\\
&\quad + \frac{\sigma^2\sum_{I\in\mathcal{J}_M^c} 
\big(\mathbb{E}_{\theta}\|\mathrm{P}_{I}\xi\|^4\big)^{1/2}
\big[\mathbb{E}_{\theta}\hat{p}_I\big]^{1/2}}{\Delta'_{M}/3}=\bar{T}_{31}+\bar{T}_{32}.
\end{align}
For each $I \in \mathcal{J}^c_M$, we have $c_3 r^2(\theta) 
\le \tfrac{1}{2}r^2(I,\theta)-\tfrac{C_3}{2}M\sigma^2$, yielding the bound 
\begin{align*}
\frac{c_2}{2\sigma^2}\big(r^2(I,\theta)-c_3 r^2(\theta)\big) &\ge
\frac{c_2}{4\sigma^2}r^2(I,\theta)+\frac{c_2C_3}{4}M.
\end{align*}
The last relation and Lemma \ref{main_lemma} entail that, for each $I \in \mathcal{J}^c_M$,
\begin{align}
\label{th1_rel12}
\big[\mathbb{E}_{\theta}\hat{p}_I \big]^{1/2}\le  
\exp\Big\{-\frac{c_1}{2}\rho(I)-\frac{c_2}{4\sigma^2} r^2(I,\theta)-\frac{c_2C_3}{4}M\Big\}.
\end{align}

Since $M\ge1$, $\Delta'_{M} \ge M\sigma^2 \ge \sigma^2$. 
Using this, the relation \eqref{th1_rel12}, the facts that 
$\max_{x\ge 0}\{xe^{-cx} \}\le (ce)^{-1}$ (for any $c>0$) and $\sum_{I\in\mathcal{I}} 
e^{-c_1\rho(I)}\le C_\nu$ (in view of Condition \eqref{(A2)} as $c_1>2\nu$), 
we bound the term $T_{31}$ as follows:
\begin{align}
\bar{T}_{31}&=
\frac{\sum_{I\in \mathcal{J}^c_M} 
\|\theta-\mathrm{P}_{I}\theta\|^2\mathbb{E}_{\theta}\hat{p}_I}{\Delta'_M/3}
\notag\\
&\le 
3\sum_{I\in\mathcal{J}^c_M}\! \frac{r^2(I,\theta)}{\sigma^2}   
\exp\big\{-\!c_1\rho(I)-\tfrac{c_2}{2\sigma^2}r^2(I,\theta_0)
-\tfrac{c_2C_3}{2}M\big\}\notag\\
&\le \frac{6C_\nu}{c_2e} e^{-c_2C_3M/2}.
\label{relation14}
\end{align}

Using \eqref{moment_bound} with $t_0=\min\{1/2,c_2/4\}$,
we have that $\big[\mathbb{E}_{\theta}\|\mathrm{P}_{I}\xi\|^4\big]^{1/2}
\le \frac{1}{\alpha t_0}\exp\{\tfrac{c_2}{4}\rho(I)\}$.  
Besides, $\Delta'_{M}\ge \sigma^2$, $r^2(I,\theta)\ge\sigma^2\rho(I)$
and, as $c_1>2\nu$, $\sum_{I\in\mathcal{I}} 
e ^{-c_1\rho(I)/2}\le C_\nu$. Piecing all these together with \eqref{th1_rel12}, we obtain
\begin{align*}
\bar{T}_{32} &=\frac{\sigma^2 \sum_{I\in\mathcal{J}_M^c} 
\big(\mathbb{E}_{\theta}\|\mathrm{P}_{I}\xi\|^4\big)^{1/2}
\big[\mathbb{E}_{\theta}\hat{p}_I\big]^{1/2}}{\Delta'_{M}/3} \\
&\le \frac{3e^{-c_2C_3M/4}}{\alpha t_0}
\sum_{I\in\mathcal{J}^c_M}
\exp\big\{-\tfrac{c_1}{2}\rho(I)+
\tfrac{c_2}{4} \rho(I)
-\tfrac{c_2}{4\sigma^2}r^2(I,\theta)\big\}\notag\\ 
&\le 
\frac{3e^{-c_2C_3M/4}}{\alpha t_0}
\sum_{I\in\mathcal{I}}\exp\big\{-\tfrac{c_1}{2}\rho(I)\big\}
\le
\frac{3C_\nu}{\alpha t_0}\ e^{-c_2C_3M/4}.
\end{align*}

Combining  \eqref{relation11}, \eqref{relation12}, \eqref{bound_T2}, \eqref{relation13},  
\eqref{relation14} and the last  relation finishes the proof of claim \eqref{th1_ii} 
with the constants $M_1=\frac{12c_3(\nu+1)}{\alpha}$, 
$H_1=\max\{C_\nu\big[3(\frac{2c_3}{M_1} +C_3) 
+ \frac{6}{\alpha}+\frac{6}{c_2e}+\frac{3}{\alpha \min\{1/2,c_2/4\}}\big], e^{m_1}\}$ 
and $m_1= \min\{\frac{C_4 }{2},\frac{c_2C_3}{4}\}$.
\end{proof}

\begin{proof}[Proof of Theorem \ref{th2}]  
First  we prove (i). 
Denote $\mathcal{G}_1=\mathcal{G}_1(\theta,M)=\{I \in \mathcal{I}: \, r^2(I,\theta) \ge c_3 r^2(\theta) + M \sigma^2\}$, 
where the constants $c_1>2\nu$, $c_2$, $c_3$ 
are defined in Lemma \ref{main_lemma}. Applying Lemma \ref{main_lemma} 
and Condition \eqref{(A2)}, 
we obtain
\begin{align*}
&\mathbb{E}_{\theta}\hat{\pi}\big(I \in\mathcal{G}_1 \big| Y\big)=
\sum_{I\in\mathcal{G}_1} \mathbb{E}_{\theta}\hat{\pi}(I|Y)\le
e^{-c_2M} \sum_{I\in\mathcal{I}} e^{-c_1\rho(I)}\le C_\nu e^{-c_2 M},
\end{align*}
which completes the proof of (i).

Now  we prove (ii). By Condition \eqref{(A3)}, for any $I,I_1\in\mathcal{I}$ there exists  $I'=I'(I,I_1)\in\mathcal{I}$ 
such that $(\mathbb{L}_{I}\cup\mathbb{L}_{I_1}) 
\subseteq \mathbb{L}_{I'}$. 
Fix $I_1\in\mathcal{I}$ and define $\mathcal{G}_2(M,I_1)=\{I\in\mathcal{I}: 
\theta^{T}[\mathrm{P}_{I'}-\mathrm{P}_{I}]\theta\ge\bar{\tau}\rho(I')\sigma^2+M\sigma^2\}$, 
where $\bar{\tau}$ is defined by \eqref{def_bar_tau}.

As $\mathbb{L}_{I}\subseteq \mathbb{L}_{I'}$, by using \eqref{ddm_P(I|Y)}, \eqref{def_bar_tau}
and applying Lemma \ref{secondary_lemma} 
with $h=\alpha$ and $I_0=I'$,  we obtain that, for each $I\in \mathcal{G}_2(M,I_1)$,
\begin{align*}
\mathbb{E}_{\theta}\hat{p}_I &\le 
\big(\tfrac{\lambda_{I}}
{\lambda_{I'}}\big)^{\alpha} 
\exp\big\{-\tfrac{\alpha}{3\sigma^{2}}\big[\theta^{T}(\mathrm{P}_{I'}-\mathrm{P}_{I})\theta\big]
+\rho(I')\big\}\\
&=
\exp\big\{-\varkappa\alpha \rho(I)-\tfrac{\alpha}{3\sigma^{2}}
\big[\theta^{T}(\mathrm{P}_{I'}-\mathrm{P}_{I})\theta\big]
+(1+\varkappa\alpha)\rho(I')\big\}\\
&\le
\exp\big\{-\varkappa\alpha \rho(I)-\big[\tfrac{\alpha\bar{\tau}}{3}
-(1+\varkappa\alpha)\big] \rho(I')
-\tfrac{\alpha}{3}M\big\}\\
&=e^{-\varkappa\alpha \rho(I)-\tfrac{\alpha}{3}M}.
\end{align*}

Since $\varkappa\ge\alpha^{-1}\nu$, by Condition \eqref{(A2)} we have that 
$\sum_{ I\in\mathcal{I}}e^{-\varkappa\alpha \rho(I)}\le C_\nu$. 
This relation and the last display imply that, with $m_0'=\alpha/3$,
\begin{align}
\label{th2_iia}
\mathbb{E}_{\theta} \hat{\pi}\big(I\in \mathcal{G}_2(M,I_1)|Y\big)
=\sum_{I\in \mathcal{G}_2(M,I_1)} \mathbb{E}_{\theta}\hat{p}_I\le 
C_\nu\exp\big\{-m_0'M\big\}.
\end{align}

Now take $I_1=I_*$ defined by \eqref{I_*}.
By  Condition \eqref{(A3)}  there exists  $I'(I, I_*)\in\mathcal{I}$ 
such that $(\mathbb{L}_{I}\cup\mathbb{L}_{I_*})\subseteq \mathbb{L}_{I'}$ and 
$\rho(I')\le \rho(I)+\rho(I_*)$. If $\rho(I)\le \delta\rho(I_*) - M$, then 
$\rho(I')\le \rho(I)+\rho(I_*)\le (1+\delta) \rho(I_*) -M$. Hence, 
$\rho(I_*) \ge \tfrac{1}{1+\delta}\rho(I')+\tfrac{M}{1+\delta}$ and 
$\mathrm{P}_{I'}\ge \mathrm{P}_{I_*}$, which, together with 
the definition of the $\tau$-oracle, imply
\begin{align*}
\theta^{T}&[\mathrm{P}_{I'}-\mathrm{P}_{I}]\theta \ge\theta^{T}[\mathrm{P}_{I_*}
-\mathrm{P}_{I}]\theta\ge\tau_0\sigma^{2}[\rho(I_*)-\rho(I)]\notag\\
&\ge\tau_0\sigma^{2}(1-\delta)\rho(I_*)+\tau_0 M\sigma^{2}\ge
\tfrac{1-\delta}{1+\delta} \tau_0\sigma^2 \rho(I')+\tau_0M \sigma^2 
\ge \bar{\tau} \sigma^2 \rho(I')
+\tau_0M \sigma^2 ,
\end{align*}
as $\tfrac{1-\delta}{1+\delta} \tau_0> \bar{\tau}$ by the definition \eqref{I_*} 
of $I_*$. It follows that $\{I\in\mathcal{I}: \rho(I)\le \delta\rho(I_*) - M\} 
\subseteq \mathcal{G}_2(\tau_0 M, I_*)$.
Thus, we obtain 
\begin{align*}
\mathbb{E}_{\theta}\hat{\pi}\big(I\in\mathcal{I}: \rho(I)\le 
\delta\rho(I_*) - M \big|Y\big)
\le 
\mathbb{E}_{\theta}\hat{\pi}\big(\mathcal{G}_2(\tau_0 M, I_*)|Y\big).
\end{align*}
The last relation and \eqref{th2_iia} 
imply claim (ii) with
$m'_1=\tau_0 m'_0=\tau_0\alpha/3$.

Finally, we prove (iii). Condition (A3') implies that $ \mathbb{L}_{I'} =\mathbb{L}_{I_o}+\mathbb{L}_I
=\mathbb{L}_{I_o} \oplus(\mathbb{L}_I \cap\mathbb{L}^\perp_{I_o})$.  
If the  inequality $\sigma^2\rho(I)<\|\mathrm{P}_{\mathbb{L}_I 
\cap\mathbb{L}^\perp_{I_o}}\theta\|^2$ would hold, then  
\begin{align*}
r^2(I',\theta) 
&= \|\theta-\mathrm{P}_{I'}\theta\|^2+\sigma^2 \rho(I')\\
& \le\|\theta-(\mathrm{P}_{I_o}
+\mathrm{P}_{\mathbb{L}_I \cap\mathbb{L}^\perp_{I_o}})\theta\|^2+\sigma^2( \rho(I_o)+\rho(I)\\
&< 
\|\mathrm{P}_{\mathbb{L}_I \cap\mathbb{L}^\perp_{I_o}}\theta\|^2+
\|\theta-(\mathrm{P}_{I_o}+\mathrm{P}_{\mathbb{L}_I \cap\mathbb{L}^\perp_{I_o}})\theta\|^2+\sigma^2 \rho(I_o)\\
&= \|\theta-\mathrm{P}_{I_o}\theta\|^2+\sigma^2 \rho(I_o)=r^2(\theta),
\end{align*}
which contradicts the definition of the oracle. Hence, 
$\|\mathrm{P}_{\mathbb{L}_I \cap\mathbb{L}^\perp_{I_o}}\theta\|^2\le \sigma^2\rho(I)$.

Take $I_0\in\mathcal{I}$ such that $\mathbb{L}_{I_0}=\mathbb{L}_I \cap\mathbb{L}_{I_o}$.
Using $\varkappa\ge \frac{2\nu+2\alpha+3}{2\alpha}$, the fact that
 $\theta^T(\mathrm{P}_{\mathbb{L}_{I}}-\mathrm{P}_{I_0})\theta
=\|\mathrm{P}_{\mathbb{L}_I\cap\mathbb{L}_{I_o}^\perp}\theta\|^2\le \sigma^2\rho(I)$
and Lemma \ref{secondary_lemma} 
(in case $\mathbb{L}_{I_0}\subseteq \mathbb{L}_{I}$) with $h=\alpha$, 
we obtain for each $I\in\mathcal{G}_0=\{I\in\mathcal{I}: \rho(I) \ge 
M'_0 \rho(I_0)+M\}$ with $M'_0=2\varkappa\alpha$,
\begin{align*}
\mathbb{E}_\theta&\hat{p}_I 
\le\big(\tfrac{\lambda_I}{\lambda_{I_0}}\big)^\alpha
\exp\big\{\alpha\sigma^{-2}\theta^T(\mathrm{P}_{\mathbb{L}_{I}}
-\mathrm{P}_{\mathbb{L}_{I_0}})\theta+\rho(I)\big\}\\
&\le
\exp\big\{-(\varkappa\alpha-\alpha-1)\rho(I)+\alpha \varkappa\rho(I_0)\big\}\\
&\le
\exp\big\{-(\nu+\tfrac{1}{2})\rho(I)+\alpha \varkappa\rho(I_0)\big\}\\
&\le 
\exp\big\{-\nu\rho(I)-(\tfrac{M'_0}{2}-\varkappa\alpha)
\rho(I_0)-\tfrac{M}{2}\big\}
= 
e^{-\nu\rho(I)-M/2}.
\end{align*}

Combining the last display with Condition \eqref{(A2)} completes the proof:
\begin{align*} 
\mathbb{E}_{\theta}\hat{\pi}\big(I \in \mathcal{I}: \rho(I)\ge  
M'_0 \rho(I_o)+M \big| Y\big)&\le 
\mathbb{E}_\theta\hat{\pi}(I \in\mathcal{G}_0| Y)\\ \vspace{-\baselineskip} 
&=
\sum_{I \in\mathcal{G}_0}\mathbb{E}_\theta\hat{p}_I\le  
C_\nu e^{-M/2}. \qedhere
\end{align*} 
\end{proof}

\begin{proof}[Proof of Theorem \ref{th3}]  
We first establish the coverage property. The constants $M_1$, $H_1$ and $m_1$ 
are defined in Theorem \ref{th1}. Take $M_2=\tfrac{M_1}{\delta}$ where $\delta\in(0,1)$ is from 
\eqref{I_*}. 
From  \eqref{oracle}, it follows that 
$r^2(\theta)\le r^2(I_*,\theta)=(b(\theta)+1)\sigma^2 \rho(I_*)+b(\theta)\sigma^2\le(b(\theta)+1)
\sigma^2(\rho(I_*)+1)$,
where $b(\theta)$ is given by \eqref{def_b}.
Combining this with the claim \eqref{th1_ii} from Theorem \ref{th1}, the claim (ii) 
from Theorem \ref{th2} 
and the definition \eqref{check_r} of $\hat{r}$ yields the coverage property:
\begin{align*}
&\mathbb{P}_\theta\big(\theta  \notin B(\hat{\theta},[(b(\theta)+1)M_2\hat{r}^2
+(b(\theta)+2)M\sigma^2]^{1/2})\big) \\
&\le\mathbb{P}_\theta\big(\|\hat{\theta}\!-\!\theta\|^2>(b(\theta)\!+\!1)M_2\hat{r}^2
\!+\!(b(\theta)\!+\!2)M\sigma^2, \hat{r}^2\ge \delta\sigma^2\rho(I_*)
\!+\!\sigma^2\!-\!\tfrac{M\sigma^2}{M_2}\big) \\
&\qquad +\mathbb{P}_\theta\big(\hat{r}^2<\delta \sigma^2\rho(I_*)
+\sigma^2-\tfrac{M\sigma^2}{M_2}\big)\notag\\
&\le \mathbb{P}_\theta\big(\|\hat{\theta}-\theta\|^2> M_1 r^2(\theta)+M\sigma^2\big)
+ \mathbb{P}_\theta\big(\rho(\hat{I})< 
\delta\rho(I_*)-\tfrac{M}{M_2}\big)\\ 
&\le H_1 e^{-m_1M}
+C_\nu e^{-m''_1M}
\le  H_2 e^{-m_2M},
\end{align*}
where  $m''_1= m'_1/M_2$, $H_2=H_1+C_\nu$, 
$m_2=m_1\wedge m''_1$; $m'_1$ is defined in Theorem \ref{th2}.
Since $b(\theta)\le t$ for all $\theta\in\Theta_{\rm eb}(t)$, the coverage relation follows.

Let us show the size property. 
For $M\ge 0$, introduce the set  $\mathcal{G}(M)=\mathcal{G}(M,\theta)=\{ I\in \mathcal{I}: 
\sigma^2\rho(I)\ge c_3 r^2(\theta) + M\sigma^2\}$,
where $c_3$ is defined in Lemma \ref{main_lemma}.
Then for all $I \in \mathcal{G}(M)$,
\begin{align*}
r^2(I,\theta) -c_3 r^2(\theta)
&\ge \sigma^2\rho(I)-c_3 r^2(\theta)\ge M\sigma^2.
\end{align*}
Remind the notation $\hat{p}_I=\check{\pi}( I|Y)=\mathrm{1}\{I=\hat{I}\}$ 
defined by \eqref{ms_ddm_I}. From Lemma \ref{main_lemma} and the last relation, it follows that 
for all $I \in \mathcal{G}(M)$
\begin{align*}
\mathbb{E}_{\theta}\hat{p}_I
&\le 
\exp\big\{-c_1\rho(I)-c_2\sigma^{-2} \big[r^2(I,\theta) -c_3 r^2(\theta)\big]\big\}\le  
e^{-c_1\rho(I)-c_2M}.
\end{align*}
The last display implies that, for any $\theta\in\Theta$,
\begin{align*}
&\mathbb{P}_\theta(\hat{r}^2 \ge c_3 r^2(\theta)+(M+1)\sigma^2 )=\mathbb{P}_\theta(\sigma^2\rho(\hat{I}) \ge c_3 r^2(\theta)+M\sigma^2 )\\ 
&\le\sum_{I \in \mathcal{G}(M)} \mathbb{E}_{\theta}\hat{p}_I\le e^{-c_2 M} \sum_{I\in\mathcal{I}} e^{-c_1\rho(I)}\le H_3 e^{-c_2M}, 
\end{align*}
because $\sum_{I\in\mathcal{I}} 
e^{-c_1\rho(I)}\le C_\nu$ in view of Condition \eqref{(A2)} as $c_1>2\nu$.
The size relation follows with  $M_3=c_3, H_3=C_\nu $ and $m_3=c_2$.

If, instead of Condition \eqref{(A3)}, stronger Condition {\rm(A3')} is fulfilled, 
then the stronger version of the size relation follows immediately from 
property (iii) of Theorem \ref{th2}:
$\mathbb{P}_\theta\big(\hat{r}^2\ge M'_0\sigma^2\rho(I_o)+(M+1)\sigma^2\big)
\le  C_\nu e^{-M/2}$, 
where the constants $M'_0$ and $C_\nu$ are defined in Theorem \ref{th2}.
\end{proof}

\begin{proof}[Proof of Theorem \ref{th4}]   
Since $Y'=\mathrm{P}_{I^*} \theta+\xi'$, we rewrite \eqref{radius2} as 
\begin{align}
&\tilde{R}^2_M=\big(\|Y'-\hat{\theta}\|^2 -\sigma^2V(Y',Y)  
+2\sigma^2G_M\sqrt{N}\big)_+ \notag\\
&=\big(\|\theta-\hat{\theta}\|^2+\sigma^2\big(\|\xi'\|^2-V(Y',Y)\big)+2\sigma 
\langle\xi',(\theta-\hat{\theta})\rangle+2\sigma^2G_M\sqrt{N}\big)_+.
\label{R_M}
\end{align}

Introduce the events $D_M=D_M(\theta)=
\big\{\|\hat{\theta}-\theta\|^2\ge M_1r^2(\theta) +M\sigma^2\big\}$ and 
$E_M=E_M(\theta)=\big\{2|\langle\xi',(\theta-\hat{\theta})\rangle|\ge
\sqrt{M(M_1r^2(\theta) +M\sigma^2)} \big\}$. 
According to Condition \eqref{cond_A4}, $\hat{\theta}$ and $\hat{I}$ are based on $Y$ 
and independent of $\xi'$.
Using this fact, the first relation from \eqref{cond_A4} and Theorem \ref{th1}, 
we obtain that
\begin{align}
\label{expression}
&\mathbb{P}_\theta(E_M)=
\mathbb{E}_\theta\mathbb{P}_\theta(E_M\cap D_M^c|Y)
+\mathbb{P}_\theta(E_M\cap D_M)\notag\\
&\le \mathbb{E}_\theta \Big[\psi_1\big(\tfrac{M(M_1r^2(\theta)+M\sigma^2)}{4\|\hat{\theta}-\theta\|^2}\big)
\mathrm{1}_{D_M^c}\Big]+\mathbb{P}_\theta(D_M)\le 
\psi_1(M/4)+H_1e^{-m_1M}.
\end{align}
Since, by \eqref{dim_N}, $r^2(\theta)\le\sigma^2N$, the event $E_M^c$ implies 
that $2\sigma\langle\xi',(\theta-\hat{\theta})\rangle>
-\sigma\sqrt{M(M_1 \sigma^2N +M\sigma^2)} \ge -\sigma^2 G_M \sqrt{N}$.
Combining this with \eqref{R_M}, \eqref{expression} and the second relation from \eqref{cond_A4}
yields the coverage property:
\begin{align*}
\mathbb{P}_\theta& \big(\theta  \notin B(\hat{\theta},\tilde{R}_M)\big)
=\mathbb{P}_\theta\big(\theta  \notin B(\hat{\theta},\tilde{R}_M),E_M^c\big)
+\mathbb{P}_\theta\big(\theta\notin B(\hat{\theta},\tilde{R}_M),E_M\big)\\
&\le \mathbb{P}_\theta \big(\|\theta-\hat{\theta}\|^2\ge \tilde{R}^2_M,E_M^c\big)
\!+\!\mathbb{P}_\theta(E_M)\\  
&\le\mathbb{P}_\theta\big(0\ge\sigma^2(\|\xi'\|^2\!-V(Y'))+\sigma^2G_M \sqrt{N}\big)\!
+\!\mathbb{P}_\theta(E_M)\\
&\le\mathbb{P}_\theta\big(\|\xi'\|^2\!-V(Y',Y)\le\!-\!M\sqrt{N} \big)
+\psi_1(M/4)+H_1e^{-m_1M}\\  
&\le \psi_2(M)\!+\!\psi_1(M/4)\!+\!H_1e^{-m_1M}.
\end{align*}
	
Let us show the size property. By \eqref{expression}, 
$\mathbb{P}_\theta\big(2\sigma\langle\xi',(\theta-\hat{\theta})\rangle\ge 
\sigma^2G_M\sqrt{N}\big) \le
\mathbb{P}_\theta\big( 2\langle\xi',(\theta-\hat{\theta})\rangle>
\sqrt{M(M_1r^2(\theta) +M\sigma^2)}\big)\le \mathbb{P}_\theta(E_M)
\le\psi_1(M/4)+H_1e^{-m_1M}$. This, Theorem \ref{th1} and \eqref{R_M} imply
\begin{align*}
\mathbb{P}_\theta&\big(\tilde{R}_M^2 \ge g_M(\theta,N)\big) 
\le 
\mathbb{P}_\theta \big( \|\theta-\hat{\theta}\|^2 \ge M_1  r^2(\theta)+M\sigma^2\big) \\
&\quad 
+\mathbb{P}_\theta\big(\sigma^2\big(\|\xi'\|^2\!-V(Y',Y)\big) 
\ge \sigma^2G_M\sqrt{N} \big) 
+\mathbb{P}_\theta\big(2\sigma\langle\xi',(\theta-\hat{\theta})\rangle\ge 
\sigma^2G_M\sqrt{N}\big)  \\
& \le H_1e^{-m_1M} +\psi_2(M)+\psi_1(M/4)+H_1e^{-m_1M}. \qedhere
\end{align*}  
\end{proof}

\section{Appendix: empirical Bayes posterior construction}
\label{sec_empirical_Bayes}
Here we present the detailed construction of the DDMs for $\theta$ and $I$, 
as the result of an \emph{empirical Bayesian approach}, based on certain (mixture of) normal prior and 
normal model, although the true model is not known. These DDMs are explicitly 
constructed below as empirical Bayes posteriors, with some links to  the penalization method. 
As we show below, these posteriors (and the derived quantities: estimators for $\theta$ and 
structure selector $\hat{I}$) handle all models and structures for which the statistical dimensions 
$(d_I)_{I\in\mathcal{I}}$ from Condition \eqref{cond_nonnormal} satisfy $d_I \gtrsim \dim(\mathbb{L}_I)$, 
$I\in\mathcal{I}$ (actually, we can always take a sufficiently large majorant $\rho(I)$ for which this holds, 
but the resulting oracle rate may be too large). This is the case for all the particular models and structures considered in Part II.

\paragraph{Prior: mixture of normals.}
Recall that the true parameter $\theta$ is assumed to be well approximated by its structured version
$\mathrm{P}_{I^*}\theta$ (e.g., $\theta=\mathrm{P}_{I^*}\theta$), 
for some ``true'' structure $I^*\in\mathcal{I}$. The true structure $I^*$ is unknown, so at a later stage 
we will put a prior on the family of structures $\mathcal{I}$. 
For now, given a structure $I$, consider the model 
$Y=\mathrm{P}_I \theta + \sigma\xi$, approximating the original model \eqref{model}, 
where $\mathrm{P}_{I}$ is the projection operator 
onto space $\mathbb{L}_{I}$, and put first an ``unstructured'' prior 
$\Pi$ on the ``unstructured'' $\theta\in\Theta$: 
$\theta \sim \Pi=\mathrm{N}(\mu,\kappa\sigma^2 \mathrm{I})$, 
where $\kappa=e-1$ and the parameter $\mu\in\mathcal{Y}$ is to be chosen by the empirical 
Bayes method later. 
The ``unstructured'' prior $\Pi$ on  $\theta$ leads to the ``structured'' prior 
$\pi_I$ on the ``structured'' $\theta^I\triangleq\mathrm{P}_I\theta$:
\begin{align}
\label{gen_norm_prior}
\pi_{I}(\vartheta)= \mathrm{N}(\mathrm{P}_I\mu, \kappa\sigma^2\mathrm{P}_I),
\quad I \in \mathcal{I}, \quad \kappa=e-1.
\end{align}
In this way, we constructed the conditional prior on $\theta$ given $I$: 
$\theta| I \sim \pi_{I}(\vartheta)$.
The rather specific choice of $\kappa=e-1$ is made only for the sake of clean 
mathematical exposition in later calculations, many other choices are actually possible. 

The next very important step in the Bayesian analysis below is that we use the normal likelihood 
$\ell(\theta, Y)=\bigotimes_{i} \mathrm{N}(\theta_i, \sigma^2)$, whereas the ``true'' model 
$Y \sim \mathbb{P}_{\theta}$ is not assumed to be normal, but only satisfying Condition 
\eqref{cond_nonnormal}. Formally applying Bayesian approach to this prior 
and the normal likelihood $\ell(\theta, Y)$ 
delivers the marginal distribution $Y\sim\mathbb{P}_{Y,I}=\mathrm{N}(\mathrm{P}_{I}\mu,\mathrm{I}
+\kappa\mathrm{P}_{I})$ and the following posterior distribution on $\theta$:
\begin{align}
\label{norm_poster_I}
\pi_{I}(\vartheta|Y)= \mathrm{N} \big(\tfrac{1}{\kappa+1}\mathrm{P}_I\mu
+\tfrac{\kappa}{\kappa+1}\mathrm{P}_I Y,\tfrac{\kappa\sigma^2}{\kappa+1}\mathrm{P}_I\big).
\end{align}
Note that in general the covariance matrix in \eqref{gen_norm_prior} is not invertible, 
but the Bayes formula for the conjugate normal-normal model still holds with the Moore-Penrose inverse
$\mathrm{P}_{I}^-$ of $\mathrm{P}_{I}$ instead of the usual inverse 
(recall that $\mathrm{P}^-=\mathrm{P}$ for any projection operator $\mathrm{P}$).

Let us now put a prior on $I$:  
\begin{align}
\label{prior_lambda}
\lambda_{I} =
c_{\varkappa}e^{- \varkappa \rho(I)} , \quad  I\in\mathcal{I},
\end{align} 
where $c_{\varkappa}$ is the normalizing constant (i.e., $\sum_{I\in\mathcal{I}} \lambda_I=1$), 
$\rho(I)$ satisfies Condition \eqref{(A2)},
the parameter $\varkappa$ satisfies the relation \eqref{cond_technical}.
 
Combining \eqref{gen_norm_prior} and \eqref{prior_lambda} gives the mixture prior on $\theta$: 
$\pi =\sum_{I \in \mathcal{I}} \lambda_{I} \pi_{I}$. This leads to the marginal distribution of $Y$:
$\mathbb{P}_{Y}=\sum_{I \in \mathcal{I}}
\lambda_{I}\mathbb{P}_{Y,I}$, 
$\mathbb{P}_{Y,I}=\mathrm{N}(\mathrm{P}_{I}\mu,\sigma^2(\mathrm{I}
+\kappa\mathrm{P}_{I}))$,
where the density of the distribution $\mathbb{P}_{Y,I}=
\mathrm{N}(\mathrm{P}_{I}\mu,\sigma^2(\mathrm{I}+\kappa\mathrm{P}_{I}))$ is 
\begin{align}
\label{norm_posterior_density}
\varphi(y,\mathrm{P}_{I}\mu,\sigma^2(\mathrm{I}+\kappa\mathrm{P}_{I}))=
\frac{e^{-(y-\mathrm{P}_{I} \mu)^T(\mathrm{I} -\tfrac{\kappa}{\kappa+1}\mathrm{P}_{I})
(y-\mathrm{P}_{I}\mu)/(2\sigma^2)}}
{(2\pi\sigma^2)^{n/2} (1+\kappa)^{\text{dim}(\mathbb{L}_{I})/2}},
\end{align}
because $(\mathrm{I}+\kappa\mathrm{P})^{-1}=\mathrm{I} -\tfrac{\kappa}{\kappa+1}\mathrm{P}$, 
$\text{det}(\mathrm{I}+\kappa\mathrm{P})=(1+\kappa)^{\text{rank}(\mathrm{P})}$ 
for any projection operator $\mathrm{P}$, and $\text{rank}(\mathrm{P}_{I})
=\text{dim}(\mathbb{L}_{I})$. The posterior of $\theta$ becomes  
\begin{align}
\label{norm_gen_posterior}
\pi(\vartheta|Y)=\pi_\varkappa(\vartheta|Y)=\sum_{I \in \mathcal{I}}
\pi(\vartheta,I|Y)=\sum_{I \in \mathcal{I}} \pi_I(\vartheta|Y)\pi(I|Y),
\end{align}
where $\pi_{I}(\vartheta|Y)$ is defined by (\ref{norm_poster_I}) 
and the posterior for $I$ is
\begin{align}
\label{norm_P(I|Y)}
\pi(I|Y)=\frac{\lambda_{I} \mathbb{P}_{Y,I}}
{\sum_{J \in \mathcal{I}}\lambda_{J}\mathbb{P}_{Y,J}}.
\end{align}

\paragraph{Empirical Bayes posterior.} 
The parameter $\mu$ is yet to be chosen in the prior. We apply the empirical Bayes approach. 
The marginal likelihood $\mathrm{P}_{Y}$ is readily maximized with respect to 
$\mu$:
$\argmin_{\mu} \big\{(Y-\mathrm{P}_{I} \mu)^T(\mathrm{I}-\tfrac{\kappa}{\kappa+1}
\mathrm{P}_{I})(Y-\mathrm{P}_{I}\mu)\big\}=Y$.
Substituting $Y$ instead of $\mu$ 
in the expressions \eqref{norm_poster_I}, \eqref{norm_gen_posterior} 
and \eqref{norm_P(I|Y)} yields the empirical Bayes posterior
\begin{align}
\label{emp_norm_posterior}
\tilde{\pi}(\vartheta|Y)=\tilde{\pi}_\varkappa(\vartheta|Y)=
\sum_{I \in \mathcal{I}}
\tilde{\pi}(\vartheta|Y,I)\tilde{\pi}(I|Y),
\end{align}
called \emph{empirical Bayes model averaging} (EBMA) posterior,
where the EBMA posterior for $\theta$ given $I$ is
\begin{align}
\tilde{\pi}(\vartheta| Y,I)&=\tilde{\pi}_{I}(\vartheta|Y)
=\mathrm{N}\big(\mathrm{P}_{I}Y,\tfrac{\kappa\sigma^2}{\kappa+1}\mathrm{P}_{I}\big)
\label{emp_poster_I}
\end{align}
and the empirical Bayes posterior for $I$ is
\begin{align}
\label{emp_P(I|Y)}
\tilde{\pi}(I|Y)&=\tilde{\pi}_{I}
=\frac{\lambda_{I}\exp\{-\tfrac{1}{2\sigma^2}[\|(\mathrm{I}-\mathrm{P}_{I})Y\|^2
+\sigma^2\text{dim}(\mathbb{L}_{I})]\}}
{\sum_{J \in \mathcal{I}}\lambda_{J}\exp\{-\tfrac{1}{2\sigma^2}[\|(\mathrm{I}-\mathrm{P}_{J})Y\|^2
+\sigma^2\text{dim}(\mathbb{L}_{J})]\}}.
\end{align} 
When deriving \eqref{emp_P(I|Y)}, we used \eqref{norm_posterior_density}, $\kappa=e-1$ and 
the fact that $(\mathrm{I}-\mathrm{P})(\mathrm{I}-\tfrac{\kappa}{1+\kappa}\mathrm{P})
(\mathrm{I}-\mathrm{P})=(\mathrm{I}-\mathrm{P})$ for any projection operator $\mathrm{P}$. 
Let $\tilde{\mathbb{E}}$ and $\tilde{\mathbb{E}}_{I}$ be the expectations 
with respect to the EBMA measures $\tilde{\pi}(\vartheta|Y)$ and $\tilde{\pi}_{I}(\vartheta|Y)$, 
respectively. Then $\tilde{\mathbb{E}}_{I}(\vartheta|Y)=\mathrm{P}_IY$, $I\in\mathcal{I}$.
Introduce the \emph{EBMA posterior mean} estimator
\begin{align}
\label{estimator_suppl}
\tilde{\theta}&=\tilde{\mathbb{E}}(\vartheta|Y)=\sum_{I \in \mathcal{I}}
\tilde{\mathbb{E}}_{I}(\vartheta|Y) \tilde{\pi}(I|Y)=
\sum_{I \in \mathcal{I}}
(\mathrm{P}_{I}Y) \tilde{\pi}(I|Y).
\end{align}

Consider yet alternative empirical Bayes posterior. First derive an empirical 
Bayes structure selector $\hat{I}$ by maximizing $\tilde{\pi}(I|Y)$ over $ I\in\mathcal{I}$.
This boils down to
\begin{align}
\label{I_MAP}
 \hat{I}&=\argmax_{I\in\mathcal{I}}{\tilde{\pi}}(I|Y)=\argmin_{I\in\mathcal{I}} 
 \big\{\|Y-\mathrm{P}_IY\|^2+\sigma^2\text{pen}(I)\big\},
\end{align}
which is essentially the \emph{penalization method} with the penalty 
$\text{pen}(I)=2\varkappa \rho(I)+\text{dim}(\mathbb{L}_{I})$. 
Note however that,
while the penalization method gives only an estimator, our method also yields   
a posterior.  
Indeed, plugging in $\hat{I}$ (defined by \eqref{I_MAP}) into $\tilde{\pi}_{I}(\vartheta|Y)$ 
defined by \eqref{emp_poster_I} gives the corresponding empirical Bayes posterior, 
called \emph{empirical Bayes model selection} (EBMS) posterior, 
and the \emph{EBMS mean estimator}  for $\theta$:
\begin{align}
\label{emp_emp_posterior}
\check{\pi} (\vartheta|Y)=\tilde{\pi}_{ \hat{I}} (\vartheta|Y)
=\mathrm{N}\big(\mathrm{P}_{\hat{I}}Y,\tfrac{\kappa\sigma^2}{\kappa+1}\mathrm{P}_{\hat{I}}\big), \quad 
\check{\theta}= \check{\mathbb{E}}(\vartheta|Y)= \mathrm{P}_{\hat{I}} Y,
\end{align}
where $\check{\mathbb{E}}$ denotes the expectation with respect to the 
EBMS measure $\check{\pi}(\vartheta|Y)$. Notice that, like \eqref{emp_norm_posterior},
$\check{\pi} (\vartheta|Y)$ defined by \eqref{emp_emp_posterior} 
can also be seen formally as mixture 
\begin{align}
\label{emp_emp_I}
\check{\pi} (\vartheta|Y)=\tilde{\pi}_{ \hat{I}} (\vartheta|Y)=
\sum_{I \in\mathcal{I}} \tilde{\pi}_I(\vartheta|Y)\check{\pi}(I|Y), \quad \check{\pi}(I|Y)=1\{I=\hat{I}\},
\end{align}
where the mixing distribution $\check{\pi}(I|Y)=1\{I=\hat{I}\}$, the 
empirical Bayes posterior for $I$, is degenerate at $\hat{I}$.
In a way, the EBMA posterior  $\tilde{\pi}(\vartheta|Y)$ 
defined by \eqref{emp_norm_posterior} is ``more Bayesian'' than the EBMS posterior 
$\check{\pi}(\vartheta|Y)$ defined by \eqref{emp_emp_I},
although both are formally mixtures.

Now notice that if $d_I \gtrsim \dim(\mathbb{L}_I)$, $I\in\mathcal{I}$, where $d_I$'s are from 
Condition \eqref{cond_nonnormal}, then the terms 
$e^{\dim(\mathbb{L}_I)/2}$ and $e^{\dim(\mathbb{L}_J)/2}$ in the numerator and denominator 
of the right hand side of \eqref{emp_P(I|Y)} can be absorbed into $\lambda_I$ and $\lambda_J$ 
respectively (for example, by making $\rho(I)$ larger by adding a multiple of $d_I$).  
Then \eqref{emp_P(I|Y)} can be expressed in the same form as \eqref{ddm_P(I|Y)}.
Next, in the above construction we used the normal likelihood 
$\ell(\theta, Y)=\bigotimes_{i} \mathrm{N}(\theta_i, \sigma^2)$
(i.e., as if $\xi_i\overset{\rm ind}{\sim}\mathrm{N}(0,1)$), which corresponds to 
$\mathrm{P}_Z = \bigotimes_{i\in[n]}\mathrm{N}(0,1)$ in \eqref{ddm_I}. 
In \eqref{ddm_I}, we need  $\mathrm{P}_Z$ to satisfy Condition \eqref{cond_nonnormal}.
According to Remark \ref{rem_cond_A1}, $\mathrm{P}_Z = \bigotimes_{i\in[n]}\mathrm{N}(0,1)$ 
does satisfy Condition \eqref{cond_nonnormal} with $\alpha=0.4$ and 
$d_I=\dim(\mathbb{L}_I)$, $I\in\mathcal{I}$ (also for $d_I \gtrsim \dim(\mathbb{L}_I)$,
$I\in\mathcal{I}$, with a different $\alpha$). 
This means that in this case the above constructed empirical 
Bayes posteriors and estimators are all particular cases of the corresponding 
DDMs and estimators constructed in Section \ref{subsec_ddm}.

\newpage

\part{\Large 
Applications}

In Part I we developed a theory for a \emph{general framework for projection structures}. 
and studied the following inference problems within this framework:
the {\em estimation}, {\em DDM contraction}, \emph{uncertainty quantification}, 
and the (weak) \emph{structure recovery} problems. 
In Part II, we apply the developed theory to a number of various models and structures, 
interesting and important on their own right. 
We present a whole avenue of results
(many new ones, some are known in the literature, some are improved) for particular combinations 
of model/structure as consequences of the general framework results from Part I.
Almost all the results on uncertainty quantification (and weak structure recovery) 
are new, we obtain some new results on estimation and DDM (posterior) contraction. 
Besides, we obtain stronger versions of many known results on estimation and posterior (DDM) contraction 
since our results are \emph{local}, \emph{refined} (non-asymptotic exponential probability bound) 
and hold in the \emph{distribution-free} setting. 

Actually, we have not spelled out all the results in the form of theorems, but have done all the preparatory 
work so that  the reader should be able to formulate formal assertions when desired. 
For some models, the preparatory work is more elaborate as it involves transforming the original 
data and the use of some tools from the literature. This is the case for the density estimation 
and covariance matrix estimation problems.   
 

\section{Combinations model/structure}

There are numerous examples of combinations model/structure falling into our general framework.
The full list of the combinations 
considered in this paper is given in Section \ref{subsec_general_framework}. 
Almost all the studied cases result from combining the 4 basic models (\emph{singal+noise}, 
\emph{linear regression}, \emph{matrix+noise} and \emph{matrix linear regression}) with the 4 basic 
structures (\emph{smoothness}, \emph{sparsity}, \emph{clustering} and \emph{shape restriction}). 
Table \ref{table1} gives an overview of the sections dedicated to corresponding combinations. 
\begin{table}[]
\caption{Considered combinations  model/structure (here: mixt. = mixture; 
DL= dictionary learning).}
\label{table1}
\begin{tabular}{|l|c|c|c|c|cc}
\hline 
\textit{\textbf{model \textbackslash structure}} &\textit{smoothness} &\textit{sparsity}&\textit{clustering} &\textit{shape restr.}  & \multicolumn{1}{c|}{\textit{mixt.}*} & \multicolumn{1}{c|}{\textit{DL}*} \\  \hline 
\textit{signal+noise} & \ref{sec_smoothness}, \ref{sec_graph_with_smoothness}, \ref{sec_density}*& \ref{sec_regression_wavelet}*, \ref{sec_sparsity}& \ref{sec_multi-level} &\ref{sec_shape_restr} & & \\   \cline{1-5}
\textit{linear regression} & & \ref{sec_lin_regr_sparsity} &   & \ref{sec_aggregation}& & \\ \cline{1-5} 
\textit{matrix+noise} & \ref{subsec_banding}* & \ref{subsec_cov_sparsity}*, \ref{sec_matrix_sparsity} & 
\ref{sec_biclustering}  &  & & \\ \hline
\textit{matrix linear regr.} &  &\ref{sec_reg_group_sparsity}&\ref{sec_group_clustering} & 
& \multicolumn{1}{c|}{\ref{sec_mixture_model}} & \multicolumn{1}{c|}{\ref{sec_dict_learning}} 
\\ \hline
\end{tabular}
\end{table}

We now comment on the items marked by * in  Table \ref{table1}. 
In Section \ref{sec_regression_wavelet}, the regression model 
under wavelet basis is considered, which can be reduced to the signal+noise model where a ``better'' choice 
of structure would be some combination of smoothness and sparsity structures (not just sparsity, see Remark \ref{rem_better_structure}).  
In Sections \ref{sec_density} (density estimation) and \ref{sec_covariance_matrix} (covariance matrix estimation), 
we had to transform the original data in order to obtain the resulting signal+noise and matrix+noise models, respectively. 
Next, we needed to employ certain additional tools to derive the results for these models, see 
Sections \ref{sec_density} and \ref{sec_covariance_matrix} for details. 
Finally, the structures \emph{mixture} and \emph{dictionary learning} are only possible for the 
matrix linear regression model. In both cases, the vector $\beta$ and the design matrix 
(in the matrix linear regression) are both unknown, but in the mixture structure we impose some 
structure on the design matrix, whereas in the dictionary learning structure we impose some structure 
on the vector $\beta$. 

Further, one can perform the computations for empty boxes in the table and derive the corresponding 
results for those cases as well, or come up with new models and/or structures, for example, by combining 
the basic 4 structures from Table \ref{table1}.

\section{Specifying the general results to particular models, structures and scales}
\label{applications}

For each particular model and a particular structure, 
we specify the structures $\mathcal{I}$, the corresponding linear spaces 
$\{\mathbb{L}_I, I\in \mathcal{I}\}$, and the majorant $\rho(I)$;
next we verify Conditions \eqref{cond_nonnormal}, \eqref{(A2)}, \eqref{(A3)} and \eqref{cond_A4}
for these quantities. 
For almost all cases, 
we construct the majorant $\rho(I)$ according to Remark \ref{rem5}: 
$\rho(I) \ge d_I+ \log|\mathcal{I}_{s(I)}|$ for some parsimonious structural slicing mapping 
$s: \mathcal{I}\mapsto\mathcal{S}$ (i.e., $d_I=d_J$ for all $I,J\in\mathcal{I}_s$, $s\in\mathcal{S}$). 
 In view of Remarks \ref{rem_cond_A1} and \ref{rem15}, Conditions \eqref{cond_nonnormal} 
and \eqref{cond_A4} hold with $d_I=\dim(\mathbb{L}_I)$ in all 
models with $\xi_i \overset{\rm ind}{\sim}\mathrm{N}(0,1)$. 
Hence, we will not verify Conditions \eqref{cond_nonnormal} and \eqref{cond_A4} 
for the models where $\xi_i \overset{\rm ind}{\sim}\mathrm{N}(0,1)$.

We keep the same notation for all the quantities involved as for the general framework from Part I, 
with the understanding that these are specialized for the particular models and structures, 
and some constants must be adjusted. Let us first summarize the results of 
Theorems \ref{th1}, \ref{th2}, \ref{th3} and \ref{th4} by the following corollary.

\begin{corollary}
\label{meta_theorem}
Let Conditions \eqref{cond_nonnormal} and \eqref{(A2)} be fulfilled. Then for any $M\ge 0$
\begin{align*}
\tag{i}
&\sup_{\theta\in\Theta}\mathbb{E}_{\theta}\hat{\pi}\big(\|\vartheta-\theta\|^2
\ge M_0r^2(\theta)+M\sigma^2 | Y\big)  \le H_0 e^{-m_0 M}, 
\\&
\tag{ii}\sup_{\theta\in\Theta}\mathbb{P}_\theta\big(\|\hat{\theta}-\theta\|^2 
\ge M_1 r^2(\theta)+M\sigma^2\big)\le H_1 e^{-m_1 M},\\
\tag{iii}
&\sup_{\theta\in\Theta}\mathbb{E}_{\theta}\hat{\pi}\big( I\in \mathcal{I}: r^2(I,\theta)
\ge c_3 r^2(\theta) + M\sigma^2|Y\big)
\le C_\nu e^{-c_2 M},\\
\tag{iv}
&\sup_{\theta\in\Theta}\mathbb{P}_\theta\big(\hat{r}^2\ge M_3 r^2(\theta)+(M+1)\sigma^2\big)
\le  H_3 e^{-m_3 M}.
\end{align*}

If in addition Condition\eqref{(A3)} is fulfilled, then for any $M,t\ge 0$
\begin{align*}
\tag{v}
&\sup_{\theta\in\Theta_{\rm eb}(t)} \mathbb{P}_\theta
\big(\theta\notin B(\hat{\theta},\hat{R}_M)\big) 
\le H_2 e^{-m_2M}.
\end{align*}

If in addition Condition {\rm \eqref{cond_A4}} is fulfilled, then for any $M\ge 0$, 
\begin{align*}
\tag{vi}
\sup_{\theta\in\Theta} \mathbb{P}_\theta
\big(\theta\notin B(\hat{\theta},\tilde{R}_M)\big) 
&\le \psi_1(M/4)+\psi_2(M)+H_1e^{-m_1M},\\
\tag{vii}
\sup_{\theta\in\Theta}  \mathbb{P}_\theta\big(\tilde{R}_M^2\ge g_M(\theta,N)\big)
&\le\psi_1(M/4)+\psi_2(M)+2H_1e^{-m_1M}.
\end{align*}
\end{corollary}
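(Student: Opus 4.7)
The statement is a consolidated restatement rather than a new result: each of the seven claims is literally an assertion that has already appeared in the paper, so my plan is to read it as a bookkeeping corollary and assemble it by citation rather than by fresh argument.

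The plan is to proceed clause by clause. Claims (i) and (ii) are exactly the two displays \eqref{th1_i} and \eqref{th1_ii} of Theorem \ref{th1}, which were shown under Conditions \eqref{cond_nonnormal} and \eqref{(A2)}; since the hypotheses of the corollary in its first block are the same, both bounds transfer verbatim with the same constants $(M_0,H_0,m_0)$ and $(M_1,H_1,m_1)$. Claim (iii) is part (i) of Theorem \ref{th2}, again available under the first block of hypotheses with constants $c_2, c_3, C_\nu$ defined there (from Lemma \ref{main_lemma} and Condition \eqref{(A2)}). Claim (iv) is the second (size) relation of Theorem \ref{th3}; although Theorem \ref{th3} is formally stated under Conditions \eqref{cond_nonnormal}, \eqref{(A2)} and \eqref{(A3)}, its own statement explicitly notes that the size relation holds \emph{without} Condition \eqref{(A3)}, so it is legitimately available already in the first block of the corollary.

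Once Condition \eqref{(A3)} is added, claim (v) is the coverage relation of Theorem \ref{th3}, applied with the set $\Theta_{\mathrm{eb}}(t)$ from \eqref{cond_ebr} and the radius $\hat{R}_M=\hat{R}_M(M_2)$. Once Condition \eqref{cond_A4} is also added, claims (vi) and (vii) are exactly the coverage and size assertions of Theorem \ref{th4}, for the radius $\tilde{R}_M$ from \eqref{radius2} and the function $g_M(\theta,N)$ defined there. In every case the constants are the ones produced by the referenced theorem, so no new constants need to be introduced.

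The only potential subtlety, which I would make explicit in the write-up rather than leave implicit, is bookkeeping of the technical parameters that thread through the earlier proofs: $\alpha \in (0,1]$ from \eqref{cond_nonnormal}, $\nu,C_\nu$ from \eqref{(A2)}, $\varkappa > \bar{\varkappa}$ from \eqref{cond_technical}, and $\tau_0$ from \eqref{I_*} (used to define $I_*$ and hence $\Theta_{\mathrm{eb}}(t)$ and $\hat{R}_M$). I would verify that the constants $(M_0,M_1,M_2,M_3,H_0,\dots,m_3,c_2,c_3)$ in the corollary are consistent with those produced in Theorems \ref{th1}--\ref{th4}, and that claim (v) uses the $\hat{R}_M$ built from the $\hat{r}$ of \eqref{check_r}, while claims (vi)--(vii) use $\tilde{R}_M$ of \eqref{radius2}; no merging is required because the two radii appear in disjoint blocks of the corollary. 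There is no genuine mathematical obstacle here — the entire argument is a one-line reduction to Theorems \ref{th1}, \ref{th2}(i), \ref{th3}, \ref{th4} — so the ``hard part'' is merely to state the reduction cleanly and to record which constants come from which theorem.
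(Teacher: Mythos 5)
Your proposal is correct and coincides with the paper's treatment: the paper introduces Corollary \ref{meta_theorem} explicitly as a summary of Theorems \ref{th1}--\ref{th4} and gives no separate proof, so the clause-by-clause citation you describe — (i)--(ii) from Theorem \ref{th1}, (iii) from Theorem \ref{th2}(i), (iv) from the size relation of Theorem \ref{th3} (valid without Condition \eqref{(A3)}, as the paper notes), (v) from the coverage relation of Theorem \ref{th3}, and (vi)--(vii) from Theorem \ref{th4} — is exactly the intended argument. Your bookkeeping of which constants and which radius ($\hat{R}_M$ versus $\tilde{R}_M$) belong to which block is also consistent with the paper.
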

\begin{remark}
The properties (ii) and (iii) of Theorem \ref{th2} can also be included in Corollary \ref{meta_theorem}, 
but we omit them, because these properties are only auxiliary results used for proving 
the size relations of Theorem \ref{th3}. 
If additionally Condition (A3') is assumed for the property (iv), then the stronger uniform 
version of (iv) holds:  $\sup_{\theta\in\Theta}\mathbb{P}_\theta\big(\hat{r}^2\ge 
M'_0\rho(s(I_o))\sigma^2+(M+1)\sigma^2\big)\le  C_\nu e^{-M/2}$.
Claim (v) of Corollary \ref{meta_theorem} can be formulated 
for the local version of coverage relation of  Theorem \ref{th3} in terms of $b(\theta)$ (given by
\eqref{def_b}) if needed. 
\end{remark}


Consider scales of classes $\{\Theta_\beta,\, \beta\in\mathcal{B}\}$, 
where $\beta\in\mathcal{B}$ is the structural parameter, for instance,
$\beta$ could measure the amount of \emph{smoothness} or \emph{sparsity}
of $\theta\in\Theta_\beta$.
The above local results imply adaptive (global) minimax results for estimation and
posterior contraction rate problems  over \emph{all} scales 
$\{\Theta_\beta,\, \beta\in\mathcal{B}\}$ at once, whose minimax rate 
\[
r^2(\Theta_\beta)\triangleq \inf_{\tilde{\theta}}\sup_{\theta\in\Theta_\beta}
\mathbb{E}_\theta \|\tilde{\theta}-\theta\|^2
\]
is bounded from below by a multiple of the local rate, namely 
\begin{align}
\label{oracle_minimax}
r^2(\Theta_\beta)\ge cr^2(\theta) \quad \text{for all} \;\; \theta\in\Theta_\beta, \; \beta\in\mathcal{B}.  
\end{align}

\begin{remark}
\label{rem_appr_term}
Typically, \eqref{oracle_minimax} is established by comparing the oracle rate with the rate 
for some appropriately chosen structure $I^*=I^*(\theta)$. The reasoning goes usually as follows: 
first show that $\sup_{\theta\in\Theta_\beta}\|\theta-\mathrm{P}_{I^*} \theta\|^2\lesssim r^2(\Theta_\beta)$ 
and $\sigma^2\rho(I^*)\lesssim r^2(\Theta_\beta)$, then argue
$r^2(\theta)\le r^2(I^*,\theta)=\|\theta-\mathrm{P}_{I^*} \theta\|^2+\sigma^2\rho(I^*)
\lesssim r^2(\Theta_\beta)$ uniformly in $\theta\in\Theta_\beta$.
Often $I^*$ is the so called ``true structure'', i.e., $\theta\in\mathbb{L}_{I^*}$, then $r^2(I^*,\theta)=\sigma^2\rho(I^*)\lesssim r^2(\Theta_\beta)$.  
\end{remark}

If \eqref{oracle_minimax} holds, we say that the oracle $r^2(\theta)$ \emph{covers} the scale 
$\{\Theta_\beta,\, \beta\in\mathcal{B}\}$. Under \eqref{oracle_minimax},
 the adaptive (with respect to the structural 
parameter $\beta\in\mathcal{B}$) minimax result follows immediately from Theorem \ref{th1}:
$\sup_{\theta\in\Theta_\beta} \mathbb{P}_\theta\big(\|\hat{\theta}-\theta\|^2 \ge 
\tfrac{M_1}{c} r^2(\Theta_\beta)+M\sigma^2\big)\le H_1 e^{-m_1 M} $. 
Moreover, Theorems \ref{th1} and  \ref{th3} imply the minimax versions of the posterior contraction 
result, the estimation result and the size relation in the uncertainty quantification problem, 
which are summarized by the following corollary.
\begin{corollary}
\label{minimax_results}
Let  \eqref{oracle_minimax}, Conditions \eqref{cond_nonnormal} and \eqref{(A2)}
be fulfilled. Then for any $M\ge 0$,
\begin{align*}
&\sup_{\theta\in\Theta_\beta}
\mathbb{E}_{\theta}\hat{\pi}\big(\|\vartheta-\theta\|^2\ge 
M_0 c^{-1}r^2(\Theta_\beta)+M\sigma^2|Y\big)  \le H_0 e^{-m_0 M}, 
\\&
\sup_{\theta\in\Theta_\beta}\mathbb{P}_\theta\big(\|\hat{\theta}-\theta\|^2 
\ge M_1c^{-1}r^2(\Theta_\beta)+M\sigma^2\big)\le H_1 e^{-m_1 M},\\
&\sup_{\theta\in\Theta_\beta}
\mathbb{P}_\theta\big(\hat{r}^2\ge M_3c^{-1} r^2(\Theta_\beta)+(M+1)\sigma^2\big)\le H_3e^{-m_3 M}.
\end{align*}
\end{corollary}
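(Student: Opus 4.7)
The plan is to derive the three assertions of Corollary \ref{minimax_results} as straightforward consequences of the local, non-asymptotic exponential probability bounds established in Theorems \ref{th1} and \ref{th3}, using condition \eqref{oracle_minimax} as the bridge from local oracle rates to global minimax rates on the scale $\{\Theta_\beta,\,\beta\in\mathcal{B}\}$.

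First, I would observe that the events appearing in Theorems \ref{th1} and \ref{th3} are monotone in the threshold: for any $\theta$ and any $R\ge r^2(\theta)$, we have
\[
\{\|\vartheta-\theta\|^2\ge M_0 R+M\sigma^2\}\subseteq \{\|\vartheta-\theta\|^2\ge M_0 r^2(\theta)+M\sigma^2\},
\]
and similarly for the events in claims \eqref{th1_ii} and in the size relation of Theorem \ref{th3}. By \eqref{oracle_minimax}, for every $\theta\in\Theta_\beta$ we may take $R=c^{-1}r^2(\Theta_\beta)\ge r^2(\theta)$. Applying this substitution inside the DDM-contraction claim (i) of Corollary \ref{meta_theorem}, the estimation claim (ii), and the size claim (iv), and then taking $\sup_{\theta\in\Theta_\beta}$ on both sides, yields the three inequalities of the corollary with the same constants $H_0,m_0,H_1,m_1,H_3,m_3$ inherited from the local statements.

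More concretely, for the first claim I would write, for each fixed $\theta\in\Theta_\beta$,
\[
\mathbb{E}_\theta\hat{\pi}\bigl(\|\vartheta-\theta\|^2\ge M_0 c^{-1}r^2(\Theta_\beta)+M\sigma^2\bigm| Y\bigr)
\le \mathbb{E}_\theta\hat{\pi}\bigl(\|\vartheta-\theta\|^2\ge M_0 r^2(\theta)+M\sigma^2\bigm| Y\bigr)\le H_0 e^{-m_0 M},
\]
using Theorem \ref{th1}(i), and then pass to the supremum over $\Theta_\beta$. The same two-line argument, invoking Theorem \ref{th1}(ii) and the size relation of Theorem \ref{th3} respectively, handles the other two claims.

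There is essentially no obstacle here: the corollary is a repackaging of local bounds into global minimax form, and the only input beyond Theorems \ref{th1} and \ref{th3} is \eqref{oracle_minimax} itself (which, per Remark \ref{rem_appr_term}, is verified case by case in Part II). Consequently, the proof will be a few lines of monotonicity plus a supremum, with no new probabilistic estimate required; the entire substance is already contained in the refined exponential bounds of Theorem \ref{th1} and in the uniform size bound of Theorem \ref{th3}.
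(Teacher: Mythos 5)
Your proposal is correct and is exactly the argument the paper intends: the corollary is stated as following ``immediately'' from Theorems \ref{th1} and \ref{th3} via the monotonicity of the events in the threshold, using $c^{-1}r^2(\Theta_\beta)\ge r^2(\theta)$ from \eqref{oracle_minimax} and then taking the supremum over $\Theta_\beta$. You also correctly rely only on the size relation of Theorem \ref{th3} in its version that holds uniformly over $\Theta$ without Condition \eqref{(A3)}, consistent with the corollary's hypotheses.
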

In case the radius  of confidence ball is of the order $r(\theta)+\sigma N^{1/4}$, we assume that the conditions of Theorem \ref{th4} instead of Theorem \ref{th3}  are fulfilled and  the third claim of Corollary \ref{minimax_results} is replaced as follows:   
\begin{align*}
\sup_{\theta\in\Theta_\beta}\mathbb{P}_\theta\big(\tilde{R}_M^2\ge g_M'(\theta,N)\big)
\le\psi_1(M/4)+\psi_2(M)+2H_1e^{-m_1M},
\end{align*}
where $g_M'(\theta,N)=M_1c^{-1}r^2(\Theta_\beta)+M\sigma^2+4\sigma^2 G_M \sqrt{N}$.  
We do not  specialize Theorem \ref{th2} and the coverage relation of Theorems \ref{th3} and \ref{th4}
for the scale $\{\Theta_\beta, \, \beta\in\mathcal{B}\}$, because 
it does not make much sense to specialize these claims for any scale. 
Theorem \ref{th2} holds uniformly in $\theta\in\Theta$, hence uniformly 
over  any $\Theta_\beta$.
The coverage relation in Theorem \ref{th3} holds uniformly over the EBR class $\Theta_{\rm eb}$, 
so it will certainly hold uniformly over the intersection $\Theta_{\rm eb}\cap \Theta_\beta$. 
Similarly,  the coverage relation in Theorem \ref{th4} will certainly hold uniformly over $\Theta_\beta$.  

Below we perform the computations to obtain Corollaries \ref{meta_theorem} 
and \ref{minimax_results} for concrete models and structures. 
For brevity sake, for some cases and some claims of Corollaries \ref{meta_theorem} 
and \ref{minimax_results}, we will not present all the computations  
for verifying the required conditions, since 
these computations can be done similarly to the previously considered cases. 

\section{Signal+noise model with smoothness structure}
 \label{sec_smoothness}
Consider the observations
\begin{align*}
Y_i=\theta_i+\tfrac{1}{\sqrt{n}}\xi_i,\; \; i\in\mathbb{N},
\end{align*}
where $\xi_i \overset{\rm ind}{\sim}\mathrm{N}(0,1)$ and 
$\theta=(\theta_i)_{i\in\mathbb{N}}\in\Theta=\ell_2$ is an unknown parameter with 
the \emph{smoothness structure}.

There is a vast literature on estimation (see, for example, references in \cite{Johnstone:2017}), 
a few papers on global posterior contraction, very few on 
global uncertainty quantification, but only one (to the best of our knowledge) on local uncertainty quantification.
Bayesian global results for smoothness scales are studied in \cite{Belitser&Ghosal:2003, Szabo&vanderVaart&vanZanten:2013, Szabo&vanderVaart&vanZanten:2015} and many others 
(see further reference therein). A local approach for this model,
delivering also the adaptive minimax results for many smoothness scales simultaneously, is
considered by \cite{Babenko&Belitser:2010, Gao&vanderVaart&Zhou:2015} 
for estimation and posterior contraction problems, and by \cite{Belitser:2016} also 
for uncertainty quantification problem (in the inverse problem context which is a more general setting).

Admittedly, this is an infinite dimensional model (with $\sigma^2=n^{-1}$) as compared with 
the default high-dimensional general framework \eqref{model}, but in this case all the results go through with one 
minor adjustment: all the sums over $I\in\mathcal{I}$ become countable infinite instead of finite.  
Alternatively, we could consider a finite dimensional model approximating the original infinite dimensional 
model with arbitrary accuracy. 

In this case, the smoothness structure is modeled by the linear spaces 
\begin{align}
\label{L_I_6.1}
\mathbb{L}_I=\big\{x\in\ell_2:\, x_{i}=0\;\;\text{for all}\; i\ge I+1\big\},\quad I\in\mathcal{I} = \mathbb{N}_0.
\end{align}
We have $\|\theta-\mathrm{P}_I \theta\|^2=\sum_{i=I+1}^\infty \theta_i^2$, $\sigma^2=n^{-1}$,
$d_I=\dim(\mathbb{L}_I)=I$, 
the structural 
slicing mapping is taken to be $s(I)=I$, so that $\mathcal{S} =\mathcal{I}=\mathbb{N}_0$
and $\mathcal{I}_{s(I)}=\{I\}$. Hence $\log| \mathcal{I}_s|=0$ for all $s\in\mathcal{S}$.
We thus take the majorant $\rho(I)=d_I+\log| \mathcal{I}_{s(I)}|=d_I =I$.
The oracle rate is 
\[
r^2(\theta) =\min_{I \in \mathbb{N}_0} \big(\sum\nolimits_{i\ge I+1} \theta_i^2+\tfrac{I}{n}\big)
=\sum\nolimits_{i\ge I_o+1} \theta_i^2+\tfrac{I_o}{n}.
\]
Recall that, in view of Remarks \ref{rem_cond_A1} and \ref{rem15}, 
Conditions \eqref{cond_nonnormal} and \eqref{cond_A4} hold with $d_I =\dim(\mathbb{L}_I)$. 
Condition \eqref{(A2)} is  fulfilled since, in view of Remark \ref{rem5}, 
$\sum_{I\in\mathcal{I}} e^ {-\nu\rho(I) }=\sum_{s\in\mathcal{S}} e^ {-\nu s}
=\frac{e^{\nu}}{e^{\nu}-1}= C_\nu$ for any $\nu>0$. Finally, Condition \eqref{(A3)} is also fulfilled. 
Indeed, for any  $I_0, I_1\in\mathcal{I}$ define $I'(I_0,I_1)=I_0 \vee I_1$, then 
$(\mathbb{L}_{I_0}\cup\mathbb{L}_{I_1}) \subseteq \mathbb{L}_{I'}$ and 
$\rho(I')=I_0\vee I_1\le I_0+I_1=\rho(I_0)+\rho(I_1)$.

As consequence of our general results, we obtain the local results of Corollary \ref{meta_theorem}
for this case with the local rate $r^2(\theta)$ defined above. 
In turn, by virtue of Corollary \ref{minimax_results} the local results will  imply global minimax 
adaptive results at once over all scales $\{\Theta_\beta,\, \beta\in\mathcal{B}\}$ covered by 
the oracle rate $r^2(\theta)$ (i.e., for which \eqref{oracle_minimax} holds).
Below we present a couple of examples of scales $\{\Theta_\beta,\, \beta\in\mathcal{B}\}$ covered 
by the oracle rate $r^2(\theta)$.

\subsection{Minimax results for the Sobolev ellipsoids} 
For $\beta,Q>0$, introduce the Sobolev ellipsoids 
\begin{align}
\label{sobolev_ellipsoid}
\Theta_\beta=\Theta_\beta(Q)=\{\theta\in\ell_2: \sum\nolimits_{i\in\mathbb{N}} i^{2\beta}\theta_i^2\le Q\}.
\end{align}
It is well known that the  corresponding minimax rate is $r^2(\Theta_\beta)\asymp 
Q^{1/(2\beta+1)} n^{-2\beta/(2\beta+1)}$; see \cite{Pinsker:1980} or, e.g., \cite{Belitser&Levit:1995}.  
The adaptive minimax results for Sobolev ellipsoids were considered by \cite{Babenko&Belitser:2010, 
Szabo&vanderVaart&vanZanten:2013} (see further references therein) for posterior 
contraction rates, and by \cite{Belitser:2016, Robins&vanderVaart:2006,
Szabo&vanderVaart&vanZanten:2015} (see also further references therein)
for constructing optimal confidence balls.
By taking $I_0=\lfloor (Qn)^{1/(2\beta+1)}\rfloor$, we obtain \eqref{oracle_minimax}: 
\begin{align*}
\sup_{\theta\in\Theta_\beta(Q)}r^2(\theta)&=\sup_{\theta\in\Theta_\beta(Q)}
\Big(\sum_{i=I_o+1}^\infty\theta_i^2+\tfrac{I_o}{n}\Big)
\le \sup_{\theta\in\Theta_\beta(Q)}\sum_{i=I_0+1}^\infty\tfrac{i^{2\beta}\theta_i^2}{I_0^{2\beta}}+\tfrac{I_0}{n}\\
&\le \tfrac{I_0}{n}+\tfrac{Q}{I_0^{2\beta}}\lesssim Q^{1/(2\beta+1)} n^{-2\beta/(2\beta+1)}
\asymp r^2(\Theta_\beta).
\end{align*} 
Corollary \ref{minimax_results} follows for this case with the minimax rate $r^2(\Theta_\beta)$ 
defined above.

\begin{remark}
Notice that, besides adaptation with respect to the smoothness $\beta$, the local result yields 
adaptation also with respect to the ellipsoid size $Q$, which is important when either $Q\to 0$ or $Q \to \infty$. 
The same holds for examples below, where 
the local oracle results deliver adaptation 
with respect to the both smoothness $\beta\in\mathcal{B}$ and size parameter $Q>0$.
\end{remark}

\subsection{Minimax results for the Sobolev hyperrectangles} 
Consider the so called Sobolev hyperrectangles  in $\ell_2$:
\begin{align*}
\Theta_\beta=\Theta_\beta(Q)=\{\theta\in\ell_2: |\theta_i|\le \sqrt{Q}i^{-\beta}\}, \; \; \beta>1/2.
\end{align*}
It is known that the  corresponding minimax rate is $r^2(\Theta_\beta)\asymp Q^{1/(2\beta)} n^{-(2\beta-1)/(2\beta)}$.  
The adaptive minimax results for Sobolev hyperrectangles were
considered by \cite{Babenko&Belitser:2010, Belitser:2016} 
for posterior contraction rates, and by \cite{Belitser:2016, Robins&vanderVaart:2006, 
Szabo&vanderVaart&vanZanten:2015} (see further references therein) 
for constructing optimal confidence balls.
By taking $I_0=\lfloor (Qn)^{1/2\beta}\rfloor$, we obtain \eqref{oracle_minimax}: 
\begin{align*}
\sup_{\theta\in\Theta_\beta(Q)}r^2(\theta)&=\sup_{\theta\in\Theta_\beta(Q)}
\sum_{i=I_o+1}^\infty\theta_i^2+\tfrac{I_o}{n}
\le \sup_{\theta\in\Theta_\beta(Q)}\sum_{i=I_0+1}^\infty\tfrac{Q}{i^{2\beta}}+\tfrac{I_0}{n}\\
&\le \tfrac{I_0}{n}+\tfrac{Q}{(2\beta-1)I_0^{(2\beta-1)}}\lesssim Q^{1/(2\beta)}n^{-(2\beta-1)/(2\beta)}
\asymp r^2(\Theta_\beta).
\end{align*} 
Corollary \ref{minimax_results} follows for this case with the minimax rate $r^2(\Theta_\beta)$ 
defined above.

\subsection{Minimax results for the analytic and tail classes}
Similarly, we can derive the adaptive minimax results for two more scales of
\emph{exponential ellipsoids} (or \emph{analytic classes}) and \emph{tail classes}.
Exponential ellipsoids  are defined as follows:
\begin{align*}
\Theta_\beta=\Theta_\beta(Q)=\big\{\theta\in\ell_2: \sum\nolimits_{k\in\mathbb{N}} 
e^{2\beta k}\theta_k^2\le Q\big\}, \quad \beta>0.
\end{align*}
For the analytic scale, the relation \eqref{oracle_minimax} is
$\sup_{\theta\in\Theta_\beta}r^2(\theta) \lesssim r^2(\Theta_\beta)\asymp \frac{\log n}{\beta n}$
(here $I_0= \frac{\log n}{\beta}$).

The tail classes are 
\begin{align*}
\Theta_\beta=\Theta_\beta(Q)=\big\{\theta\in\ell_2: \sum\nolimits_{k=m+1}^\infty\theta_k^2\le 
Qm^{-\beta}, m\in\mathbb{N}\big\},
\quad \beta>0.
\end{align*}
In this case, the relation \eqref{oracle_minimax} is $\sup_{\theta\in\Theta_\beta}r^2(\theta) 
\lesssim r^2(\Theta_\beta)\asymp Q^{1/(1+\beta)}n^{-\beta/(\beta+1)}$ 
(here $I_0=\lfloor (Qn)^{1/(1+\beta)}\rfloor$).

Corollary \ref{minimax_results} follows for the both scales with the corresponding minimax 
rates $r^2(\Theta_\beta)$. 

\section{Smooth function on a graph} 
\label{sec_graph_with_smoothness}

We adopt  the notation and conventions from \cite{Kirichenko&vanZanten:2017}.
Let $G$ be a connected, simple (i.e., no loops, multiple edges or weights), 
undirected graph with $n$ vertices labelled as $1,\ldots,n$. Following \cite{Kirichenko&vanZanten:2017}, 
a function on the graph $G$ can be represented by a mapping $f:[n]\mapsto
\mathbb{R}$.  We write $f$ both for the function and for the
associated vector of function values $(f(1),f(2),...,f(n))$ in $\mathbb{R}^n$.  
Then the observations $Y_1,\ldots,Y_n$ at the vertices of the graph $G$ are modeled as
\begin{align*}
Y_i = f(i)+\tfrac{1}{\sqrt{n}}\xi_i,\quad i\in[n],
\end{align*}
where $\xi_i \overset{\rm ind}{\sim}\mathrm{N}(0,1)$, and the mapping $f$ on the graph $G$ has 
a \emph{smoothness} structure. This is basically the finite-dimensional version of the model 
from Section \ref{sec_smoothness}, in this case $\theta=f\in\Theta\triangleq\mathbb{R}^n$. 
Below we derive the local results on estimation, posterior contraction 
rate and uncertainty quantification as consequence of our general 
local results from Part I.

The smoothness structure of function $f$ is described by the linear spaces 
\begin{align*}
\mathbb{L}_I=\big\{x\in\mathbb{R}^n:\, x_{i}=0\;\;\text{for all}\; i=I+1\ldots n\big\},\quad I\in\mathcal{I} = [n]_0.
\end{align*}
In this case, $\|f-\mathrm{P}_I f\|^2=\sum_{i=I+1}^n f^2(i)$, the structural 
slicing mapping $s(I)=I$, so that $\mathcal{S} =\mathcal{I}=[n]_0$
and $\mathcal{I}_s =\mathcal{I}_I=\{I\}$. Hence $\log| \mathcal{I}_s|=0$. 
Further, in view of Remark \ref{rem_cond_A1}, Condition \eqref{cond_nonnormal} is fulfilled 
with $\alpha =0.4$, $d_I=\dim(\mathbb{L}_I)=I$, and we arrive at the majorant
$\rho(I)= d_I =I$. The oracle rate is 
\[
r^2(f) =\min_{I \in  [n]_0} \big(\sum\nolimits_{i= I+1}^n f^2(i)+\sigma^2 I\big)
=\sum\nolimits_{i=I_o+1}^n f^2(i)+\sigma^2 I_o.
\]

Further, Condition \eqref{cond_A4} holds in view of Remark \ref{rem15}.
Condition \eqref{(A2)} is fulfilled since, according to Remark \ref{rem5}, 
for any $\nu>0$,
\[
\sum\nolimits_{I\in\mathcal{I}} e^ {-\nu\rho(I) }=\sum\nolimits_{s\in\mathcal{S}} e^ {-\nu s}
=\tfrac{e^{\nu}}{e^{\nu}-1}= C_\nu.
\]
Condition \eqref{(A3)} is also fulfilled. Indeed, for any  $I_0, I_1\in\mathcal{I}$ 
define $I'(I_0,I_1)=I_0\vee I_1$,
then $(\mathbb{L}_{I_0}\cup\mathbb{L}_{I_1})\subseteq \mathbb{L}_{I'}$ 
and $\rho(I')=I_0\vee I_1\le I_0+I_1=\rho(I_0)+\rho(I_1)$.

As consequence of our general framework results, we obtain the local results of 
Corollary \ref{meta_theorem} for this case of model/structure with the local rate 
$r^2(f)$ defined above. The reader is invited to formulate all these claims.
In turn, by virtue of Corollary \ref{minimax_results} the local results will  imply global minimax 
adaptive results over all scales $\{\Theta_\beta,\, \beta\in\mathcal{B}\}$ at once, 
covered by the oracle rate $r^2(f)$ (i.e., for which \eqref{oracle_minimax} holds).
Below we present the Laplacian scale $\{H^\beta,\, \beta>0\}$ and show that it is covered 
by the oracle rate $r^2(f)$.

\subsection{Minimax results for the  Laplacian graph} 
One common approach to learn functions on graphs 
is Laplacian regularization; see, for example, \cite{Belkin&Matveeva&Niyogi:2004, 
Kirichenko&vanZanten:2017}. The graph Laplacian is defined as $\mathrm{L}=\mathrm{D}
-\mathrm{A}$, where $\mathrm{A}$ is the adjacency matrix of the graph and $\mathrm{D}$ is 
the diagonal matrix with the degrees of the vertices on the diagonal. When viewed as 
a linear operator, the Laplacian acts on a function $f$ as
\begin{align*}
\mathrm{L}f(i)=\sum\nolimits_{j\sim i}\big(f(i)-f(j)\big),
\end{align*}
where we write $i\sim j$ if vertices $i$ and $j$ are connected by an edge. 
Denote the Laplacian eigenvalues, ordered by magnitude, by $\lambda_1\le \lambda_2
\le\ldots \le\lambda_n$. As in \cite{Kirichenko&vanZanten:2017}, we  assume without loss 
of generality that there exist $i_0\in\mathbb{N}$, $C_1>0$ such that for all $n$ large enough 
and $r\ge 1$,
\begin{align*}
&\lambda_i\ge C_1\big(\tfrac{i}{n}\big)^{2/r}, \quad i>i_0,
\end{align*}
and $f\in H^\beta=H^\beta(Q)=\{f:\sum_{i=1}^n(1+n^{2\beta/r}\lambda_i^\beta)f^2(i)\le Q\}$, 
with smoothness $\beta>0$. The minimax estimation rate over the class $H^\beta$ is 
$r^2(H^{\beta})=\inf_{\tilde{f}}\sup_{f \in H^{\beta}}\mathbb{E}_f\|\tilde{f}-f\|^2 
\asymp n^{-\frac{2\beta}{2\beta+r}}$; see \cite{Kirichenko&vanZanten:2018}. 

By taking $I_0=\lfloor n^{r/(2\beta+r)}\rfloor$, we establish \eqref{oracle_minimax} in this case: 
\begin{align*}
\sup_{f\in H^\beta} r^2(f)&=\sup_{f\in H^\beta} \sum_{i=I_o+1}^n f^2(i)+\tfrac{I_o}{n}
\le \sup_{f\in H^\beta} \sum_{i=I_0+1}^n f^2(i)+\tfrac{I_0}{n}\\
&\le \tfrac{Q}{1+n^{2\beta/r}\lambda_{I_0}^\beta}+\tfrac{I_0}{n}\lesssim 
n^{-\frac{2\beta}{2\beta+r}}\asymp r^2(H^\beta).  
\end{align*}
Hence, Corollary \ref{minimax_results} follows for this case with the minimax rate $r^2(H^\beta)$ 
defined above. As compared to the Theorems $3.2$ and $3.3$ in \cite{Kirichenko&vanZanten:2017}, 
there is no restriction on the range of the smoothness $\beta$ in Corollary \ref{minimax_results}, 
and we do not have any extra logarithmic factor in the rate.
 
\section{Density estimation with smoothness structure}
\label{sec_density}

We observe $X_1,\ldots, X_n\sim f$, where $f$ is a density on $[0,1]$. 
Let $\{\varphi_i, i\in\mathbb{N}\}$ be an orthonormal basis in $L_2[0, 1]$. 
For simplicity, consider a basis $\{\varphi_i, i\in\mathbb{N}\}$ such that 
$\sup_{x\in[0,1]}|\varphi_i(x)|\le c_\varphi$ for some $c_\varphi>0$; 
e.g., for the trigonometric basis $c_\varphi=\sqrt{2}$. 
We can expand the density function $f$ in a Fourier series $f(x)=\sum_{i=1}^\infty\theta_i\varphi_i(x)$, 
$x\in [0,1]$, in the $L_2$-sense. Due to Parseval's identity, the problem of estimating 
the density function $f$ in the $L_2$-sense can be converted into the problem of estimating 
the parameter $\theta=(\theta_i)_{i\in\mathbb{N}}$ in the $\ell_2$-sense:
\begin{align}
\label{model_dens}
Y_i=\theta_i+\sigma_n\xi_i, \;\;i\in\mathbb{N},
\end{align}
where $\theta=(\theta_i)_{i\in\mathbb{N}}$ is an unknown high-dimensional parameter 
of interest with $\theta_i=\mathbb{E}_fY_i=\int_0^1\varphi_i(x)f(x)dx$, $Y_i=\frac{1}{n}\sum_{l=1}^n\varphi_i(X_l)$, 
and $\sigma_n\xi_i = Y_i -\theta_i$. 
Since $|Y_i|=|\frac{1}{n}\sum_{l=1}^n\varphi_i(X_l)|\le c_\varphi$,
we have $\sigma_n|\xi_i|\le |Y_i|+|\theta_i|\le 2c_\varphi$ 
and $\mbox{Var} (\sigma_n \xi_i)\le \frac{c_\varphi^2}{n}$. 
The parameter $\sigma_n$ will be chosen later, for now it is any sequence $\sigma_n \in [0,1]$.

Notice that we reduced the original density estimation problem to a finite dimensional version of 
the model from Section \ref{sec_smoothness}, however the errors $\xi_i$'s are now not iid normals, 
which complicates the study of the present model.  
Consider the same smoothness structure 
as in Section \ref{sec_smoothness}, 
with the difference that we restrict the family of structures $I\in\mathcal{I} = [n]_0$. 
The oracle rate becomes
\[
r^2(\theta) =\min_{I \in [n]_0} \big(\sum\nolimits_{i\ge I+1} \theta_i^2+\sigma^2_n I\big)
=\sum\nolimits_{i\ge I_o+1} \theta_i^2+\sigma^2_n I_o.
\]
Conditions \eqref{(A2)} and \eqref{(A3)} are met in the same way as for the signal+noise model from 
Section \ref{sec_smoothness}.
However, in order to derive at least the local estimation and posterior contraction results,
we also need Condition \eqref{cond_nonnormal}. This condition is now 
not immediate since the errors $\xi_i$'s are non-normal and dependent in the model \eqref{model_dens} 
(actually, the $\xi_i$'s are asymptotically normal, but we are not going to rely on this).
We apply the following strategy: introduce certain event 
and establish that the probability of this event is exponentially small (in $n$);
next, under this event establish Condition \eqref{cond_nonnormal}; finally, combine these 
two facts to derive the local estimation and posterior contraction results.

The following proposition is a direct consequence of McDiarmid's inequality; 
see, for instance Theorem 6.2 in \cite{Boucheron&Lugosi&Massart:2012}.
\begin{proposition}
\label{proposition_density}
For any $t>0$ and $i\in[n]$,
\begin{align*}
\mathbb{P}(\sigma_n|\xi_i|\ge t)\le 2\exp\big\{-c_\varphi^{-2}t^2n/2\big\}.
\end{align*}
\end{proposition}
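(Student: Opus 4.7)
The plan is to apply McDiarmid's bounded differences inequality to the statistic $Y_i$ viewed as a function of the iid sample $X_1,\ldots,X_n$. Writing $F(x_1,\ldots,x_n) = \frac{1}{n}\sum_{l=1}^n \varphi_i(x_l)$, we have $Y_i = F(X_1,\ldots,X_n)$ and $\theta_i = \mathbb{E}_f F(X_1,\ldots,X_n)$, so that $\sigma_n\xi_i = F(X_1,\ldots,X_n)-\mathbb{E}_f F(X_1,\ldots,X_n)$, and the claim becomes a standard concentration statement for $|F-\mathbb{E}_f F|$.

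The first step is to verify the bounded differences property of $F$. For any fixed coordinate $l\in[n]$ and any $x_l,x_l'\in[0,1]$ (with the remaining coordinates held fixed), the difference $F(\ldots,x_l,\ldots)-F(\ldots,x_l',\ldots) = (\varphi_i(x_l)-\varphi_i(x_l'))/n$ is bounded in absolute value by $2c_\varphi/n$, using the uniform bound $\sup_{x\in[0,1]}|\varphi_i(x)|\le c_\varphi$ stated at the start of Section \ref{sec_density}. Hence $F$ satisfies bounded differences with constants $c_l = 2c_\varphi/n$, and $\sum_{l=1}^n c_l^2 = 4c_\varphi^2/n$.

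McDiarmid's inequality then yields, for every $t>0$,
\[
\mathbb{P}(\sigma_n|\xi_i|\ge t) = \mathbb{P}\bigl(|F-\mathbb{E}_f F|\ge t\bigr)\le 2\exp\!\Bigl(-\tfrac{2t^2}{\sum_{l=1}^n c_l^2}\Bigr) = 2\exp\!\bigl(-c_\varphi^{-2}t^2 n/2\bigr),
\]
which is exactly the bound asserted in the proposition. I do not anticipate any obstacle: once the Lipschitz-in-one-coordinate constants are identified, the result is a one-line invocation of McDiarmid. Equivalently one could note that $\sigma_n\xi_i$ is a normalized sum of iid centered random variables with range at most $2c_\varphi$ and invoke Hoeffding's inequality, which delivers the same bound.
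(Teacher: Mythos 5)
Your proof is correct and follows exactly the route the paper takes: the paper states the proposition as a direct consequence of McDiarmid's inequality (citing Theorem 6.2 of Boucheron, Lugosi and Massart), and your computation of the bounded-difference constants $c_l=2c_\varphi/n$ and the resulting exponent $t^2n/(2c_\varphi^2)$ fills in precisely the details the paper omits.
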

The relation $\mathbb{P}(\max_{i\in[n]}|\xi_i|\ge t)\le\sum_{i\in[n]}
\mathbb{P}(|\xi_i|\ge t)$ and Proposition \ref{proposition_density} imply that,
for the event $E=\{\max_{i\in[n]}|\xi_i|\le \sqrt{2} c_\varphi\}$,
\begin{align}
\label{rel_max_dens}
\mathbb{P}(E^c)=\mathbb{P}\big(\max_{i\in[n]}|\xi_i|>\sqrt{2}c_\varphi\big)\le 
2\exp\{-n\sigma_n^2+\log n\}.
\end{align}
Now, by using \eqref{rel_max_dens}, we ensure Condition \eqref{cond_nonnormal} 
under the event $E=\{\max_{i\in[n]}|\xi_i|\le \sqrt{2}c_\varphi\}$ with 
$\alpha=1\wedge 1/(2c_\varphi^2)$.
Exactly, for any $I\in[n]_0$, 
\begin{align}
\mathbb{E}\exp\big\{&\alpha \|\mathrm{P}_I\xi\|^2\big\}\mathrm{1}_E=
\mathbb{E} \exp\big\{\alpha\sum_{i=1}^I \xi_i^2\big\}\mathrm{1}\{\max_{i\in[n]}|\xi_i|\le
\sqrt{2}c_\varphi\} \notag\\&
\le \exp\big\{\alpha 2c_\varphi^2 I \big\}=e^I=\exp\{d_I\}.
\label{conditional_A1_dens}
\end{align}

We have thus verified the conditional version of Condition \eqref{cond_nonnormal} 
(under event $E$) and Conditions \eqref{(A2)} and \eqref{(A3)} for the model \eqref{model_dens}. 
This means that we can derive results on estimation, posterior 
contraction and uncertainty quantification for the density $f$ in terms of the model 
\eqref{model_dens}. These are the counterparts of claims (i)-(v) of 
Corollary \ref{meta_theorem} summarized by Theorem \ref{meta_theorem_dens} below. 
To the best of our knowledge, local results on uncertainty quantification for the density 
are new. In the below theorem, we keep the same notation for all the quantities involved as 
in the general framework, with the understanding that these are specialized for the model 
\eqref{model_dens} with the smoothness structure and the oracle rate $r^2(\theta)$.

\begin{theorem}
\label{meta_theorem_dens}
Let the constants $M_0, M_1, M_3, H_0, H_1, H_2, H_3$, $m_0, m_1,m_2,m_3$, 
$c_2, c_3, C_\nu$ 
be defined in Theorems \ref{th1}-\ref{th3} and \eqref{rel_max_dens}.
Then for any $M \ge 0$,
\begin{align*}
&\sup_{\theta\in\ell_2}\mathbb{E}_\theta
\hat{\pi}\big(\|\theta-\vartheta\|^2\ge M_0r^2(\theta)+M\sigma^2_n|Y\big) 
\le 2e^{-n\sigma^2_n+\log n}\!+\! H_0 e^{-m_0 M},\\
&\sup_{\theta\in\ell_2}\mathbb{P}_\theta
\big(\|\hat{\theta}-\theta\|^2 \ge M_1 r^2(\theta)+M\sigma^2_n\big)
\le 2e^{-n\sigma^2_n+\log n}+H_1 e^{-m_1 M},\\
&\sup_{\theta\in\ell_2}\mathbb{E}_\theta
\hat{\pi}\big(I: r^2(I,\theta)\ge c_3 r^2(\theta)+M\sigma^2_n|Y\big) 
\le 2e^{-n\sigma^2_n+\log n}+ C_\nu e^{-c_2 M},\\
&\sup_{\theta\in\ell_2}
 \mathbb{P}_\theta\big(\hat{r}^2\ge M_3 r^2(\theta)+(M+1)\sigma^2_n\big)
\le 2e^{-n\sigma^2_n+\log n}+H_3 e^{-m_3 M},\\
&\sup_{\theta\in\ell_2\cap\Theta_{\rm eb}}\mathbb{P}_\theta
\big(\theta\notin B(\hat{\theta},\hat{R}_M)\big) 
\le 2e^{-n\sigma^2_n+\log n}+H_2 e^{-m_2M}.
\end{align*}
\end{theorem}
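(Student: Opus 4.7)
The plan is to reduce each claim of the theorem to the corresponding claim of Corollary \ref{meta_theorem} by conditioning on the high-probability event $E=\{\max_{i\in[n]}|\xi_i|\le \sqrt{2}c_\varphi\}$ introduced around \eqref{rel_max_dens}. For any measurable set $A$ (and any DDM-event similarly), the elementary decomposition
\[
\mathbb{P}_\theta(A)\le \mathbb{P}_\theta(A\cap E)+\mathbb{P}(E^c),\qquad
\mathbb{E}_\theta[X\,\mathrm{1}_A]\le \mathbb{E}_\theta[X\,\mathrm{1}_{A\cap E}]+\mathbb{P}(E^c)
\]
(the latter for $X\in[0,1]$) peels off the second summand $2e^{-n\sigma_n^2+\log n}$ appearing in every bound of Theorem \ref{meta_theorem_dens} via \eqref{rel_max_dens}. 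So it remains to reproduce the bounds of Corollary \ref{meta_theorem}, but with all probabilities and expectations restricted to the event $E$.

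The key observation is that the restricted-measure version of Condition \eqref{cond_nonnormal} has already been done for us in \eqref{conditional_A1_dens}: with $\alpha=1\wedge 1/(2c_\varphi^2)$ and $d_I=I=\dim(\mathbb{L}_I)$,
\[
\mathbb{E}_\theta\bigl[\exp\{\alpha\|\mathrm{P}_I\xi\|^2\}\mathrm{1}_E\bigr]\le e^{d_I}\qquad \text{for all }I\in[n]_0,\ \theta\in\ell_2.
\]
Conditions \eqref{(A2)} and \eqref{(A3)} are structure-only conditions on $\rho(I)=I$ and $\{\mathbb{L}_I\}$, and have already been verified verbatim in Section \ref{sec_smoothness} with $C_\nu=e^\nu/(e^\nu-1)$. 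Consequently, tracing through the proofs of Lemmas \ref{secondary_lemma}--\ref{main_lemma} and Theorems \ref{th1}--\ref{th3}, every invocation of Condition \eqref{cond_nonnormal} occurs in the form $\mathbb{E}_\theta\exp\{\alpha\|\mathrm{P}_I\xi\|^2\}\le e^{d_I}$ inside a larger (non-negative) product; inserting the indicator $\mathrm{1}_E$ and applying the restricted bound above produces the same numerical estimates. Similarly, every application of the Markov inequality $\mathbb{P}_\theta(\alpha\|\mathrm{P}_I\xi\|^2\ge t)\le e^{-t}\mathbb{E}_\theta e^{\alpha\|\mathrm{P}_I\xi\|^2}$ carries over to $\mathbb{P}_\theta(\{\alpha\|\mathrm{P}_I\xi\|^2\ge t\}\cap E)\le e^{-t}\mathbb{E}_\theta[e^{\alpha\|\mathrm{P}_I\xi\|^2}\mathrm{1}_E]\le e^{-t+d_I}$. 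Hence every intermediate bound in the proofs of Theorems \ref{th1}--\ref{th3} continues to hold with the event $E$ intersected in. Likewise, the moment bound \eqref{moment_bound}, which is the only other derived consequence of \eqref{cond_nonnormal} used in the proof of Theorem \ref{th1}, extends to $\mathbb{E}_\theta\bigl[\|\mathrm{P}_I\xi\|^4 \mathrm{1}_E\bigr]\le (t\alpha)^{-2}e^{2t\rho(I)}$ by exactly the same H\"older argument.

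Assembling the pieces, on $E$ we obtain the five bounds
\[
\mathbb{E}_\theta\bigl[\hat{\pi}(\|\vartheta-\theta\|^2\ge M_0 r^2(\theta)+M\sigma_n^2\mid Y)\mathrm{1}_E\bigr]\le H_0 e^{-m_0 M},
\]
and analogously for claims (ii)--(v) of Corollary \ref{meta_theorem}, with the constants inherited from Theorems \ref{th1}--\ref{th3} (and the structural constants $C_\nu,c_2,c_3$ from Section \ref{sec_smoothness}). Adding the complementary bound $\mathbb{P}(E^c)\le 2e^{-n\sigma_n^2+\log n}$ from \eqref{rel_max_dens} to each of the five inequalities yields exactly the five claims of Theorem \ref{meta_theorem_dens}, with the supremum over $\theta\in\ell_2$ (respectively $\theta\in\ell_2\cap\Theta_{\rm eb}$ for the coverage claim) being preserved because all the constants are independent of $\theta$.

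The only technical obstacle that deserves attention is the coverage claim (v), which passes through the DDM-bound in Theorem \ref{th2}(ii) and uses the $\tau_0$-oracle construction \eqref{I_*}. One has to check that the intersection with $E$ does not interfere with the three lemma-based reductions in that proof; however, each of those reductions is again an application of Lemma \ref{secondary_lemma} combined with Condition \eqref{(A3)}, so the $\mathrm{1}_E$-insertion argument above is directly applicable. No adjustments to the constants $M_0,\ldots,m_3$ are needed, and the statement of Theorem \ref{meta_theorem_dens} follows.
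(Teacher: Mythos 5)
Your proposal is correct and follows essentially the same route as the paper's own (sketched) argument: decompose each quantity as $\mathbb{P}_\theta(E^c)+\mathbb{E}_\theta[\,\cdot\,1_E]$, bound the first term by \eqref{rel_max_dens}, and rerun the proofs of Theorems \ref{th1}--\ref{th3} using the conditional Condition \eqref{cond_nonnormal} from \eqref{conditional_A1_dens}. Your additional checks (that the H\"older/Markov steps and the moment bound \eqref{moment_bound} survive insertion of $1_E$, and that the auxiliary measure $\mathbb{P}_Z$ is unaffected) fill in exactly the details the paper omits.
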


Let us outline the idea of the proof (which is omitted) of the first claim of the above theorem;
the same reasoning applies to the remaining claims.
The expectation of the empirical Bayes posterior probability $\mathbb{E}_\theta \Pi=
\mathbb{E}_\theta\hat{\pi}\big(\|\theta-\vartheta\|^2\ge 
M_0r^2(\theta)+M\sigma^2_n|Y\big)$ is bounded by the sum of two terms 
$\mathbb{E}_\theta \Pi\le \mathbb{P}_\theta(E^c)
+\mathbb{E}_\theta \Pi 1_E$. The first term is evaluated by using \eqref{rel_max_dens}
(obtaining the bound $2e^{-n\sigma^2_n+\log n}$); the second term is evaluated exactly 
in the same way as in the proof Theorem \ref{th1}, because Condition \eqref{cond_nonnormal} 
is fulfilled under the event $E$ according to \eqref{conditional_A1_dens}.   
Counterparts of assertions (ii) and (iii) of Theorem \ref{th2} can also be formulated and proved
in the same way. Notice that the results that rely on Condition \eqref{cond_A4} are 
not claimed as we are unable to verify this condition at the moment.

As to the choice of $\sigma^2_n$ in the oracle rate, clearly, we would want 
it to be as small as possible. On the other hand, we want the claims of the theorem to be non-void,
which is ensured only if $\sigma^2_n n \ge C \log n$, or $\sigma^2_n\ge \tfrac{C\log n}{n}$, 
for sufficiently large $C>0$. In the sequel we take therefore $\sigma^2_n=\tfrac{C\log n}{n}$.
An extra log factor thus appeared which will also enter the minimax rates in the global results.   
We conjecture that one can get rid of that factor by using more accurate concentration inequalities 
when establishing Condition \eqref{cond_nonnormal}.

As usually, the local results of Theorem \ref {meta_theorem_dens} will imply global minimax 
adaptive results simultaneously over all scales $\{\Theta_\beta,\, \beta\in\mathcal{B}\}$ covered by 
the oracle rate $r^2(\theta)$ (i.e., for which \eqref{oracle_minimax} holds).
Hence, the same adaptive minimax results for the same scales as in Section \ref{sec_smoothness} follow,
up to a log factor as we have $\sigma^2_n\asymp \frac{\log n}{n}$ in the model \eqref{model_dens}
instead of $n^{-1}$ in the model from Section \ref{sec_smoothness}. 
The reader is invited to formulate a number of local and adaptive minimax 
results for this case. We should mention that it seems possible to 
extend the results to other structures (e.g., sparsity) and scales (e.g., Besov scales).   


\section{Regression under wavelet basis (smoothness+sparsity structure)} 
\label{sec_regression_wavelet}

Consider the observations
\begin{align}
\label{model_wavelet}
Y_{jk}=\theta_{jk}+\tfrac{1}{\sqrt{n}}\xi_{jk},\quad \xi_{jk} \overset{\rm ind}{\sim}\mathrm{N}(0,1),
\quad (jk)\in \mathcal{K}=\{(jk): j\in\mathbb{N}_0,\; k\in [2^j]\}.
\end{align}
This model is obtained as the result of the orthogonal wavelet transform of an additive
regression function observed in Gaussian noise with $\sigma^2=n^{-1}$, or just as 
a sequence version (with respect to some 
wavelet basis) of the continuous \emph{white noise model}. We could also consider 
a high dimensional ``projected'' (see (9.57) in \cite{Johnstone:2017}) variant of \eqref{model_wavelet}, where 
$j\in[J_o]$ with $2^{J_0+1}= n$. For further references, details, many interesting connections 
and relations of the above model to the function estimation theory, we refer to the very 
comprehensive and insightful account \cite{Johnstone:2017} on this topic. 
We adopt  the notation and conventions from \cite{Johnstone:2017}.  

The model here is of the type signal+noise but can also be regarded as matrix+noise.
The structure studied here is some kind of smoothness, but different from the previous 
case, now geared towards describing functions from \emph{Besov scales}; in a way, 
it is combined \emph{smoothness+sparsity} structure. For this model, there is vast literature on 
estimation, especially in the global settings related to the Besov scales, much less literature 
on uncertainty quantification; we mention some relevant references below.

The smoothness+sparsity structure of $\theta=(\theta_{jk}, (jk)\in\mathcal{K})$ 
is modeled by the linear spaces 
\begin{align*}
\mathbb{L}_{I}=\big\{(x_{jk}, (jk)\in\mathcal{K}): x_{jk}=0\;\, \forall\, j\in [j_0]_0,k\in I_j^c\,
\;\text{and}\;\, \forall \, j> j_0,\, k\in[2^j]   \big\},
\end{align*}
where $I=(j_0,I_0,\ldots, I_{j_0}) \in\mathcal{I}=\big\{(j_0,I_0,\ldots, I_k): j_0 \in\mathbb{N}_0, 
I_j\subseteq [2^j], j\in[j_0]_0\big\}$. 
The structural slicing mapping is $s(I)=(j_0,|I_0|,\ldots,|I_{j_0}|)$ and 
$d_I=\dim(\mathbb{L}_{I})=\sum_{m=0}^{j_0}|I_m|$.
Compute $|\mathcal{I}_{s(I)}|=\prod_{k=0}^{j_0}\binom{2^k}{|I_k|}$, hence 
$\log |\mathcal{I}_{s(I)}|=\sum_{k=0}^{j_0}\log \binom {2^k}{|I_k|}
\le \sum_{k=0}^{j_0}|I_k|\log(\frac{e 2^k}{|I_k|})$.
Since $d_I+\log |\mathcal{I}_{s(I)}|
\le 2\sum_{k=0}^{j_0}|I_k|\log(\frac{e 2^k}{|I_k|})$, we take the majorant  
$\rho(I)=2\sum_{k=0}^{j_0}|I_k|\log(\frac{e 2^k}{|I_k|})$.

Conditions \eqref{cond_nonnormal} and \eqref{cond_A4} hold with $d_I =\dim(\mathbb{L}_I)$
in view of Remarks \ref{rem_cond_A1} and \ref{rem15}. Condition \eqref{(A2)} is also fulfilled, 
since, according to Remark \ref{rem5}, for any $\nu>2$ 
\begin{align*}
\sum_{I\in\mathcal {I}} e^{-\nu \rho(I)}
&\le \sum_{s\in\mathcal{S}}  e^{-(\nu-1) \rho(I)}
\le \sum_{j_0=0}^\infty \sum_{k_0=1}^{2^0} \ldots \sum_{k_m=1}^{2^{j_0}}  e^{-(\nu-1)(k_0+\ldots+k_m)}\\
&\le 
\sum_{j_0=0}^\infty \big(\tfrac{1}{e^{\nu-1}-1}\big)^{j_0+1}\le \tfrac{1}{e^{\nu-1}-2}=C_\nu.
\end{align*}
Finally, for  any  $I^0, I^1\in\mathcal{I}$  define  $j''_0=\min\{j_0^0,j_0^1\}$, $j'_0=\max\{j_0^0,j_0^1\}$ and $I'(I^0,I^1)\in\mathcal{I}$ such that 
\[
I'(I^0,I^1)=(I_0^0\cup I_0^1, I_1^0\cup I_1^1,\ldots , I_{j''_0}^0\cup I_{j''_0}^1,
I_{j''_0+1}^{\mathrm{1}\{j'_0=j_0^1\}},\ldots, I_{j'_0}^{\mathrm{1}\{j'_0=j_0^1\}} ).
\]
Then  $(\mathbb{L}_{I^0}\cup\mathbb{L}_{I^1})
\subseteq \mathbb{L}_{I'}$ and 
\[
\sum_{m=0}^{j'_0}|I_m'|\log \big(\tfrac{e 2^m}{|I_m'|}\big)
\le\sum_{m=0}^{j_0^0}|I_m^0|\log \big(\tfrac{e 2^m}{|I_m^0|}\big)+ \sum_{m=0}^{j_0^1}|I_m^1|
\log \big(\tfrac{e 2^m}{|I_m^1|}\big),
\]
which entails Condition \eqref{(A3)}.

As consequence of our general results, we obtain Corollary \ref{meta_theorem}
for this case with the local rate $r^2(\theta)= \min_{ I \in \mathcal{I}} \big\{\|\theta-\mathrm{P}_I \theta\|^2 +
\tfrac{1}{n}\rho(I)\big\}$. 
Below we present the example of Besov scale, for which the global minimax adaptive results follow
from the local results. We should mention that there are of course more scales covered 
by the oracle rate $r^2(\theta)$, the reader is invited to make computations for other 
interesting scales. Besides, the results can be extended to non-normal, not independent 
$\xi_{jk}$'s, but only satisfying Condition \eqref{cond_nonnormal}.

\subsection{Minimax results for the Besov scale} 
\label{subsec_Besov_scale}
Assume that  the true signal $\theta$ belongs to a Besov ball 
\begin{align}
\label{Besov_class}
\Theta_{p,q}^\beta(Q)=\Big\{\theta: \sum_{j=0}^\infty2^{ajq}\big(\sum_{k=1}^{2^j}
\theta_{jk}^p\big)^{q/p}\le Q^q\Big\}, \quad a=\beta+\tfrac{1}{2}-\tfrac{1}{p},
\end{align}
for some $p,q, Q>0$ and $\beta\ge 1/p$. 
The minimax rate over $\Theta_{p,q}^\beta(Q)$ is known to be $r^2(\Theta_{p,q}^\beta(Q))\asymp 
n^{-\frac{2\beta}{2\beta+1}}$. The adaptive minimax results for the scale of the 
class $\Theta_{p,q}^\beta(Q)$ were considered by \cite{Rivoirard&Rousseau:2012, Hoffmann&Rousseau&Schmidt-Hieber:2015, 
Gao&vanderVaart&Zhou:2015} and many others for posterior contraction rates, 
and \cite{Bull&Nickl:2013} for constructing optimal confidence balls.

Let $j_*=\lfloor\log_2 n\rfloor$. Define $\mathcal{I}_*
=\{I\in\mathcal{I}: j_0(I)=j_*\}$ and note that $\mathcal{I}_*\subset\mathcal{I}$. 
Hence, for any $\theta\in \Theta_{p,q}^\beta(Q)$,
\begin{align*}
r^2(\theta) &\le \min_{ I \in \mathcal{I}_*}\big\{\|\theta-\mathrm{P}_I \theta\|^2+\tfrac{1}{n}\rho(I)\big\} \\
&\le 
\sum_{j=0}^{j_*}\sum_{k\in I_{oj}^c}\theta_{jk}^2\!+\!\sum_{j=j_*+1}^\infty  \sum_{k=1}^{2^j}\theta_{jk}^2
+\sum_{j=0}^{j_*} \tfrac{|I_{oj}|}{n}\log \big(\tfrac{e 2^j}{|I_{oj}|}\big)\\
&\le
\sum_{j=0}^{j_*}\min_{0\le k\le 2^j}\Big(\sum_{l>k}\theta_{j(l)}^2+
C_1\frac{k}{n}\log(e2^j/k)\Big)+\sum_{j=j_*+1}^\infty  \sum_{k=1}^{2^j}\theta_{jk}^2\\
&\le C_2 n^{-\frac{2\beta}{2\beta+1}} +C_3n^{-1} \lesssim n^{-\frac{2\beta}{2\beta+1}}
\asymp r^2(\Theta_{p,q}^\beta(Q)),
\end{align*}
where  $\theta_{j(l)}^2$ denotes the $l$-th largest value among  $\{\theta_{jk}^2, j\in[2^k]\}$. 
The third inequality of the last display follows from Theorem 12.1 in \cite{Johnstone:2017} 
under the assumption $\beta\ge 1/p$. 
We thus established the relation \eqref{oracle_minimax} for the Besov scale,
and Corollary \ref{minimax_results} follows with the minimax 
rate $r^2(\Theta_{p,q}^\beta(Q))$ defined above.

\begin{remark}
Interestingly, as is shown in Section \ref{subsec_besov_rev} (see also \cite{Belitser&Nurushev:2015}),
the global results on Besov scales for the model \eqref{model_wavelet} can also be derived as consequence 
of the local approach to the signal+noise model with sparsity structure.
\end{remark}

\section{Signal+noise with sparsity structure} 
\label{sec_sparsity}
Consider the observations
\begin{align}
\label{model_sparsity}
Y_i=\theta_i+\sigma\xi_i,\quad i\in[n],
\end{align}
where $\theta=(\theta_i)_{i\in[n]}\in\Theta=\mathbb{R}^n$ is an unknown parameter
and $\xi_i \overset{\rm ind}{\sim}\mathrm{N}(0,1)$.
According to the local approach, the goal is to fully exploit 
all the \emph{sparsity structure} in the high-dimensional vector 
$\theta$. There is a vast literature on estimation and posterior contraction, 
some relevant references can be found below.
The local approach for this model,
delivering also the adaptive minimax results for various sparsity scales simultaneously, is
considered in \cite{Belitser&Nurushev:2015, Gao&vanderVaart&Zhou:2015} 
for posterior contraction rates (in \cite{Belitser&Nurushev:2015}, 
also for uncertainty quantification problem).

The classical sparsity structure is modeled by the linear spaces 
\begin{align*}
\mathbb{L}_I=\big\{x\in\mathbb{R}^n: x_i=0, i\in I^c  \big\},\quad I\in\mathcal{I} = 
\{J: J \subseteq [n]\}.
\end{align*}
In this case, $d_I=\dim(\mathbb{L}_I)=|I|$, $\|\theta-\mathrm{P}_I\theta\|^2=\sum_{i\in I^c}\theta_i^2$,  
the structural slicing mapping is defined to be $s(I)=|I|\in\mathcal{S}\triangleq[n]_0$. 
Compute $|\mathcal{I}_{s(I)}|=\binom{n}{|I|}$, hence $\log |\mathcal{I}_{s(I)}|=\log \binom {n}{|I|}
\le |I|\log(\frac{en}{|I|})$. Since $d_I+\log |\mathcal{I}_{s(I)}|
\le |I|+|I|\log(\frac{en}{|I|})$, we take the majorant $\rho(I)=2|I|\log(\frac{en}{|I|})$.

Conditions \eqref{cond_nonnormal} and \eqref{cond_A4} hold with $d_I =\dim(\mathbb{L}_I)$
in view of Remarks \ref{rem_cond_A1} and \ref{rem15}. 
Condition \eqref{(A2)} is fulfilled, since, according to Remark \ref{rem5}, for any $\nu>1$ 
\begin{align*}
\sum_{I\in\mathcal {I}} e^{-\nu \rho(I)}
&\le \sum_{s\in\mathcal{S}}  e^{-(\nu-1) \rho(I)}
\le \sum_{s=0}^n\big(\tfrac{en}{s}\big)^{-(\nu-1)s}\le 
\tfrac{1}{1-e^{1-\nu}}= C_\nu.
\end{align*}
Finally, for  any  $I_0, I_1\in\mathcal{I}$  define  $I'=I_0\cup I_1$.
Then  $(\mathbb{L}_{I_0}\cup\mathbb{L}_{I_1})
\subseteq \mathbb{L}_{I'}=\mathbb{L}_{I_0}+\mathbb{L}_{I_1}$ and $ |I'|\log \big(\frac{e n}{|I'|}\big)
\le |I_0|\log \big(\frac{e n}{|I_0|}\big)+ |I_1|\log \big(\frac{e n}{|I_1|}\big)$, which entails Condition \eqref{(A3)}.

\begin{remark}
\label{rem_rho'}
We can take a slightly better majorant, $\rho'(I)=\max\{|I|,\log\binom{n}{|I|}\}$. 
\end{remark}

As a consequence of our general results, we obtain Corollary \ref{meta_theorem}
with the local rate $r^2(\theta)= \min_{ I \in \mathcal{I}}\big\{\|\theta
-\mathrm{P}_I\theta\|^2 +\sigma^2\rho(I)\big\}$.
In view of Remark \ref{rem_rho'}, the results hold also with the local rate 
$r^2(\theta)= \min_{ I \in \mathcal{I}} \big\{\|\theta
-\mathrm{P}_I\theta\|^2 +\sigma^2\rho'(I)\big\}$. 
As $\rho'(I)\le \rho(I)$ for all $I\in\mathcal{I}$, the local rate with 
$\rho'(s)$ is smaller than the rate with $\rho(I)$ implying a stronger version of 
Corollary \ref{meta_theorem}. However, the quantity $\rho(I)$ is easier to compute, so 
we will use the majorant $\rho(I)$.

Below we present a couple of examples of scales $\{\Theta_\beta,\, \beta \in\mathcal{B}\}$, 
for which the global minimax adaptive results follow from the local results. 
There are of course more scales covered by the
oracle rate $r^{2}(\theta )$, one can establish the relation
\eqref{oracle_minimax} for other scales, for example for smoothness
scales (with a log factor in the minimax rate for smoothness scales).
Recall also that the results can be extended to non-normal and not necessarily independent 
$\xi_{i}$'s, but only satisfying Condition \eqref{cond_nonnormal}. For example, 
as demonstrated in \cite{Belitser&Nurushev:2015}, $\xi_i$'s originating from a 
certain AR(1)-model also satisfy Condition \eqref{cond_nonnormal}.

\subsection{Minimax results for the nearly black vectors $\ell_0$} 
By $I^*(\theta)$ and $s(\theta)$, we denote respectively the active index set and 
the sparsity of $\theta\in\mathbb{R}^n$. For $p\in[n]$, introduce the \emph{sparsity class} 
(also called \emph{nearly black vectors})
\begin{align}
\label{nearly_black_vectors}
\ell_0[p]=\{\theta\in\mathbb{R}^n: \|\theta\|_0=|I^*(\theta)|\le p\},\;\; 
I^*(\theta)=\{i\in[n]: \theta_i\!\not\! =0\}.
\end{align}
The minimax estimation rate over the class  of nearly black vectors $\ell_0[p]$ with the sparsity 
parameter $p$  is known to be $r^2(\ell_0[p])\asymp\sigma^2p\log(\frac{n}{p})$ 
(usually in the literature $p= p_n=o(n)$ as $n\to \infty$, but we do not impose this restriction); 
see \cite{Donoho&Johnstone&Hoch&Stern:1992}. 
The adaptive minimax results for \emph{nearly black vectors} were
considered in \cite{Belitser&Nurushev:2015, Castillo&vanderVaart:2012, Martin&Walker:2014,  vanderPas&Kleijn&vanderVaart:2014} and many others  for posterior contraction rates, and in    \cite{Belitser&Nurushev:2015, vanderPas&Szabo&vanderVaart:2017} 
for constructing optimal confidence balls.

By the definition  \eqref{oracle} of the oracle rate $r^2(\theta)$,
we have that $r^2(\theta) \le r^2(I^*(\theta),\theta)$. Then 
we obtain trivially that  
\[
\sup_{\theta\in \ell_0[p]} r^2(\theta) 
\le \sup_{\theta\in \ell_0[p]} r^2(I^*(\theta),\theta)
\le \sigma^2 p \log\big(\tfrac{en}{p}\big)\lesssim r^2(\ell_0[p]).
\]
We thus established the relation \eqref{oracle_minimax} for the scale $\{\ell_0[p],\,0\le p\le n\}$,
and Corollary \ref{minimax_results} follows with the minimax 
rate $r^2(\ell_0[p])$ defined above.

\subsection{Minimax results for the weak $\ell_q$-balls} 
For $q\in(0,2)$, the \emph{weak $\ell_q$-ball} of sparsity $p_n$ is defined by
\begin{align}
\label{weak_balls}
m_q[p_n]=\big\{\theta\in\mathbb{R}^n: 
\theta^2_{[i]}\le(p_n/n)^2(n/i)^{2/q}, \, i\in[n]\big\},
\end{align} 
where $p_n=o(\sigma n) \; \text{as} \; n\to \infty$,  $\theta_{[1]}^2\ge\ldots\ge \theta_{[n]}^2$ 
are the ordered 
$\theta_1^2,\ldots, \theta_n^2$. 
This class 
can be thought of as Sobolev hyperrectangle for ordered (with unknown locations) coordinates:
$m_q[p_n]=\mathcal{H}(\beta,\delta_n)=
\{\theta\in\mathbb{R}^n: |\theta_{[i]}| \le \delta_n i^{-\beta}\}$, with 
$\delta_n =p_n n^{-1+1/q}$ and $\beta=1/q > 1/2$.

Denote $j=O_\theta(i)$ if $\theta_i^2=\theta_{[j]}^2$, with 
the convention that in the case $\theta_{i_1}^2=\ldots =\theta_{i_k}^2$ for $i_1 < \ldots < i_k$ 
we let $O_\theta(i_{l+1})=O_\theta(i_l)+1$, $l=1,\ldots, k-1$. 
The minimax estimation rate  over this class is 
$r^2(m_q[p_n])=n(\tfrac{p_n}{n})^q[\sigma^2\log(\tfrac{n\sigma}{p_n})]^{1-q/2}$ 
when $n^{2/q}(\tfrac{p_n}{n})^2 \ge \sigma^2 \log n$, and $r^2(m_q[p_n])
=n^{2/q}(\tfrac{p_n}{n})^2 +\sigma^2$ when $n^{2/q}(\tfrac{p_n}{n})^2<\sigma^2 \log n$,
as $n\to \infty$; see \cite{Donoho&Johnstone:1994b, Birge&Massart:2001}. 
The adaptive minimax results for the scale of \emph{weak $\ell_q$-balls} were
considered in \cite{Belitser&Nurushev:2015, Castillo&vanderVaart:2012}  for posterior contraction rates and in \cite{Belitser&Nurushev:2015}  for constructing optimal confidence balls.
We take $I_0(\theta)=\{i \in[n]: O_\theta(i) \le p^*_n\}$,
with $p_n^*=en (\frac{p_n}{n\sigma})^q[\log(\frac{n\sigma}{p_n})]^{-q/2}$ 
in the case $n^{2/q}(\tfrac{p_n}{n})^2\ge\sigma^2 \log n$, to derive \eqref{oracle_minimax}:  
\begin{align*}
\sup_{\theta\in m_q[p_n]} r^2(\theta) &\le \sup_{\theta\in m_q[p_n]} r^2(I_0(\theta),\theta)
\le \sigma^2 p_n^*\log(\tfrac{en}{p^*_n})+n^{2/q} (\tfrac{p_n}{n})^2\sum_{i>p_n^*} i^{-2/q} 
\notag\\
&\le C_1\sigma^2p_n^*\log(\tfrac{n\sigma}{p_n})+C_2 n^{2/q} (\tfrac{p_n}{n})^2 (p^*_n)^{1-2/q} \notag\\
&\lesssim n (\tfrac{p_n}{n})^q\big[\sigma^2 \log (\tfrac{n\sigma}{p_n})\big]^{1-q/2}
\lesssim r^2(m_q[p_n]).  
\end{align*}
The case $n^{2/q}(\tfrac{p_n}{n})^2<\sigma^2 \log n$ is treated similarly by taking
$p_n^*=0$. 
Corollary \ref{minimax_results} follows for this case with the minimax rate $r^2(m_q[p_n])$  defined above.

\subsection{Minimax results for Besov scales}
\label{subsec_besov_rev}
Consider again the model \eqref{model_wavelet} with $j \in [J_0]_0$, where $J_{0}\in \mathbb{N}$ 
is such that $2^{J_{0}+1}=n$. 
We can see \eqref{model_wavelet} as $J_{0}+1$ models of type
\eqref{model_sparsity}, where $\sigma ^{2} = n^{-1}$ and the $j$-th model has
$2^{j}$ observations, $j \in [J_{0}]_{0}$. Let $\theta ^{j} =\bigl(\theta_{jk}, k \in [2^{j}]\bigr)$ 
and $r^{2}(\theta ^{j}, I_{oj})$ denote the oracle
rate in $j$-th model. Then aggregating the oracle results over these $J_{0}+1=\log _{2}n$ models leads to the results for the whole model \eqref{model_wavelet} with the aggregated oracle rate $r^{2}(\theta )
= \sum_{i \in [J_{0}]_{0}} r^{2}(\theta ^{j}, I_{oj})$. Because of the
aggregation, in Corollary \ref{meta_theorem} we get $\frac{\log _{2} n}{n} M$
instead of $\sigma ^{2} M$ and $(\log _{2} n)H_{l}$ instead of
$H_{l}$, $l=0,1$.

Assume that the true signal $\theta $ belongs to a Besov ball $\Theta _{p,q}^{\beta }(Q)$ defined 
by \eqref{Besov_class}, for some $p,q, Q>0$, $\beta \ge 1/p$.
Now, exactly in the same way as in Section \ref{subsec_Besov_scale}, we derive that
for any $\theta\in \Theta _{p,q}^{\beta }(Q)$, 
\begin{align*}
r^{2}(\theta ) &\le \sum_{j\in [J_{0}]_{0}}\sum
_{k\in I_{oj}^{c}} \theta _{jk}^{2}+ \sum
_{j\in [J_{0}]_{0}} \frac{| I_{oj} | }{n}
\log \Bigl(\frac{e 2^{j}}{ | I_{oj} |}\Bigr) 
\\&
\le \sum_{j\in [J_{0}]_{0}}\min_{0\le k\le 2^{j}}
\biggl(\sum_{l>k} \theta _{j(l)}^{2}+
\frac{k}{n}\log \bigl(e2^{j}/k\bigr) \biggr) 
\le C
n^{-\frac{2
\beta }{2\beta +1}} \asymp r^{2}\bigl(\Theta _{p,q}^{\beta }(Q)
\bigr),
\end{align*}
where $\theta _{j(l)}^{2}$ denotes the $l$-th largest value among $\{\theta _{jk}^{2}, j\in [2^{k}]\}$. 
The third inequality of the last display follows from Theorem 12.1 in \cite{Johnstone:2017} under the
assumption $\beta \ge 1/p$. We thus established the relation \eqref{oracle_minimax} for the Besov scale, 
so that the global minimax adaptive results for the Besov scale follow by Corollary
\ref{minimax_results} with $\Theta _{\beta }= \Theta _{p,q}^{\beta }(Q)$ and the minimax rate 
$r^{2}(\Theta _{\beta}) =r^{2}\bigl(\Theta _{p,q}^{\beta }(Q)\bigr)\asymp n^{-\frac{2\beta }{2\beta +1}}$. 
Recall that we have to set $\frac{\log _{2} n}{n} M$ instead of $\sigma ^{2} M$ and $(\log _{2} n)H_l$ 
instead of $H_{l}$, $l=0,1$, because of the aggregation. In this case, the asymptotic regime $n\to \infty $ 
is of interest. Let us formulate the first claim of Corollary \ref{minimax_results} in this case
(other claims can be formulated similarly): for some $C>0$ and any $M\ge 0$,
\[
\sup_{\theta \in \Theta _{p,q}^{\beta }(Q)}
\mathrm{E}_{\theta } \hat{\pi } \Bigl(
\| \vartheta -\theta \|^{2}\ge C n^{-\frac{2\beta }{2\beta +1}}+
M\frac{\log _{2} n}{n} \big| Y \Bigr) \le H_{0} (\log _{2} n) e^{-m_{0}M}.
\]
Take for example $M=M_{n}=n^{1/(2\beta +1)}/\log _{2} n$ to obtain a well
interpreted asymptotic relation. 
\begin{remark}
\label{rem_better_structure}
Notice that we consider minimax results over Besov scales also in Section \ref{subsec_Besov_scale}, 
and the  results obtained in this section are slightly weaker than the ones from Section \ref{subsec_Besov_scale}, 
in view of the log factors. This is because the structure here is sparsity, whereas in 
Section \ref{subsec_Besov_scale} it is smoothness+sparsity that is better suited for Besov scales.
\end{remark}
 
\section{Signal+noise with clustering (multi-level sparsity) structure}
\label{sec_multi-level}

Consider the same model \eqref{model_sparsity}, but now with
the so called \emph{clustering} (or \emph{multi-level sparsity})  structure, an extension of the traditional sparsity 
structure. In the usual one-level sparsity structure we have just one known sparsity level, 
which is by default zero. The first attempt to study a version of 
such structure has been undertaken in \cite{Belitser&Nurushev:2017} (by a different approach),  
here we propose a systematic approach to this from the general perspective 
of the linear spaces for the first time.   
To the best of our knowledge, this structure has never been systematically studied in the literature.

First we extend the classical sparsity structure by allowing the sparsity level to be 
an unknown constant, not necessarily zero. This extended \emph{unknown level sparsity} structure
is described by the linear spaces:
\begin{align*}
\mathbb{L}_I=\big\{x\in\mathbb{R}^n: x_i=x_j,\, \forall\, i,j\in I^c\big\},\quad I\in\mathcal{I} = 
\{J: J \subseteq [n]\}.
\end{align*}
Then $d_I=\dim(\mathbb{L}_I)=(|I|+1)\wedge n$, $\|\theta-\mathrm{P}_I\theta\|^2
=\sum_{i\in I^c}(\theta_i-\bar{\theta}_{I^c})^2$ (where 
$\bar{\theta}_{I^c} =|I^c|^{-1} \sum_{i\in I^c} \theta_i$), and the structural slicing mapping
$s(I)=|I|\in\mathcal{S}\triangleq[n]_0$. 
Compute $|\mathcal{I}_s|=\binom{n}{s}$, hence 
$d_I+\log |\mathcal{I}_{s(I)}|=d_I+\log \binom {n}{|I|}
\le (|I|+1)\wedge n+|I|\log(\frac{en}{|I|})$ and the majorant is $\rho(I)=(|I|+1)\wedge n+|I|\log(\frac{en}{|I|})$.

Next, we extend the one-level sparsity structure to the multi-level 
sparsity structure (with unknown sparsity levels) by introducing  the following linear spaces:
for a partition $I=(I_i, \, i\in [m]_0)$ of the set $[n]$ into $m+1$ parts,
\begin{align*}
\mathbb{L}_I=\big\{x\in\mathbb{R}^n: x_j=x_{j'},\, \forall j,j'\in I_i,\, 
i\in [m] \big\}, \quad I\in\mathcal{I},
\end{align*}
where $\mathcal{I}=\mathcal{I}_m$ is the family of all partitions of $[n]$ into $m+1$ parts 
(some possibly empty), and $m=2,\ldots, n-1$. 
This can also be seen as \emph{clustering structure}, where the partition $I$ determines 
clustering of the coordinates of $\theta$ into $m+1$ groups. 
In this case,  compute $\|\theta-\mathrm{P}_I\theta\|^2
=\sum_{k=1}^m \sum_{i\in I_k}(\theta_i-\bar{\theta}_{I_k})^2$ with the group averages
$\bar{\theta}_{I_k} = \tfrac{1}{|I_k|}\sum_{i\in I_k} \theta_i$, 
the structural slicing mapping is taken to be $s(I)=(|I_i|, i\in[m]_0)\in\mathcal{S}$, 
where $\mathcal{S}=\mathcal{S}(n,m+1)=\{(n_i,i\in[m]_0):\, n_i\in[n]_0, \sum_{i\in[m]_0} 
n_i=n\}$ is the family of the so called \emph{weak compositions} of $n$ into $m+1$ parts. 
It is well known that $|\mathcal{S}|={n+m  \choose m}$.
Further we have $d_I=\dim(\mathbb{L}_I)=(|I_0|+m)\wedge n$ and 
$|\mathcal{I}_{s(I)}|=\binom{n}{|I_0|,\ldots, |I_m|}$ is the multinomial coefficient.

\begin{remark}
An interesting variation of the above structure is when we insist on consecutive clusters:
for $t_i \in \mathbb{N}$ such that $1=t_1\le \ldots \le t_m<  t_{m+1}=n+1$, 
let $I_i =\{t_i,t_i+1, \ldots, t_{i+1}-1\}$, $i\in[m]$, with the convention that $I_i=\varnothing$
if $t_i=t_{i+1}$. 
One can do the computations for this case, 
also when allowing the number of clusters vary: $m\in[n]$. This will be studied elsewhere 
in the context of a \emph{change point} problem.
\end{remark}

Conditions \eqref{cond_nonnormal} and \eqref{cond_A4} hold with $d_I =\dim(\mathbb{L}_I)$
in view of Remarks \ref{rem_cond_A1} and \ref{rem15}. 
To ensure Condition \eqref{(A2)}, we have to compensate for the number $|\mathcal{S}|$ (which can 
be big in general) by adding the term $\log|\mathcal{S}|=\log{n+m  \choose m}$ in the complexity 
majorant $\rho(I)$. Hence,  we take the majorant $\rho(I)=(|I_0|+m)\wedge n
+\log \binom{n}{|I_0|,\ldots, |I_m|}+\log{n+m  \choose m}$, so that
Condition \eqref{(A2)} is now fulfilled for any $\nu\ge 1$: 
\begin{align*}
\sum_{I\in\mathcal {I}} e^{-\nu \rho(I)}=\sum_{s\in\mathcal{S}}\sum_{I\in\mathcal {I}_s} 
e^{-\nu\rho(I)}\le \sum_{s\in\mathcal{S}} e^{-\nu \log |\mathcal{S}|}\le 1. 
\end{align*}
Notice that the factor $\log{n+m  \choose m}$ in $\rho(I)$ is too conservative for some 
$I\in\mathcal{I}$, for example, we can set this factor to zero if $|I_i| =n$ for some $i\in[m]$.

Unfortunately, we were unable to establish Condition \eqref{(A3)} for this structure, which is needed 
for the uncertainty quantification results under the EBR condition. What we can claim are 
the relations (i)--(iv) and (vi)--(vii) of Corollary \ref{meta_theorem} with the local rate 
\begin{align*}
r^2(\theta) &= \min_{ I \in \mathcal{I}}\big\{\|\theta
-\mathrm{P}_I\theta\|^2 +\sigma^2\rho(I)\big\}
=\min_{I \in \mathcal{I}}\Big\{ \sum_{k=1}^m \sum_{i\in I_k}(\theta_i-\bar{\theta}_{I_k})^2  \\
& \qquad\qquad
+\sigma^2 \big[(|I_0|+m)\wedge n+\log \tbinom{n}{|I_0|,\ldots, |I_m|}+\log \tbinom{n+m}{m}\big]\Big\}.
\end{align*}
For $m=1$ we get the classical one-level local sparsity results  which also imply the global 
minimax results over sparsity scales, as is considered in the previous paragraph. 
For $m\ge 2$, the obtained local results (i)--(iv) and (vi)--(vii) of Corollary \ref{meta_theorem} 
are new to the best of our knowledge. 
The most problematic term is $\log \binom{n}{|I_0|,\ldots, |I_m|}$, this term is of a smaller order 
than $n$ if $|I_0|$ and any $m-1$ values among $|I_1|,\ldots,|I_m|$ 
(e.g., $|I_0|,|I_1|,\ldots,|I_{m-1}|$) are themselves of the smaller order than $n$.

\begin{remark}
It is an open problem to establish Condition \eqref{(A3)}. This is important in the uncertainty quantification 
problem, namely, the coverage relation  (v) from Corollary \ref{meta_theorem} relies on this. 
If we are to verify Condition \eqref{(A3)}, for any  $I,I'\in\mathcal{I}$  we would define 
\[
I''=I''(I,I')=\big(I_0\cup I'_0,(I_i\cap I'_{i'}, \, i,i' \in [m])\big).
\] 
Clearly, $\mathbb{L}_{I'}\subseteq \mathbb{L}_{I''}$, $\mathbb{L}_{I}
\subseteq \mathbb{L}_{I''}\subseteq\mathbb{L}_{I} + \mathbb{L}_{I'}$ 
and $ \max\{s(I),s(I')\} \le s(I'') \le s(I)+s(I')$,
implying $ \rho(I'')\le\rho(I)+\rho(I')$, and seems that Condition \eqref{(A3)} is fulfilled. 
However, the problem is that  the resulting $I''$ may in general not lie in $\mathcal{I}$ but 
rather in $\mathcal{I}_{m^2}$. An idea to fix this would be to let the number $m$ of parts 
in partitions $I\in\mathcal{I}$ free (any integer from $0$ to $n$).
But then the problem will emerge in another place: there are too many choices as the 
family $\mathcal{S}$ of all compositions of 
$n$ becomes $|\mathcal{S}| = 2^{n-1}$. Then we will have to put the term 
$\log|\mathcal{S}| \asymp n $ in the complexity majorant $\rho(I)$ to meet Condition \eqref{(A2)},
which makes the local rate $r^2(\theta) \gtrsim n\sigma^2$ trivially large and therefore uninteresting. 
\end{remark}

\subsection{Minimax results for the clustering (multi-level sparsity)} 
The global minimax results are not going to be useful, at least if we try to extend one-level sparsity 
scales to multi-level sparsity scales in the usual way. Indeed, even if we assume sparsity in the sense 
that $|I_0| \le s$ for some small $s=s_n \ll n$, i.e., $\theta \in \Theta_s =
\cup_{I \in \mathcal{I}: |I_0|\le s} \mathbb{L}_I$, 
the minimax rate over $\Theta_s$ will presumably be (one will have to prove the lower bound also) 
\[
r^2(\Theta_s) \asymp \sigma^2 \max_{I \in \mathcal{I}: |I_0|\le s} \rho(I) 
\gtrsim \sigma^2 \max_{I \in \mathcal{I}: |I_0|\le s} \log \tbinom{n}{|I_0|,\ldots, |I_m|}\gtrsim n \sigma^2,
\]
which would not be useful. This means basically that the multilevel counterpart $\Theta_s$ for the 
traditional one-level sparsity class $\ell_0[s]$  is too ``massive'' in the minimax sense. 

One can propose other scales $\{\Theta_\beta, \beta\in\mathcal{B}\}$ with more structure, for which at least minimax consistency would hold, i.e., $r^2(\Theta_\beta) \ll \sigma^2 n$.
For example, consider \[
\Theta_{s,m}=\cup\big\{ \mathbb{L}_I: I \in \mathcal{I}, 
|I_j|\le s_j, j\in [m]_0\backslash \{i\} \text{ for some } i\in[m]\big\}, 
\]
with $s=(s_j, j\in[m])$ and $m\in[n]$ such that 
$(|s|+m)  \log n \ll n$, where $|s| =\sum_{j\in[m]}s_j$.
Then it is easy to see that for any $\theta \in \Theta_{s,m}$ 
\[
r^2(\theta) \le \sigma^2 \big[(|I_0|+m)+ \log \tbinom{n}{|I_0|,\ldots, |I_m|}+\log \tbinom{n+m}{m}\big]
\lesssim \sigma^2 (|s|+m)\log n.
\] 
One needs to establish the corresponding lower bound for the minimax rate over 
$\Theta_{s,m}$.

\section{Signal+noise with shape structure: isotonic, unimodal and convex regressions}
\label{sec_shape_restr}

Consider the standard signal+noise model \eqref{model_sparsity}, but now assume that 
the parameter $\theta=(\theta_i, i \in [n]) \in \mathbb{R}^n$ possibly belongs to
one of the three classes:
\begin{align}
\label{isotonic}
&\mathcal{S}^{\uparrow}=\{\theta\in\mathbb{R}^n:\,\theta_i\le \theta_{i+1},\, i=1,\ldots, n-1\}, 
\;\; n \ge 2;\\
\label{unimodal}
&\mathcal{U}^m=\{\theta\in\mathbb{R}^n:\,\theta_1\ge\ldots\ge\theta_{m}\le\theta_{m+1}\le\ldots\le \theta_n\}, 
\;m\in[n],\;n \ge 2;\\
\label{convex}
&\mathcal{C}=\{\theta\in\mathbb{R}^n:\, 2\theta_i\le \theta_{i+1}+\theta_{i-1},\,i=2,\ldots ,n-1\}, 
\;\; n\ge3.
\end{align} 
The \emph{isotonic}, \emph{unimodal} and \emph{convex} regression 
problems concern the classes  $\mathcal{S}^{\uparrow}$, $\mathcal{U}^m$ and $\mathcal{C}$, respectively. Recently,  oracle estimation   results for these problems  were derived by \cite{Chatterjeeetal:2015, 
Bellec&Tsybakov:2015, Bellec:2018}.  To the best of our knowledge, there are 
no local results on  posterior contraction rate and uncertainty quantification problems 
for these structures.

First, to model parameters from $\mathcal{S}^{\uparrow}$ and $\mathcal{U}^m$, 
introduce the linear spaces 
\begin{align*}
\mathbb{L}_{I}=\big\{x\in\mathbb{R}^n: x_i=x_{i+1}, \; i\not\in I \big\},\quad I\subseteq\mathcal{I}=[n-1],
\end{align*}
where $d_I=\dim(\mathbb{L}_I)=
|I|+1$. 
The structural slicing mapping is $s(I)=|I|$, so that $\mathcal{S}=[n-1]$.
Compute $|\mathcal{I}_{s(I)}|=\binom{n-1}{|I|}$, hence $\log |\mathcal{I}_{s(I)}|\le |I|\log (\frac{en}{|I|})$.
Since $d_I + |\mathcal{I}_{s(I)}| \le |I|+1+\log \binom{n-1}{|I|} \le
1+2|I|\log (\frac{en}{|I|}) $, we take the majorant $\rho(I)=1+2|I|\log (\frac{en}{|I|})$. 
 
Next, the parameters from $\mathcal{C}$ are modeled by the linear spaces
\begin{align*}
\mathbb{L}'_{I}=\big\{x\in\mathbb{R}^n: 2x_i=x_{i+1}+x_{i-1}, \; i\not\in I\big\}, 
\quad I\subseteq\mathcal{I}'=\{2,\ldots,n-1\},
\end{align*}
where $d_I=\dim(\mathbb{L}_I)\le(2|I|\vee 1)\wedge n\le 2|I|+1$. 
The structural slicing mapping in this case is $s'(I)=|I|$, 
so that $\mathcal{S}'=\{2,\ldots n-1\}$. Compute $|\mathcal{I}_{s'(I)}|=\binom{n-2}{|I|}$, 
hence $\log |\mathcal{I}_{s'(I)}|\le |I|\log (\frac{en}{|I|})$. Since 
$d_I+|\mathcal{I}_{s'(I)}|\le 2|I|+1+|I|\log (\frac{en}{|I|})\le 1+3|I|\log (\frac{en}{|I|})$, 
we take the majorant $\rho'(I)=1+3|I|\log (\frac{en}{|I|})$. 

\begin{remark}
The traditional approach to shape structures is by projecting the data on 
one corresponding convex (or closed) set. We instead work with a family of 
linear spaces which, in a way, reproduces the shape structure. Moreover, 
at the price of a log factor, our approach has certain universality feature; 
see Section \ref{subsec_universality}.
\end{remark}

We introduced two different families of structures with two corresponding (different) 
families of linear spaces, but the majorants in the both cases can be chosen the same 
(up to a multiplicative constant). Conditions \eqref{cond_nonnormal}--\eqref{cond_A4} 
for the both cases are fulfilled in the same way 
as for the model considered in Section \ref{sec_sparsity}, we omit the argument and 
computations that are very much along the same lines as in Section \ref{sec_sparsity}. 
As consequence of our general results, we obtain the local results of Corollary \ref{meta_theorem}
for the both cases with the local rate $r^2(\theta)=\min_{I\in\mathcal{I}}
\big\{\|\theta-\mathrm{P}_I \theta\|^2+\sigma^2\rho(I)\big\}$ and
$r'^2(\theta)=\min_{I\in\mathcal{I}'}\big\{\|\theta-\mathrm{P}_I\theta\|^2+\sigma^2\rho'(I)\big\}$. 
In turn, by virtue of Corollary \ref{minimax_results} the local results will  imply global minimax 
adaptive results at once over all scales $\{\Theta_\beta,\, \beta\in\mathcal{B}\}$ covered by 
the oracle rate $r^2(\theta)$ and $r'^2(\theta)$ (i.e., for which \eqref{oracle_minimax} holds).
Below we present a couple of examples of such scales $\{\Theta_\beta,\, \beta\in\mathcal{B}\}$.

\subsection{Minimax results for isotonic, unimodal  and convex regressions} 
Following \cite{Bellec&Tsybakov:2015}, for $\theta\in\mathbb{R}^n$,
denote the number of relations $\theta_{i} \neq \theta_{i+1}$ for $i\in[n-1]$ by $k(\theta)-1$ 
(number of jumps of $\theta$), and for $\theta \in \mathcal{C}$, 
the number of inequalities $2\theta_i \le \theta_{i+1} + \theta_{i-1}$ that are strict for 
$i = 2,\ldots, n-1$ by $q(\theta)-1$. Let $\mathcal{U}=\cup_{m=1}^n\mathcal{U}^m$, 
where $\mathcal{U}^m$ is defined by \eqref{unimodal}.
Define the classes of monotone and unimodal parameters with at most $k$ jumps 
and the class of piecewise linear convex parameters with at most 
$q$ linear pieces as follows: for $k,q\ge1$,
\begin{align*}
&\mathcal{S}_k^{\uparrow}=\{\theta\in\mathcal{S}^{\uparrow}: k(\theta)\le k\},\;\; 
\mathcal{U}_k=\{\theta\in\mathcal{U}: k(\theta)\le k\},\;\; 
\mathcal{C}_q=\{\theta\in\mathcal{C}:q(\theta)\le q\},
\end{align*}
where $\mathcal{S}^{\uparrow}$ and  $\mathcal{C}$ are defined by \eqref{isotonic} and \eqref{convex}.
Define $\Theta^\uparrow_k=\cup_{I\in\mathcal{I}:|I|+1\le k} \mathbb{L}_I$ 
and notice that for each $\theta\in\mathcal{S}_k^{\uparrow}$ (or $\theta\in\mathcal{U}_k$)
there exists $I_*\in\mathcal{I}$ such that 
$\theta\in\mathbb{L}_{I_*}$ and $k(\theta)=|I_*|+1$, implying that 
$\mathcal{S}_k^{\uparrow}\subseteq\Theta^\uparrow_k$
(and $\mathcal{U}_k\subseteq\Theta^\uparrow_k$).
Similarly, we define $\Theta_q=\cup_{I\in\mathcal{I}':|I|+1\le q} \mathbb{L}'_I$
and derive that $\mathcal{C}_q\subseteq\Theta_q$.

As is shown in \cite{Bellec&Tsybakov:2015}, the minimax rates over $\mathcal{S}_k^{\uparrow}$ 
and $\mathcal{C}_q$, with $k,q\ge 1$, are $r^2(\mathcal{S}_k^{\uparrow})\triangleq
\inf_{\hat{\theta}}\sup_{\theta \in \mathcal{S}_k^{\uparrow}}\mathbb{E}_\theta\|\hat{\theta}-\theta\|^2 
\asymp \sigma^2 k$ and $r^2(\mathcal{C}_q) \asymp \sigma^2 q$, respectively. Due to the fact that 
$\mathcal{S}^\uparrow\subseteq \mathcal{U}$, we also have $r^2(\mathcal{U}_k)\gtrsim\sigma^2 k$. 
Now, for each $\theta \in\mathcal{S}_k^{\uparrow}$ (or $\theta \in \mathcal{U}_k$) there exists 
$I_*\in\mathcal{I}$ such that $\theta\in\mathbb{L}_{I_*}$ (so that $\mathrm{P}_{I_*}\theta =\theta$)
and $|I_*|+1 =k(\theta) \le k$. Hence, $r^2(\theta)\le r^2(I_*,\theta)=
\sigma^2\big(1+2|I_*|\log(\tfrac{en}{|I_*|})\big)\lesssim\sigma^2k\log(\frac{en}{k})$ for all 
$\theta \in\mathcal{S}_k^{\uparrow}$ and $\theta \in \mathcal{U}_k$. 
Similarly, we show that $r'^2(\theta)\lesssim\sigma^2q\log(\frac{en}{q})$ for all 
$\theta\in\mathcal{C}_q$. We thus established the relation \eqref{oracle_minimax} 
for the classes $\mathcal{S}_k^{\uparrow}$, $\mathcal{U}_k$ and $\mathcal{C}_q$, which implies 
the minimax results (up to a logarithmic factor) of Corollary \ref{minimax_results} 
for all these three classes.

Finally introduce the classes of (shape-restricted) monotone, unimodal 
and convex parameters $\theta$ with bounded total variation:
$\mathcal{S}^{\uparrow}(V)=\{\theta\in\mathcal{S}^{\uparrow}: V(\theta)\le V\}$,
$\mathcal{U}(V)=\{\theta\in\mathcal{U}: V(\theta)\le V\}$,
$\mathcal{C}(V)=\{\theta\in\mathcal{C}: V(\theta)\le V\}$,
where $V(\theta)=\max_{i,j}(\theta_i-\theta_j)$ (notice that $V(\theta)=\theta_n-\theta_1$ 
for $\theta\in\mathcal{S}^{\uparrow}$). It is known that the minimax rates over 
$\mathcal{S}^{\uparrow}(V)$, $\mathcal{U}(V)$ and $\mathcal{C}(V)$ 
are respectively 
$r^2(\mathcal{S}^{\uparrow}(V))\asymp \max\{n^{1/3}(\sigma^{2}V)^{2/3},\sigma^2\}$,
$r^2(\mathcal{U}(V))\asymp \max\{n^{1/3}(\sigma^{2}V)^{2/3},\sigma^2\}$ and
$r^2(\mathcal{C}(V))\gtrsim n^{1/5}(\sigma^4V)^{2/5}$ if $V\ge \sigma$.
To derive the Corollary \ref{minimax_results} for these classes, 
we need the next proposition, where claim (i) is Lemma 2 from \cite{Bellec&Tsybakov:2015}
and claim (ii) is Lemma 4.1 from \cite{Bellec&Tsybakov:2015}. 
We give these claims here (in our notation) for completeness, 
the proofs can be found in the mentioned references. 
\begin{proposition}
\label{proposition_isotonic}
Let $k,q\in[n]$. Then the following properties hold.
\begin{itemize}
\item[(i)] For any $\theta\in\mathcal{S}^{\uparrow}$ (or $\theta\in\mathcal{U}$) 
there exists a $\theta^*=\theta^*(\theta) \in \Theta^\uparrow_k$ such that 
$\| \theta - \theta^*\|^2 \le C_1 \tfrac{nV^2(\theta)}{k^2}$ for some absolute 
constant $C_1>0$. \\ 
\item[(ii)] 
For any $\theta\in\mathcal{C}$ there exists a $\theta^*=\theta^*(\theta)\in\Theta_q$
such that $\| \theta - \theta^*\|^2 \le C_2\tfrac{nV^2(\theta)}{q^4}$ for some 
absolute constant $C_2>0$.
\end{itemize}
\end{proposition}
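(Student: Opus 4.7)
The plan is to exhibit explicit approximators $\theta^*$ in each case, check that they lie in the required linear class, and control $\|\theta-\theta^*\|^2$ block by block.

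For part (i), let $\theta\in\mathcal{S}^\uparrow$. Since $V(\theta)=\theta_n-\theta_1$, I would partition the range $[\theta_1,\theta_n]$ into $k$ equal subintervals of length $V(\theta)/k$. Their preimages under $\theta$ give at most $k$ contiguous blocks $B_1,\ldots,B_{k'}$ of $[n]$, with $k'\le k$. Define $\theta^*_i=\bar\theta_{B_j}$ for $i\in B_j$. Then $\theta^*$ is piecewise constant with at most $k-1$ jumps, so $\theta^*\in\Theta^\uparrow_k$. On each $B_j$ the variation of $\theta$ is at most $V(\theta)/k$, hence $(\theta_i-\bar\theta_{B_j})^2\le V^2(\theta)/k^2$ for every $i\in B_j$, and summing over $i\in[n]$ yields $\|\theta-\theta^*\|^2\le nV^2(\theta)/k^2$. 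For $\theta\in\mathcal{U}$ with mode index $m$, I would split at $m$ and apply the monotone construction to $(\theta_1,\ldots,\theta_m)$ and $(\theta_m,\ldots,\theta_n)$ separately, each with $\lceil k/2\rceil$ blocks, and concatenate the approximants; the total error only inflates by an absolute constant, since the variation of each monotone piece is bounded by $V(\theta)$.

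For part (ii), the convexity of $\theta$ is equivalent to monotonicity of the forward differences $\Delta\theta_i=\theta_{i+1}-\theta_i$. The naive attempt---apply part (i) to $\Delta\theta$ with $q$ pieces and recover $\theta$ by summation---only produces a rate of order $nV^2(\theta)/q^2$, which is too weak. The correct construction partitions $[n]$ into $q$ contiguous intervals $J_1,\ldots,J_q$ and replaces $\theta$ on each $J_j$ by its best affine fit; the resulting $\theta^*$ is piecewise affine with at most $q$ pieces and, with an appropriate choice of the fits on adjacent blocks, remains convex, so $\theta^*\in\Theta_q$. On each $J_j$ the $L^2$ error would be estimated by a discrete Taylor-type bound involving $|J_j|$ and the second differences of $\theta$ restricted to $J_j$; since convexity keeps these second differences nonnegative and their total is controlled in terms of $V(\theta)$, one then optimizes the $q$ intervals to balance the per-block contributions.

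The hardest part will be producing the fourth power of $q$ in part (ii). A purely first-order argument based only on monotonicity of $\Delta\theta$ cannot beat $q^{-2}$, so one must exploit the fact that the approximation on each piece is affine (not merely constant), using a discrete analogue of the standard inequality $\|f-P_1f\|_{L^2(I)}\lesssim |I|^{5/2}\|f''\|_\infty$ for orthogonal projection $P_1$ onto affine functions on $I$. The technical heart is then a careful block-length optimization together with a Cauchy--Schwarz bound on the sum of per-block errors; this combinatorial optimization is precisely the content of the Bellec--Tsybakov argument that we quote.
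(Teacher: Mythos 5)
The first thing to note is that the paper does not prove this proposition at all: immediately before the statement it says that claim (i) is Lemma~2 and claim (ii) is Lemma~4.1 of Bellec and Tsybakov (2015), reproduced ``for completeness'' with the proofs to be found there. So there is no in-paper argument to compare yours against; the relevant benchmark is whether your sketch would stand on its own.

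Part (i) of your proposal does stand on its own. Partitioning the range $[\min_i\theta_i,\max_i\theta_i]$ into $k$ equal subintervals, taking the (contiguous, by monotonicity) preimage blocks and replacing $\theta$ by block averages gives oscillation at most $V(\theta)/k$ per block, hence $\|\theta-\theta^*\|^2\le nV^2(\theta)/k^2$; this is exactly the standard argument. Two small remarks: with $\lceil k/2\rceil$ pieces on each side of the mode you may end up with $k+1$ pieces when $k$ is odd, so use $\lfloor k/2\rfloor$ (handling $k=1$ by a single constant) and absorb the loss into $C_1$; and since $\Theta^\uparrow_k$ is a union of \emph{linear} spaces $\mathbb{L}_I$, the concatenated approximant need not itself be monotone or unimodal, so no care is required at the junction.

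Part (ii) as written is not a proof, and you say so yourself: the ``careful block-length optimization'' you defer to Bellec--Tsybakov is the entire content of the lemma. To see that the gap is real and not just bookkeeping: with secant (or best affine) fits on blocks $J_j$ of lengths $n_j$, the per-block squared error is of order $n_j^3\delta_j^2$, where $\delta_j$ is the increase of $\Delta\theta$ over $J_j$. The only constraints your sketch extracts from $V(\theta)\le V$ are $\sum_j n_j=n$ and $\sum_j\delta_j\le 2V$, and optimizing $\sum_j n_j^3\delta_j^2$ under those constraints (equalizing $n_j^{3/5}\delta_j^{2/5}$ via H\"older) yields only $n^3V^2/q^4$ --- off by a factor $n^2$. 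The extremal configuration for that relaxed problem ($\delta_j\equiv 2V/q$ spread uniformly) is not realizable by a sequence with $V(\theta)\le V$, because uniformly spread curvature of total mass $2V$ forces oscillation of order $nV$. The correct argument must therefore use the stronger, blockwise consequence of $V(\theta)\le V$ (bounds on weighted partial sums of the second differences on every interval), and that is precisely what the cited lemma does. A minor simplification you can also use: $\Theta_q$ is again a union of linear spaces, so $\theta^*$ only needs to be piecewise affine with at most $q$ pieces --- convexity of $\theta^*$ is not required for membership (though the secant interpolant of a convex sequence is convex anyway).
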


By Proposition \ref{proposition_isotonic}, 
$\theta^*\in\Theta^\uparrow_k$, then there exists an $I_*\in\mathcal{I}$ such that 
$\theta^*\in\mathbb{L}_{I_*}$ and
\begin{align*}
\|\theta-\mathrm{P}_{I_*}\theta\|^2\le \|\theta-\theta_*\|^2\le C_1 \tfrac{nV^2(\theta)}{k^2}.
\end{align*}
It follows therefore that  for any  $k \in [n]$ and any $\theta\in\mathcal{S}^{\uparrow}$ 
(or $\theta\in\mathcal{U}$) ,
\[
r^2(\theta) \le r^2(I_*,\theta) \lesssim \tfrac{nV^2(\theta)}{k^2} + \sigma^2 k\log(\tfrac{en}{k}).
\]

Let $\theta\in\mathcal{S}^{\uparrow}$ (or $\theta\in\mathcal{U}$) and let us 
take $k=k_*=\lfloor \big(\frac{nV^2(\theta)}{\sigma^2\log (en)}\big)^{1/3}\rfloor+1$.
If $k^*=1$, we have $nV^2(\theta)\le \sigma^2\log(en)$. If $k^*>1$, then by definition 
of $k^*$, $\tfrac{nV^2(\theta)}{(k^*)^2} \le n^{1/3}(\sigma^2 V(\theta)\log(en))^{2/3}$. 
We conclude that if $V(\theta)\le V$, then for all $\theta\in \mathcal{S}^{\uparrow}(V)$ 
and $\theta\in\mathcal{U}(V)$, 
\[
r^2(\theta)\lesssim\max
\big\{n^{1/3}(\sigma^2V\log (en))^{2/3},\sigma^2\log(en)\big\}.
\] 
Thus, we established the relation \eqref{oracle_minimax}, and Corollary \ref{minimax_results} 
follows with the minimax  (up to a log factor) rate $\max\{n^{1/3}(\sigma^{2}V)^{2/3},\sigma^2\}$
for the both classes 
$\mathcal{S}^{\uparrow}(V)$ and $\mathcal{U}(V)$ simultaneously.

Similarly, we establish that for all $\theta\in \mathcal{C}(V)$ 
\[
r'^2(\theta)\lesssim \max
\big\{n^{1/5}(\sigma^4V)^{2/5}[\log(en)]^{4/5} ,\sigma^2\log(en)\big\}.
\]
This means that we established the relation \eqref{oracle_minimax} and hence also  
Corollary \ref{minimax_results} with the minimax rate (up to a logarithmic factor) 
for the class $\mathcal{C}(V)$.


\subsection{Log factor and universality of the results}
\label{subsec_universality}
It should be recognized that we attain the minimax rates for the classes $\mathcal{S}^{\uparrow}_k$,
$\mathcal{U}_k$, $\mathcal{C}_q$, $\mathcal{S}^{\uparrow}(V)$,
$\mathcal{U}(V)$ and $\mathcal{C}(V)$ only up to a logarithmic factor.
On the other hand, we obtain the optimal rates over the bigger scales 
$\{\Theta^{\uparrow}_k, k\in[n]\}$ and $\{\Theta_k, k\in[n]\}$. 
Moreover, as consequence of our general results we have also solved 
the uncertainty quantification problem and the problem of structure recovery (in a weak sense).
Our constants in the estimation results may be worse than those from the above mentioned 
references, but on the other hand we do not require that the vector $\xi$ is normal 
and its coordinates are independent, only mild Condition \eqref{cond_nonnormal} is to be fulfilled.  
    
Interestingly, the extra log factor in the local rate can also be seen as ``price'' for certain 
\emph{universality} of the results. Indeed, recall that the results for the family of 
structures $\mathcal{I}$ with corresponding linear spaces $\mathbb{L}_I$, $I\in\mathcal{I}$, 
cover the scale $\{\Theta^{\uparrow}_k, k\in[n]\}$. This in turn implies the minimax results for 
the scales $\{\mathcal{S}^{\uparrow}_k, k\in[n]\}$ and $\{\mathcal{U}_k, k\in[n]\}$ (adaptively 
with respect to $k\in[n]$) and over the global shape-restricted classes $\mathcal{S}^{\uparrow}(V)$ 
and $\mathcal{U}(V)$ of monotone and unimodal parameters, \emph{simultaneously} 
for all the mentioned scales. Thus, at the log factor price, one approach handles several 
structures at once.

Actually our approach allows to extend the universality property even further. Indeed, let us unite 
the two structures families $\bar{\mathcal{I}} = \mathcal{I}\cup\mathcal{I}'$ and the corresponding 
families of the linear spaces $\{\mathbb{L}_I, I\in\bar{\mathcal{I}}\}$ and consider the resulting procedure. 
This procedure makes sense because the majorants for the both families are of the same order, 
so we only need to adjust a multiplicative constant in front of the majorant $\rho(I)=
1+2|I|\log (\tfrac{en}{|I|})$ that will now handle the both families of structures. 
In doing so, we get the local result with the oracle rate over the both families at the price 
of a bigger multiple of the majorant. This means that the resulting procedure will mimic the 
oracle structure over the union of the two families, i.e., the resulting oracle rate will cover both scales 
$\{\Theta^{\uparrow}_k, k\in[n]\}$ and $\{\Theta_k, k\in[n]\}$ simultaneously.
This in turn implies the minimax results for 
the scales $\{\mathcal{S}^{\uparrow}_k, k\in[n]\}$, $\{\mathcal{U}_k, k\in[n]\}$ and 
$\{\mathcal{C}_q, q\in[n]\}$ (adaptively with respect to $k,q\in[n]$) and over the global 
shape-restricted classes $\mathcal{S}^{\uparrow}(V)$, $\mathcal{U}(V)$ and $\mathcal{C}(V)$ of 
monotone, unimodal  and convex parameters, \emph{simultaneously} for all the mentioned scales. 

\subsection{No EBR-like condition for shape-restricted structures}
The last important aspect to discuss for this case of model/structure  
is one peculiar phenomenon recently discovered by some researchers 
in related settings: for certain shape-restricted classes, the uniform coverage and 
optimal size properties in the uncertainty quantification problem  can be derived without 
imposing any EBR-like condition. It turns out to be possible to construct a confidence ball for monotone 
$\theta$'s with a high coverage and a radius of the optimal order 
$n^{1/3} (\sigma^2 V)^{2/3}$, uniformly over monotone $\theta\in\mathcal{S}^{\uparrow}(V)$ 
and without any EBR-like condition.

Let us show that we can also achieve this (up to a logarithmic factor) by using our approach. 
We consider only the family of structures $\mathcal{I}$ (and the corresponding family of linear spaces) 
for modeling monotone and unimodal $\theta$'s, similar argument can be given 
for the family of structures $\mathcal{I}'$. To ensure the EBR-condition, we 
simply restrict the family of structures $\mathcal{I}$ to the subfamily 
$\mathcal{I}_1=\{I \in \mathcal{I}: |I|\ge C(n/\sigma^2)^{1/3}\}$ for some 
sufficiently large $C>0$.
Then the results go through in the same way as before with the difference that 
the oracle rate is now $r^2(\theta)=\min_{I\in\mathcal{I}_1}
\big\{\|\theta-\mathrm{P}_I \theta\|^2+\sigma^2\rho(I)\big\}$, 
with respect to the family $\mathcal{I}_1$, rather than $\mathcal{I}$.
Since for any $I\in\mathcal{I}_1$, $|I|\ge C(n/\sigma^2)^{1/3}$,
this and Proposition \ref{proposition_isotonic} imply that for any $I\in\mathcal{I}_1$ and any 
$\theta \in \mathcal{S}^\uparrow(V) $ (or $\theta\in\mathcal{U}(V)$) there exists a 
$\theta_*\in\Theta^\uparrow_I$ such that 
\[
\|\theta - \mathrm{P}_I\theta\|^2 \le \| \theta - \theta_*\|^2 \le 
C_1\tfrac{nV^2}{|I|^2} \lesssim n^{1/3}\sigma^{4/3} \lesssim
\sigma^2 |I|\lesssim\sigma^2\rho(I),
\]
which ensures the EBR condition \eqref{cond_ebr}. Thus, the EBR condition is fulfilled 
automatically for the family of structures $\mathcal{I}_1$. At the same time, 
the oracle rate $r^2(\theta)$ covers the both scales $\mathcal{S}^{\uparrow}(V)$ and 
$\mathcal{U}(V)$. Indeed, by taking 
$I_*\in \mathcal{I}_1$ such that $|I_*|= \lfloor C(n/\sigma^2)^{1/3} \rfloor+1$, we obtain 
that uniformly over $\theta\in\mathcal{S}^{\uparrow}(V)\cup\mathcal{U}(V)$  
\[
r^2(\theta) \le r^2(I_*,\theta) \le C_1\tfrac{nV^2}{|I_*|^2}+\sigma^2\rho(I_*)
\lesssim n^{1/3}\sigma^{4/3} \log(en),
\]
which is the minimax rate (up to a logarithmic factor) over the both classes 
$\mathcal{S}^{\uparrow}(V)$ and $\mathcal{U}(V)$ simultaneously.

\section{Matrix linear regression}
\label{sec_multiple_linear_regression}
First we introduce the the \emph{matrix linear regression}:
\begin{align}
\label{matrix_regression}
Y=\mathrm{X}\beta+\sigma\xi, 
\end{align}
where $\mathrm{X}=\text{diag}(\mathrm{X}^1,\ldots , \mathrm{X}^m)\in\mathbb{R}^{mn\times mp}$ 
is a block diagonal matrix, whose blocks  $\mathrm{X}^1,\ldots , \mathrm{X}^m\in\mathbb{R}^{n\times p}$ 
are design matrices, $\sigma>0$ is the known noise intensity, $\beta=(\beta^1,\ldots ,\beta^m)\in\mathbb{R}^{mp}$ is a concatenation of $m$ unknown $p$-dimensional vectors 
$\beta^1, \ldots , \beta^m\in\mathbb{R}^p$, $Y=(Y^1, \ldots, Y^m)\in\mathbb{R}^{mn}$ 
is a concatenation of  observed vectors $Y^1, \ldots, Y^m\in\mathbb{R}^{n}$,
$\xi=(\xi_i, i \in[mn])$, $\xi_i \overset{\rm ind}{\sim} \mathrm{N}(0,1)$. 
The name \emph{matrix regression} comes from the fact that 
\eqref{matrix_regression} can be represented in the matrix form 
$\mathrm{Y}=\bar{\mathrm{X}}\mathrm{B}+\sigma\mathrm{Z}$ with appropriate matrices 
$\mathrm{Y},\bar{\mathrm{X}},\mathrm{B}, \mathrm{Z}$, as is usually done in the literature, 
but we will use the vectorized version \eqref{matrix_regression}.

Introduce some notation. In the sequel, by $\mathrm{M}_I$ we denote the submatrix of $\mathrm{M}$ 
with columns $(M_i, \, i \in I)$, 
$x_I$ is the $|I|$-dimensional subvector of $x\in\mathbb{R}^p$ with
coordinates $i\in I$, 
$\|\beta\|_0$ denotes the number of non-zero elements of $\beta$, i.e., 
the cardinality of the support $I^*(\beta)=\text{supp}(\beta) = \{i: \beta_i\neq 0\}$ of $\beta$.
Under $\xi_i \overset{\rm ind}{\sim} \mathrm{N}(0,1)$, 
Conditions \eqref{cond_nonnormal} and \eqref{cond_A4} hold with $d_I=\dim(\mathbb{L}_I)$,
in view of Remarks \ref{rem_cond_A1} and \ref{rem15}. 

Many particular linear models 
can be put in \eqref{matrix_regression} by choosing 
appropriately $m$, $p$ and $\mathrm{X}$. 
The case $m=1, p=n, \mathrm{X}=\mathrm{I}$ is already considered in Section 
\ref{sec_smoothness} for the smoothness structure and in Section \ref{sec_sparsity} for the sparsity structure.
In the following sections we consider several other specific models and structures in detail.

We should emphasize that in linear regression models of type \eqref{matrix_regression} 
by $\beta$ we denote the vector of unknown parameters, 
notation commonly used in the literature.
This is not to be confused with the structural parameter $\beta$ for indexing 
the scales of classes $\{\Theta_\beta, \beta \in \mathcal{B}\}$ 
which we use for other models.

\begin{remark}
\label{lin_model2}
As explained in the introduction 
we can always work with the linear model, even when the true distribution $\mathrm{P}_\theta$
of the observed data does not follow the linear model \eqref{matrix_regression}, 
e.g., $Y =\theta +\sigma\xi$, where $\theta\not= \mathrm{X}\beta$. 
In that case, \eqref{matrix_regression} is an approximating model of 
the true model and all the local results hold  
with $\theta$ substituted everywhere instead of $\mathrm{X}\beta$.
The global minimax results over, say, a class $\Theta_\gamma$ 
will have to be modified by including the approximation 
term $\sup_{\theta\in\Theta_\gamma} \| \theta- \mathrm{P}_{I_o} \theta\|^2$ 
in the minimax rate $r^2(\Theta_\gamma)$.
\end{remark}

\section{Linear regression with sparsity structure}
\label{sec_lin_regr_sparsity}
Consider the classical linear regression model, with $m=1$ in \eqref{matrix_regression}, that is, 
\begin{align}
\label{lin_regr_suppl}
Y=\mathrm{X}\beta+\sigma\xi, \quad \mathrm{X}=(X_1,\ldots , X_p)\in\mathbb{R}^{n\times p}, \;\;
\xi=(\xi_i, i \in[n]),
\end{align}
where $\mathrm{X}$ 
is the design matrix, whose columns $X_1,\ldots , X_p$  are the (observed) predictors, 
$\sigma>0$ is the known noise intensity. We assume 
$\xi_i \overset{\rm ind}{\sim} \mathrm{N}(0,1)$, but this can be relaxed by assuming 
Condition \eqref{cond_nonnormal} (and \eqref{cond_A4}). 
In high-dimensional settings, typically $p\gg n$,  
and to be able to make sensible inference, one needs to exploit a structure on $\beta$. 
For a \emph{sparsity structure} $I\subseteq [p]$, the vector $\beta=(\beta_1,\ldots , \beta_p)^T\in\mathbb{R}^p$ is called \emph{sparse} in the sense that $\beta_i=0$  (or close to zero) 
for $i\in I^c$, in other words, the predictors $(X_i: i\in I^c)$ 
are irrelevant. 
Recall that we pursue a local approach, in this case this means that we do not impose any specific sparsity 
constraint, but rather exploit as much sparsity as there is in an arbitrary $\beta$.

A local approach to this case of model/structure,
delivering also the adaptive minimax results, 
is considered in \cite{Gao&vanderVaart&Zhou:2015, Belitser&Ghosal:2018} 
for the estimation and posterior contraction problems. In \cite{Belitser&Ghosal:2018} the uncertainty 
quantification problem is extensively treated as well as some other related interesting aspects, 
such as inference  on $\beta$ 
and sparsity recovery. 
Here we demonstrate that the results obtained 
in \cite{Belitser&Ghosal:2018} follow from our general framework results.  Actually, we obtain stronger 
versions of the results as they hold in the refined formulation (non-asymptotic exponential probability 
bounds) and distribution-free setting (the observations are not necessarily normal and/or independent).

In this model, the sparsity structure is expressed by the linear spaces 
\begin{align}
\label{lin_regr_L_I}
\mathbb{L}_I=\big\{\mathrm{X}_Ix_I\in\mathbb{R}^n: x_I\in\mathbb{R}^{|I|}\big\}
=\big\{\mathrm{X}x\in \mathbb{R}^n: 
x\in\mathbb{R}^{p},\; x_i=0 \text{ for }i \not\in I\big\},
\end{align}
$I \in\mathcal{I}$, the family of structures is 
$\mathcal{I}=\mathcal{I}_1\cup \{I_r\}$ with 
$\mathcal{I}_1 =\{I\subseteq [p]: 2|I|\log(ep/|I|) \le r\}$ (where we denote 
$r=r(\mathrm{X})=\rank(\mathrm{X})$)
and $I_r=(i_1,\ldots, i_r) \subseteq [p]$ such that  $(X_{i_1},\ldots, X_{i_r})$ are $r$ 
linearly independent columns of $\mathrm{X}$.
Then $|\mathcal{I}|\le 2^p$, the structural slicing mapping is taken to be
$s(I)=|I| \in\mathcal{S}\triangleq[r]_0$.
Further, we have $\theta=\mathrm{X}\beta$, 
$d_I=\dim(\mathbb{L}_I)\le \min\{|I|,r\}\le \min\{|I|,n,p\}$ for $I\in\mathcal{I}_1$ 
and $d_{I_r}=\dim(\mathbb{L}_{I_r})=r$.
Clearly, 
$\log |\mathcal{I}_{s(I)}|\le\log \binom {p}{|I|}\le |I|\log(\frac{ep}{|I|})$ for $I\in\mathcal{I}_1$ and
$\log |\mathcal{I}_{s(I_r)}|=0$. Since $d_I +\log |\mathcal{I}_{s(I)}|\le |I| + |I| \log(\frac{ep}{|I|})
\le 2|I| \log(\frac{ep}{|I|})$ for $I\in\mathcal{I}_1$ and $d_{s(I_r)}+\log |\mathcal{I}_{s(I_r)}|=d_{I_r}=r$, 
we take the majorant 
\begin{align}
\label{formula_rho}
\rho(I)=2|I|\log(\tfrac{ep}{|I|})1\{I\in\mathcal{I}_1\}+r 1\{I=I_r\}, \quad I\in \mathcal{I}.
\end{align}
Notice that we could use a smaller majorant $\rho'(I)=d_I+\log\binom {p}{|I|}$ for $I\in\mathcal{I}_1$
(the best choice), but this majorant is not practical to use.

\begin{remark} 
In the majorant $\rho(I)$ defined above, we see the \emph{elbow effect} mentioned in Remark \ref{family_cover}, this elbow effect will enter the rate as well. 
Let us explain how this elbow effect has emerged in this model.

Notice that we could consider the more natural full family of structures $\bar{\mathcal{I}}=
\{J:J\subseteq [p]\}$, so that $|\bar{\mathcal{I}}|=2^p$, with the same structural slicing mapping 
$s(I)=|I|\in\mathcal{S}\triangleq[p]_0$, but defined on the family $\bar{\mathcal{I}}$. 
As before, $d_I=\dim(\mathbb{L}_I)\le \min\{|I|,r\}$
and $ |\mathcal{I}_{s(I)}|=\binom {p}{|I|}$.
Since $d_I +\log |\mathcal{I}_{s(I)}|\le |I|+|I| \log(\frac{ep}{|I|})\le 2|I| \log(\frac{ep}{|I|})$, 
the majorant would be $\bar{\rho}(I)=2|I|\log(\tfrac{ep}{|I|})$, $I\in\bar{\mathcal{I}}$.
The idea of the family $\mathcal{I}$ is that, even though 
$\mathcal{I}\subseteq \bar{\mathcal{I}}$, the family $\mathcal{I}$ still covers $\bar{\mathcal{I}}$ 
in the sense of Remark \ref{family_cover}. 
Indeed, 
$r^2(I,\beta)=\|\mathrm{X} \beta-\mathrm{P}_I\mathrm{X}\beta\|^2 +\sigma^2\rho(I) 
=\|\mathrm{X} \beta-\mathrm{P}_I\mathrm{X}\beta\|^2 +\sigma^2\bar{\rho}(I)=\bar{r}^2(I,\beta)$ 
for $I\in\mathcal{I}_1$, and $r^2(I_r,\beta)=\sigma^2 r \le \sigma^2 2|I| \log(\frac{ep}{|I|}) \le
\bar{r}^2(I,\beta)$ for all $I\in\bar{\mathcal{I}}\backslash\mathcal{I}_1$, as
$\mathrm{P}_{I_r}\mathrm{X}\beta=\mathrm{X}\beta$ for any $\beta\in\mathbb{R}^p$.

Here we considered an important case when a seemingly right (full) family $\bar{\mathcal{I}}$ 
of structures can be reduced to a subfamily $\mathcal{I}\subset\bar{\mathcal{I}}$ 
that has a reduced complexity but still covers the original family $\bar{\mathcal{I}}$ 
in the sense of Remark \ref{family_cover}, thus improving the resulting oracle rate. 
This is a typical situation exhibiting the  ``elbow effect'' in the complexity term of the rate; 
below there are a couple of more such 
example (Sections \ref{sec_group_clustering}, \ref{sec_mixture_model} and 
\ref{sec_dict_learning}).
\end{remark}

\begin{remark}
\label{rem_32}
We could further reduce the family of structures to
${\mathcal{I}}'=\{I_r\}\cup\mathcal{I}'_1$, with $\mathcal{I}'_1=
\{I\subseteq\mathcal{I}_1: \text{the columns } (X_i, i\in I) 
\text{ are linearly independent}\}$ (with the same 
structural slicing mapping $s(I)=|I|$), so that $\mathcal{I}'\subseteq\mathcal{I}$. 
In this case, we have $d_I=\dim(\mathbb{L}_I)=|I|$, 
$|I|\le r\le \min\{n,p\}$ for each $I\in\mathcal{I}'$,
the layer is $\mathcal{I}_{s(I)}=\{J \subseteq \mathcal{I}_1:  \dim(\mathbb{L}_J) =\dim(\mathbb{L}_I)\}$
for $I\in\mathcal{I}'_1$ and  $\mathcal{I}_{s(I_r)}=\{I_r\}$.
Then we can take the majorant $\bar{\rho}(I)=(|I|+\log |\mathcal{I}_{s(I)}|)1\{I\in\mathcal{I}'_1\} +
r 1\{I=I_r\}$. When implementing the Bayesian or penalization procedure, the majorant 
$\rho(I)=2|I| \log(ep/|I|)1\{I\in\mathcal{I}'_1\}+r1\{I=I_r\}$ is more practical to use also 
for the family $\mathcal{I}'$. But then the families $\mathcal{I}$ and $\mathcal{I}'$ cover each other 
in the sense of Remark \ref{family_cover}, thus yielding  the same resulting oracle rate over the both 
families. Therefore, as soon as we use the same majorant $\rho(I)$, it does not matter which family of 
structures, $\mathcal{I}$ or $\mathcal{I}'$, we take. We will have a slightly bigger constant in 
Condition \eqref{(A2)} for the family $\mathcal{I}$ as there are more terms in the sum. 
We will use the family $\mathcal{I}$. 
\end{remark}

Condition \eqref{(A2)} is fulfilled, since, according to Remark \ref{rem5}, for any $\nu>1$ 
\begin{align*}
\sum_{I\in\mathcal {I}} e^{-\nu \rho(I)}
&\le \sum_{s\in\mathcal{S}}  e^{-(\nu-1) \rho(I)}
\le \sum_{s=0}^p e^{-(\nu-1)s}\le 
\frac{1}{1-e^{1-\nu}}= C_\nu. 
\end{align*}
As to Condition \eqref{(A3)}, for any $I_0, I_1\in\mathcal{I}$, take $I'=I_r$ 
if either $I_0=I_r$ or $I_1=I_r$ or $2|I_0\cup I_1| \log (ep/|I_0\cup I_1|)>r$; otherwise 
take $I' =I_0\cup I_1$.
Since $(\mathbb{L}_{I_0}\cup\mathbb{L}_{I_1})\subseteq \mathbb{L}_{I'}=\mathbb{L}_{I_0}
+\mathbb{L}_{I_1}$ and $\rho(I')\le \rho(I_0)+\rho(I_1)$, Condition \eqref{(A3)} is also fulfilled.


As consequence of our general results, we obtain Corollary \ref{meta_theorem}
with the local (prediction) rate $r^2(\beta)=\min_{ I \in \mathcal{I}} r^2(I,\beta)=
 \min_{ I \in \mathcal{I}} \big\{\|\mathrm{X}\beta
-\mathrm{P}_I\mathrm{X} \beta\|^2 +\sigma^2\rho(I)\big\}$, where the majorant 
$\rho(I)$ is defined by \eqref{formula_rho}.
In particular,
\begin{align}
r^2(\beta) & \le r^2(I^*(\beta),\beta) \wedge r^2 (I_r,\beta) =
 \sigma^2 \big[\rho(s(I^*(\beta)))\wedge\rho(s(I_r))\big] \notag\\
&\lesssim \sigma^2\big[\big(|I^*(\beta)|\log(\tfrac{ep}{|I^*(\beta)|})\big) \wedge r\big].
\label{local_rate2}
\end{align}

\begin{remark}
Conditions \eqref{cond_nonnormal} and \eqref{cond_A4}  hold in view of 
Remarks \ref{rem_cond_A1} and \ref{rem15}.
Notice that the claims (i)--(vii) of Corollary \ref{meta_theorem} deliver finer and stronger versions 
of the corresponding results from \cite{Belitser&Ghosal:2018}. Besides, we can drop the normality 
and independence assumptions and impose only Conditions 
\eqref{cond_nonnormal} and \eqref{cond_A4} instead. 
\end{remark}

Next, by virtue of Corollary \ref{minimax_results} the local results imply global minimax 
adaptive results at once over all scales $\{\Theta_\gamma,\, \gamma\in\Gamma\}$ covered 
by the oracle rate $r^2(\beta)$ (i.e., for which \eqref{oracle_minimax} holds).
Below we present a couple of scales $\{\Theta_\gamma,\, \gamma\in\Gamma\}$ 
covered by the oracle rate $r^2(\beta)$.

\subsection{Minimax results for the nearly black vectors $\ell_0$} 
For $s \in[p]$, 
introduce 
\begin{align*}
\ell_0[s]=\{\beta\in\mathbb{R}^p: \|\beta\|_0=|I^*(\beta)|\le s\},\;\; \text{where} \;\;
I^*(\beta)=\{i\in[p]: \beta_i \not =0\},
\end{align*}
the set of vectors with at most $s$ nonzero elements. Under certain conditions on 
the parameters $s,p,n$ and the design matrix $\mathrm{X}$ (at least, $s\log(ep/s) \lesssim r=
\rank(\mathrm{X})$ has to hold), 
the  minimax prediction estimation rate over $\ell_0[s]$ is known to be $r^2(\ell_0[s])
=\inf_{\hat{\beta}}\sup_{\beta\in \ell_0[s]} \mathbb{E}_\beta\|\mathrm{X} \hat{\beta}-\mathrm{X}\beta\|^2
\asymp\sigma^2s\log(ep/s)$; see \cite{Bunea&Tsybakov&Wegkamp:2007, 
Raskutti&Wainwright&Yu:2011}. The adaptive minimax results for $\ell_0$-balls were considered 
by \cite{Castilloetal:2015, Gao&vanderVaart&Zhou:2015, Martin&etal:2017} for posterior 
contraction rates and by \cite{Nickl&vandeGeer:2013} for uncertainty quantification problem.

If  $\beta\in\ell_0[s]$, then $\mathrm{X}\beta \in \mathbb{L}_{I^*(\beta)}$ for  
$I^*(\beta)\in \mathcal{I}_s$ such that $\mathrm{P}_{I^*(\beta)}\mathrm{X}\beta
=\mathrm{X}\beta$ and $|I^*(\beta)|\le s$,
and hence  $r^2(I^*(\beta),\beta)\le \sigma^2|I^*(\beta)|
\log\big(ep/|I^*(\beta)|\big)\le\sigma^2s\log\big(ep/s\big)$.
By the definition  \eqref{oracle} of the oracle rate $r^2(\beta)$,
we have that $r^2(\beta) \le r^2(I^*(\beta),\beta) \wedge r^2(I_r,\beta)$. 
Then we obtain trivially that  
\[
\sup_{\beta\in\ell_0[s]} r^2(\beta) 
\le \sup_{\beta\in \ell_0[s]}\big[r^2(I^*(\beta),\beta) \wedge r^2(I_r,\beta)\big]
\lesssim \sigma^2\big[r\wedge\big(s\log(\tfrac{ep}{s})\big)\big]\lesssim r^2(\ell_0[s]).
\]
We thus established the relation \eqref{oracle_minimax} for the scale $\ell_0[s]$,
and Corollary \ref{minimax_results} follows with the minimax 
rate $r^2(\ell_0[s])$ defined above.

\subsection{Minimax results for the weak $\ell_q$-balls} 
For $q\in(0,1]$, the \emph{weak $\ell_q$-ball} is defined by
\begin{align*}
\ell_q[R]=\big\{\beta\in\mathbb{R}^p: 
\beta^2_{[i]}\le R^2 i^{-2/q}, \, i\in[p]\big\},\;\; R^2 \ge \sigma^2  \log p,
\end{align*}
where $\beta_{[1]}^2\ge\ldots\ge \beta_{[p]}^2$ are the ordered  
$\beta_1^2,\ldots, \beta_p^2$.  We assume that there exists a constant $L>0$ such that $\max_{i\in[p]}\|X_i\|^2\le n L^2$.   The  minimax prediction estimation rate over $\ell_q[R]$ in $\ell_2$-prediction norm is known to be  $r^2(\ell_q[R])=
\inf_{\hat{\beta}}\sup_{\beta\in\ell_q[R]} \mathbb{E}_\beta
\|\mathrm{X} \hat{\beta}-\mathrm{X}\beta\|^2
=R^qn^{q/2}\sigma^{2-q}[\log(1+\tfrac{p\sigma^q}{n^{q/2}R^q})]^{1-q/2}$ when $R^2 \ge \sigma^2  \log p$; see  \cite{Tsybakov:2014} (cf.\ \cite{Donoho&Johnstone:1994b, Birge&Massart:2001}). 
The adaptive minimax results for \emph{weak} $\ell_q$-balls in $\ell_2$-prediction norm were
considered by \cite{Gao&vanderVaart&Zhou:2015} for posterior contraction rates.

Define $j=O_\beta(i)$ if $\beta_i^2=\beta_{[j]}^2$, with 
the convention that in the case $\beta_{i_1}^2=\ldots =\beta_{i_k}^2$ for $i_1 < \ldots < i_k$ 
we set $O_\beta(i_{l+1})=O_\beta(i_l)+1$, $l=1,\ldots, k-1$.
Let $I^*=I^*(\beta)=\{i \in [p]: O_\beta(i) \le R^*\}$ with $R^*=e(\frac{R}{\sigma})^q 
n^{q/2}[\log(\frac{p\sigma^q}{n^{q/2}R^q})]^{-q/2}$, and $\beta^*=\beta^*(\beta)=((\beta^*_i)_{i\in[p]}: \beta^*_i=\beta_i \text{ for } i \in I^*, \beta^*_j=0 \text{ for } j\not\in I^*)$.

There exists ${I^*}\in\mathcal{I}$  such that $\mathrm{X}\beta^*\in\mathbb{L}_{I^*}$ and 
$\|\mathrm{X}\beta-\mathrm{P}_{I^*}\mathrm{X}\beta\|^2=\|\mathrm{X}\beta-\mathrm{X}\beta^*\|^2$. 
By using this and  the fact that 
$\max_{i\in[p]}\|X_i\|^2\le L^2n$, we derive \eqref{oracle_minimax}:
\begin{align*}
\sup_{\beta\in\ell_q[R]}r^2(\beta) &\le\sup_{\beta\in\ell_q[R]} r^2(I^*,\beta)\le
\sigma^2 R^*\log(\tfrac{ep}{R^*})+\sup_{\beta\in\ell_q[R]}\|\mathrm{X}\beta-\mathrm{X}\beta^*\|^2 
\notag\\
&\lesssim \sigma^2R^*\log(\tfrac{p\sigma^q}{n^{q/2}R^q})+L^2n R^2 (R^*)^{1-2/q}\\
&\lesssim R^{q}n^{q/2}\sigma^{2-q}\big[\log (1+\tfrac{p\sigma^q}{n^{q/2}R^q})\big]^{1-q/2}
\asymp r^2(\ell_q[R]).  
\end{align*}
Corollary \ref{minimax_results} follows for this case with the minimax rate 
$r^2(\ell_q[R])$  defined above.

\subsection{Model selection} 
Besides inference on $\theta=\mathrm{X}\beta$, several interesting 
corollaries were established in \cite{Belitser&Ghosal:2018} 
and they follow from our results exactly in the same way, we  
provide them here for completeness. 

The first corollary concerns a bound on the size of the selected model. 
Similar to \cite{Castilloetal:2015} and \cite{Martin&etal:2017}, the following assertion 
shows that  the models with substantially higher size than the true one are unlikely 
according to the posterior $\hat{\pi}(I|Y)$ (which is in essence the penalization method in case 
$\hat{\pi}(I|Y)=\check{\pi}(I|Y)$).
\begin{proposition}
\label{cor1_dimension}
Under the conditions of Corollary \ref{meta_theorem}, for sufficiently large $M'_0$   
\[
\sup_{\beta\in\mathbb{R}^p}\mathbb{E}_{\beta}\hat{\pi}(I: |I|>C_0 \|\beta\|_0|Y)
\le C_\nu\exp\big\{-c_2\big(\tfrac{M'_0}{2}-c_3\big)\|\beta\|_0 \log(\tfrac{ep}{\|\beta\|_0})\big\},
\]
where $C_0=\max\{M'_0,{M'_0}^2/e\}$.
\end{proposition}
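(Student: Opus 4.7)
The plan is to combine Lemma \ref{main_lemma} with a summation over $\{I\in\mathcal{I}:|I|>C_0 s\}$ (where $s=\|\beta\|_0$) and a combinatorial comparison of the complexity $|I|\log(ep/|I|)$ at $|I|=C_0 s$ versus $|I|=s$. Since $\theta=\mathrm{X}\beta\in\mathbb{L}_{I^*(\beta)}$ with $\rho(I^*(\beta))=2s\log(ep/s)$, the oracle rate satisfies $r^2(\theta)\le r^2(I^*(\beta),\theta)=2\sigma^2 s\log(ep/s)$. Combined with $r^2(I,\theta)\ge\sigma^2\rho(I)$, Lemma \ref{main_lemma} gives, for every $I\in\mathcal{I}$,
\[
\mathbb{E}_\beta \hat{p}_I \le \exp\{-(c_1+c_2)\rho(I) + 2c_2 c_3\, s\log(ep/s)\}.
\]

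Next I would sum this bound over $\{I:|I|>C_0 s\}$ (if $C_0 s>p$ the set is empty and the claim is trivial). Splitting $c_1+c_2=\nu+(c_1+c_2-\nu)$ (recall $c_1>2\nu$ from Lemma \ref{main_lemma}), using Condition \eqref{(A2)} to handle the first factor and the monotonicity of $\rho(I)=2|I|\log(ep/|I|)$ in $|I|$ on the second, one obtains
\[
\mathbb{E}_\beta\hat{\pi}(|I|>C_0 s|Y) \le C_\nu \exp\{2c_2 c_3\, s\log(ep/s) - 2(c_1+c_2-\nu)\, C_0 s\log(ep/(C_0 s))\}.
\]

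The heart of the argument is the combinatorial inequality $C_0 s\log(ep/(C_0 s))\ge (M'_0/2)\, s\log(ep/s)$ for $C_0=\max\{M'_0,M'_0^2/e\}$. Writing the ratio as $C_0[1-\log C_0/\log(ep/s)]$ and using $\log(ep/s)\ge 1+\log C_0$ (which holds whenever $C_0 s\le p$), one gets $C_0/(1+\log C_0)$ as a lower bound. This is $\ge M'_0/2$ in both regimes: for $M'_0\le e$ (so $C_0=M'_0$), since $\log M'_0\le 1$; and for $M'_0\ge e$ (so $C_0=M'_0^2/e$), the inequality reduces to $M'_0\ge e\log M'_0$, which is valid throughout this range (the function $M'_0-e\log M'_0$ vanishes at $M'_0=e$ and has positive derivative thereafter). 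Inserting this into the preceding display, the exponent becomes $[2c_2 c_3-(c_1+c_2-\nu)M'_0]\, s\log(ep/s)$, which for $M'_0$ sufficiently large (specifically $M'_0\ge 2c_2 c_3/(c_1+c_2/2-\nu)$) is dominated by $-c_2(M'_0/2-c_3)\, s\log(ep/s)$, yielding the claim.

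The main obstacle is the sharp combinatorial inequality $C_0 s\log(ep/(C_0 s))\ge (M'_0/2)s\log(ep/s)$ together with identifying the precise two-regime form $C_0=\max\{M'_0,M'_0^2/e\}$ that makes it hold uniformly in $M'_0$; once this is in hand, the rest is routine bookkeeping with constants from Lemma \ref{main_lemma} and Condition \eqref{(A2)}. A minor technicality is the special structure $I_r\in\mathcal{I}$ with $\rho(I_r)=r$: it is either excluded from $\{|I|>C_0 s\}$ (when $r\le C_0 s$) or can be handled by a single direct application of Lemma \ref{main_lemma} using $\mathrm{P}_{I_r}\theta=\theta$, producing a term of the same order as the bound above.
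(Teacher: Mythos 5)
Your main argument, the bound for structures $I\in\mathcal{I}_1$, is correct and is essentially the paper's proof with Theorem \ref{th2}(i) inlined. The paper shows that $|I|\ge M'_0\|\beta\|_0$ forces $r^2(I,\beta)\ge c_3 r^2(\beta)+M''_0\sigma^2$ with $M''_0=(M'_0/2-c_3)\|\beta\|_0\log(ep/\|\beta\|_0)$ and then invokes Theorem \ref{th2}(i) as a black box; you re-derive that estimate directly from Lemma \ref{main_lemma} plus Condition \eqref{(A2)}. Your combinatorial inequality $C_0s\log(ep/(C_0s))\ge(M'_0/2)\,s\log(ep/s)$ and the two-regime verification of $C_0=\max\{M'_0,{M'_0}^2/e\}$ are the same in substance as the paper's case split on whether $\|\beta\|_0$ is below or above $ep/{M'_0}^2$; both versions check out.

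The genuine gap is the special structure $I_r$, which you flag as a ``minor technicality'' but do not actually dispose of. Since $\rho(I_r)=r$ rather than $2r\log(ep/r)$, a direct application of Lemma \ref{main_lemma} with $\mathrm{P}_{I_r}\theta=\theta$ only yields $\mathbb{E}_\beta\hat{p}_{I_r}\le\exp\{-(c_1+c_2)r+2c_2c_3\|\beta\|_0\log(ep/\|\beta\|_0)\}$, and knowing only $r>C_0\|\beta\|_0$ does not make this comparable to the target $\exp\{-c_2(M'_0/2-c_3)\|\beta\|_0\log(ep/\|\beta\|_0)\}$: take $s=\|\beta\|_0=1$, $r=\lceil C_0\rceil+1$, and $p$ so large that $\log(ep)\gg r$; the exponent is then even positive. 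The problem is not merely with the bound. In the regime $C_0 s<r<2s\log(ep/s)$ no $I\in\mathcal{I}_1$ can contain $I^*(\beta)$ (monotonicity gives $2|I|\log(ep/|I|)\ge 2s\log(ep/s)>r$ for $|I|\ge s$), so for a generic design and $\|\beta\|$ large the selector picks $\hat{I}=I_r$ with probability near one while $|I_r|=r>C_0s$; the left-hand side of the proposition is then near $1$ although the right-hand side is exponentially small in $\log(ep/s)$. So $I_r$ must genuinely be excluded from the event, or one needs an extra assumption such as $r\gtrsim M'_0\|\beta\|_0\log(ep/\|\beta\|_0)$. You should know that the paper's own proof has exactly the same blind spot: its first display implicitly uses $\rho(I)\ge |I|\log(ep/|I|)$ for every $I\in\mathcal{I}$, which is false for $I=I_r$.
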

\begin{proof}
Note that for any $M'_0>2c_3$,  $|I|\ge M'_0\|\beta\|_0$ implies that 
\begin{align*}
r^2(I,\beta)&\ge \sigma^2|I| \log (\tfrac{en}{|I|})\ge 
M'_0\sigma^2\|\beta\|_0 \log\big(\tfrac{en}{M'_0\|\beta\|_0}\big)
\ge 
\tfrac{M'_0}{2} \sigma^2\|\beta\|_0\log(\tfrac{en}{\|\beta\|_0}),
\end{align*} 
provided $\|\beta\|_0<en/{M'_0}^2$. Since $r^2(\beta) \le r^2(I^*(\beta), \beta) \le 
\sigma^2\|\beta\|_0\log(en/\|\beta\|_0)$, the above display implies that $r^2(I,\beta)
\ge c_3 r^2(\beta)+M''_0\sigma^2$, where $M''_0=(M'_0/2-c_3)\|\beta\|_0\log(en/\|\beta\|_0)$. 
By (i) of Theorem \ref{th2} ($M''_0$ corresponds to $M$ from Theorem \ref{th2}), 
the assertion holds for any $|I|\ge M'_0 \|\beta\|_0$  whenever $\|\beta\|_0<en/{M'_0}^2$. 
If $\|\beta\|_0\ge en/{M'_0}^2$, the result trivially holds for any $|I|\ge {M'_0}^2\|\beta\|_0/e$.
Hence, the choice $C_0=\max\{M'_0,{M'_0}^2/e\}$ ensures the result for any $\beta\in\mathbb{R}^p$.  
\end{proof}
The above claim, being non-asymptotic and uniform in $\beta\in\mathbb{R}^p$, can be specialized 
to certain situations. In particular, it leads to 
an interesting  conclusion under the asymptotic setting $p=p_n\to \infty$ and $\|\beta\|_0
\le s_n=o(p_n)$ as $n\to \infty$. Then the probability bound goes to $0$ as $n\to \infty$, uniformly 
in  $\beta\in\ell_0[s_n]\triangleq \{\beta: \|\beta\|_0\le s_n\}$. 
Further, when $s_n=o(p_n)$, the constant $C_0$ can be chosen smaller, which makes 
the conclusion of the claim stronger. 

\subsection{Inference on $\beta$ under the compatibility condition} 
The next several corollaries concern inference on $\beta$ rather than on $\theta$.
Besides optimal prediction, it is of interest to infer on the parameter $\beta$ itself. 
Because the dimension $p$ may be (and generally is) larger than $n$, the correspondence between $\mathrm{X}\beta$ and $\beta$ is not unique, and hence additional conditions are necessary even in the noiseless situation. As is commonly adopted in the literature (see, e.g., 
\cite{Gao&vanderVaart&Zhou:2015}), we will need to assume a condition lower bounding the norm 
of $\mathrm{X}\beta$ by a positive multiple of a norm on $\beta$ for sparse vectors which 
is in turn a condition on the design matrix $\mathrm{X}$.

There is yet another issue: recall that inference in the general framework is on $\theta$ and is 
based on the posterior $\hat{\pi}(\vartheta|Y)$ 
for $\theta=\mathrm{X}\beta$, not for $\beta$. In order to infer on $\beta$, 
we need to construct a prior $\Pi$ on $\beta$  
that leads to an (empirical Bayes) posterior $\hat{\Pi}(b|Y)$ such that $\vartheta=\mathrm{X}b 
\sim \hat{\pi}(\vartheta|Y)$. This is not difficult: indeed, we use the construction 
of the conditional prior $\pi_I(\vartheta)$ on $\theta$ from \cite{Belitser&Nurushev:2015},
for all $I\in \mathcal{I}'$, where the family $\mathcal{I}'$ is from Remark \ref{rem_32}.
Since in this case the conditional prior 
$\pi_I(\vartheta|Y)$ was formally constructed as prior on ``structured''
$\theta^I=\mathrm{P}_I\theta= \mathrm{P}_I\mathrm{X} \beta = \mathrm{X}_I\beta$,
we can derive the corresponding conditional prior $\Pi_I(b|Y)$ for $\beta=
(\mathrm{X}_I^T \mathrm{X}_I)^{-1}\theta^I$ because 
$\theta^I=\mathrm{X}_I\beta$ is invertible with respect to $\beta$ for any $I\in\mathcal{I}'$, 
by the definition of $\mathcal{I}'$. Thus, the corresponding conditional prior on $\beta$ becomes
\[
\beta|I \sim \Pi_I(b|Y) = \mathrm{N}\big((\mathrm{X}_I^T \mathrm{X}_I)^{-1} \mathrm{X}_I \mu, 
\kappa \sigma^2(\mathrm{X}_I^T \mathrm{X}_I)^{-1}\big) 
\otimes \delta_{0_{|I^c|}},
\]
which means that subvector $\beta_I$ with coordinates in $I$ is normally distributed with the 
above parameters, and the remaining coordinates $I^c$ of $\beta$ are set to zero. 
From this point on, we can apply the empirical Bayesian approach exactly
in the same way as in \cite{Belitser&Ghosal:2018}, yielding
the corresponding empirical Bayes posteriors on $\beta$:
$\tilde{\Pi}_I(b|Y)$, $\Pi(b|Y)$, $\tilde{\Pi}(b|Y)$, $\check{\Pi}(b|Y)$; 
and the estimators $\tilde{\beta}=\sum_{I\in\mathcal{I}}
\hat{\beta}_I \tilde{\pi}(I|Y)$ and $\check{\beta}=\hat{\beta}_{\hat{I}}$, 
where $\hat{I}$ is defined by \eqref{I_MAP} and 
$\hat{\beta}_I=(\mathrm{X}_I^T \mathrm{X}_I)^{-1} \mathrm{X}_IY$ is just the ordinary 
least squares estimator of $\beta$ based on the design matrix $\mathrm{X}_I$ 
of full column rank as $I\in\mathcal{I}$. Similarly, we can define 
$\hat{\Pi}(\beta|Y)$ as being either $\tilde{\Pi}(\beta|Y)$ or $\check{\Pi}(\beta|Y)$, and 
$\hat{\beta}$ as being either $\tilde{\beta}$ or $\check{\beta}$. 
The details of Bayesian construction for $\beta$ can be found in \cite{Belitser&Ghosal:2018}. 
For us what only matters is the fact that if $\beta\sim\hat{\Pi}(b|Y)$ then 
$\theta=\mathrm{X}\beta \sim \hat{\pi}(\vartheta|Y)$.

%
 
Introduce some additional notation.
Recall that $\|\beta\|_0$ denotes the number of non-zero elements of $\beta$. 
Further let $\|\beta\|_1=\sum_{j=1}^p|\beta_j|$ be the $\ell_1$-norm of $\beta$
and $\|\mathrm{X}\|_{\rm max}= \max_{k=1,\ldots, p} \|X_k\|$
(notice that if the design matrix $\mathrm{X}$ is normalized
so that $\|X_k\|^2=n$, $k\in[p]$, then $\|\mathrm{X}\|_{\rm max}=\sqrt{n}$).
For $l\in\mathbb{N}$, let
\begin{align}
\label{phi_1}
\phi_1(l)=\inf\Big\{\frac{\sqrt{l}\|\mathrm{X}\beta\|}{\|\mathrm{X}\|_{\rm max}\|\beta\|_1}:\|\beta\|_0\le l,\; \text{supp}(\beta)\in\mathcal{I}\Big\},\\
\label{phi_2}
\phi_2(l)=\inf\Big\{\frac{\|\mathrm{X}\beta\|}{\|\mathrm{X}\|_{\rm max}\|\beta\|}:\|\beta\|_0\le l,\; \text{supp}(\beta)\in\mathcal{I}\Big\}.
\end{align}
Because $\|\beta\|_1\le \sqrt{\|\beta\|_0}\|\beta\|$, it follows that $\phi_1(l)\ge\phi_2(l)$. 
Positivity of $\phi_1$ at an argument $l$ is called the compatibility condition, and is stronger 
if $\phi_1(l)$ is larger. If any of $\phi_1$ or $\phi_2$ is zero at its argument, then the corresponding 
result below becomes trivial but remains valid.

The following claims say basically that, under the compatibility condition, the (empirical Bayes) 
posterior $\hat{\Pi}(b|Y)$ on $\beta$  contracts around the truth with the optimal rate.
\begin{proposition} Under the conditions of Corollary \ref{meta_theorem},
for sufficiently large $M'_0$ and any $M\ge 0$
\label{estimation_rate}
\begin{align*}
\mathbb{E}_{\beta}&\hat{\Pi}\Big(\|b-\beta\|_1\ge \tfrac{\sqrt{(C_0+1)\|\beta\|_0(M_0r^2(\beta)+M\sigma^2)}}{\|\mathrm{X}\|_{\rm max}
\phi_1((C_0+1)\|\beta\|_0)} \big| Y\Big) \\
& \le H_0 e^{-m_0 M}+C_\nu\exp\big\{-c_2(M'_0/2-c_3)\|\beta\|_0 \log(\tfrac{ep}{\|\beta\|_0})\big\},\\
\mathbb{E}_{\beta}&\hat{\Pi}\Big(\|b-\beta\|\ge \tfrac{\sqrt{M_0r^2(\beta)+M\sigma^2}}{\|\mathrm{X}\|_{\rm max}
\phi_2((C_0+1)\|\beta\|_0)} \big| Y\Big) \\
& \le H_0 e^{-m_0 M}+C_\nu\exp\big\{-c_2(M'_0/2-c_3)\|\beta\|_0 
\log(\tfrac{ep}{\|\beta\|_0})\big\},
\end{align*}
uniformly in $\beta\in\mathbb{R}^p$, where $C_0=\max\{M'_0,{M'_0}^2/e\}$.
\end{proposition}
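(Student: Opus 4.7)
The plan is to combine the prediction contraction result of Corollary \ref{meta_theorem}(i), applied to $\vartheta=\mathrm{X}b$ and $\theta=\mathrm{X}\beta$, with the model-size control of Proposition \ref{cor1_dimension}, and to convert a bound on $\|\mathrm{X}(b-\beta)\|$ into a bound on $\|b-\beta\|_1$ (respectively $\|b-\beta\|$) via the compatibility functions \eqref{phi_1} and \eqref{phi_2}.

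First, I would decompose the empirical Bayes posterior according to the data-dependent structure $I$. Recalling that $\hat{\Pi}(\cdot|Y)=\sum_{I\in\mathcal{I}}\Pi_I(\cdot|Y)\hat{\pi}(I|Y)$ and that under $\Pi_I(\cdot|Y)$ the vector $b$ has support inside $I$, split the posterior mass over the ``good'' event $\mathcal{A}_0=\{I\in\mathcal{I}:|I|\le C_0\|\beta\|_0\}$ and its complement. On $\mathcal{A}_0^c$ the posterior mass is bounded by Proposition \ref{cor1_dimension}, giving the second term $C_\nu\exp\{-c_2(M'_0/2-c_3)\|\beta\|_0\log(ep/\|\beta\|_0)\}$.

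On $\mathcal{A}_0$ we have $\mathrm{supp}(b-\beta)\subseteq I\cup\mathrm{supp}(\beta)$, hence $\|b-\beta\|_0\le (C_0+1)\|\beta\|_0$; moreover, the union $I\cup\mathrm{supp}(\beta)$ belongs to $\mathcal{I}$ (otherwise $\phi_1$ or $\phi_2$ at this argument is zero and the corresponding claim is trivially valid, as noted just before the statement). The compatibility definitions \eqref{phi_1}--\eqref{phi_2} then yield
\[
\|b-\beta\|_1\le\frac{\sqrt{(C_0+1)\|\beta\|_0}\,\|\mathrm{X}(b-\beta)\|}{\|\mathrm{X}\|_{\rm max}\,\phi_1((C_0+1)\|\beta\|_0)},\qquad
\|b-\beta\|\le\frac{\|\mathrm{X}(b-\beta)\|}{\|\mathrm{X}\|_{\rm max}\,\phi_2((C_0+1)\|\beta\|_0)}.
\]
Thus on $\mathcal{A}_0$ the event of interest in the $\ell_1$-claim implies $\|\mathrm{X}b-\mathrm{X}\beta\|^2\ge M_0r^2(\beta)+M\sigma^2$, and similarly for the $\ell_2$-claim.

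It remains to invoke Corollary \ref{meta_theorem}(i) for $\hat{\pi}(\vartheta|Y)$ with $\vartheta=\mathrm{X}b$ and $\theta=\mathrm{X}\beta$: the posterior mass of $\{\|\vartheta-\theta\|^2\ge M_0r^2(\beta)+M\sigma^2\}$ is in expectation bounded by $H_0e^{-m_0M}$, uniformly in $\beta$. Adding the two contributions gives the stated bound. The only genuine subtlety in the argument is ensuring $\mathrm{supp}(b-\beta)\in\mathcal{I}$ on $\mathcal{A}_0$; this is the step to watch, and it is handled by the trivialization convention when $\phi_i$ vanishes at the given argument, so no additional assumption on $\mathrm{X}$ is needed beyond the one already implicit in the compatibility definitions.
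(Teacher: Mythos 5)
Your argument is correct and is essentially identical to the paper's proof: split the posterior mass according to whether $|I|\le C_0\|\beta\|_0$, bound the bad event by Proposition \ref{cor1_dimension}, use the compatibility coefficients to pass from $\|b-\beta\|_1$ (or $\|b-\beta\|$) to $\|\mathrm{X}(b-\beta)\|$ on the good event, and conclude by the contraction bound of Theorem \ref{th1}. Your extra care about $\mathrm{supp}(b-\beta)$ lying in $\mathcal{I}$ is handled exactly as you say, by the trivialization convention when $\phi_1$ or $\phi_2$ vanishes.
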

\begin{proof} 
By the definition of compatibility coefficient, 
on models $I$ with $|I| \le C_0\|\beta\|_0$, the quantity $\|b-\beta\|_1$ is bounded by 
\[
\sqrt{(C_0+1)\|\beta\|_0}\|\mathrm{X}(b-\beta)\|/[\|\mathrm{X}\|_{\rm max}
\phi_1((C_0+1)\|\beta\|_0)],
\]
since the cardinality of  $\text{supp}(b-\beta)$ is at most  $(C_0+1)\|\beta\|_0$. 
By Theorem \ref{th1}, the $\mathbb{E}_\beta$-expectation of the posterior probability 
of $\|\mathrm{X}(b-\beta)\| = \|\vartheta-\theta\|>\sqrt{M_0r^2(\theta)+M\sigma^2}$ is 
bounded by $H_0 e^{-m_0 M}$, while by Proposition \ref{cor1_dimension}, the event 
$\{I: |I|\ge C_0\|\beta\|_0\}$ has probability bounded by 
\[
C_\nu\exp\big\{-c_2(M'_0/2-c_3)\|\beta\|_0\log(\tfrac{ep}{\|\beta\|_0})\big\}.
\] 
The first assertion follows, the proof of the second claim is similar.
\end{proof}
Notice that the above result implies the Corollary 5.4 in \cite{Gao&vanderVaart&Zhou:2015} and 
obtains optimal estimation rates for both $\ell_1$ and $\ell_2$ loss functions.  
Moreover, the dependence on the quantities  $\phi_1(l)$ and $\phi_2(l)$ are optimal; cf.\ \cite{Raskutti&Wainwright&Yu:2011}.
Next, we also obtain the optimal  estimation result  for both $\ell_1$- and $\ell_2$-norms.

\begin{proposition} 
Under the conditions of Corollary \ref{meta_theorem}, for sufficiently large $M'_0$ and any $M\ge 0$
\label{estimation_result}
\begin{align*}
\mathbb{P}_\beta&\Big(\|\hat{\beta}-\beta\|_1 \ge\tfrac{\sqrt{(C_0+1)\|\beta\|_0
(M_1r^2(\beta)+M\sigma^2)}}{\|\mathrm{X}\|_{\rm max}\phi_1((C_0+1)\|\beta\|_0)}\Big)\\
& \le H_1 e^{-m_1 M}+C_\nu\exp\big\{-c_2(M'_0/2-c_3)\|\beta\|_0\log(\tfrac{ep}{\|\beta\|_0})\big\},\\
\mathbb{P}_\beta&\Big(\|\hat{\beta}-\beta\| \ge\tfrac{\sqrt{M_1r^2(\beta)+M\sigma^2}}{\|\mathrm{X}\|_{\rm max}\phi_2((C_0+1)\|\beta\|_0)}\Big)\\
& \le H_1 e^{-m_1 M}+C_\nu\exp\big\{-c_2(M'_0/2-c_3)\|\beta\|_0\log(\tfrac{ep}{\|\beta\|_0})\big\},
\end{align*}
uniformly in $\beta\in\mathbb{R}^p$, where $C_0=\max\{M'_0,{M'_0}^2/e\}$.
\end{proposition}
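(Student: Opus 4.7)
The plan is to reduce Proposition \ref{estimation_result} to Theorem \ref{th1}(ii) via the compatibility coefficient, mirroring the argument used for the posterior version in Proposition \ref{estimation_rate}. The key identity is $\mathrm{X}(\hat\beta-\beta)=\hat\theta-\theta$ up to the choice of empirical Bayes construction of $\hat\beta$ on $\beta$-space (as spelled out in the paper, $\hat\pi(\vartheta|Y)$ is the pushforward of $\hat\Pi(b|Y)$ under $b\mapsto \mathrm{X}b$, and the same is true at the level of means), so any $\ell_2$-prediction norm bound transfers directly to a bound on $\|\mathrm{X}(\hat\beta-\beta)\|$.

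First, I would decompose the target probability by introducing the event $E=\{|\hat I|\le C_0\|\beta\|_0\}$ on which the dimension of the selected model is controlled. On $E$, the support of $\check\beta-\beta$ is contained in $\hat I\cup I^\ast(\beta)$, so its cardinality is at most $(C_0+1)\|\beta\|_0$, and the definition \eqref{phi_1} of $\phi_1$ yields
\[
\|\check\beta-\beta\|_1\le\frac{\sqrt{(C_0+1)\|\beta\|_0}\,\|\mathrm{X}(\check\beta-\beta)\|}{\|\mathrm{X}\|_{\rm max}\,\phi_1((C_0+1)\|\beta\|_0)}
=\frac{\sqrt{(C_0+1)\|\beta\|_0}\,\|\check\theta-\theta\|}{\|\mathrm{X}\|_{\rm max}\,\phi_1((C_0+1)\|\beta\|_0)}.
\]
Applying Theorem \ref{th1}(ii) to $\|\check\theta-\theta\|^2\ge M_1 r^2(\beta)+M\sigma^2$ bounds the intersection of the target event with $E$ by $H_1 e^{-m_1 M}$. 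On $E^c$, Proposition \ref{cor1_dimension} bounds $\mathbb{P}_\beta(E^c)=\mathbb{P}_\beta(|\hat I|>C_0\|\beta\|_0)$ by $C_\nu\exp\{-c_2(M'_0/2-c_3)\|\beta\|_0\log(ep/\|\beta\|_0)\}$ in the MS case (where $\hat\pi(I|Y)$ is degenerate at $\hat I$ so the $\mathbb{E}_\beta\hat\pi$ statement reduces to a probability statement on $\hat I$). Summing the two yields the first claim. The second claim is identical with $\phi_1$ replaced by $\phi_2$ via \eqref{phi_2}.

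The main obstacle is the MA case $\hat\beta=\tilde\beta=\sum_I\hat\beta_I\tilde\pi(I|Y)$, since $\tilde\beta$ is a mixture and need not be supported on a single sparse model. The fix is the triangle inequality $\|\tilde\beta-\beta\|_1\le\sum_I\tilde\pi(I|Y)\|\hat\beta_I-\beta\|_1$, after which I split the sum over $\mathcal{J}=\{I:|I|\le C_0\|\beta\|_0\}$ and its complement. On $\mathcal{J}$, each $\hat\beta_I-\beta$ has support of size at most $(C_0+1)\|\beta\|_0$ and the compatibility bound applies term-by-term, giving after a Cauchy--Schwarz step a bound by $\frac{\sqrt{(C_0+1)\|\beta\|_0}}{\|\mathrm{X}\|_{\rm max}\phi_1(\cdot)}\bigl(\sum_{I\in\mathcal J}\tilde\pi(I|Y)\|\mathrm{X}(\hat\beta_I-\beta)\|^2\bigr)^{1/2}$, which is controlled by $\tilde{\mathbb E}(\|\vartheta-\theta\|^2|Y)^{1/2}$ and ultimately by Theorem \ref{th1}(i) in the integrated form of Remark \ref{remark}. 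The remainder $\sum_{I\in\mathcal J^c}\tilde\pi(I|Y)\|\hat\beta_I-\beta\|_1$ is handled in the spirit of the $\mathcal{J}_M^c$ term in the proof of Theorem \ref{th1}(ii): a Cauchy--Schwarz step separates a fourth-moment bound on $\|\hat\beta_I-\beta\|_1$ (which follows from \eqref{moment_bound} applied to the coordinates of $\hat\beta_I-\beta$ on the low-dimensional space $\mathbb{L}_I$) from $\bigl[\mathbb{E}_\beta\tilde\pi(\mathcal J^c|Y)\bigr]^{1/2}$, and the latter is exponentially small by Proposition \ref{cor1_dimension}. Assembling these pieces gives the same two-term bound as in the MS case, completing the proof.
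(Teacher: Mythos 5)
Your treatment of the model-selection case $\hat\beta=\check\beta$ is precisely the paper's proof: introduce the event $E=\{|\hat I|\le C_0\|\beta\|_0\}$, use the compatibility coefficient \eqref{phi_1} (resp.\ \eqref{phi_2}) to convert the prediction-norm bound of Theorem \ref{th1}, claim \eqref{th1_ii}, into the stated $\ell_1$- (resp.\ $\ell_2$-) bound on $\check\beta-\beta$ on $E$, and bound $\mathbb{P}_\beta(E^c)$ by Proposition \ref{cor1_dimension}. That part is correct and identical to what the paper does; the paper then simply declares the remaining cases ``similar and therefore omitted.''

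Where you go beyond the paper is in actually attempting the model-averaging case $\hat\beta=\tilde\beta$, and there your argument has a concrete gap. The split over $\mathcal{J}=\{I:|I|\le C_0\|\beta\|_0\}$ and $\mathcal{J}^c$ is the natural route, and the $\mathcal{J}$-part is fine: after Cauchy--Schwarz the relevant quantity $\sum_I\tilde\pi(I|Y)\|\mathrm{X}(\hat\beta_I-\beta)\|^2$ is exactly the quantity $\sum_I R_I^2\hat p_I$ controlled in probability in the first display of the proof of claim \eqref{th1_ii}. But the remainder $\sum_{I\in\mathcal{J}^c}\tilde\pi(I|Y)\|\hat\beta_I-\beta\|_1$ is not handled by the step you describe. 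The bound \eqref{moment_bound} controls moments of $\|\mathrm{P}_I\xi\|$, i.e.\ of the \emph{prediction-space} quantity $\|\mathrm{X}_I\hat\beta_I-\mathrm{P}_I\theta\|$; it says nothing about the coefficient vector $\hat\beta_I-\beta$. Translating from the prediction norm to $\|\hat\beta_I-\beta\|_1$ requires a compatibility constant evaluated at $|I|+\|\beta\|_0$ with $I\in\mathcal{J}^c$, and for such large models $\phi_1$ may be (and for $|I|>n$ with $p\gg n$ necessarily is) zero; equivalently, $\|\hat\beta_I-\beta\|_1^2$ has second moment of order $\mathrm{tr}\big((\mathrm{X}_I^T\mathrm{X}_I)^{-1}\big)$, which is not bounded in terms of any quantity appearing in the statement. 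Since $\tilde\pi(\mathcal{J}^c|Y)$ is strictly positive on every sample path, you also cannot push this contribution into a small-probability event; the exponentially small factor $[\mathbb{E}_\beta\tilde\pi(I|Y)]^{1/2}$ from Lemma \ref{main_lemma} is design-free and cannot compensate an arbitrarily ill-conditioned $\mathrm{X}_I$. So the MA case needs either an additional assumption on the design (positivity of $\phi_2$ at all model sizes occurring in $\mathcal{I}$, or a uniform bound on $\mathrm{tr}((\mathrm{X}_I^T\mathrm{X}_I)^{-1})$) or a different definition of the estimator on $\mathcal{J}^c$; as written, your step asserting the fourth-moment bound does not go through.
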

\begin{proof} 
Consider the case $\hat{\theta}=\check{\theta}
=\mathrm{X}\check{\beta}$, where $\check{\theta}$ is defined 
by \eqref{emp_emp_posterior}.
Denote for brevity $\Delta=\tfrac{\sqrt{(C_0+1)\|\beta\|_0(M_1r^2(\beta)+M\sigma^2)}}
{\|\mathrm{X}\|_{\rm max}\phi_1((C_0+1)\|\beta\|_0)}$ and introduce the event 
$E_M=\{|\hat{I}| \le C_0\|\beta\|_0\}$, where  $\hat{I}$ is defined by $\eqref{I_MAP}$. 
By the definition of compatibility coefficient, 
in case $|\hat{I}| \le  C_0\|\beta\|_0$,  $\|\check{\beta}-\beta\|_1$ is bounded by $\sqrt{(C_0+1)\|\beta\|_0}
\|\mathrm{X}(\check{\beta}-\beta)\|/(\|\mathrm{X}\|_{\rm max}\phi_1((C_0+1)\|\beta\|_0))$,
since the cardinality of  $\text{supp}(\check{\beta}-\beta)$ is at most  $(C_0+1)\|\beta\|_0$. 
By Theorem \ref{th1}, $\|\mathrm{X}(\check{\beta}-\beta)\|>\sqrt{M_1r^2(\beta)+M\sigma^2}$
has probability bounded by $H_1 e^{-m_1 M}$. Using this and Proposition \ref{cor1_dimension}, 
we have 
\begin{align*}
\mathbb{P}_{\beta}(\|\check {\beta}-\beta\|_1 \ge  \Delta)&=
\mathbb{P}_{\beta}(\|\check {\beta}-\beta\|_1 \ge  \Delta,E_M)
+\mathbb{P}_{\beta}(\|\check {\beta}-\beta\|_1 \ge  \Delta, E_M^c)\\
&\le \mathbb{P}_{\beta}(\|X\check {\beta}-X\beta\|^2 \ge  M_1r^2(\beta)+M\sigma^2)
+\mathbb{P}_{\beta}(E_M^c)\\
&\le  H_1 e^{-m_1 M}+\mathbb{P}_{\beta}(E_M^c)\\
&\le H_1 e^{-m_1 M}+\mathbb{E}_{\beta}\check{\pi}(I: |I|>C_0\|\beta\|_0|Y)\\
&\le H_1 e^{-m_1 M}+
C_\nu\exp\big\{-c_2(M'_0/2-c_3)\|\beta\|_0\log(\tfrac{ep}{\|\beta\|_0})\big\}.
\end{align*}
The proof of the second claim for the case  
$\hat{\theta}=\check{\theta}=\mathrm{X}\check{\beta}$ and the proofs of the both claims 
for the case $\hat{\theta}=\tilde{\theta}=\mathrm{X}\tilde{\beta}$ 
are similar and therefore omitted.
\end{proof}

\section{Linear regression with shape structure: aggregation} 
\label{sec_aggregation} 
 
Consider the  regression model with a fixed design: 
\begin{align}
\label{aggregation_model_suppl}
Y_i = f(x_i) + \sigma\xi_i ,\; i\in[n],
\end{align}
where $x_i \in \mathcal{X}$ are nonrandom, $ \mathcal{X}$ is an arbitrary set, $f: \mathcal{X}
\to\mathbb{R}$ is an unknown function, and $\xi_i \overset{\rm ind}{\sim}\mathrm{N}(0,1)$. 
We use the notation $\|f\|^2 = \sum_{i\in[n]} f^2(x_i)$.

Aggregation in nonparametric regression has been considered by \cite{Nemirovski:2000, Bunea&Tsybakov&Wegkamp:2007, Rigollet&Tsybakov:2011, Tsybakov:2014} 
and many others, with estimation as grand problem.
Here we demonstrate that the results obtained 
in the above mentioned papers follow from our general framework results. 
Actually, we obtain stronger versions of the results as they hold in the refined formulation 
and distribution-free setting (the observations are not necessarily normal and/or independent).
Moreover, apart from the estimation results, claims (i) and (iii)--(vii) of Corollary \ref{meta_theorem} 
deliver additional results for the model \eqref{aggregation_model_suppl}, DDM contraction, uncertainty 
quantification and weak structure recovery results, which are new to the best of our knowledge.


Assume we are given a collection of functions $\{f_1, ..., f_p\}$, called \emph{dictionary}.
For $\beta\in\mathbb{R}^p$, let $f_\beta=\sum_{j=1}^p\beta_jf_j$. 
By choosing a rich dictionary $\{f_1, ..., f_p\}$ and an appropriate 
$\beta\in\mathcal{B}\subseteq\mathbb{R}^p$, one can expect
$f_\beta$ to be close to $f$ under some assumptions. 
For a certain choice of $\mathcal{B}$, the so called \emph{aggregation problem}
consists basically in determining the ``best'' $\hat{\beta}\in\mathcal{B}$
on the basis of the data $Y$ such that 
$\big(\sum_{j=1}^p \hat{\beta}_j f_j(x_i), i\in[n]\big)$ well estimates the true $(f(x_i),i\in [n])$.
The appropriate structure here is \emph{sparsity} as in Section \ref{sec_lin_regr_sparsity}.

Introduce the sets $\mathcal{B}$ studied in the literature:
the sets $\mathcal{B}_{(MS)}$, $\mathcal{B}_{(C)}$, $\mathcal{B}_{(L)}$,
$\mathcal{B}_{(L_s)}$, $\mathcal{B}_{(C_s)}$ are defined as in \cite{Rigollet&Tsybakov:2011}.
Precisely, let $B_1(1) = \{\beta\in\mathbb{R}^p: \|\beta\|_1=\sum_{j=1}^p |\beta_j|\le 1\}$ 
and $B_0(s)=\ell_0[s]=\{\beta\in\mathbb{R}^p: \|\beta\|_0\le s\}$ for $s \in[p]$. 
Next, define $\mathcal{B}_{(MS)}=B_0(1)$, $\mathcal{B}_{(C)}$ is a closed convex subset of 
$B_1(1)$, $\mathcal{B}_{(L)}=B_0(p)=\mathbb{R}^p$, $\mathcal{B}_{(L_s)} = B_0(s)$ and
$\mathcal{B}_{(C_s)}$ as a closed convex subset of $B_0(s) \cap B_1(1)$. 
Thus $\mathcal{B}\in\{\mathcal{B}_{(MS)},\mathcal{B}_{(C)},\mathcal{B}_{(L)},
\mathcal{B}_{(L_s)},\mathcal{B}_{(C_s)}\}$.


First recall the main estimation results from \cite{Rigollet&Tsybakov:2011} 
(lower bounds are also established in that paper).
The so called \emph{exponential screening estimator} 
$f_{\tilde{\beta}^{\rm ES}}$ is proposed in \cite{Rigollet&Tsybakov:2011}. 
Under the assumptions $\max_{j\in[p]}\|f_j\| \le\sqrt{n}$, $p\ge 2$, 
$n \ge 1$, $s\in[p]$, the following oracle estimation result is derived in \cite{Rigollet&Tsybakov:2011}:
for some constant $C>0$,
\begin{align}
\label{oracle_ineq_aggreg}
\mathbb{E}_f\|f_{\tilde{\beta}^{\rm ES}}-f\|^2\le \inf_{\beta\in\mathcal{B}}\|f_\beta-f\|^2
+C\sigma^2\psi_{n,p}(\mathcal{B}),
\end{align}
where  $\psi_{n,p}(\mathcal{B})=\min\{\psi_{n,p}^*(\mathcal{B}), r\}$ is the optimal rate of aggregation for the corresponding classes $\mathcal{B}\in\{\mathcal{B}_{(MS)},\mathcal{B}_{(C)},\mathcal{B}_{(L)},
\mathcal{B}_{(L_{s})},\mathcal{B}_{(C_{s})}\}$, $r=\rank(\mathrm{X})$, $\psi_{n,p}^*(\mathcal{B})$ 
is defined as follows:
\begin{align*}
\psi_{n,p}^*(\mathcal{B})=\begin{cases}
\log p, &\mathcal{B}=\mathcal{B}_{(MS)},\\
\sqrt{n\log \big(1+\tfrac{ep\sigma}{\sqrt{n}}\big)},&\mathcal{B}=\mathcal{B}_{(C)},\\
r,&\mathcal{B}=\mathcal{B}_{(L)},\\
s\log(1+ep/s),&\mathcal{B}=\mathcal{B}_{(L_{s})},\\
\min\Big\{\sqrt{n\log \big(1+\tfrac{ep\sigma}{\sqrt{n}}\big)}, s\log(1+ep/s)\Big\},&\mathcal{B}=\mathcal{B}_{(C_{s})}.
\end{cases}
\end{align*}
An advantageous feature of the result \eqref{oracle_ineq_aggreg} 
is its universality: the aggregation is attained over 
the five classes simultaneously. This result follows from Lemma 8.2 and Theorem 3.1 of \cite{Rigollet&Tsybakov:2011}. The result of Theorem 3.1 from \cite{Rigollet&Tsybakov:2011} in our notation reads as follows:
for any $p,n\ge 1$
\begin{align}
\label{Tsybakov_result}
\mathbb{E}_f\|f_{\tilde{\beta}^{\rm ES}}-f\|^2 &\le
\min_{\beta \in\mathbb{R}^p} \Big\{\|f-f_\beta\|^2 + \sigma^2 \big[r \wedge 
\big(9|I^*(\beta)|\log(1+\tfrac{e p}{|I^*(\beta)|\vee 1})\big)\big] \Big\} \notag\\
& \quad + 8\sigma^2 \log 2.
\end{align}
But 
Lemma 8.2 is fulfilled as soon as Theorem 3.1 holds and $\max_{j\in[p]}\|f_j\| \le\sqrt{n}$; 
see \cite{Rigollet&Tsybakov:2011}.
This means (as is as  concluded in \cite{Rigollet&Tsybakov:2011}) 
that under the condition $\max_{j\in[p]}\|f_j\| \le\sqrt{n}$, any estimator satisfying 
\eqref{Tsybakov_result} (possibly with different constants in the right hand side) 
leads to the universal oracle inequality \eqref{oracle_ineq_aggreg}.

Let us demonstrate that we can derive the same type of estimation results as in 
\cite{Rigollet&Tsybakov:2011}, again as consequences of our general approach 
for particular choice of sparsity structures. In fact, we improve upon 
certain aspects and also provide the results on uncertainty quantification, again as 
consequence of our general framework results.

The aggregation problem considered here for the model \eqref{aggregation_model_suppl}
can be associated with the standard linear regression model \eqref{lin_regr_suppl}.
Indeed, let $\theta=(f(x_i),\, i \in [n])$ and notice that
the vector $f_\beta=(f_\beta(x_i),i\in[n])$ can be represented as $\mathrm{X}\beta$, 
where $\beta\in\mathbb{R}^p$ is the unknown high-dimensional parameter and 
the design $(n\times p)$-matrix $\mathrm{X}$ has the entries $X_{ij}=f_j(x_i)$, $(i, j)\in[n]\times[p]$. 
In doing so, we arrive to the general setting $Y=\theta + \sigma\xi$, but now we take 
the family of structures $\mathcal{I}$ and the corresponding family of linear spaces  
$\{\mathbb{L}_I, I\in \mathcal{I}\}$, defined  by \eqref{lin_regr_L_I}. 
Then (see Remark \ref{lin_model2})
the general framework results imply Corollary \ref{meta_theorem} with the oracle rate 
\[
r^2(\theta)=\min_{I \in \mathcal{I}} r^2(I,\theta)=
 \min_{ I \in \mathcal{I}} \big\{\|\theta
-\mathrm{P}_I\theta\|^2 +\sigma^2\rho(I)\big\},
\]
where the majorant $\rho(I)$ is defined by \eqref{formula_rho}.

Recall the full family of structures $\bar{\mathcal{I}}=\{J:J\subseteq [p]\}$. 
Since $\|\theta-\mathrm{P}_{I_r}\theta\|^2=\min_{I\in\mathcal{I}}\|\theta-\mathrm{P}_I\theta\|^2
=\min_{I\in\bar{\mathcal{I}}}\|\theta-\mathrm{P}_I\theta\|^2$, it is easy to see that 
\begin{align}
r^2(\theta) &=\big[\min_{I \in \mathcal{I}_1} r^2(I,\theta)\big]\wedge r^2 (I_r,\theta) =
\min_{I \in \bar{\mathcal{I}}}\big\{\|\theta-\mathrm{P}_I\theta\|^2 +
\sigma^2[\rho(I)\wedge r]\big\} \notag\\
&=\min_{\beta\in\mathbb{R}^p} \big\{\|f_\beta-f\|^2 +\sigma^2 \big[r \wedge
\big(2|I^*(\beta)|\log(\tfrac{ep}{|I^*(\beta)|})\big)\big]\big\}.
\label{local_rate3}
\end{align}
In particular, property (ii) of Corollary  \ref{meta_theorem} entails
that for some $C_0,C_1>0$
\[
\mathbb{E}_f\|f-\hat{\theta}\|^2 \le
C_0 r^2(\theta)+ C_1\sigma^2,
\]
where $r^2(\theta)$ is defined by \eqref{local_rate3}, which is in fact property \eqref{Tsybakov_result} 
for our estimator $\hat{\theta}$. As is mentioned above, \cite{Rigollet&Tsybakov:2011} established that
\eqref{Tsybakov_result} (with the additional assumption $\max_{j\in[p]}\|f_j\| \le\sqrt{n}$) 
in turn leads to the universality property \eqref{oracle_ineq_aggreg}. This means that 
\eqref{oracle_ineq_aggreg} holds also for our estimator $\hat{\theta}$: for some $C_0,C_2>0$,
\[
\mathbb{E}_f\|f-\hat{\theta}\|^2\le C_0\inf_{\beta\in\mathcal{B}}\|f_\beta-f\|^2
+C_2\sigma^2\psi_{n,p}(\mathcal{B}).
\]

We should mention that the constants in the universality property 
for our estimator $\hat{\theta}$ may be worse than those for the estimator 
$\tilde{f}_{\beta^{\rm ES}}$. On the other hand, 
notice that the claim (ii) of Corollary \ref{meta_theorem}, being a uniform exponential 
inequality in probability, is itself finer and stronger version of the corresponding oracle 
result in expectation (like \eqref{oracle_ineq_aggreg}).
Moreover, we additionally obtain claims (i) and (iv)--(vii) of Corollary \ref{meta_theorem}
for the DDM (empirical Bayes posterior for the normal case) contraction and uncertainty quantification, 
and these results are new to the best of our knowledge. Global results over 
appropriate scales can also be derived as
consequences of Corollary \ref{minimax_results}. 
Besides, we can drop the normality and independence assumptions 
and impose only Condition \eqref{cond_nonnormal} instead. 
One can readily formulate these results.

\section{Matrix+noise model: covariance matrix estimation} 
\label{sec_covariance_matrix}

Suppose we observe $n$ iid $p$-dimensional vectors $X_1,\ldots, X_n$,
$X_i=(X_i^1,\ldots, X_i^p)^T$, $i\in[n]$, 
with $\mathbb{E}X_i = 0$, $\mathbb{E}(X_i^j)^4\le C_X$, 
$(i,j)\in[p]\times[p]$, and the unknown covariance matrix $\mathbb{E}(X_iX_i^T )=\mathrm{\Sigma}$, $i\in[n]$. 
Without loss of generality, we set $C_X=1$. 
Let $\mathcal{C}\subseteq \mathbb{R}^{p\times p}$ denote the set of all $p$-dimensional covariance matrices.
Assume that for some (known and independent of $p$) 
$\varepsilon_0>0$, 
\[
\mathrm{\Sigma} \in\mathcal{C}_{\varepsilon_0}=\{\mathrm{M}\in\mathcal{C}:
\varepsilon_0\le\lambda_{\rm min}(\mathrm{M})\le\lambda_{\rm max}(\mathrm{M})
\le\varepsilon_0^{-1}\}.
\] 
Here, $\lambda_{\rm max}(\mathrm{M})$ and  $\lambda_{\rm min}(\mathrm{M})$ 
are the maximum and minimum eigenvalues of $\mathrm{M}$. 
We assume that $X_i \overset{\rm ind}{\sim}\mathrm{N}(\mathrm{0}_n,\mathrm{\Sigma})$, where 
$\mathrm{0}_n$ is the $n$-dimensional vector of zeros. 
The normality assumption is not important to us, this only plays a role in that we can use certain 
auxiliary result below (Proposition \ref{proposition_cov}) which is available only for the normal case.

We are interested in recovering the covariance matrix $\mathrm{\Sigma}=\{\Sigma_{ij}\}_{1\le i,j\le p}$
which is assumed to have the \emph{banding} or \emph{sparsity} structure, to be specified later. 
The maximum likelihood estimator of $\mathrm{\Sigma}$ is 
$\tilde{\mathrm{\Sigma}}=\frac{1}{n}\sum_{l=1}^n(X_l-\bar{X})(X_l-\bar{X})^T
=\frac{1}{n}\sum_{l=1}^nX_lX_l^T-\bar{X}\bar{X}^T$, where $\bar{X}=\frac{1}{n}\sum_{l=1}^nX_l$. 
Since $\bar{X}\bar{X}^T$ is a higher order term (see Remark 1 in \cite{Caietal:2010}), 
we shall ignore this term and focus on the dominating term $\frac{1}{n}\sum_{l=1}^nX_lX_l^T$ 
for estimating 
$\mathrm{\Sigma}$. 

Let $\mathrm{Y}=(Y_{ij})_{i,j\in[p]}=\frac{1}{n}\sum_{l=1}^nX_lX_l^T$, 
$Y=\text{vec}\big[( Y_{ij})\big]=(Y_{11},Y_{12}\ldots,Y_{pp})^T$. 
We obtain the following model:
\begin{align}
\label{model_cov}
Y_{ij}=\Sigma_{ij}+ \sigma_n \xi_{ij}, \quad i,j\in[p],
\end{align}
where $\sigma_n\xi_{ij}=Y_{ij} - \mathbb{E}Y_{ij}=Y_{ij}-\Sigma_{ij}$, so that
$\mathbb{E} \xi_{ij}=0$ and 
\begin{align*}
\sigma^2_n\text{Var}(\xi_{ij})&=\tfrac{1}{n}\mbox{Var}(X_1^iX_1^j)
=\tfrac{1}{n}\mathbb{E}(X_1^iX_1^j)^2\le \tfrac{1}{n}[\mathbb{E}(X_1^i)^4]^{\frac{1}{2}}[\mathbb{E}(X_1^j)^4]^{\frac{1}{2}}\le\tfrac{1}{n}.
\end{align*} 
The parameter $\sigma_n$ will be chosen later, for now it is any sequence $\sigma_n \in [0,1]$.
We thus have a particular case of general framework model \eqref{model}, where the parameter 
of interest is now denoted by $\mathrm{\Sigma}$ instead of $\theta$.
Recall that we work with the usual norm of vectorized version of the parameter 
$\mathrm{\Sigma}=(\Sigma_{ij})_{i,j \in [p]}$, that is, if $\mathrm{\Sigma}$ is seen as matrix, 
then $\|\mathrm{\Sigma}\|$ means its Frobenius norm. We denote the probability measure of $Y$ 
from the model (\ref{model_cov}) by $\mathbb{P}_{\mathrm{\Sigma}}$, and the corresponding 
expectation $\mathbb{E}_{\mathrm{\Sigma}}$. 

\subsection{Matrix+noise with smoothness structure: banded covariance matrix} 
\label{subsec_banding}

Assume that the covariance matrix $\mathrm{\Sigma}=(\Sigma_{ij})_{i,j\in[p]}$ has 
a \emph{banding} structure, i.e.,  $\Sigma_{ij}=0$ for all $i,j\in[p]$ such that $|i-j|>I$ 
for some $I\in[p]_0$. To model this structure, define the linear spaces
\begin{align*}
\mathbb{L}_{I}=\big\{\text{vec}(x)\in\mathbb{R}^{p^2}: x_{ij}=x_{ji}\;\, \forall i,j\in[p]; 
x_{ij}=0\;\,\text{if}\, |i-j|>I\big\},\;\, I\in \mathcal{I}=[p]_0.
\end{align*}
Then $\|\mathrm{\Sigma}-\mathrm{P}_I\mathrm{\Sigma}\|^2=\sum_{|i-j|>I}\Sigma_{ij}^2$,
 $d_I=\dim(\mathbb{L}_{I})=p+\sum_{l=1}^I(p-l)=p+I(p-(I+1)/2)$,
the structural slicing mapping $s(I)=I$, $\mathcal{S}=[p-1]_0$, 
$\log |\mathcal{I}_s|=0$, $d_I=p+I(p-(I+1)/2)$ leading to the 
majorant $\rho(I)=d_I=p+I(p-(I+1)/2)$.  

Condition \eqref{(A2)} is fulfilled, since $\sum_{I\in\mathcal{I}} e^ {-\nu\rho(I)}
\le \sum_{s\in\mathcal{S}} e^ {-\nu s}\le\frac{e^\nu}{e^{\nu}-1}= C_\nu$ for any $\nu>0$.  
However, in order to derive at least the local estimation and posterior contraction results,
we also need Condition \eqref{cond_nonnormal}. This condition is now 
not easy to check since the errors $\xi_{ij}$'s are dependent in the model 
\eqref{model_cov}. We apply the following strategy (in the same spirit as in 
Section \ref{sec_density}): introduce certain event 
and establish that the probability of this event is exponentially small (in $n$);
next, under this event establish Condition \eqref{cond_nonnormal}; finally, combine these 
two facts to derive the local estimation and posterior contraction results.

The following proposition (formulated in our notation) is Lemma $12$ from Appendix of 
\cite{Kolar&Liu:2012} and is given here  for completeness,
its proof can be found in \cite{Kolar&Liu:2012}.
\begin{proposition}
\label{proposition_cov}
Let $\nu_{ij}=\max\{(\Sigma_{ii}\Sigma_{jj})^{1/2} -\Sigma_{ij},
(\Sigma_{ii}\Sigma_{jj})^{1/2} +\Sigma_{ij}\}$, $i,j\in[p]$.
Then for any $t\in[0,\nu_{ij}/2)$
\begin{align*}
\mathbb{P}(\sigma_n|\xi_{ij}|\ge t)\le 4\exp\big\{-\tfrac{3nt^2}{16\nu_{ij}^2}\big\}.
\end{align*}
\end{proposition}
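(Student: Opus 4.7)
The plan is to express the centered entry $\sigma_n\xi_{ij}=n^{-1}\sum_{l=1}^n(X_l^iX_l^j-\Sigma_{ij})$ as a linear combination of centered chi-squared variables via a polarization identity, and then apply a Laurent--Massart type tail bound together with a union bound. Using the identity $X_l^iX_l^j=\tfrac{1}{4}[(X_l^i+X_l^j)^2-(X_l^i-X_l^j)^2]$ and setting $U_l=X_l^i+X_l^j$ and $V_l=X_l^i-X_l^j$, one sees that $U_l\sim\mathrm{N}(0,a)$ and $V_l\sim\mathrm{N}(0,b)$ with $a=\Sigma_{ii}+\Sigma_{jj}+2\Sigma_{ij}$ and $b=\Sigma_{ii}+\Sigma_{jj}-2\Sigma_{ij}$; both variances are nonnegative by Cauchy--Schwarz, since $|\Sigma_{ij}|\le(\Sigma_{ii}\Sigma_{jj})^{1/2}\le(\Sigma_{ii}+\Sigma_{jj})/2$. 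Using $\mathbb{E}U_l^2-\mathbb{E}V_l^2=4\Sigma_{ij}$ gives the decomposition
\[
\sigma_n\xi_{ij}=\tfrac{a}{4n}(W_U-n)-\tfrac{b}{4n}(W_V-n),\quad W_U=\tfrac{1}{a}\sum_{l=1}^nU_l^2\sim\chi^2_n,\quad W_V=\tfrac{1}{b}\sum_{l=1}^nV_l^2\sim\chi^2_n.
\]

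Next I would apply a union bound: if $|\sigma_n\xi_{ij}|\ge t$, then at least one of the two summands exceeds $t/2$ in absolute value, so
\[
\mathbb{P}(\sigma_n|\xi_{ij}|\ge t)\le\mathbb{P}\bigl(|W_U-n|\ge 2nt/a\bigr)+\mathbb{P}\bigl(|W_V-n|\ge 2nt/b\bigr).
\]
To each marginal I would apply the standard Laurent--Massart chi-squared tail bound, which in the sub-Gaussian regime reads $\mathbb{P}(|\chi^2_n-n|\ge y)\le 2\exp(-3y^2/(16n))$; this bound is derived by optimizing the Chernoff exponent $-\lambda y-(n/2)\log(1-2\lambda)$ over $\lambda\in(0,1/2)$ and applying the elementary inequality $\log(1+u)-u\le-3u^2/8$, valid for $u=y/n$ bounded away from $1$. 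Substituting $y=2nt/a$ and $y=2nt/b$ gives two contributions each of the form $2\exp(-3nt^2/(4a^2))$ and $2\exp(-3nt^2/(4b^2))$.

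The final step is to pass from $a,b$ to $\nu_{ij}$ via the arithmetic--geometric mean inequality. Since $\Sigma_{ii}+\Sigma_{jj}\ge 2(\Sigma_{ii}\Sigma_{jj})^{1/2}$, one has
\[
\max(a,b)=\Sigma_{ii}+\Sigma_{jj}+2|\Sigma_{ij}|\ge 2(\Sigma_{ii}\Sigma_{jj})^{1/2}+2|\Sigma_{ij}|=2\nu_{ij},
\]
so $\max(a,b)^2\ge 4\nu_{ij}^2$, and each exponent is bounded by $-3nt^2/(16\nu_{ij}^2)$. Combining the two contributions then yields $\mathbb{P}(\sigma_n|\xi_{ij}|\ge t)\le 4\exp\bigl(-3nt^2/(16\nu_{ij}^2)\bigr)$, as claimed. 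The restriction $t<\nu_{ij}/2$ is exactly what is needed to keep the deviations $2nt/a$ and $2nt/b$ in the quadratic regime where the chosen Laurent--Massart bound applies.

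The main obstacle is the careful constant tracking: both the numerical constant $3/16$ in the chi-squared tail (which is not immediate from the crudest Bernstein argument) and the AM--GM step must be tight enough together to produce the stated bound. A subtlety worth flagging is that $U_l$ and $V_l$ are generally correlated (with covariance $\Sigma_{ii}-\Sigma_{jj}$), but this dependence is irrelevant for the union bound since each marginal tail is handled separately. A conceptually cleaner alternative route would go via a spectral decomposition: diagonalizing the quadratic form $X_l^iX_l^j=Z_l^{\top}BZ_l$ with $Z_l\sim\mathrm{N}(0,I_2)$ produces eigenvalues $(\Sigma_{ij}\pm(\Sigma_{ii}\Sigma_{jj})^{1/2})/2$ whose maximum modulus is exactly $\nu_{ij}/2$, and the Hanson--Wright inequality for Gaussian chaos of degree two then yields essentially the same bound, explaining why $\nu_{ij}$ is the natural scale in the statement.
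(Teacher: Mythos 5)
The paper itself does not prove this proposition; it imports it verbatim as Lemma~12 of Kolar and Liu (2012), so your argument is the only proof on the table, and it has two genuine gaps. The fatal one is the direction of your final inequality. After the unnormalized polarization $U_l=X_l^i+X_l^j$, $V_l=X_l^i-X_l^j$, your two union-bound terms carry exponents $-3nt^2/(4a^2)$ and $-3nt^2/(4b^2)$ with $a,b=\Sigma_{ii}+\Sigma_{jj}\pm2\Sigma_{ij}$. To dominate both by $\exp\{-3nt^2/(16\nu_{ij}^2)\}$ you would need $a,b\le 2\nu_{ij}$, but AM--GM gives exactly the opposite: $\max(a,b)=\Sigma_{ii}+\Sigma_{jj}+2|\Sigma_{ij}|\ge 2(\Sigma_{ii}\Sigma_{jj})^{1/2}+2|\Sigma_{ij}|=2\nu_{ij}$, with equality only when $\Sigma_{ii}=\Sigma_{jj}$. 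Concretely, for $\Sigma_{ii}=4$, $\Sigma_{jj}=1$, $\Sigma_{ij}=0$ you get $a=b=5$ and $\nu_{ij}=2$, so your route yields $4\exp\{-3nt^2/100\}$, which is strictly weaker than the claimed $4\exp\{-3nt^2/64\}$. The cure is to polarize the \emph{standardized} variables, $X_l^i/\Sigma_{ii}^{1/2}\pm X_l^j/\Sigma_{jj}^{1/2}$, which replaces the coefficients $a/4$ and $b/4$ by $((\Sigma_{ii}\Sigma_{jj})^{1/2}\pm\Sigma_{ij})/2$, whose maximum is exactly $\nu_{ij}/2$. That is precisely the spectral decomposition you relegate to a closing remark; it is not an optional alternative but the step that makes $\nu_{ij}$ enter with the correct sense of the inequality.

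The second gap is the claim that $t<\nu_{ij}/2$ ``is exactly what is needed to keep the deviations in the quadratic regime.'' It is not. The inequality $\log(1+u)-u\le-3u^2/8$ behind your chi-squared tail fails for $u\gtrsim0.52$, and the bound $\mathbb{P}(\chi^2_n\ge n+y)\le\exp\{-3y^2/(16n)\}$ is genuinely false for, say, $y=n$, where the true large-deviation rate is $\exp\{-\tfrac{1-\log 2}{2}n+o(n)\}\approx\exp\{-0.153n\}$ versus the claimed $\exp\{-0.1875n\}$. The constraint $t<\nu_{ij}/2$ controls the relative deviation only for the term with the \emph{larger} variance; the other term requires $|W-n|\ge 2nt/\min(a,b)$, and $\min(a,b)$ can be tiny compared with $\nu_{ij}$ when $|\Sigma_{ij}|$ is close to $(\Sigma_{ii}\Sigma_{jj})^{1/2}$ (e.g.\ $\Sigma_{ii}=\Sigma_{jj}=1$, $\Sigma_{ij}=0.9$ permits $t$ up to $0.95$ while $b=0.2$, so $2t/b$ can reach $9.5$). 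For that term you must work in the sub-exponential regime of the upper chi-squared tail and check separately that the Chernoff exponent $\tfrac{n}{2}[\log(1+u)-u]\approx -nu/2$ still beats $3nt^2/(16\nu_{ij}^2)$ — which it does, using $t<\nu_{ij}/2$ and the fact that the smaller coefficient is at most $\nu_{ij}/2$ — but this case split is absent from your argument. With the normalization and the two-regime analysis added, your outline becomes a correct proof.
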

The relation $\mathbb{P}(\max_{i,j\in[p]}|\xi_{ij}|\ge t)\le\sum_{i,j\in[p]}
\mathbb{P}(|\xi_{ij}|\ge t)$ and Proposition \ref{proposition_cov} imply that,
for the event $E=\{\max_{i,j\in[p]}|\xi_{ij}|\le t_0\}$ with $t_0=\frac{\min_{i,j\in[p]}\nu_{ij}}{\sqrt{5}}$,
\begin{align}
\label{rel_max_cov}
\mathbb{P}(E^c)=\mathbb{P}\big(\max_{i,j\in[p]}|\xi_{ij}|\ge t_0\big)\le H'\exp\{-\bar{c}_1n\sigma_n^2+\bar{c}_2\log p\},
\end{align}
where $\nu_{ij}$ is defined in Proposition \ref{proposition_cov},
$H'=4$, $0<\bar{c}_1=\frac{3\varepsilon_0^4}{320}\le \tfrac{3}{80}
\tfrac{\min_{i,j\in[p]}\nu^2_{ij}}{\max_{i,j\in[p]}\nu^2_{ij}}$ 
(because $\varepsilon_0 \le \min_{i,j\in[p]}\nu_{ij} \le \max_{i,j\in[p]}\nu_{ij} \le 2\varepsilon_0^{-1}$)
and $\bar{c}_2=2$. Clearly, for \eqref{rel_max_cov} to be useful, 
we need 
$\log p \lesssim n\sigma^2_n$. 
 
By the assumptions on $\mathrm{\Sigma}$, we have that $\min_{i,j\in[p]}\nu_{ij}\le 2\varepsilon_0^{-1}$,
so that $t_0^2=\min_{i,j\in[p]}\nu^2_{ij}/5\le \tfrac{4}{5\varepsilon_0^2}$.
Using this and \eqref{rel_max_cov}, we ensure Condition \eqref{cond_nonnormal} 
under the event $E=\{\max_{i,j\in[p]}|\xi_{ij}|\le t_0\}$ with $\alpha=1\wedge (5\varepsilon_0^2)/4$.
Exactly,
\begin{align}
\mathbb{E}\exp\big\{&\alpha \|\mathrm{P}_I\xi\|^2\big\}\mathrm{1}\{E\}=
\mathbb{E} \exp\big\{\alpha\sum_{|i-j|>I} \xi_{ij}^2\big\}\mathrm{1}\{\max_{i,j\in[p]}|\xi_{ij}|<t_0\}\notag\\
&\le \exp\big\{\alpha t_0^2(p+I(p-(I+1)/2))\big\}\le\exp\{d_I\}.
\label{conditional_A1}
\end{align}
Condition \eqref{(A3)} holds as well. Indeed, for any $I_0,I_1\in\mathcal{I}$ take $I'=I_0\vee I_1$ and 
verify that $(\mathbb{L}_{I_0}\cup\mathbb{L}_{I_1})\subseteq \mathbb{L}_{I'}=\mathbb{L}_{I_0}
+\mathbb{L}_{I_1}$ and $\rho(I')\le \rho(I_0)+\rho(I_1)$.
 
The oracle rate is in this case
$r^2(\mathrm{\Sigma}) =\min_{I\in\mathcal{I}} r^2(I, \mathrm{\Sigma})$,
where
\[
r^2(I,\mathrm{\Sigma})=\|\mathrm{\Sigma}-\mathrm{P}_I\mathrm{\Sigma}\|^2
+\sigma^2_n \rho(I)=\sum_{|i-j|>I}\Sigma_{ij}^2+\sigma^2_n\big(p+I(p-(I+1)/2)\big),
\]
and the EBR-set $\Theta_{\rm eb}=\Theta_{\rm eb}(t)$ is given by \eqref{cond_ebr}, but now in terms of 
the bias and variance parts of the oracle rate $r^2(\mathrm{\Sigma})$. 

We have thus verified the conditional version of Condition \eqref{cond_nonnormal} 
(under the event $E$) and Conditions \eqref{(A2)} and \eqref{(A3)} for the model \eqref{model_cov} 
with the banding structure. This means that we can derive results on estimation, posterior contraction 
and uncertainty quantification for this model. These are the counterparts of claims (i)--(v) of 
Corollary \ref{meta_theorem} summarized by Theorem \ref{meta_theorem_cov} below. 
To the best of our knowledge, there are no local results on estimation, posterior contraction 
rate and  uncertainty quantification problems for this model. 

A couple of conventions concerning notation in Theorem \ref{meta_theorem_cov}: as compared 
to the general framework notation, in the model \eqref{model_cov}, the parameter of interest 
is denoted by $\mathrm{\Sigma}$ instead of $\theta$ and the corresponding estimator becomes 
$\hat{\mathrm{\Sigma}}$ instead of $\hat{\theta}$; 
in the posteriors for $\mathrm{\Sigma}$ we use the variable $\mathit{\Sigma}$ to distinguish it from 
the ``true''  $\mathrm{\Sigma}\in\mathcal{C}_{\varepsilon_0}$. 
We keep the same notation for all other quantities involved as in the general
framework (like $\hat{r}$, $\hat{R}_M$, $B(\hat{\mathrm{\Sigma}},\hat{R}_M)$), 
with the understanding that these are specialized for the model \eqref{model_cov}
with the banding structure and the oracle rate $r^2(\mathrm{\Sigma})$.

\begin{theorem}
\label{meta_theorem_cov}
Let the constants $M_0, M_1, M_3, H_0, H_1, H_2, H_3, m_0, m_1$, $m_2,m_3$, 
$c_2, c_3$, $C_\nu, H', \bar{c}_1, \bar{c}_2$ 
be defined in Theorems \ref{th1}-\ref{th3} and \eqref{rel_max_cov}.
Then for any $M \ge 0$,
\begin{align*}
&\sup_{\mathrm{\Sigma}\in\mathcal{C}_{\varepsilon_0}}\mathbb{E}_{\mathrm{\Sigma}}
\hat{\pi}\big(\|\mathit{\Sigma}-\mathrm{\Sigma}\|^2\ge M_0r^2(\mathrm{\Sigma})+M\sigma^2_n|Y\big) 
\le H'e^{-\bar{c}_1n\sigma^2_n+\bar{c}_2\log p}\!+\! H_0 e^{-m_0 M},\\
&\sup_{\mathrm{\Sigma}\in\mathcal{C}_{\varepsilon_0}}\mathbb{P}_{\mathrm{\Sigma}}
\big(\|\hat{\mathrm{\Sigma}}-\mathrm{\Sigma}\|^2 \ge M_1 r^2(\mathrm{\Sigma})+M\sigma^2_n\big)
\le H'e^{-\bar{c}_1n\sigma^2_n+\bar{c}_2\log p}+H_1 e^{-m_1 M},\\
&\sup_{\mathrm{\Sigma}\in\mathcal{C}_{\varepsilon_0}}\mathbb{E}_{\mathrm{\Sigma}}
\hat{\pi}\big(I: r^2(I,\mathit{\Sigma})\ge c_3 r^2(\mathrm{\Sigma})+M\sigma^2_n|Y\big) 
\le H'e^{-\bar{c}_1n\sigma^2_n+\bar{c}_2\log p}+ C_\nu e^{-c_2 M},\\
&\sup_{\mathrm{\Sigma}\in\mathcal{C}_{\varepsilon_0}}
 \mathbb{P}_{\mathrm{\Sigma}}\big(\hat{r}^2\ge M_3 r^2(\mathrm{\Sigma})+(M+1)\sigma^2_n\big)
\le  H'e^{-\bar{c}_1n\sigma^2_n+\bar{c}_2\log p}+H_3 e^{-m_3 M},\\
&\sup_{\mathrm{\Sigma}\in\mathcal{C}_{\varepsilon_0}\cap\Theta_{\rm eb}}\mathbb{P}_{\mathrm{\Sigma}}
\big(\mathrm{\Sigma}\notin B(\hat{\mathrm{\Sigma}},\hat{R}_M)\big) 
\le H'e^{-\bar{c}_1n\sigma^2_n+\bar{c}_2\log p}+H_2 e^{-m_2M}.
\end{align*}
\end{theorem}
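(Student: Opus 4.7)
The plan is to mimic the strategy already sketched for the density estimation case of Section \ref{sec_density} (see the outline following Theorem \ref{meta_theorem_dens}). Concretely, introduce the favorable event $E=\{\max_{i,j\in[p]}|\xi_{ij}|\le t_0\}$ with $t_0=\min_{i,j\in[p]}\nu_{ij}/\sqrt{5}$. For each of the five claims, the standard decomposition
\[
\mathbb{E}_{\mathrm{\Sigma}} \Pi \;\le\; \mathbb{P}_{\mathrm{\Sigma}}(E^c) \;+\; \mathbb{E}_{\mathrm{\Sigma}}\bigl[\Pi \cdot \mathrm{1}_E\bigr]
\]
reduces the problem to two tasks: (a) bound $\mathbb{P}_{\mathrm{\Sigma}}(E^c)$, and (b) bound the second term under the conditioning on $E$. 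Task (a) is immediate from \eqref{rel_max_cov}, giving the uniform bound $H' e^{-\bar{c}_1 n\sigma_n^2+\bar{c}_2\log p}$.

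For task (b), the key observation is \eqref{conditional_A1}: under the event $E$, the exponential moment $\mathbb{E}_{\mathrm{\Sigma}}\exp\{\alpha\|\mathrm{P}_I\xi\|^2\}\mathrm{1}_E$ is bounded by $e^{d_I}$, so Condition \eqref{cond_nonnormal} effectively holds in conditional form with $\alpha=1\wedge (5\varepsilon_0^2)/4$ and $d_I=\dim(\mathbb{L}_I)=p+I(p-(I+1)/2)$. Since Conditions \eqref{(A2)} and \eqref{(A3)} have already been verified unconditionally for the banding structure in Section \ref{subsec_banding}, I would then rerun the proofs of Theorems \ref{th1}, \ref{th2} and \ref{th3} with every occurrence of \eqref{cond_nonnormal} replaced by its conditional-on-$E$ counterpart. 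The five claims of Theorem \ref{meta_theorem_cov} correspond respectively to claims (i), (ii), (i)+\eqref{th1_ii} combined with Theorem \ref{th2}(ii), the size relation of Theorem \ref{th3}, and the coverage relation of Theorem \ref{th3}; for each, one multiplies all conditional probability/expectation bounds by $\mathrm{1}_E$ and adds $\mathbb{P}_{\mathrm{\Sigma}}(E^c)$.

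The technical nuisance (and the closest thing to an obstacle) is to check that the Markov-type inequalities used inside the proofs of Lemma \ref{secondary_lemma}, Lemma \ref{main_lemma} and the three theorems remain valid when the factor $\mathrm{1}_E$ is inserted. This is straightforward because $\mathrm{1}_E\le 1$: in the key step \eqref{lemm2_rel2} one bounds
\[
\mathbb{E}_{\mathrm{\Sigma}}\hat{p}_I\, \mathrm{1}_E \;\le\; \mathbb{E}_{\mathrm{\Sigma}}\Bigl[\tfrac{\tilde{\pi}(I|Y)}{\tilde{\pi}(I_0|Y)}\Bigr]^h \mathrm{1}_E,
\]
and the subsequent Hölder step in the proof of Lemma \ref{secondary_lemma} yields $\mathbb{E}_{\mathrm{\Sigma}} e^{\frac{5\alpha}{8}\|\mathrm{P}_I\xi\|^2+\frac{3\alpha}{8}\|\mathrm{P}_{I_0}\xi\|^2}\mathrm{1}_E \le e^{\frac{5}{8}\rho(I)+\frac{3}{8}\rho(I_0)}$ by \eqref{conditional_A1} and Hölder (note $\mathrm{1}_E^{5/8}\mathrm{1}_E^{3/8}=\mathrm{1}_E$). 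The same substitution works in the moment bound \eqref{moment_bound} and in the arguments controlling the terms $T_1, T_2, T_3$ and $\bar T_1,\bar T_2,\bar T_3$. Collecting these conditional-on-$E$ versions yields the counterparts of Theorems \ref{th1}--\ref{th3} with an extra additive term $\mathbb{P}_{\mathrm{\Sigma}}(E^c)$, which is precisely the form stated in Theorem \ref{meta_theorem_cov}.

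Finally, I would only need to verify that the bounds on $t_0$ used to derive $\alpha=1\wedge(5\varepsilon_0^2)/4$ in \eqref{conditional_A1} are indeed uniform over $\mathrm{\Sigma}\in\mathcal{C}_{\varepsilon_0}$; this follows from $\varepsilon_0\le\min_{i,j}\nu_{ij}\le\max_{i,j}\nu_{ij}\le 2\varepsilon_0^{-1}$, which already ensures the constants $\bar c_1,\bar c_2$ in \eqref{rel_max_cov} are absolute. Thus all constants $M_0,M_1,M_2,M_3,H_0,H_1,H_2,H_3,m_0,m_1,m_2,m_3,c_2,c_3,C_\nu$ from Theorems \ref{th1}--\ref{th3} carry over unchanged, and the proof is complete.
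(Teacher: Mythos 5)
Your proposal is correct and follows essentially the same route as the paper's own (sketched) argument: the decomposition $\mathbb{E}_{\mathrm{\Sigma}}\Pi\le\mathbb{P}_{\mathrm{\Sigma}}(E^c)+\mathbb{E}_{\mathrm{\Sigma}}[\Pi\,\mathrm{1}_E]$ over the event $E=\{\max_{i,j}|\xi_{ij}|\le t_0\}$, the bound \eqref{rel_max_cov} for the first term, and a rerun of the proofs of Theorems \ref{th1}--\ref{th3} using the conditional form \eqref{conditional_A1} of Condition \eqref{cond_nonnormal} for the second. Your explicit check that the H\"older and Markov steps survive insertion of $\mathrm{1}_E$ is a detail the paper glosses over, but it is exactly the right verification.
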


Let us outline the idea of the proof (which is omitted) of the first claim of the above theorem;
the same reasoning applies to the remaining claims.
The expectation of the empirical Bayes posterior probability $\mathbb{E}_{\mathrm{\Sigma}}\Pi=
\mathbb{E}_{\mathrm{\Sigma}}\hat{\pi}\big(\|\mathit{\Sigma}-\mathrm{\Sigma}\|^2\ge 
M_0r^2(\mathrm{\Sigma})+M\sigma^2_n|Y\big)$ is bounded by the sum of two terms 
$\mathbb{E}_{\mathrm{\Sigma}}\Pi\le \mathbb{P}_{\mathrm{\Sigma}}(E^c)
+\mathbb{E}_{\mathrm{\Sigma}}\Pi 1_E$. The first term is evaluated by using \eqref{rel_max_dens}
(obtaining the bound $H'e^{-\bar{c}_1n\sigma^2_n+\bar{c}_2\log p}$); 
the second term is evaluated exactly in the same way as 
in the proof Theorem \ref{th1} because Condition \eqref{cond_nonnormal} 
is fulfilled under the event $E$ according to \eqref{conditional_A1}.   
Counterparts of assertions (ii) and (iii) of Theorem \ref{th2} can also be formulated and proved
in the same way.

As to the choice of $\sigma^2_n$, this quantity is in the oracle rate, so that we would want 
it to be as small as possible. On the other hand, we want the claims of the theorem to be non-void,
which is ensured only if $\sigma^2_n n \ge C \log p$, or $\sigma^2_n\ge \tfrac{C\log p}{n}$, for sufficiently large $C>0$. In the sequel we take therefore $\sigma^2_n=\tfrac{C\log p}{n}$.
An extra log factor thus appeared which will also enter the minimax rates in the global results.   
We conjecture that one can get rid of that factor by using more accurate concentration inequalities 
when establishing Condition \eqref{cond_nonnormal}.

As usually, the local results of Theorem \ref {meta_theorem_cov} will imply global minimax 
adaptive results at once over all scales $\{\Theta_\beta,\, \beta\in\mathcal{B}\}$ covered by 
the oracle rate $r^2(\mathrm{\Sigma})$ (i.e., for which \eqref{oracle_minimax} holds). Below 
we present the example of scales $\{\Theta_\beta,\, \beta\in\mathcal{B}\}$ covered by 
$r^2(\mathrm{\Sigma})$. 

\subsection{Minimax results for the scale $\{\mathcal{G}_\beta, \beta>0\}$}
For $\beta, L,\varepsilon_0>0$, define  
\begin{align*}
\mathcal{G}_\beta=\mathcal{G}_\beta(L,\varepsilon_0^{-1})=\big\{\Sigma\in
\mathcal{C}_{\varepsilon_0}: |\Sigma_{ij}|\le L|i-j|^{-(\beta+1)}\;\text{for}\; i\neq j\big\}.
\end{align*}
The rate $r^2(\mathcal{G}_\beta)
=\min\big\{pn^{-\frac{2\beta+1}{2(\beta+1)}},p^2 n^{-1}\big\}$ is minimax over 
the class  $\mathcal{G}_\beta$ under the Frobenius norm; see \cite{Caietal:2010}.
If $(\tfrac{n}{\log p})^{\frac{1}{2(\beta+1)}}\le p$, taking $I_*=\lfloor (n/\log p)^{\frac{1}{2(\beta+1)}}\rfloor$ and 
recalling $\sigma^2_n=\tfrac{C\log p}{n}$, we
derive that, uniformly in $\Sigma\in \mathcal{G}_\beta$,
\begin{align*}
r^2(\mathrm{\Sigma}) 
&\le r^2(I_*,\mathrm{\Sigma})=\sum_{|i-j|>I_*}\Sigma_{ij}^2+\sigma^2_n\big(p+I_*(p-(I_*+1)/2)\big)\\
&\lesssim pI_*^{-(2\beta+1)}+\tfrac{p I_* \log p}{n} \lesssim 
p\big(\tfrac{n}{\log p}\big)^{-\frac{2\beta+1}{2(\beta+1)}}.
\end{align*}
If $(\tfrac{n}{\log p})^{\frac{1}{2(\beta+1)}}> p$, we take $I_*=p$ to derive  
$\sup_{\Sigma\in \mathcal{G}_\beta} r^2(\mathrm{\Sigma})\lesssim p^2(\tfrac{n}{\log p})^{-1}$.

To summarize, we established that 
\[
\sup_{\Sigma\in \mathcal{G}_\beta} r^2(\mathrm{\Sigma})\lesssim 
\min\big\{p\big(\tfrac{n}{\log p}\big)^{-\frac{2\beta+1}{2(\beta+1)}},p^2\big(\tfrac{n}{\log p}\big)^{-1}\big\}
=\tilde{r}^2(\mathcal{G}_\beta),
\]
where $\tilde{r}^2(\mathcal{G}_\beta)$ is the minimax rate (up to a logarithmic factor) 
for the class $\mathcal{G}_\beta$.
Then the last relation and Theorem \ref{meta_theorem_cov} imply the global minimax results 
for the scale $\{\mathcal{G}_\beta, \beta>0\}$. These results will look as the ones from 
Theorem \ref{meta_theorem_cov} with the difference that the class 
$\mathcal{G}_\beta$ stands instead  $\mathcal{C}_{\varepsilon_0}$ and the rate 
$\tilde{r}^2(\mathcal{G}_\beta)$ stands instead of $r^2(\mathrm{\Sigma})$.
For the results to be most useful, we take $\sigma^2_n=\tfrac{C\log p}{n}$ with sufficiently large $C>0$
and $M=M_n\to \infty$ as $n\to \infty$ such that $M_n\sigma^2_n\asymp \tilde{r}^2(\mathcal{G}_\beta)$.

\begin{remark}
\label{rem_A4_issue}
The obtained local and global results on uncertainty quantification 
for the covariance matrix with a banding structure are new to the best of our knowledge.
Notice however that we derived only the uncertainty quantification 
results based on the EBR condition, whereas counterparts of claims (vi)--(vii)
of Corollary \ref{meta_theorem} are not established 
because we were unable to verify Condition \eqref{cond_A4}. 

The point is that the set $\tilde\Theta$ of highly structured parameters defined by \eqref{tilde_Theta}
is empty in this case: as $N=p^2$,
\[
r^2(\mathrm{\Sigma})\ge \sigma^2_n 
\rho(I) \gtrsim \sigma^2_n p=\sigma^2_n N^{1/2}.
\]
This means that the uncertainty quantification claims based on 
Condition \eqref{cond_A4} would be more valuable for this model because they are free 
of the deceptiveness phenomenon. 
Indeed, if we would have established Condition \eqref{cond_A4}, then 
the confidence ball $B(\hat{\mathrm{\Sigma}}, \tilde{R}_M)$ would have been of asymptotically 
full coverage and of the optimal oracle size, uniformly over $\mathcal{C}_{\varepsilon_0}$ (because 
$\tilde\Theta$ turns out to be empty in this case).
It is an open problem to verify Condition \eqref{cond_A4} for the model \eqref{model_cov},
the main issue is to find an appropriate statistics $V(Y')$ for which the second relation of 
Condition \eqref{cond_A4} is fulfilled.
\end{remark}

\subsection{Matrix+noise with sparsity structure: sparse covariance matrix} 
\label{subsec_cov_sparsity}

Here we briefly discuss the case of sparsity structure for the model \eqref{model_cov}.
Denote by $\Sigma_{-i}$ the $i$-th column of $\mathrm{\Sigma}$ with $\Sigma_{ii}$
removed. Let $p\ge 2$.  For any $i\in[p]$ the vector $\Sigma_{-i}\in\mathbb{R}^{p-1}$ is 
assumed to be \emph{sparse} so that $\Sigma_{ki}=0,\; k \not\in I_i$ (or $\Sigma_{ik}=0$), 
where $I_i\subseteq [p]\backslash\{i\}$. 
To model this sparsity structure, introduce the linear spaces
\begin{align*}
\mathbb{L}_{I}=\big\{\text{vec}(x)\in\mathbb{R}^{p^2}: x_{ki}=0,\; k \not\in I_i,\; i\in[p]\big\},
\end{align*}
where the structure is $I=(I_1,\ldots, I_p)\in\mathcal{I}\triangleq \{(J_1, \ldots, J_p): 
J_i\subseteq [p]\backslash\{i\}, i\in[p]\}$. 
Then $\|\mathrm{\Sigma}-\mathrm{P}_I\mathrm{\Sigma}\|^2=\sum_{i\in[p]}\sum_{j\not\in I_i}
\Sigma_{ji}^2$, $\dim(\mathbb{L}_{I})=p+\sum_{i=1}^p|I_i|$. Take the structural mapping 
$s(I)=(|I_1|,\ldots,|I_p|)\in \otimes_{i\in[p]} [p-1]_0=\mathcal{S}$, 
$\log |\mathcal{I}_{s(I)}|=\sum_{i\in[p]}\log \binom{p-1}{|I_i|}\le\sum_{i\in[p]} |I_i|
\log\big(\tfrac{e(p-1)}{|I_i|}\big)$. 

Next, along the same lines as in Section \ref{subsec_banding}, we can verify the conditional version 
of Condition \eqref{cond_nonnormal} (under the same event $E$) with $d_I=\dim(\mathbb{L}_{I})
=p+\sum_{i\in[p]}|I_i|$. Thus, we take the majorant $\rho(I)=p+\sum_{i\in[p]}|I_i|\log\big(\tfrac{e(p-1)}{|I_i|}\big)$. 
Condition \eqref{(A2)} is fulfilled for the majorant $\rho(I)$, since, according 
to Remark \ref{rem5}, for sufficiently large $\nu>1$,
\begin{align*}
\sum_{I\in\mathcal{I}}e^{-\nu\rho(I)}
&\le e^{-\nu p}\sum_{s_1=0}^{p-1} e^{-(\nu-1)s_1}\ldots\sum_{s_p=0}^{p-1}
e^{-(\nu-1)s_p} \le\tfrac{e^{-\nu p}}{(1-e^{1-\nu})^p} \le C_\nu.
\end{align*}
Condition \eqref{(A3)} follows 
in the same way as in Section \ref{subsec_banding}.

This means that we can derive results on estimation, posterior contraction 
and uncertainty quantification (and weak structure recovery) for the model \eqref{model_cov}, 
now with the sparsity structure, in the same way as for banding structure in Section \ref{subsec_banding}. 
We can readily formulate a theorem containing the local results for this structure:
it will take the form of Theorem \ref{meta_theorem_cov} with the oracle rate
$r^2(\mathrm{\Sigma}) =\min_{I\in\mathcal{I}} \big\{\|\mathrm{\Sigma}-\mathrm{P}_I\mathrm{\Sigma}\|^2
+\sigma^2_n \rho(I)\big\}$, where again $\sigma^2_n=\tfrac{C\log p}{n}$ with sufficiently large $C>0$.
To the best of our knowledge, there are no local results on estimation, posterior contraction 
rate and uncertainty quantification for the covariance matrix with sparsity structure. 
Also for the sparsity structure we have the same issue (described in Remark \ref{rem_A4_issue})
with Condition \eqref{cond_A4} as for the banding structure.

Finally, consider one scale covered by the oracle rate for the sparsity structure.

\subsection{Minimax results for the weak $\ell_q$-balls}
Recall the weak $\ell_q$ ball of radius $c$ in $\mathbb{R}^m$ containing elements with fast decaying 
ordered magnitudes of components,
\begin{align*}
B_q^m(c)=\{\zeta\in\mathbb{R}^m: |\zeta|_{(k)}^q\le ck^{-1},\, k\in[m]\},
\end{align*}
where $|\zeta|_{(k)}$ denotes the $k$th largest element in magnitude of the vector $\zeta$. 
For $0\le q<1$, define the class $\mathcal{G}_q(c_{n,p})$ of covariance matrices by
\[
\mathcal{G}_q(c_{n,p})=\{\mathrm{\Sigma}\in\mathcal{C}_{\varepsilon_0}: 
\Sigma_{-j}\in B_q^{p-1}(c_{n,p}), j\in[p]\},
\]
that is, each column $\Sigma_{-j}$ of $\mathrm{\Sigma} \in\mathcal{G}_q(c_{n,p})$ 
must be in a weak $\ell_q$-ball, $j\in[p]$. The minimax estimation rate  over $\mathcal{G}_q(c_{n,p})$ 
is $r^2(\mathcal{G}_q(c_{n,p}))=pc_{n,p}\big(\frac{\log p}{n}\big)^{1-q/2}+\frac{p}{n}$; 
see \cite{Caietal:2012}. Recall $\sigma^2_n=\tfrac{C\log p}{n}$ and take 
$I^*=I^*(\mathrm{\Sigma})=(I_1^*,\ldots, I_p^*)$ such that $|I_i^*| = p^*\triangleq 
\lfloor c_{n,p}\big(\frac{\log p}{n}\big)^{-q/2}\rfloor $, $i\in[p]$,  to derive  
\begin{align*}
\sup_{\mathrm{\Sigma}\in \mathcal{G}_q(c_{n,p})}\!\!\! r^2(\mathrm{\Sigma})&\le
\sup_{\mathrm{\Sigma}\in \mathcal{G}_q(c_{n,p})}\!\!\! r^2(I^*,\mathrm{\Sigma}) 
\le \sup_{\Sigma\in \mathcal{G}_q(c_{n,p})} \sum_{i\in[p]}\sum_{j\not\in I_i^*}
\Sigma_{ji}^2+ \sigma^2_n\big[p+pp^*\log \big(\tfrac{e (p-1)}{p^*}\big)\big]\\
&\lesssim p c_{n,p}^{2/q} \sum_{j>p^*} j^{-2/q}+\sigma^2_np\, c_{n,p}n^{q/2}(\log p)^{1-q/2}+\sigma^2_np \\&
\lesssim\big( pc_{n,p}\big(\tfrac{\log p}{n}\big)^{1-q/2}+\tfrac{p}{n}\big)  \log p .   
\end{align*}
This relation and the local results imply the global minimax results 
(up to the logarithmic factor $\log p$)
on estimation, posterior contraction and uncertainty quantification for
the model (79) for the scale $\mathcal{G}_q(c_{n,p})$.

\section{Matrix+noise with sparsity structure}
\label{sec_matrix_sparsity}

Suppose we observe a matrix $Y=(Y_{ij})\in \mathbb{R}^{n_1\times n_2}$:
\begin{align*}
Y_{ij}=\theta_{ij}+\sigma\xi_{ij}, \quad i\in[n_1], \;\;  j\in[n_2],
\end{align*}
where $\sigma>0$ is the known noise intensity,  $\xi_{ij} \overset{\rm ind}{\sim}\mathrm{N}(0,1)$,  $\theta=(\theta_{ij}) \in \mathbb{R}^{n_1\times n_2}$ is an unknown high-dimensional parameter 
of interest with at most $k_1$ nonzero rows and $k_2$ nonzero columns, 
not necessarily consecutive.  To the best of our knowledge, there are no local results on estimation, posterior contraction rate and  uncertainty quantification problems for this case of model/structure.

 The submatrix sparsity structure is modeled by the linear subspaces
\begin{align*}
\mathbb{L}_{I}=\big\{\text{vec}(x)\in\mathbb{R}^{n_1n_2}: x_{ij}=0\;\; \forall (i,j)\in \big((I_1^c\times[n_2])\cup ([n_1]\times I_2^c)\big)  \big\},
\end{align*}
where  $I=(I_1,I_2)\in\mathcal{I}=\{(I_1',I_2'): I_1'\subseteq[n_1],I_2'\subseteq[n_2]\}$ 
and $d_I=\dim(\mathbb{L}_{I})=|I_1||I_2|$. The structural slicing mapping 
is $s(I)=(|I_1|, |I_2|)$, so that $\mathcal{S}=([n_1]_0,[n_2]_0)$.
Compute $|\mathcal{I}_{s(I)}|=\prod_{i=1}^2\binom{n_i}{|I_i|}$, hence 
\[
\log |\mathcal{I}_{s(I)}|=\log \tbinom{n_1}{|I_1|}+\log \tbinom{n_2}{|I_2|}
\le\sum_{i\in[2]} |I_i|\log(\tfrac{en_i}{|I_i|}).
\]
Since $d_I=|I_1||I_2|$ and $d_I+\log |\mathcal{I}_{s(I)}| \le 
|I_1||I_2|+\sum_{i=1}^2|I_i|\log(\tfrac{en_i}{|I_i|})$, 
we take the majorant 
$\rho(I)=|I_1||I_2|+|I_1|\log(\tfrac{en_1}{|I_1|})+|I_2|\log(\tfrac{en_2}{|I_2|})$.

Conditions \eqref{cond_nonnormal} and \eqref{cond_A4} hold with $d_I=\dim(\mathbb{L}_I)$
in view of Remarks \ref{rem_cond_A1} and \ref{rem15}. 
Condition \eqref{(A2)} is fulfilled, since, according to Remark \ref{rem5}, for any $\nu>1$ 
\begin{align*}
 \sum_{I\in\mathcal{I}}e^{-\nu\rho(I)}&\le\sum_{|I_1|=0}^{n_1} 
 \big(\tfrac{en_1}{|I_1|}\big)^{-(\nu-1)|I_1|}\sum_{|I_2|=0}^{n_2}\big(\tfrac{en_2}{|I_2|}\big)^{-(\nu-1)|I_2|} 
 \le\tfrac{1}{(1-e^{1-\nu})^2}= C_\nu.
\end{align*}
For any $I^0, I^1\in\mathcal{I}$ define $I'=I'(I^0, I^1)=(I_1^0\cup I_1^1, I_2^0\cup I_2^1)$. 
Then $(\mathbb{L}_{I^0}\cup\mathbb{L}_{I^1})\subseteq \mathbb{L}_{I'}$ and $\rho(I')\le
\rho(I^0)+\rho(I^1)$, which entails Condition \eqref{(A3)}.

As consequence of our general results, we obtain the local results of Corollary \ref{meta_theorem}
for this case with the local rate $r^2(\theta)=\min_{ I \in \mathcal{I}} \big\{\|\theta-\mathrm{P}_I \theta\|^2 +
\sigma^2\rho(I)\big\}$. 
In turn, by virtue of Corollary \ref{minimax_results} the local results will  imply global minimax 
adaptive results at once over all scales $\{\Theta_\beta,\, \beta\in\mathcal{B}\}$ covered by 
the oracle rate $r^2(\theta)$ (i.e., for which \eqref{oracle_minimax} holds).
Below we present the example of scales $\{\Theta_\beta,\, \beta\in\mathcal{B}\}$ covered 
by the oracle rate $r^2(\theta)$.

\subsection{Minimax results for $\mathcal{F}(k_1, k_2, n_1, n_2)$}  
Let $\mathcal{F}(k_1, k_2, n_1, n_2)$ be the collection of matrices $\theta=(\theta_{ij}) \in \mathbb{R}^{n_1\times n_2}$ with at most $k_1$ nonzero rows and $k_2$ nonzero columns, which are not necessarily consecutive. 
Classes $\mathcal{F}(k_1, k_2, n_1, n_2)$ were introduced in \cite{Ma&Wu:2015}. In our notation, 
$\mathcal{F}(k_1, k_2, n_1, n_2) = \cup_{I\in \mathcal{I}: |I_1|\le k_1,|I_2|\le k_2}\mathbb{L}_{I}$. 
As is shown in \cite{Ma&Wu:2015},
the minimax rate over  $\mathcal{F}(k_1, k_2, n_1, n_2)$ is 
\[
r^2(\mathcal{F}(k_1, k_2, n_1, n_2))\asymp 
\sigma^2\big(k_1k_2+k_1\log(\tfrac{en_1}{k_1})+k_2\log(\tfrac{en_2}{k_2}) \big).
\] 

On the other hand, for each 
$\theta \in\mathcal{F}(k_1, k_2, n_1, n_2)$ there exists $I_*\in\mathcal{I}$ such that 
$\theta\in\mathbb{L}_{I_*}$ and $|I_{*1}|\le k_1$ and $|I_{*2}|\le k_2$. 
Hence, $\mathrm{P}_{I_*}\theta =\theta$
and 
\begin{align*}
r^2(\theta)&\le r^2(I_*,\theta)=\sigma^2\rho(I_*)
=\sigma^2\big(|I_{*1}||I_{*2}|+|I_{*1}|\log(\tfrac{en_1}{|I_{*1}|})+|I_{*2}|\log(\tfrac{en_2}{|I_{*2}|})\big)\\
&\le \sigma^2\big(k_1k_2+k_1\log(\tfrac{en_1}{k_1})+k_2\log(\tfrac{en_2}{k_2})\big)
\asymp r^2(\mathcal{F}(k_1, k_2, n_1, n_2)).
\end{align*} 
We thus established the relation 
\eqref{oracle_minimax} for this scale,
and Corollary \ref{minimax_results} follows with the minimax 
rate $r^2(\mathcal{F}(k_1, k_2, n_1, n_2))$ defined above.

\section{Matrix+noise with clustering structure: biclustering model} 
\label{sec_biclustering} 
Suppose we observe a matrix $Y=(Y_{ij})\in \mathbb{R}^{n_1\times n_2}$:
\begin{align*}
Y_{ij}=\theta_{ij}+\sigma\xi_{ij}, \quad i=1,\ldots, n_1, \quad  j=1,\ldots, n_2,
\end{align*}
where $\theta=(\theta_{ij}) \in \mathbb{R}^{n_1\times n_2}$ is an unknown high-dimensional parameter 
of interest with \emph{biclustering} structure (to be specified later), $\sigma>0$ is the known noise intensity, 
$\xi=(\xi_{ij}) \in \mathbb{R}^{n_1\times n_2}$ is a random matrix 
with $\mathbb{E}_\theta \xi_{ij} =0$. 

The essence of biclustering structure is to reduce dimensionality of a large matrix of parameters by 
simultaneous grouping of the rows and columns. For example, if the rows of $\theta$ correspond to 
objects and the columns to features, a biclustering structure means that only a few features are relevant 
for identifying a few groups of similar objects. 
There is a large literature on the biclustering model (some relevant references can be found in 
\cite{Belitser&Nurushev:2018}), especially on its particular case, 
the so called \emph{stochastic block model} (briefly discussed below) which is rather popular 
in the literature on networks as this model is widely used to model undirected network graphs. 
This case of model/structure was studied at length in 
\cite{Belitser&Nurushev:2018}, here we demonstrate that the results obtained in 
\cite{Belitser&Nurushev:2018} also follow from our general framework results.

Biclustering structure means that the rows and columns of the matrix $\theta=(\theta_{ij} )\in
\mathbb{R}^{n_1\times n_2}$ are  split into $k_1$ and $k_2$ clusters, respectively,  and the 
values $\theta_{ij}$ are the same for $i,j$  from the same clusters. 
Let us give the mathematical formalization of this idea. 
For $(k_1, k_2) \in [n_1]\times[n_2]$, consider a mapping $z=(z_1,z_2):\, 
[n_1]\times [n_2] \mapsto[k_1]\times [k_2]$, where $z_1:\, [n_1] \mapsto [k_1]$ and 
$z_2:\, [n_2]\mapsto [k_2]$. Each mapping $z \in [k_1]^{[n_1]}\times [k_2]^{[n_2]}$ 
determines the pertinent partition $I=I(z)$ of  
the rows and columns of any matrix 
$(M_{ij})\in\mathbb{R}^{n_1\times n_2}$  into $k_1\times k_2$ blocks:
\begin{align*}
[n_1]\times[n_2]=z^{-1}([k_1]\times [k_2])=z_1^{-1}([k_1]) \times z_2^{-1}([k_2])
=\cup_{(I_i^1,I_j^2)\in I} (I_i^1,I_j^2),
\end{align*}
where $I^1_i=z_1^{-1}(i)$ and $I^2_j=z_2^{-1}(j)$.
The \emph{biclustering structure} is nothing else but just 
this partition $I=I(z)=(I^1,I^2)$, where $I^1=I^1(z_1)=(I^1_i: i\in [k_1])$ 
is the row partition and $I^2=I^2(z_2)=(I^2_j: j\in [k_2])$ is the column partition.
So, the collection of all mappings $\mathcal{Z}= \mathcal{Z}(n_1,n_2)
=\{(z_1,z_2) \in [k_1]^{[n_1]}\times [k_2]^{[n_2]}, \, (k_1,k_2) \in [n_1]\times [n_2]\}$ yields
the collection of all biclustering structures 
(which are all \emph{biclustered} partitions of $[n_1]\times [n_2]$): 
\[
\mathcal{I}=\mathcal{I}(n_1,n_2)=\big\{I(z),\, z \in 
[k_1]^{[n_1]}\times [k_2]^{[n_2]}, \, 
(k_1,k_2)\in[n_1]\times [n_2]\big\}.
\] 

A biclustering structure $I\in\mathcal{I}$ in terms of parameter $\theta$ is expressed by
imposing $\theta\in\mathbb{L}_I\subseteq \mathbb{R}^{n_1n_2}$, where the linear subspace 
$\mathbb{L}_I$ is defined as
\begin{align}
\label{L_I}
\mathbb{L}_{I}=\big\{x\in \mathbb{R}^{n_1 n_2}: x_{ij}=
x_{i'j'} \;\forall\; (i,j),  (i',j') \in (I_1,I_2),\; \forall\; (I_1,I_2) \in I \big\}.
\end{align}
Assume that $\mathcal{I}$ is ``cleaned up'' in the sense that 
$\mathbb{L}_{I} \not= \mathbb{L}_{I'}$ for all $I\not= I'$ (see Remark \ref{remark1}). 

The structural slicing mapping $s:\mathcal{I} \mapsto \mathcal{S}$ is defined as $s(I)=(s_1(I),s_2(I))
\in [n_1]\times [n_2]\triangleq\mathcal{S}$, where $(s_1(I),s_2(I))$ denotes the numbers of nonempty row 
and column blocks in the structure $I \in \mathcal{I}$. Then $d_I=\dim(\mathbb{L}_{I})=s_1(I)s_2(I)$.

Let us propose a majorant $\rho(I)$ for the layer complexity 
$d_I+\log|\mathcal{I}_{s(I)}|=s_1(I)s_2(I)+\log|\mathcal{I}_{s(I)}|$. 
Clearly, $|\mathcal{I}_{s}|\le N(n_1,s_1)N(n_2,s_2)$, 
where $N(n,k)$ is the number of ways to put $n$ different objects into $k$ different 
boxes so that each box contains at least one object.
Notice that $S(n,k)=N(n,k)/k!=\frac{1}{k!}\sum_{j=0}^k(-1)^{k-j}\binom{k}{j}j^n$
is a Stirling number of the second kind.
To have a simple closed form expression for a majorant of the complexity, instead of 
$N(n_1,s_1)N(n_2,s_2)$ we can use its upper bound $s_1^{n_1}s_2^{n_2}$ (all the partitions of 
$[n_1]\times[n_2]$ into $s_1\times  s_2$ blocks, some of which are possibly empty).
However,  the bound $|\mathcal{I}_{s}|\le s_1^{n_1}s_2^{n_2}$ 
becomes too crude for some $s\in\mathcal{S}$.
In particular, this bound is too crude for the cases 
(i) $(s_1,s_2)\in\mathcal{S}_1=\{(s_1,s_2) \in [n_1]\times[n_2]: s_1<n_1, s_2=n_2\}$, 
(ii) $(s_1,s_2)\in\mathcal{S}_2=\{(s_1,s_2) \in [n_1]\times[n_2]: s_1=n_1, s_2<n_2\}$,
and (iii) $(s_1,s_2)\in\mathcal{S}_3=\{(n_1,n_2)\}$. 
Indeed, let $\mathrm{id}_m: [m] \mapsto [m]$ with $\mathrm{id}_m(s)=s$, $s\in[m]$, 
the identity mapping  of $[m]$.
Then it is easy to see that  $\mathbb{L}_{I(z_1,z_2)}=
\mathbb{L}_{I(z_1,\mathrm{id}_{n_2})} $
for all $z_2\in [n_2]^{[n_2]}$ and all $z_1\in [s_1]^{[n_1]}$, $s_1\in[n_1]$.
Similarly, $\mathbb{L}_{I(z_1,z_2)}=
\mathbb{L}_{I(\mathrm{id}_{n_1},z_2)} $
for all $z_1\in [n_1]^{[n_1]}$, $z_2\in [s_2]^{[n_2]}$, $s_2\in[n_2]$; and 
$\mathbb{L}_{I(z_1,z_2)}= \mathbb{L}_{I(\mathrm{id}_{n_1},\mathrm{id}_{n_2})}$
for all $z_1\in [n_1]^{n_1}, z_2\in [n_2]^{n_2}$.
Hence, $|\mathcal{I}_{s}| \le \big| [s_1]^{[n_2]}\big|\le s_1^{n_1}$ for $(s_1,s_2)\in\mathcal{S}_1$,
$|\mathcal{I}_{s}| \le s_2^{n_2}$ for $(s_1,s_2)\in\mathcal{S}_2$, 
and $|\mathcal{I}_{s}| \le 1$ for $(s_1,s_2)\in\mathcal{S}_3$.
Thus, we improve the bound $d_I+\log|\mathcal{I}_{s(I)}|\le s_1(I)s_2(I)+\log[s_1^{n_1}(I)s_2^{n_2}(I)]$ 
by proposing the following majorant $\rho(I)\ge d_I+\log|\mathcal{I}_{s(I)}| $ for the complexity $ d_I+\log|\mathcal{I}_{s(I)}|$ of the layer $\mathcal{I}_s$:
\begin{align}
\label{rho(k)}
 \rho(I) \triangleq  
\begin{cases}  
s_1(I)s_2(I)+ n_1\log s_1(I)+n_2\log s_2(I), \quad s_1(I)<n_1, \; s_2(I) <n_2, \\
 s_1(I)n_2+n_1 \log s_1(I),\quad s_1(I) < n_1, \; s_2(I)=n_2,\\
n_1s_2(I)+ n_2 \log s_2(I), \quad s_1(I)=n_1, \; s_2(I)<n_2,\\
n_1n_2, \quad s_1(I)=n_1,\; s_2(I)=n_2.
\end{cases}
\end{align}
This is an example of the so called \emph{elbow effect} mentioned in Remark \ref{family_cover}.

In case $\xi_i \overset{\rm ind} {\sim}\mathrm{N}(0,1)$, Conditions \eqref{cond_nonnormal} 
and \eqref{cond_A4} hold with $d_I =\dim(\mathbb{L}_I)$ 
in view of Remarks \ref{rem_cond_A1} and \ref{rem15}.  
Let us show that Condition \eqref{cond_nonnormal} is also fulfilled 
in case $Y_{ij} \overset{\rm ind}{\sim} \text{Bernoulli}(\theta_{ij})$, 
which is typically used for modeling indirect network graphs. 
Indeed, we have 
$\xi_{ij}\in\{1-\theta_{ij}, -\theta_{ij}\}\subseteq [-1,1]$ and 
$\mathbb{E}_\theta\xi_{ij}=0$, $(i,j)\in[n_1]\times[n_2]$. Note that in this case 
the error distribution depends on $\theta$.  
We represent the projection $\mathrm{P}_I=\mathrm{B}\mathrm{B}^T$, where 
$\mathrm{B}$ is the $(n_1n_2 \times k_1k_2)$-matrix whose columns $(b_{I_1I_2},(I_1,I_2) \in I)$ 
form an orthonormal basis of $\mathbb{L}_I$. Then
$\|\mathrm{P}_I \xi\|^2=\|\mathrm{B}^T \xi\|^2=\|\eta\|^2$, with $\eta=(\eta_{I_1I_2},\, (I_1,I_2)\in I)$, 
$\eta_{I_1I_2}=b_{I_1I_2}^T \xi$. We choose the following orthogonal basis of $\mathbb{L}_I$:
$b_{I_1I_2}=\big((|I_1||I_2|)^{-1/2}1\{(i,j)\in(I_1,I_2)\}, \, (i,j)\in [n_1]\times [n_2]\big)$, $(I_1,I_2) \in I$,
so that $\eta_{I_1I_2}=\frac{1}{\sqrt{|I_1||I_2|}}\sum_{(i,j)\in(I_1,I_2)}\xi_{ij}$.
Hoeffding's inequality implies that for any $t\ge 0$ 
\begin{align*}
\mathbb{P}_\theta(|\eta_{I_1I_2}|\ge t)
\le 2e^{-t^2/2}, \quad \text{for all} \;\;  (I_1,I_2)\in I.  
\end{align*}
Using this, we obtain for any $0<b<1/2$
\begin{align*}
\mathbb{E}_\theta e^{b\eta_{I_1I_2}^2}&=1+\int_{1}^\infty \mathbb{P}_\theta
\big(e^{b\eta_{I_1I_2}^2}\ge t\big)dt
\le 1+ 2 \int_{1}^\infty e^{-(\log t)/(2b)}dt=1+\tfrac{4b}{1-2b}.
\end{align*}
By taking  $b_0=\frac{e-1}{2(1+e)}$, we derive 
\[
\mathbb{E}_\theta\exp\{b_0\|\mathrm{P}_I \xi\|^2\}
=\mathbb{E}_\theta\exp\{b_0\|\eta\|^2\}\le
\big(1+\tfrac{4b_0}{1-2b_0}\big)^{|I_1||I_2|}=e^{|I_1||I_2|}=e^{d_I},
\] 
which is Condition \eqref{cond_nonnormal} with the constant $\alpha=b_0=\frac{e-1}{2(1+e)}$. 
Of course, the above argument applies (with minor adjustments) 
to any independent zero mean bounded errors $\xi_{ij}\in [-c,c]$ for some $c>0$.

Let us  verify Condition \eqref{(A2)}: for any $\nu\ge 1$,
\begin{align*}
\sum_{I\in\mathcal{I}}e^{-\nu\rho(I)}&\le \sum_{(s_1,s_2)\in[n_1]\times[n_2]}
e^{-\nu s_1s_2}=(e^{\nu}+e^{-\nu}-2)^{-1}= C_\nu.
\end{align*}

Thus, the properties (i)-(iv) of Corollary \ref{meta_theorem} follow 
for the biclustering model with the $\xi_i$'s that are independent and 
either normal or binomial, in fact, for any $\xi$ satisfying Condition \eqref{cond_nonnormal}.

One can also check Condition \eqref{(A3)}, so that the coverage 
property (v) of Corollary \ref{meta_theorem} holds under EBR as well. 
However, the peculiarity of the biclustering structure is that the size and coverage 
claims (vi)--(vii) for the confidence ball 
$B(\hat{\theta},\tilde{R}_M)$ are stronger and more useful in this case
than the corresponding claims (iv)--(v) for the confidence ball 
$B(\hat{\theta},\hat{R}_M)$. 

Indeed, the coverage property (v) holds uniformly only under the EBR, whereas the coverage 
property (vii) is uniform over the entire space $\Theta=\mathbb{R}^{n_1 \times n_2}$.
So, basically the deceptiveness issue is not present in the coverage property (vii) for the confidence ball $B(\hat{\theta},\tilde{R}_M)$, it appears only marginally in the size relation (vi) of 
Corollary \ref{meta_theorem}. Indeed, the size $\tilde{R}_M$ of the ball is of the oracle rate order 
uniformly in $\theta\in\Theta\backslash \tilde{\Theta}=\mathbb{R}^{n_1\times n_2}\backslash\tilde{\Theta}$,
where $\tilde{\Theta}$ is defined by \eqref{tilde_Theta}. By the definition of  $\tilde{\Theta}$, 
$r^2(\theta)\ge c\sigma^2\sqrt{n_1n_2}$ for $\theta\in\Theta\backslash\tilde{\Theta}$. For the biclustering 
model, we can take $c=\log 2$ and $\tilde{\Theta}$ can be written as 
$\tilde{\Theta}=\{\theta\in\mathbb{R}^{n_1\times n_2}:\min\{s_{o1}(\theta),s_{o2}(\theta)\}=1\}$ with 
$(s_{o1}(\theta), s_{o2}(\theta))=s(I_o(\theta))$, where the oracle $I_o(\theta)$ is defined by \eqref{oracle}. 
Hence, for the biclustering model,  $\tilde{\Theta}$ is indeed a ``thin'' subset of 
$\mathbb{R}^{n_1\times n_2}$ consisting of \emph{highly structured parameters}, whose oracle 
number of either row or block columns is 1. As we have already discussed at the end of Section \ref{sec_without_EBR}, this means that, modulo highly structured parameters, there is no deceptiveness phenomenon  in the biclustering model.



Consider an example of scale $\{\Theta_\beta,\, \beta\in\mathcal{B}\}$ 
covered by the local rate $r^2(\theta)$. 

\subsection{Minimax results for the biclustering model}
In \cite{Gao&Lu&Zhou:2015}, classes $\Theta^{\rm asym}_{k_1k_2}$ are introduced 
(and classes $\Theta_{k_1k_2}(M)$ 
from \cite{Gao&Lu&Ma&Zhou:2016}). In  our notation, 
$\Theta^{\rm asym}_{s_1s_2}=\cup_{I \in \mathcal{I}_{s}} \Theta_{I}$, where 
$s=(s_1, s_2)\in [n_1]\times [n_2]\triangleq \mathcal{S}$,  
$\Theta_{I}\triangleq\mathbb{L}_{I} \cap[0,1]^{n_1\times n_2}$ and  
$\mathbb{L}_{I}$ is defined by \eqref{L_I}.
So, the family of classes $\Theta^{\rm asym}_{s_1s_2}$ is nothing else but 
the scale $\{\Theta_{s}, \, s\in\mathcal{S}\}$. 
The minimax rate $r^2(\Theta_s)\triangleq s_1s_2 +n_1\log s_1+n_2\log s_2$ 
over $\Theta_{s}$ is derived in \cite{Gao&Lu&Zhou:2015}, 
under the assumption $\log s_1 \asymp \log s_2$. 
It is easy to see that the oracle rate $r^2(\theta)$ covers the scale 
 $\{\Theta_{s}, \, s\in\mathcal{S}\}$ in the sense of \eqref{oracle_minimax}.
Indeed, if 
$\theta \in\Theta_s$, then $\theta \in \mathbb{L}_{I'}$ for some 
$I'\in \mathcal{I}_{s}$, so that $\mathrm{P}_{I'} \theta =\theta$ 
and hence 
\begin{align}
\label{oracle_less_minimax}
r^2(\theta) \le r^2(I',\theta)=\rho(I')=\rho(I) \le 
r^2(\Theta_s), \quad \theta \in \Theta_s.
\end{align}
Corollary \ref{minimax_results} follows for this case with the minimax rate $r^2(\Theta_s)$ 
defined above.

\begin{remark}
From \eqref{oracle_less_minimax}, we have that $r^2(\theta) \le s_1s_2+n_1\log s_1+n_2\log s_2$ 
for each $\theta \in \Theta_s$.
Next, for any $I \in\mathcal{I}_{s}$ with $s=(s_1,s_2)$
and any $\mathbb{L}_{I}$, there exist $I'=I'(I)$ and 
$\mathbb{L}_{I'}$ such that 
$\mathbb{L}_{I} \subseteq \mathbb{L}_{I'}$ where 
$I' \in \mathcal{I}_{s'}$ with $s'=(s_1,n_2)$.
Then for any $\theta \in\Theta_s$, $\theta \in \mathbb{L}_{I}
\subseteq \mathbb{L}_{I'(I)}$ for some 
$I' \in \mathcal{I}_{s'}$ with $s'=(s_1,n_2)$, implying
$\mathrm{P}_{I'} \theta =\theta$.  In view of \eqref{rho(k)}, we obtain 
that $r^2(\theta) \le r^2(I',\theta)=\rho(I')
=s_1n_2+n_1\log s_1$  for all $\theta \in \Theta_s$. 
Similarly, we derive that $r^2(\theta) \le n_1 s_2 + n_2 \log s_2$ and  
$r^2(\theta) \le n_1 n_2$ for all $\theta \in \Theta_s$.
	Thus,  instead of \eqref{oracle_less_minimax}, we established
the following stronger bound  for  any  $\theta \in \Theta_s$
\[
r^2(\theta) \le  
\min\{r^2(\Theta_s), s_1n_2 + n_1 \log s_1, 
n_1 s_2 + n_2 \log s_2, n_1n_2\}\triangleq \bar{r}^2(\Theta_s).
\]
Notice that for some $s\in\mathcal{S}$, the quantity $\bar{r}^2(\Theta_s)$ can be less 
than the minimax rate $r^2(\Theta_s)=s_1s_2 +n_1\log s_1+n_2\log s_2$.
Recall however that the minimax rate $r^2(\Theta_s)$ is claimed in \cite{Gao&Lu&Zhou:2015} 
only under the assumption $\log s_1 \asymp \log s_2$, and, in this case, indeed 
$r^2(\Theta_s)\asymp\bar{r}^2(\Theta_s)$. 
In general, the minimax rate over $\Theta_s$ for arbitrary $s\in\mathcal{S}$ cannot be bigger 
than $\bar{r}^2(\Theta_s)$, we conjecture that it is $\bar{r}^2(\Theta_s)$ 
for all $s\in\mathcal{S}$.
\end{remark}

\begin{remark}
\label{rem_binomial_case}
In view of Remark \ref{rem15}, 
Condition \eqref{cond_A4} is always fulfilled whenever $\xi_i \overset{\rm ind}{\sim}\mathrm{N}(0,1)$. 
However, for the biclustering model (and the stochastic block model, described below), a more appropriate distribution for the observations is binomial, i.e., $Y_{ij}\overset{\rm ind}{\sim}\text{Bernoulli}(\theta_{ij})$ 
and  $Y'_{ij}\overset{\rm ind}{\sim}\text{Bernoulli}(\theta_{ij})$.
This case is important in relation to network modeling.
Also in this case, Condition \eqref{cond_A4} holds in view of Remark \ref{rem_bin_model} 
if we have a second sample $Y'$.
\end{remark}

\subsection{Stochastic block model}


Here we briefly discuss a particular case of biclustering model, the 
\emph{stochastic block model} (SBM) which is used in the literature on networks to model undirected 
network graphs. Oracle estimation and posterior contraction rate results for stochastic block model 
were recently derived in \cite{Gao&vanderVaart&Zhou:2015, Klopp&Tsybakov&Verzelen&:2017}.
Precisely, to get the SBM from the biclustering model, we assume additionally
$s_1=s_2=s$, $n_1=n_2=n$, $z_1=z_2=z$.
For a mapping $z\in [s]^{[n]}$, the pertinent row partition in the SBM is
$I =I(z)=(z^{-1}(i), i \in [s])$, which is the same as the column partition.

In the binomial case $Y_{ij}\overset{\rm ind}{\sim}\text{Bernoulli}(\theta_{ij})$, 
the observations $Y_{ij}$ can be associated with network data. In this case $Y_{ij}$ stands for 
the presence or absence of an edge between vertices $i$ and $j$ in the network interpretation. 
To model undirected network graphs, some conditions (called \emph{network conditions}) 
are then additionally assumed: the ``no self-loop'' condition $Y_{ii}=\theta_{ii}=0$ 
and symmetry condition $Y_{ij}=Y_{ji}$ and $\theta_{ij}=\theta_{ji}$.
Denote by $\Theta_{\rm net}$ the parameters $\theta\in\mathbb{R}^{n_1\times n_2}$ 
satisfying  these additional network conditions.

All the quantities, conditions and claims specialize to the SBM by setting 
$s_1=s_2=s$, $n_1=n_2=n$, $z_1=z_2=z$ in all the above formulas for the biclustering model.
The linear subspaces $\mathbb{L}_{I}$ defined by \eqref{L_I} will get adjusted since 
$z_1=z_2$, the family $\mathcal{I}_s$ can be associated with the collection of 
all possible partitions of $[n]$ into $s$ blocks, parametrized by mappings $z\in [s]^{[n]}$. 
$|\mathcal{I}_s|\le s^n$, $s\in \mathcal{S}\triangleq[n]$. 
The structural slicing mapping $s(I)$ is the number of blocks in the partition $I$. 
Notice that under additional network conditions $cs^2(I) \le \dim(\mathbb{L}_I)\le s^2(I)$, 
so that we can use $s^2(I)$ (instead of the true 
$d_I=\dim(\mathbb{L}_I)$) in the complexity part of the local rate as it is still of the same order, 
although some constants can be improved because of this extra network structure. 
We have $d_I=\dim(\mathbb{L}_I)\le s^2(I)$,  $\log|\mathcal{I}_s|\le n\log s$, 
and we take $\rho(I)=s^2(I)+n\log s(I)$. 
Conditions \eqref{cond_nonnormal}--\eqref{cond_A4} are fulfilled in the same way as for the 
biclustering  model, leading to Corollary \ref{meta_theorem}.
As to the binomial case, see Remark \ref{rem_binomial_case}.

Consider a couple of examples of scales $\{\Theta_\beta,\, \beta\in\mathcal{B}\}$ covered 
by the local rate $r^2(\theta)$.

\subsection{Minimax results for the stochastic block model}
We consider the SBM.
In \cite{Gao&Lu&Zhou:2015} (cf.\ \cite{Klopp&Tsybakov&Verzelen&:2017}), 
classes $\Theta_k$ were introduced for the SBM. In our notation, 
$\Theta_s = \cup_{I\in \mathcal{I}_s} \Theta_{I} $, where
$s=k$, 
$\Theta_{I} = \mathbb{L}_{I} \cap \Theta_{\rm net} \cap [0,1]^{n^2}$, $I\in  \mathcal{I}_s$, 
$s\in \mathcal{S}$. So,  we have the scale $\{\Theta_s, \,s\in \mathcal{S}\}$ and the adaptive minimax
results over this scale follow from the local results given by Corollary \ref{meta_theorem}. 
Indeed, as is shown in \cite{Gao&Lu&Zhou:2015},
the minimax rate over $\Theta_s$ in the SBM is 
$r^2(\Theta_s)=\inf_{\hat{\theta}}\sup_{\theta \in \Theta_s} \mathbb{E}_\theta\|\hat{\theta} 
- \theta\|^2 \asymp s^2 +n\log s=\rho(I)$, $I\in  \mathcal{I}_s$. On the other hand, for each 
$\theta \in\Theta_s$  there exists $I\in\mathcal{I}_s$ such that 
$\theta\in\mathbb{L}_{I}$. Hence, $\mathrm{P}_{I}\theta =\theta$
and $r^2(\theta)\le r^2(I,\theta)=\rho(I)\asymp r^2(\Theta_s)$. 
This implies  Corollary  \ref{minimax_results} for this scale.

\begin{remark}
\label{rem_dec_sbm}
As to the deceptiveness phenomenon in the SBM, for the confidence ball 
$B(\hat{\theta},\tilde{R}_M)$ we again have 
the coverage property uniformly over the whole scale $\{\Theta_s,s \in\mathcal{S}\}$, 
whereas the size property with the optimal radial rate holds 
over all classes $\{\Theta_s, s=2,\ldots,n\}$, but one: $\Theta_1$.
Indeed, the class $\Theta_1$ consists of highly structured parameters 
$\theta\in\mathbb{R}^{n^2}$, whose coordinates are all equal. The case 
$\theta\in\Theta_1$ reduces to just one-dimensional signal+noise model with $N=n^2$ 
observations. Since the effective radial rate $g_M(\theta,N)$ for the confidence ball 
$B(\hat{\theta},\tilde{R}_M)$ is always at least of the order 
$n\sigma^2 \gg \sigma^2=r^2(\Theta_1)$, we could not attain the optimal rate $r^2(\Theta_1)$ 
in the size relation only for the highly structured parameters $\theta\in \Theta_1$. 
\end{remark}

\subsection{Minimax results for the graphon classes} 
Consider the SBM. It is also possible to derive the global minimax results  
for the function class of graphons as consequence of our local results. We use the same notation 
as in \cite{Gao&Lu&Zhou:2015}. Consider a random graph with adjacency matrix 
$\{Y_{ij}\}\in\{0,1\}^{n\times n}$. Assume again the network conditions: $Y_{ii}=\theta_{ii}=0$, 
$Y_{ij}=Y_{ji}$, $\theta_{ij}=\theta_{ji}$. For any $i>j$, $Y_{ij}$ is sampled as follows: 
\begin{align*}
(\xi_1,\ldots, \xi_n)\sim\mathrm{P}_{\xi}, \quad Y_{ij}|(\xi_i,\xi_j)
\overset{\rm ind}{\sim}\text{Bernoulli}(\theta_{ij}), \quad \theta_{ij}=f(\xi_i,\xi_j).
\end{align*}
The function $f$ on $[0,1]^2$, which is assumed to be symmetric, is called \emph{graphon}. 
Introduce the derivative operator $\nabla_{jk}f(x,y) =\frac{\partial^{j+k}}{\partial x^j \partial y^k}f(x, y)$, 
with the convention $\nabla_{00}f(x,y) = f(x,y)$. For $\beta>0$, the H\"older norm is defined by
\begin{align*}
\|f\|_{\mathcal{H}_\beta}=\max_{j+k\le\lfloor\beta\rfloor }
\sup_{x,y}|\nabla_{jk}f(x,y)|+\max_{j+k=\lfloor\beta\rfloor }\sup_{(x,y)
\neq (x',y')}\tfrac{|\nabla_{jk}f(x,y)-\nabla_{jk}f(x',y')|}{\|(x-x',y-y')\|^{\beta-\lfloor\beta\rfloor}}.
\end{align*}
For $\beta,Q>0$, the H\"older graphon class is  
\begin{align*}
\mathcal{F}_\beta=\mathcal{F}_\beta(Q)=\{f: \|f\|_{\mathcal{H}_\beta}\le 
Q,\, f(x,y)=f(y,x),\;0\le f(x,y)\le 1\; \text{for}\; x\ge y\}.
\end{align*}
Recall that $\theta_{ij}=f(\xi_i,\xi_j)$. Slightly abusing notation, we will 
write $\theta \in \mathcal{F}_\beta(Q)$ if $f \in \mathcal{F}_\beta(Q)$.

The next proposition  is Lemma 2.1 from \cite{Gao&Lu&Zhou:2015}, which we give here 
(in our notation) for completeness. The proof can be found in \cite{Gao&Lu&Zhou:2015}.
\begin{proposition} 
\label{prop_graphon}
For any $\theta\in\mathcal{F}_\beta(Q)$, $s_0\in\mathcal{S}$, there exists a partition 
$I_0=I_0(\theta,s_0)\in\mathcal{I}_{s_0}$ such that, for some universal constant $\bar{C}_1>0$,
\begin{align*}
\|\theta-\mathrm{P}_{I_0}\theta\|^2 
\le \bar{C}_1Q^2n^2 s_0^{-2\min\{\beta,1\}}.
\end{align*}
\end{proposition}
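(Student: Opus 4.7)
The plan is to construct the partition explicitly by splitting $[0,1]$ into $s_0$ consecutive subintervals of equal length. Concretely, I would set $J_k = \big[(k-1)/s_0, k/s_0\big)$ for $k\in[s_0]$ and define $I_0=(I_0^1,\ldots,I_0^{s_0})$ by $I_0^k = \{i\in[n]: \xi_i \in J_k\}$, so that $I_0 \in \mathcal{I}_{s_0}$. Some blocks $I_0^k$ may be empty, which is harmless: those indices simply do not contribute to the sum below, and the resulting partition still has at most $s_0$ non-empty blocks.

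Next, I would identify the projection operator. The linear subspace $\mathbb{L}_{I_0}$ consists of matrices that are constant on each product block $I_0^k \times I_0^l$; hence the orthogonal projection is block averaging,
\begin{align*}
(\mathrm{P}_{I_0}\theta)_{ij} \;=\; \bar{\theta}_{kl} \;\triangleq\; \frac{1}{|I_0^k||I_0^l|}\sum_{(i',j')\in I_0^k\times I_0^l}\theta_{i'j'} \quad\text{for } (i,j)\in I_0^k\times I_0^l.
\end{align*}
Since $\theta_{i'j'}=f(\xi_{i'},\xi_{j'})$ with $(\xi_{i'},\xi_{j'})\in J_k\times J_l$, it follows that $|\theta_{ij}-(\mathrm{P}_{I_0}\theta)_{ij}|$ is bounded by the oscillation of $f$ on the rectangle $J_k\times J_l$, whose Euclidean diameter is $\sqrt{2}/s_0$.

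The final step is to bound that oscillation using the Hölder norm. For $\beta\le 1$, the definition of $\|\cdot\|_{\mathcal{H}_\beta}$ directly yields $|f(x,y)-f(x',y')|\le Q\|(x-x',y-y')\|^\beta \le Q(\sqrt{2}/s_0)^\beta$ whenever $(x,y),(x',y')\in J_k\times J_l$, giving $|\theta_{ij}-(\mathrm{P}_{I_0}\theta)_{ij}|^2 \le 2Q^2 s_0^{-2\beta}$. For $\beta>1$, the partial derivatives $\nabla_{10}f$ and $\nabla_{01}f$ are bounded by $Q$, so $f$ is Lipschitz with constant at most $\sqrt{2}Q$ on each rectangle, whence $|\theta_{ij}-(\mathrm{P}_{I_0}\theta)_{ij}|^2 \le 2Q^2 s_0^{-2}$; this is exactly where the cap $\min(\beta,1)$ enters, reflecting that piecewise-constant approximation on a grid of mesh $1/s_0$ cannot exploit smoothness beyond Lipschitz. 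Summing over the $n^2$ entries gives $\|\theta-\mathrm{P}_{I_0}\theta\|^2 \le 2Q^2 n^2 s_0^{-2\min(\beta,1)}$, which is the claim with $\bar C_1=2$.

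The only mild subtlety will be keeping the approximation clean under the biclustering/SBM conventions, namely the identification of $\mathrm{P}_{I_0}$ with block averaging in the presence of the symmetry and zero-diagonal constraints used for network graphons; but since those constraints only restrict the ambient space and leave the block-averaging projection intact, the pointwise estimate above continues to apply. The argument is otherwise the standard Hölder-by-piecewise-constant approximation estimate adapted to the graphon setting, and the probabilistic nature of the $\xi_i$'s plays no role since the bound holds uniformly in $(\xi_1,\ldots,\xi_n)$.
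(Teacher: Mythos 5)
Your proof is correct and is essentially the standard argument: the paper itself does not prove this proposition but quotes it as Lemma 2.1 of \cite{Gao&Lu&Zhou:2015}, whose proof is exactly your construction (equipartition of $[0,1]$ into $s_0$ intervals, block averaging as the projection, and the H\"older/Lipschitz oscillation bound with the $\min\{\beta,1\}$ cap). The only points worth polishing are cosmetic: if some $J_k$ receives no $\xi_i$ then $I_0$ lies in a layer $\mathcal{I}_{s'}$ with $s'\le s_0$ rather than in $\mathcal{I}_{s_0}$ itself (harmless, since $\rho$ is monotone in the number of blocks), and the zero-diagonal convention $\theta_{ii}=0$ perturbs the block averages by an $O(n)$ total amount, which is dominated by the claimed bound for the values of $s_0$ used in the application.
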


By taking $s_0=\lfloor n^{1/(\min\{\beta,1\}+1)}\rfloor+1$ 
and $I_0\in\mathcal{I}_{s_0}$ from Proposition \ref{prop_graphon}, 
we obtain 
\begin{align*}
&\sup_{\theta\in\mathcal{F}_\beta(Q)}r^2(\theta)=\sup_{\theta\in\mathcal{F}_\alpha(Q)}
\big\{\|\theta-\mathrm{P}_{I_{o}}\theta\|^2+s^2(I_o)+n\log s(I_o)\big\}
\\
&\le \sup_{\theta\in\mathcal{F}_\beta(Q)}\|\theta-\mathrm{P}_{I_{0}}\theta\|^2+s^2(I_0)+
n \log s(I_0)\\
&\le\bar{C}_1Q^2n^2 s_0^{-2\min\{\beta,1\}}+s_0^2+n\log s_0
\lesssim n^{2-2\beta/(\beta+1)}+n\log n \\
&\asymp n^{2/(\beta+1)}+n\log n.
\end{align*} 
Corollary  \ref{minimax_results} follows for the scale $\{\mathcal{F}_\beta, \, \beta>0\}$ 
with the minimax rate $r^2(\mathcal{F}_\beta)\asymp 
n^{2/(\beta+1)}+n\log n$. The second claim of Corollary  \ref{minimax_results} 
recovers the same minimax estimation rate  as in \cite{Gao&Lu&Zhou:2015}
and \cite{Klopp&Tsybakov&Verzelen&:2017}.

\section{Matrix linear regression with group sparsity}
\label{sec_reg_group_sparsity}

Assume now that the unknown regression vectors $\beta^1,\ldots , \beta^m \in \mathbb{R}^p$ 
in the general regression model \eqref{matrix_regression} share the same support. 
Note that the model considered in Section \ref{sec_lin_regr_sparsity} is a special case of linear 
regression with group sparsity with $m=1$. Local results  for linear regression with group sparsity 
were derived in \cite{Lounici&Pontil&vandeGeer&Tsybakov:2011}, 
and posterior contraction rate results in \cite{Gao&vanderVaart&Zhou:2015}.  
The group sparsity structure is modeled by the linear spaces 
\begin{align*}
\mathbb{L}_I=
\big\{ \text{vec}(\mathrm{X}_I^1 x^1_I,\ldots ,\mathrm{X}_I^m x^m_I)\in \mathbb{R}^{nm}:\,
x^j_I\in\mathbb{R}^{|I|},\,j\in[m]\big\},
\;\; I\in\mathcal{I},
\end{align*}
where $\mathcal{I}=\mathcal{I}_1 \cup\{I_p\}$, with $I_p=[p]$, $\mathcal{I}_1=
\{I\subseteq [p]: m|I|+|I|\log(ep/|I|)\le r\}$, $r=\sum_{i=1}^m\rank(\mathrm{X}^i)$. 
Clearly, $|\mathcal{I}|\le 2^p$ and $d_I=\dim(\mathbb{L}_I)\le m|I|$ for $I\in\mathcal{I}_1$ 
and $d_{I_p}=r$. In this case, $\theta=\mathrm{X}\beta$ with $\beta=
(\beta^1,\ldots, \beta^m)\in\mathbb{R}^{mp}$, the structural slicing mapping is 
$s(I)=|I|\in\mathcal{S}\triangleq [p]_0$. Further, we have  $|\mathcal{I}_{s(I)}|=\binom{p}{|I|}$ for 
$I \in \mathcal{I}_1$ and $|\mathcal{I}_{s(I_p)}|=1$, 
hence $\log |\mathcal{I}_{s(I)}|=\log \binom {p}{|I|}\le |I|\log(ep/|I|)$ for $I \in \mathcal{I}_1$ and 
$\log |\mathcal{I}_{s(I_p)}|=0$. 
Since $d_I +\log |\mathcal{I}_{s(I)}|\le m|I|+|I|\log(ep/|I|)$ for $I \in \mathcal{I}_1$ and
$d_{I_p} +\log |\mathcal{I}_{s(I_p)}| =r$, we take the majorant 
\[
\rho(I)=\big(m|I|+|I|\log(ep/|I|)\big) 1\{I\in\mathcal{I}_1\} + r 1\{I=I_p\}, \quad I \in \mathcal{I}.
\]
Notice the elbow effect in the majorant that emerges here for the same reason as in 
Section \ref{sec_lin_regr_sparsity}.

Conditions \eqref{(A2)} and \eqref{(A3)} are fulfilled in the same way as for the model 
in Section \ref{sec_lin_regr_sparsity}. As consequence of our general results, we obtain Corollary \ref{meta_theorem} for this case with the local rate 
\[
r^2(\beta)=\min_{ I \in \mathcal{I}} r^2(I,\beta)=\min_{ I \in \mathcal{I}}
\big\{\|(\mathrm{I}-\mathrm{P}_I)\mathrm{X}\beta\|^2+\sigma^2\rho(I)\big\}.
\]
\begin{remark}
We can redefine the structural slicing mapping as $s(I)=\dim(\mathbb{L}_I)$, and 
the bound $\log |\mathcal{I}_{s}|=\log \binom {p}{s}\le s\log(ep/s)$ would still be valid.
Notice further that we can slightly improve the above oracle rate by
using the exact quantity $d_I=\dim(\mathbb{L}_I)$ instead of its upper bound $m|I|$ 
in the expression for the complexity $\rho(I)$, which would make the oracle rate 
$r^2(\beta)$ slightly smaller. 
\end{remark}

\subsection{Minimax results for group sparsity}
One can formulate minimax results for appropriate scales. For example, introduce 
the scale of classes
\[
\ell^m_0[s] =\big\{\text{vec}(\beta^1,\ldots , \beta^m) \in \mathbb{R}^{pm}:
\, I^*(\beta^i)= I^*(\beta^j),\, |I^*(\beta^i)|\le s\; \forall i,j \in [m]\big\},
\]
where $I^*(\beta)=\{i\in[p]: \beta_i \not= 0\}$. 
The minimax rate over this class is established in \cite{Lounici&Pontil&vandeGeer&Tsybakov:2011}
(under some conditions):
\[
r^2(\ell^m_0[s] )\triangleq 
\inf_{\hat{\beta}} \sup_{\beta \in \ell^m_0[s]} \mathbb{E}_\beta
\|\mathrm{X}\hat{\beta}-\mathrm{X}\beta\|^2 \asymp  \sigma^2 \big[ms+s\log(ep/s)\big].
\]
Then we can easily show that the oracle rate implies this global rate since 
\begin{align*}
r^2(\beta) &\le r^2(I^*(\beta),\beta) \wedge r^2(I_p,\beta)\le
\sigma^2 \big[\big(m|I^*(\beta)|+ |I^*(\beta)|\log(\tfrac{ep}{|I^*(\beta)|}) \big) \wedge r\big] \\
&\le \sigma^2 \big[ms+s\log(ep/s)\big]\asymp r^2(\ell^m_0[s])
\qquad \text{for all} \quad \beta \in \ell^m_0[s].
\end{align*}

\section{Matrix linear regression with group clustering (multi-task learning)}  
\label{sec_group_clustering}

Assume now a clustering structure shared by $m$ unknown regression vectors 
$\beta^1,\ldots , \beta^m\in\mathbb{R}^p$. That is, there is some  mapping 
$z:\, [m]\mapsto[k]$ such that $\beta^j=\beta^{z(j)}$, $j\in[m]$. 
Let the design matrix $\mathrm{X}=\text{diag}\{\mathrm{X}^1,\ldots , \mathrm{X}^m\}$ in \eqref{matrix_regression} be such that 
$\mathrm{X}^1=\ldots=\mathrm{X}^m=\bar{\mathrm{X}}$, with
$\det({\bar{\mathrm{X}}^T\bar{\mathrm{X}}})>0$. Full column rankness of the  
$(n\times p)$-matrix $\bar{\mathrm{X}}$ implies $p\le n$. 
Each mapping $z \in [k]^{[m]}$ determines (uniquely) the pertinent partition 
$I=I(z)=(I_i, i\in[k])$ of the vectors $\beta^1,\ldots , \beta^m$ into $k$ groups 
$I_i=I_i(z)=z^{-1}(i)\subseteq[m]$, $i\in[k]$, such that $\cup_{i\in [k]} I_i=[m]=z^{-1}([k])$. 
Thus, the collection of all mappings $\mathcal{Z}= \mathcal{Z}(m)=\{z \in [k]^{[m]}, \, k \in [m]\}$ 
yields the collection of all clustering partitions of $[m]$: $\bar{\mathcal{I}}=\bar{\mathcal{I}}(m)
=\big\{I(z),\, z \in [k]^{[m]}, \, k\in[m]\big\}$. Some local posterior contraction rate results for this model 
are claimed in \cite{Gao&vanderVaart&Zhou:2015}, where this model is called by \emph{multi-task learning}.
We will call this model rather by \emph{linear regression with group clustering}. 
To the best of our knowledge, there are no adaptive minimax results on estimation and  uncertainty 
quantification problems for this model.

In this model, the structures $I$ are going to be certain partitions from $\bar{\mathcal{I}}$.
Let $\bar{I}=(\{1\}, \ldots, \{m\})$ be the finest partition of $[m]$ into $m$ one-point clusters
and the structural slicing mapping $s(I)$ be the number of blocks in the partition $I$, so that 
$\mathcal{S} =[m]$. The group clustering structure is modeled by the following linear spaces
\begin{align*}
\mathbb{L}_{I}=\big\{\text{vec}(\bar{\mathrm{X}}x^1,\ldots ,\bar{\mathrm{X}}x^m)
\in\mathbb{R}^{nm}:\, &
x^j \in \mathbb{R}^p,\,j\in[m], \text{ such that }\\
&
x^{j}=x^{j'}\; \forall\, j,j'\in I_i,\,I_i \in I,\,i\in[s(I)] \big\},
\end{align*}
where $I\in \mathcal{I}\triangleq\mathcal{I}_1\cup\{\bar{I}\}$ with 
$\mathcal{I}_1=\{I \in\bar{\mathcal{I}}: ps(I)+m\log s(I)\le pm\}$.
In this case, $\theta=\mathrm{X}\beta$, $d_I=\dim(\mathbb{L}_{I})=ps(I)$ and
$|\mathcal{I}_{s(I)}|=N(m,s(I))$ for $I\in\mathcal{I}_1$, where $N(m,s)$ is the number of ways to put $m$ 
different objects into $s$ different boxes so that each box contains at least one object.
Then $\log |\mathcal{I}_{s(I)}|\le \log s^m(I)=m\log s(I)$ for $I\in\mathcal{I}_1$.
Besides, we have $d_{\bar{I}}=\dim(\mathbb{L}_{\bar{I}})=pm$ and 
$|\mathcal{I}_{s(\bar{I})}|=1$.
Since $d_I +\log |\mathcal{I}_{s(I)}|\le ps(I)+m\log s(I)$ for $I \in \mathcal{I}_1$ and
$d_{\bar{I}} +\log |\mathcal{I}_{s(\bar{I})}| =pm$, we take the majorant 
\[
\rho(I)=\big(ps(I)+m\log s(I)\big) 1\{I\in\mathcal{I}_1\} + pm1\{I=\bar{I}\}.
\]

\begin{remark}
\label{rem_second_regime}
As before, we have an elbow effect, again for the same reason.
The idea of the elbow in the majorant should be clear now: there is no point (although possible) 
to model the structures $I \in \bar{\mathcal{I}}\backslash \mathcal{I}$,
because all these structures are dominated by the structure $\bar{I} \in \mathcal{I}$.
Indeed, for each $I \in \bar{\mathcal{I}}\backslash \mathcal{I}$,
$r^2(I,\beta)=\|(\mathrm{I}-\mathrm{P}_I)\mathrm{X} \beta\|^2 +\sigma^2 \rho(I)
=\|(\mathrm{I}-\mathrm{P}_I)\mathrm{X} \beta\|^2 +
\sigma^2\big(ps(I)+m\log s(I)\big)\ge\sigma^2 pm
=\|(\mathrm{I}-\mathrm{P}_{\bar{I}})\mathrm{X} \beta\|^2 +\sigma^2pm
=r^2(\bar{I},\beta)$, because $\mathrm{P}_{\bar{I}}\mathrm{X} \beta =\mathrm{X} \beta$. 
\end{remark}

Condition \eqref{(A2)} is fulfilled, since, according to Remark \ref{rem5}, for any $\nu\ge 1$ 
\begin{align*}
\sum_{I\in\mathcal{I}}e^{-\nu\rho(I)}&\le 
\sum_{I\in \mathcal{I}_1}e^{-\nu \rho(I)}+e^{-\nu pm}
\le\sum_{s\in[m]} e^{-\nu p s}+e^{-\nu pm}
\le (e^{\nu p}-1)^{-1}+1 
= C_\nu.
\end{align*}

\begin{remark}
\label{rem_redundancy}
Notice that we could consider the full family of structures $\bar{\mathcal{I}}$
under some mild condition. Namely,
we could allow redundancy by associating the same space $\mathbb{L}_{\bar{I}}$ 
to each $I\in\bar{\mathcal{I}}\backslash \mathcal{I}$. 
The majorant becomes 
$\bar{\rho}(I)=\big(ps(I)+m\log s(I)\big) 1\{I\in\mathcal{I}_1\}
+pm1\{I\in\bar{\mathcal{I}}\backslash \mathcal{I}_1\}$, defined now for all $I\in\bar{\mathcal{I}}$.
Then, if $p \gtrsim \log m$, 
Condition \eqref{(A2)} is fulfilled for sufficiently large $\nu$:
\begin{align*}
\sum_{I\in\bar{\mathcal{I}}}e^{-\nu\rho(I)}&\le 
\sum_{I\in \mathcal{I}_1}e^{-\nu \rho(I)}+
\sum_{\in\bar{\mathcal{I}}\backslash \mathcal{I}_1}e^{-\nu \rho(I)}
\\&
\le\sum_{s\in[m]} e^{-\nu p s}+\sum_{s\in[m]} s^m e^{-\nu pm}
\le (e^{\nu p}-1)^{-1}+C
= C_\nu.
\end{align*}
Thus, this structure redundancy  
$\bar{\mathcal{I}}\backslash \mathcal{I}_1$ does not affect the final local rate, only constant 
$C_\nu$ becomes slightly larger (and the condition $p \gtrsim \log m$ has to hold).
\end{remark}

Condition \eqref{(A3)} is also fulfilled. Indeed, for any  $I^0,I^1\in\mathcal{I}$  define the partition refinement 
\[
I'=I'(I^0,I^1)=I^0\vee I^1=\big(I_i\cap J_j, \, I_i\in I^0, J_j \in I^1\big).
\] 
Clearly, $\mathbb{L}_{I^0}\cup\mathbb{L}_{I^1}
\subseteq \mathbb{L}_{I'}\subseteq\mathbb{L}_{I^0} + \mathbb{L}_{I^1}$ 
and $ \max\{s(I^0),s(I^1)\} \le s(I') \le s(I^0)+s(I^1)$,
implying $ \rho(I')\le\rho(I^0)+\rho(I^1)$, which entails Condition \eqref{(A3)}. 

As consequence of our general results, we obtain the local results of Corollary \ref{meta_theorem}
for these model and structure  with the local rate 
\[
r^2(\beta)= \min_{ I \in \mathcal{I}} \big\{\|(\mathrm{I}-\mathrm{P}_I)\mathrm{X} \beta\|^2 +
\sigma^2\rho(I)\big\}.
\] 

\subsection{A conjectured minimax result for group clustering}
In turn, by virtue of Corollary \ref{minimax_results}, the local results will  imply global minimax 
adaptive results at once over all scales $\{\Theta_\gamma,\, \gamma\in\Gamma\}$ covered by 
the oracle rate $r^2(\beta)$ (i.e., for which \eqref{oracle_minimax} holds).
For example, let  $\Theta_{GC}(s) = \cup_{I\in\bar{\mathcal{I}}: s(I)\le s}\mathbb{L}_I$. To the best of our knowledge, there are no minimax results over $\Theta_{GC}(s)$. 
We conjecture that the minimax rate over 
$\Theta_{GC}(s)$ is 
\[
r^2( \Theta_{GC}(s))\triangleq 
\inf_{\hat{\beta}} \sup_{\beta: \mathrm{X}\beta \in \Theta_{GC}(s)} \mathbb{E}_\beta
\|\mathrm{X}\hat{\beta}-\mathrm{X}\beta\|^2 
\asymp\sigma^2 \min\{ps+m\log s,pm\}.
\]
It is not difficult to show that the local rate $r^2(\beta)$ covers this scale. Indeed, for each 
$\theta=\mathrm{X}\beta \in\Theta_{GS}(s)$ there exists $I_*=I_*(\theta)\in\bar{\mathcal{I}}$
such that $\theta=\mathrm{X}\beta\in\mathbb{L}_{I_*}$ and $s(I_*)\le s$. 
If $ps+m\log s\le pm$, then $I_*\in\mathcal{I}_1$. Hence, $r^2(\beta)\le r^2(I_*,\beta)=\sigma^2\rho(I_*)
=\sigma^2\big(ps(I_*)+m\log s(I_*)\big)\le\sigma^2(ps+m\log s)$ because 
$\mathrm{P}_{I_*}\mathrm{X}\beta=\mathrm{X}\beta$ and $s(I_*)\le s$. 
If $ps+m\log s>pm$, then $r^2(\beta)\le r^2(\bar{I},\beta)=\sigma^2\rho(\bar{I})=\sigma^2 pm$ because 
$\mathrm{P}_{\bar{I}}\mathrm{X}\beta=\mathrm{X}\beta$.

Summarizing, $r^2(\beta)\le\sigma^2 \min\{ps+m\log s,pm\}$.
We thus established the relation 
\eqref{oracle_minimax} for this scale, and Corollary \ref{minimax_results} follows 
with the minimax rate $r^2(\Theta_{GS}(s))$ defined above.

\section{Matrix linear regression with mixture structure}
\label{sec_mixture_model}

Consider the regression model \eqref{matrix_regression} with $p\in[n]$ such that 
$\mathrm{X}^1=\ldots= \mathrm{X}^m=\bar{\mathrm{X}}$, $\bar{\mathrm{X}}=
(\bar{X}_{ij})\in\{0,1\}^{n\times p}$, and $\sum_{j\in [p]} \bar{X}_{ij}=1$ for all $i\in[n]$, i.e., 
each row of the matrix $\bar{\mathrm{X}}$ has $n-1$ zeros and only one entry equals to 1. 
Recently, some estimation results for this model were derived in
\cite{Klopp&Lu&Tsybakov&Zhou&:2017}. To the best of our knowledge, there are no local results 
on posterior contraction rate and   uncertainty quantification problems for mixture model.

In this case, $\theta=\mathrm{X}\beta$ and $\dim(\beta^j)=p\in [n]$ is now not fixed 
but rather a varying ingredient of the structure. Another ingredient of the structure are
the locations $I_i$ of $1$'s in the $i$th $p$-dimensional row of the matrix $\bar{\mathrm{X}}$, $i\in[n]$.
Putting these together, we encode the whole structure as $I=[p, (I_i, i\in[n])]$ where $I_i\in[p]$, $p\in[n]$.
Thus, the full family of all structures is 
\[
\bar{\mathcal{I}}=\{[p, (I_i, i\in[n])]:\, I_i\in[p],\, p\in[n]\}.
\]
Let $\mathrm{X}_I=\text{diag}\{\bar{\mathrm{X}}_I,\ldots ,\bar{\mathrm{X}}_I\}$,
$\bar{\mathrm{X}}_I=(\bar{X}_{ij})$ be the $(n\times p(I))$-matrix corresponding 
to the structure $I \in\bar{\mathcal{I}}$, that is, $\bar{X}_{iI_i}=1$ for $i\in[n]$ 
and all the other entries of this matrix are zeros. By $p(I)$ we denote the first ingredient 
of the structure $I$, the number of columns in the matrix $\bar{\mathrm{X}}_I$.
The structural slicing mapping is $s(I)=r(I)$, where $r(I)=\rank(\bar{\mathrm{X}}_I)$, 
the number of linearly independent columns in the matrix $\bar{\mathrm{X}}_I$. 
So, $\mathcal{S}=[n]$ and notice that $r(I)\le p(I)$.

The structures in this model are modeled by the linear spaces
\begin{align*}
\mathbb{L}_{I}
=\big\{\text{vec}(\bar{\mathrm{X}}_Ix^1,\ldots ,\bar{\mathrm{X}}_Ix^m)
\in\mathbb{R}^{nm}:\,x^{j}\in \mathbb{R}^{p(I)}, j\in[m] \big\},
\end{align*}
where $I\in\mathcal{I}\triangleq\mathcal{I}_1\cup\{\bar{I}\}$ with 
$\mathcal{I}_1=\{I \in\bar{\mathcal{I}}: mr(I)+n\log p(I)\le nm\}$ and
$\bar{I}=\big[n,[n]\big]$ (so that $\bar{\mathrm{X}}_{\bar{I}}=\mathrm{I}$ is the 
$n$-dimensional identity matrix).
In this case, $\theta=\mathrm{X}\beta$,  $d_I=\dim(\mathbb{L}_{I})=
m \rank(\bar{\mathrm{X}}_I)=mr(I)\le mp(I)$ 
and $|\mathcal{I}_{s(I)}|\le p^n(I)$ for $I\in\mathcal{I}_1$, because 
$p^n(I)$ is the number of possibilities to choose locations of 1's in the 
$n$ $p(I)$-dimensional rows of the design matrix $\bar{\mathrm{X}}_I$.  
Further, $d_{\bar{I}}=\dim(\mathbb{L}_{\bar{I}})=nm$ 
(as $\bar{\mathrm{X}}_{\bar{I}}=\mathrm{I}$)
and $|\mathcal{I}_{s(\bar{I})}|=1$.
Since $d_I +\log |\mathcal{I}_{s(I)}|\le mr(I)+n\log p(I)$ for $I \in \mathcal{I}_1$ and
$d_{\bar{I}} +\log |\mathcal{I}_{s(\bar{I})}| =nm$, we take the majorant 
\begin{align}
\label{majorant_mixture}
\rho(I)=\big(mr(I)+n\log p(I)\big) 1\{I\in\mathcal{I}_1\} +nm1\{I=\bar{I}\}.
\end{align}
The reason for considering the restricted family of structures $\mathcal{I}$ instead of the full family 
$\bar{\mathcal{I}}$ in this model is the same as for the model from 
Section \ref{sec_group_clustering} and is explained in Remark \ref{rem_second_regime}.

Condition \eqref{(A2)} is  fulfilled since, according to Remark \ref{rem5}, for any $\nu\ge 1 $
\begin{align*}
\sum_{I\in\mathcal{I}}e^{-\nu\rho(I)}
&\le\sum_{s\in\mathcal{S}} e^{-\nu d_s}
\le\sum_{s\in[n]} e^{-\nu m s}+e^{-\nu nm}
\le (e^{\nu m}-1)^{-1}+1=C_\nu.
\end{align*}

Condition \eqref{(A3)} can also be verified, 
which would ensure the coverage property (v) of Corollary \ref{meta_theorem} under EBR as well. 
However, there is no point in verifying Condition \eqref{(A3)} because for this linear regression model with mixture structure we have the same peculiar situation as for the biclustering model from Section 
\ref{sec_biclustering}: the size and coverage claims (vi)--(vii) for the confidence ball 
$B(\hat{\theta},\tilde{R}_M)$ are stronger and more useful than the corresponding claims (iv)--(v) for 
the confidence ball $B(\hat{\theta},\hat{R}_M)$. Let us demonstrate that the linear regression with 
mixture structure does not suffer from the deceptiveness phenomenon, modulo 
the so called highly structured parameters. 

Indeed, as consequence of our general results, we obtain the local results (i)--(iv) and (vi)--(vii)
of Corollary \ref{meta_theorem} for this case with the local rate $r^2(\beta)= \min_{ I \in \mathcal{I}} \big\{\|(\mathrm{I}-\mathrm{P}_I)\mathrm{X} \beta\|^2 +\sigma^2\rho(I)\big\}$,
with $\mathrm{P}_I$ as projection onto $\mathbb{L}_I$ defined above and the majorant
$\rho(I)$ defined by \eqref{majorant_mixture}.
The coverage property (v) for the confidence ball $B(\hat{\theta},\hat{R}_M)$ 
can be shown to hold also, but uniformly only under the EBR, whereas the coverage 
property (vii) for the confidence ball $B(\hat{\theta},\tilde{R}_M)$
is uniform over the entire space $\Theta=\mathbb{R}^{n \times m}$. 
The size $\tilde{R}_M$ is of the oracle rate  order (as the radius $\hat{R}_M$) 
uniformly in $\theta\in\Theta\backslash \tilde{\Theta}=\mathbb{R}^{n\times m}\backslash\tilde{\Theta}$
where $\tilde{\Theta}$ is defined by \eqref{tilde_Theta}. Since in 
this model  the total number of observations is $N=nm$, 
it is easy to see that $\tilde{\Theta}\subseteq \{\theta\in\mathbb{R}^{n\times m }: \, p(I_o(\theta))=1\}$
(i.e., $\bar{\mathrm{X}}_{I_o}=\mathrm{1}_n$, where $\mathrm{1}_n$ is the $n$-dimensional column of 1's)
where the oracle structure $I_o(\theta)$ is defined by \eqref{oracle}. 
Clearly, the $m$-dimensional $\tilde{\Theta}$ is a ``thin'' subset of  
$\mathbb{R}^{n\times m}$ consisting of \emph{highly structured parameters} $\theta$ 
whose oracle number of columns in the design matrix $\bar{\mathrm{X}}_{I_o}$ is $p(I_o(\theta))=1$. 
As we have already discussed at the end of Section \ref{sec_without_EBR}, this means that, 
modulo these highly structured parameters, there is no deceptiveness phenomenon
in this  model.

\begin{remark}
Notice that our local results for the linear regression model with mixture structure 
actually improve upon the results of \cite{Klopp&Lu&Tsybakov&Zhou&:2017} as we have $mr(I) \le 
m p(I)$ instead of $m p(I)$ (as in \cite{Klopp&Lu&Tsybakov&Zhou&:2017}) 
in the expression of the the local rate $r^2(\beta)$.
This means that this oracle rate $r^2(\beta)$ defined above is smaller than the one from 
\cite{Klopp&Lu&Tsybakov&Zhou&:2017}. Notice that the below global minimax results over 
the considered class cannot be improved as the worst case of the both local rates is the same. 
\end{remark}

Finally, by virtue of Corollary \ref{minimax_results} the local results will  imply global minimax 
adaptive results at once over all scales $\{\Theta_\gamma,\, \gamma\in\Gamma\}$ covered by 
the oracle rate $r^2(\beta)$ (i.e., for which \eqref{oracle_minimax} holds). Below we present 
one such scale, covered by the oracle rate $r^2(\beta)$.

\subsection{Minimax results for the mixture model}
Define the class 
\[
\Theta_{M}(p)= \cup_{I\in\bar{\mathcal{I}}: p(I)\le p}\mathbb{L}_I.
\] 
As is shown in \cite{Klopp&Lu&Tsybakov&Zhou&:2017}, the minimax rate over 
$\Theta_{M}(p)$ is 
\[
r^2( \Theta_{M}(p))\triangleq 
\inf_{\hat{\beta}} \sup_{\beta: \mathrm{X}\beta \in \Theta_{M}(p)} \mathbb{E}_\beta
\|\mathrm{X}\hat{\beta}-\mathrm{X}\beta\|^2 
\asymp \sigma^2 \min\{mp+n\log p,nm\}. 
\]
For each $\theta=\mathrm{X}\beta \in\Theta_{M}(p)$ there exists $I_*=I_*(\theta)\in\bar{\mathcal{I}}$
such that $\theta\in\mathbb{L}_{I_*}$ and $p(I_*)\le p$. 
If $mp + n\log p \le nm$, then $r(I_*)\le p(I_*)\le p$, hence
$I_*\in\mathcal{I}_1$, so that $r^2(\beta)\le r^2(I_*,\beta)=\sigma^2 \rho(I_*) \le 
\sigma^2 \big(mp+n\log p\big)$ because 
$\mathrm{P}_{I_*}\mathrm{X}\beta=\mathrm{X}\beta$.
If $mp + n\log p >nm$, then $r^2(\beta)\le r^2(\bar{I},\beta)=\sigma^2\rho(\bar{I})=\sigma^2nm$ because 
$\mathrm{P}_{\bar{I}}\mathrm{X}\beta =\mathrm{X}\beta$. 

Piecing these together, we obtain that $r^2(\beta)\le\sigma^2 \min\{mp+n\log p,nm\}$ 
for all $\beta$ such that $\theta=\mathrm{X}\beta \in\Theta_{M}(p)$. 
We thus established the relation 
\eqref{oracle_minimax} for this scale, and Corollary \ref{minimax_results} follows 
with the minimax rate $r^2(\Theta_{M}(p))$ defined above.

\section{Matrix linear regression with unknown design: dictionary learning}
\label{sec_dict_learning}
Dictionary learning can be considered as a linear regression problem when 
the design matrix and (sparse) vector of regressors are both unknown. 
The data $Y=(Y_i)_{i\in[mn]}$ are observed according to the model:
\begin{align*}
Y=\bar{\mathrm{D}}r+\sigma\xi, 
\end{align*}
where  $\xi=(\xi_i, i \in[mn]), \xi_i \overset{\rm ind}{\sim}\mathrm{N}(0,1)$, 
$\bar{\mathrm{D}}=\text{diag}\{\mathrm{D},\ldots , \mathrm{D}\}\in\mathbb{R}^{mn\times mp}$ 
is an $m$ block diagonal matrix with $p\in \mathbb{N}$, whose block $\mathrm{D}
=(D_1,\ldots,D_p)\in\mathbb{R}^{n\times p}$ is an unknown dictionary matrix,
$p\le n$ without loss of generality, 
$\sigma>0$ is the known noise intensity, $r=(r^1,\ldots , r^m)\in\mathbb{R}^{mp}$ 
is a concatenation of unknown representations 
$r^1, \ldots , r^m\in\mathbb{R}^p$ such that each entry $r^j_i$ of each $r^j$ comes from 
a (known) finite set  of numbers: $r^j_i\in \mathcal{R}_K=\{\bar{r}_1,\ldots,\bar{r}_K\}$ 
(for instance, $\mathcal{R}_3=\{-1,0,1\}$), for some $\bar{r}_k\in\mathbb{R}$, $k\in[K]$.
Recently, posterior contraction rate and  oracle estimation 
results for this model were derived by \cite{Gao&vanderVaart&Zhou:2015} and \cite{Klopp&Lu&Tsybakov&Zhou&:2017}, respectively. To the best of our knowledge, 
there are no local results on uncertainty quantification problem for dictionary learning.

In this model, we have $\theta=\bar{\mathrm{D}}r$. The structure $I$ consists of 
two parts: $m$ sparsity patterns $I^m\triangleq(I_1,\ldots, I_m) \subseteq [p]^m$ 
($I_j$ determines which columns are taken in the $j$-th diagonal block $\mathrm{D}$ of 
$\bar{\mathrm{D}}$) and $m$ sparse versions of representation vectors $R_{I^m}\triangleq
(r^1_{I_1},\ldots, r^m_{I_m})$ according to the sparsity patterns $I^m$, 
where $r^j_{I_j}=(r^j_i, i \in I_j)$ with $r^j_i \in  \mathcal{R}_K$, $i\in I_j$, $j\in[m]$.
We encode the structure $I$ as 
$I=(I^m, R_{I^m})$, and the whole family of structures is
\[
\bar{\mathcal{I}}=\{(I^m, R_{I^m}): \, 
r^j_i \in \mathcal{R}_K,\, i\in I_j,\, j\in[m];\,  I_k \subseteq[p],\, k\in[m]\}. 
\]
The structural slicing mapping is defined as $s(I)=(|I_k|,k\in[m])\in\mathcal{S}\triangleq [p]_0^m$.
Further, introduce the subfamily $\mathcal{I}_1$ of $\bar{\mathcal{I}}$:
\[
\mathcal{I}_1=\{I \in\bar{\mathcal{I}}:\,np+l_K(I)\le nm\},
\]
where the quantity $l_K(I)$ is defined as
\begin{align}
\label{l_K(I)}
l_K(I)\triangleq \sum_{j\in[m]} |I_j|\log(\tfrac{ep}{|I_j|})
+ (\log K) \sum_{j\in[m]} |I_j|.
\end{align}
This quantity has the meaning of the log of the cardinality of the structural layer $\mathcal{I}_{s(I)}$
and its motivation to appear here will become clear later.


The structures in this model are modeled by the linear spaces 
\begin{align*}
\mathbb{L}_{I}
=\big\{\text{vec}(\mathrm{D}_{I_1} r^1_{I_1},\ldots,\mathrm{D}_{I_m} r^m_{I_m})
\in\mathbb{R}^{nm}:\,\mathrm{D}_{I_k} \in \mathbb{R}^{n\times |I_k|},\, k\in[m] \big\},
\end{align*}
where $I\in \mathcal{I}\triangleq\mathcal{I}_1\cup\{\bar{I}\}$ and 
$\bar{I}$ is one special structure (the finest possible) such that  
$s(\bar{I})=(p,\ldots,p)$ ($m$-dimensional vector of $p$'s) and the associated linear 
space is $\mathbb{L}_{\bar{I}}=\big\{\text{vec}(x^1,\ldots ,x^m)\in\mathbb{R}^{nm}: 
\, x^j \in \mathbb{R}^{n},\,j\in[m]\big\}$.
If some $I_j=\varnothing$, then the corresponding column $\mathrm{D}_{I_j}r^j_{I_j}$ is the 
zero column.

In this case, $\theta=\bar{\mathrm{D}}r\in\mathbb{R}^{n\times m}$ (recall that whenever 
appropriate we treat $\theta$ as vector: $\theta\in\mathbb{R}^{nm}$), 
$d_I=\dim(\mathbb{L}_{I})= n\, |\cup_{k\in[m]}I_k| \le np$ for $I\in \mathcal{I}_1$
and $d_{\bar{I}} =\dim(\mathbb{L}_{\bar{I}})=nm$. 
The layer $\mathcal{I}_{s(I)}$ consists of all the structures $I$ which 
have the same $s(I)=(|I_k|, k\in[m])$. Clearly, $|\mathcal{I}_{s(\bar{I})}|=1$ because there 
is only one structure $\bar{I}$ in the layer $\mathcal{I}_{s(\bar{I})}$.
To count the number of structures in $\mathcal{I}_{s(I)}$ for $I\in \mathcal{I}_1$, 
notice that there are $\prod_{j\in [m]}\binom{p}{|I_j|}$ 
possible choices of the sparsity patterns $I^m$  
and there are $K^{\sum_{k\in[m]} |I_k|}$ possible choices of sparse representation vectors 
$R_{I^m}$, yielding the cardinality 
$|\mathcal{I}_{s(I)}|=\prod_{j\in [m]}\binom{p}{|I_j|} \times K^{\sum_{k\in[m]} |I_k|}$. 
Hence, 
\begin{align*}
\log |\mathcal{I}_{s(I)}|&\le \sum_{j\in[m]} |I_j|\log(\tfrac{ep}{|I_j|})
+ (\log K) \sum_{j\in[m]} |I_j|=l_K(I)\quad\text{for} \;\; I\in \mathcal{I}_1,
\end{align*}
where $l_K(I)$ is introduced by \eqref{l_K(I)}. The last relation explains the origin of 
the quantity $l_K(I)$. Since $d_I +\log |\mathcal{I}_{s(I)}|\le np+l_K(I)$ for $I \in \mathcal{I}_1$ 
and $d_{\bar{I}} +\log |\mathcal{I}_{s(\bar{I})}| =nm$, we take the majorant 
\begin{align}
\label{majorant_dict_learning}
\rho(I)=\big(np+l_K(I)\big) 1\{I \in \mathcal{I}_1\} + nm 1\{I=\bar{I}\}.
\end{align}
As for some previous cases of model/structure, we have an elbow effect expressed 
by the quantity $l_K(I)$ in the majorant, and there is no need to consider the structures 
$I \in\bar{\mathcal{I}}\backslash \mathcal{I}$, because these 
are dominated by the structure $\bar{I}$, by the same reasoning as 
in Remark \ref{rem_second_regime}.

Conditions \eqref{cond_nonnormal} and \eqref{cond_A4} hold with $d_I=\dim(\mathbb{L}_I)$
in view of Remarks \ref{rem_cond_A1} and \ref{rem15}. 
Denote $\mathcal{S}_1= \{s(I): \, I\in \mathcal{I}_1\}$.
Condition \eqref{(A2)} is fulfilled, since, according to Remark \ref{rem5}, for a sufficiently large $\nu>1$ 
\begin{align*}
\sum_{I\in\mathcal {I}} e^{-\nu \rho(I)} &\le \sum_{I\in\mathcal{I}_1}e^{-\nu \rho(I)}
+e^{-\nu nm}\le e^{-\nu np} \sum_{s\in\mathcal{S}_1}e^{- (\nu-1)l_K(I)}+e^{-\nu nm}\\
&\le  e^{-\nu np}  \sum_{|I_1|=0}^p \ldots \sum_{|I_m|=0}^p e^{- (\nu-1)l_K(I)}+e^{-\nu nm} \\&
\le e^{-\nu np} \Big(\sum_{l=0}^p e^{-(\nu-1)l}\Big)^m + 1\le\tfrac{e^{-\nu np}}{(1-e^{1-\nu})^m}+1\le C_\nu,
\end{align*}
under the assumption that $m\lesssim np$. 
\begin{remark}
Notice the emerging condition $m\lesssim np$. This is not completely surprising:
$m$ should not be too big in order not to have too many structures in the layers. 
Alternatively, instead of imposing this condition, we can make the majorant slightly
bigger by setting $np\vee m$ instead of just $np$ in \eqref{majorant_dict_learning}.
Yet another fix would be to remove those structures $I$ from $\mathcal{I}_1$ 
for which $\sum_{j\in[m]} |I_j| <m$. One can show that in this case the above sum will be 
uniformly bounded.
\end{remark}

As for  the previous model (linear regression with mixture structure), there is no point in verifying 
Condition \eqref{(A3)} because the size and coverage claims (vi)--(vii) for the confidence ball 
$B(\hat{\theta},\tilde{R}_M)$ are stronger and more useful for this model and this structure
than the corresponding claims (iv)--(v) for the confidence ball 
$B(\hat{\theta},\hat{R}_M)$. 
Let us demonstrate  that this model in essence does not suffer from 
the deceptiveness phenomenon, modulo the so called highly structured parameters. 

Indeed, as consequence of our general results, we obtain the local results (i)--(iv) and (vi)--(vii)
of Corollary \ref{meta_theorem} for this case with the local rate $r^2(\theta)=
\min_{ I \in \mathcal{I}} \big\{\|(\mathrm{I}-\mathrm{P}_I)\theta\|^2+\sigma^2\rho(I)\big\}$, 
where $\theta=\bar{\mathrm{D}}r$, with majorant $\rho(I)$ defined above and $\mathrm{P}_I$, 
the projection onto $\mathbb{L}_I$ defined above.
The coverage property (v) for the confidence ball $B(\hat{\theta},\hat{R}_M)$ 
can be shown to hold also, but uniformly only under the EBR, whereas the coverage 
property (vii) for the confidence ball $B(\hat{\theta},\tilde{R}_M)$
is uniform over the entire space $\theta=\bar{\mathrm{D}}r\in\Theta
=\mathbb{R}^{n \times m}$. 
The size $\tilde{R}_M$ is of the oracle rate  order (as the radius $\hat{R}_M$) 
uniformly in $\theta\in\Theta\backslash \tilde{\Theta}=\mathbb{R}^{n\times m}\backslash\tilde{\Theta}$,
where $\tilde{\Theta}$ is defined by \eqref{tilde_Theta}. 
In this model the total number of observations is $N=nm$ and 
$\tilde{\Theta}=\{\theta\in\mathbb{R}^{n\times m }: \, 
\sigma^{-2}\|(\mathrm{I}-\mathrm{P}_{I_o(\theta)})\theta\|^2 +np+
l_K(I_o(\theta)) \lesssim \sqrt{N} = \sqrt{nm}\}$,
where $I_o(\theta)$ is the oracle structure defined by \eqref{oracle}. 
Clearly, $\tilde{\Theta}$ is a ``thin'' subset of  
$\mathbb{R}^{n\times m}$ consisting of \emph{highly structured parameters} $\theta$, in this case
\emph{ultra-sparse} parameters as their oracle structure must be very sparse: 
$l_K(I_o(\theta))\le C\sqrt{nm} - np$. Actually, $\tilde{\Theta}=\varnothing$ if $m\lesssim p^2n$ which 
is a very mild assumption on the dimensions $n,p,m$ only. 
To summarize, under the assumption $m\lesssim p^2n$, 
in the dictionary learning model there is no deceptiveness issue at all. 

\begin{remark}
Notice that we actually established stronger local results: the local rate $r^2(\theta)=
\min_{ I \in \mathcal{I}} \big\{\|(\mathrm{I}-\mathrm{P}_I)\theta\|^2+\sigma^2\bar{\rho}(I)\big\}$ 
is with a smaller majorant $\bar{\rho}(I)=\min\big\{n|\cup_{i\in[m]}I_i|+
\sum_{i\in[m]} |I_i|\log(\tfrac{ep}{|I_i|})+ (\log K) \sum_{i\in[m]} |I_i|, nm \big\}$, 
under the assumption $m\lesssim np$. If we want to avoid the assumption $m\lesssim np$, then 
we should put $\big(n|\cup_{i\in[m]}I_i|\big)\vee m$ instead of $n|\cup_{i\in[m]}I_i|$ 
in the expression of the majorant $\bar{\rho}(I)$.
\end{remark}

Finally, by virtue of Corollary \ref{minimax_results} the local results will  imply global minimax 
adaptive results at once over all scales $\{\Theta_\beta,\, \beta\in\mathcal{B}\}$ covered by 
the oracle rate $r^2(\theta)$ (i.e., for which \eqref{oracle_minimax} holds).
Below we present one example of scale covered by the oracle rate $r^2(\theta)$.

\subsection{Minimax results for the sparse dictionary learning} 
Define the sparsity class for the dictionary learning model:  for $\bar{s}\in[p]_0$,
$\Theta_{SDL}(\bar{s})=\cup \{\mathbb{L}_I:\, 
I\in \bar{\mathcal{I}}, \, |I_i|\le \bar{s}, \, i\in[m] \}$. 
As is shown in \cite{Klopp&Lu&Tsybakov&Zhou&:2017},
the minimax rate over $\Theta_{SDL}(\bar{s})$ is \[
r^2(\Theta_{SDL}(\bar{s}))\asymp
\sigma^2 \min\big\{np+m\bar{s}\log(\tfrac{ep}{\bar{s}}),nm\big\}.
\]

For each $\theta=\bar{\mathrm{D}}r\in\Theta_{SDL}(\bar{s})$ there exists  $I_*\in\bar{\mathcal{I}}$ 
such that $\theta\in\mathbb{L}_{I_*}$, hence $\mathrm{P}_{I_*}\theta=\theta$ and 
$r^2(I_*(\theta),\theta)=\sigma^2\rho(I_*)$. Further, since $|I_{*i}(\theta) |\le \bar{s}$, $i\in[m]$, we have
$l_K(I_*(\theta))=\sum_{i\in[m]} |I_{*i}(\theta)|\log(\tfrac{ep}{|I_{*i}(\theta)|})
+ (\log K) \sum_{i\in[m]} |I_{*i}(\theta)|
\le(1+\log K)m\bar{s}\log(\tfrac{ep}{\bar{s}})$.
Therefore, if $np+(1+\log K)m\bar{s}\log(\frac{ep}{\bar{s}})\le nm$, 
then $np+l_K(I_*(\theta))\le np +(1+\log K)m\bar{s}\log(\frac{ep}{\bar{s}})\le nm$, 
hence $I_*(\theta)\in\mathcal{I}_1$ and $r^2(\theta)\le r^2(I_*(\theta),\theta)=\sigma^2\rho(I_*(\theta))=
\sigma^2\big(np + l_K(I_*(\theta))\big)\le\sigma^2\big(np +(1+\log K)m\bar{s}\log(\frac{ep}{\bar{s}})\big)$ in this case.
Besides, recall that $\mathrm{P}_{\bar{I}}\theta=\theta$, so that $r^2(\theta) \le 
r^2(\bar{I},\theta)=\sigma^2\rho(\bar{I})=\sigma^2nm$. 
Piecing these together, we obtain that 
\[
r^2(\theta)\lesssim\sigma^2 \min\big\{np +m\bar{s}\log(\tfrac{ep}{\bar{s}}),nm\big\}
\asymp r^2(\Theta_{SDL}(\bar{s}))
\;\text{ for all} \;\theta\in\Theta_{SDL}(\bar{s}).
\] 
We thus established the relation \eqref{oracle_minimax} for this scale, and Corollary \ref{minimax_results} follows with the minimax rate $r^2(\Theta_{SDL}(\bar{s}))$ defined above.


\begin{thebibliography}{9}
	
	\bibitem{Babenko&Belitser:2010}
	\textsc{Babenko, A.} and \textsc{Belitser, E.} (2010).
	Oracle projection convergence rate of posterior.
	\textit{Math.\ Meth.\ Statist.} 19, 219--245.
	
	
	\bibitem{Baraud:2004}
	\textsc{Baraud, Y.}  (2004).
	Confidence balls in Gaussian regression. 
	\textit{Ann.\ Statist.} 32, 528--551.
	
	\bibitem{Belitser:2016} 
	\textsc{Belitser, E.} (2017). 
	On coverage and local radial rates of credible sets.
	\textit{Ann.\ Statist.} 45, 1124--1151.
	
	\bibitem{Belitser&Ghosal:2003} 
	\textsc{Belitser, E.} and \textsc{Ghosal, S.} (2003). Adaptive Bayesian inference on
	the mean of an infinite dimensional normal distribution. 
	\textit{Ann. Statist}. 31, 536--559.
	 
	\bibitem{Belitser&Ghosal:2018} 
	\textsc{Belitser, E.} and \textsc{Ghosal, S.} (2019). 
	Empirical Bayes oracle uncertainty quantification for regression. 
	To appear in \textit{Ann. Statist}.
	
	\bibitem{Belitser&Levit:1995} 
	\textsc{Belitser, E.} and \textsc{Levit, B.} (1995). 
	On minimax filtering over ellipsoids. 
	\textit{Math.\ Meth.\ Statist.} 3, 259--273.
	
	\bibitem{Belitser&Nurushev:2015} 
	\textsc{Belitser, E.} and \textsc{Nurushev, N.}  (2019).
	Needles and straw in a haystack: robust empirical Bayes confidence for possibly 
	sparse sequences. ArXiv:1511.01803, to appear in \textit{Bernoulli}.
	
	\bibitem{Belitser&Nurushev:2017} 
	\textsc{Belitser, E.} and \textsc{Nurushev, N.}  (2017).
	Local posterior concentration rate for multilevel sparse sequences. 
	{\it Bayesian Statistics in Action}, Springer Proc. Math. Stat., 194, 51--66.
	
	\bibitem{Belitser&Nurushev:2018}
	\textsc{Belitser, E.} and \textsc{Nurushev, N.}  (2018).
	 Local inference by penalization method for biclustering model.
	\textit{Math.\ Meth.\ Statist.} 27, 163--183.
	
	 
	
	\bibitem{Belkin&Matveeva&Niyogi:2004} 
	\textsc{Belkin, M.}, \textsc{Matveeva, I.} and \textsc{Niyogi, P.} (2004). 
	Regularization and semi-supervised learning on large graphs. 
	\textit{COLT}, Springer 3120, 624--638. 
	
	\bibitem{Bellec&Tsybakov:2015}
	\textsc{Bellec, P.C.} and \textsc{Tsybakov, A.B.} (2015). 
	Sharp oracle bounds for monotone and convex regression through aggregation.
	\textit{J.\ Mach.\ Learn.\ Res.} 16, 1879--1892.
	
	\bibitem{Bellec:2018}
	\textsc{Bellec, P.C.} (2018). 
	Sharp oracle inequalities for Least Squares estimators in shape restricted regression.
	\textit{Ann.\ Statist.} 46, 745--780.
	
	\bibitem{Birge&Massart:2001} 
	\textsc{Birg{\'e}, L.} and \textsc{Massart, P.} (2001). Gaussian model selection. 
	\textit{J.\ Eur.\ Math.\ Soc.} 3, 203--268.
	
	\bibitem{Boucheron&Lugosi&Massart:2012}
	\textsc{Boucheron, S.}, \textsc{Lugosi, G.}, and \textsc{Massart, P.} (2012). 
	Concentration Inequalities (A nonasymptotic theory of independence).
	\textit{Oxford University Press}.
	
	\bibitem{Bull&Nickl:2013}
	\textsc{Bull, A.} and \textsc{Nickl, R.} (2013). Adaptive confidence sets in $L^2$. 
	\textit{Probab.\ Theory and Rel.\ Fields.} 156, 889--919.
	
	\bibitem{Bunea&Tsybakov&Wegkamp:2007} 
	\textsc{Bunea, F.}, \textsc{Tsybakov, A.B.}, and \textsc{Wegkamp, M.} (2007).
	Aggregation for Gaussian regression. 
	\textit{Ann.\ Stat.} 35, 1674--1697.
	
	\bibitem{Cai&Low:2004} 
	\textsc{Cai, T.T.} and \textsc{Low, M.G.} (2004).
	An adaptation theory for nonparametric confidence intervals. 
	\textit{Ann.\ Statist.} 32, 1805--1840.
	
	\bibitem{Caietal:2010}
	\textsc{Cai, T.T.}, \textsc{Zhang, C.-H.} and \textsc{Zhou, H.H.} (2010). 
	Optimal rates of convergence for covariance matrix estimation. 
	\textit{Ann.\ Statist.} 38, 2118--2144.
	
	\bibitem{Caietal:2012}
	\textsc{Cai, T.T.} and \textsc{Zhou, H.H.} (2012). 
	Optimal rates of convergence for  sparse covariance matrix estimation. 
	\textit{Ann.\ Statist.} 40, 2389--2420.
	
	\bibitem{Castilloetal:2015}
	\textsc{Castillo, I.}, \textsc{Schmidt-Hieber, J.} and \textsc{van der Vaart, A.} (2015). 
	Bayesian linear regression with sparse priors. 
	\textit{Ann.\ Statist.} 43, 1986--2018.
	
	\bibitem{Castillo&vanderVaart:2012} 
	\textsc{Castillo, I.} and \textsc{van der Vaart, A.} (2012).
	Needles and straw in a haystack: posterior concentration for possibly sparse sequences. 
	\textit{Ann.\ Statist.} 40, 2069--2101.
	
	\bibitem{Chatterjeeetal:2015} 
	\textsc{Chatterjee, S.}, \textsc{Guntuboyina, A.} and \textsc{Sen, B.} (2015).
	On risk bounds in isotonic and other shape restricted regression problems. 
	\textit{Ann.\ Statist.} 43, 1774--1800.
	
	\bibitem{Donoho&Johnstone&Hoch&Stern:1992} 
	\textsc{Donoho, D.L.}, \textsc{Johnstone, I.M.}, \textsc{Hoch, J.C.} and \textsc{Stern, A.S.} (1992).
	Maximum entropy and the nearly black object (with Discussion).
	\textit{J.\ Roy.\ Statist.\ Soc.\ Ser.\ B} 54, 41--81. 
	
	\bibitem{Donoho&Johnstone:1994b} 
	\textsc{Donoho, D.L.} and \textsc{Johnstone, I.M.} (1994).
	Minimax risk over $\ell_p$-balls for $\ell_q$-error.
	\textit{Probab.\ Theory Rel.\ Fields.} 99, 277--303. 
	
	\bibitem{Gao&Lu&Zhou:2015} 
	\textsc{Gao, C.}, \textsc{Lu, Y.} and \textsc{Zhou, H.H.} (2015).
	Rate-optimal graphon estimation. 
	\textit{Ann.\ Statist.} 43, 2624--2652.
	
	\bibitem{Gao&Lu&Ma&Zhou:2016} 
	\textsc{Gao, C.}, \textsc{Lu, Y.}, \textsc{Ma, Z.} and \textsc{Zhou, H.H.} (2016).
	Optimal estimation and completion of matrices with biclustering structures.
	\textit{J.\ Mach.\ Learn.\ Res.} 17, 1--29.
	
	\bibitem{Gao&vanderVaart&Zhou:2015} 
	\textsc{Gao, C.}, \textsc{van der Vaart, A.W.} and \textsc{Zhou, H.H.} (2015).
	A general framework for Bayes structured linear models.
	ArXiv:1506.02174.
	
	\bibitem{Ghosal&Ghos&Vaart:2000} 
	\textsc{Ghosal, S.}, \textsc{Ghosh, J.K.} and \textsc{van der Vaart, A.W.}  (2000).
	Convergence rates of posterior distributions.
	\textit{Ann.\ Statist.} 28, 500--531.
	
	\bibitem{Ghosal:2019} 
	\textsc{Ghosal, S.} (2019). Personal communication.
	
	\bibitem{Han:2017} 
	\textsc{Han, Q.} (2017). Bayes model selection. ArXiv:1704.07513. 
	
	\bibitem{Hoffmann&Rousseau&Schmidt-Hieber:2015} 
	\textsc{Hoffmann, M.}, \textsc{Rousseau, J.} and \textsc{Schmidt-Hieber, J.}  (2015).
	On adaptive posterior concentration rates.
	\textit{Ann.\ Statist.} 43, 2259--2295.
	
	\bibitem{Johnstone:2017} 
	\textsc{Johnstone, I.M.}  (2017). 
	Gaussian estimation: Sequence and wavelet models. Book draft.
	
	\bibitem{Kirichenko&vanZanten:2017} 
	\textsc{Kirichenko, A.} and \textsc{van Zanten, H.} (2017).
	Estimating a smooth function on a large graph by Bayesian Laplacian regularization. 
	\textit{Electron.\ J.\ Stat.} 11, 891--915. 
	
	\bibitem{Kirichenko&vanZanten:2018} 
	\textsc{Kirichenko, A.} and \textsc{van Zanten, H.} (2018).
	Minimax lower bounds for function estimation on graphs. 
	\textit{Electron.\ J.\ Stat.} 12, 651--666.  
	
	
	\bibitem{Klopp&Tsybakov&Verzelen&:2017} 
	\textsc{Klopp, O.}, \textsc{Tsybakov, A.B.} and \textsc{Verzelen, N.} (2017).
	Oracle inequalities for network models and sparse graphon estimation. 
	\textit{Ann.\ Statist.} 45, 316--354.
	
	\bibitem{Klopp&Lu&Tsybakov&Zhou&:2017} 
	\textsc{Klopp, O.}, \textsc{Lu, Y.}, \textsc{Tsybakov, A.B.} and \textsc{Zhou, H.H.} (2017).
	Structured Matrix Estimation and Completion. ArXiv:1707.02090.
	
	\bibitem{Kolar&Liu:2012} 
	\textsc{Kolar, M.} and \textsc{Liu, H.} (2012).
	Supplement to \textit{Marginal regression for multitask learning}. 
	\textit{Proceedings of Machine Learning Research} 22, 647--655.
	
	\bibitem{Li:1989}
	\textsc{Li, K.-C.} (1989).
	Honest confidence regions for nonparametric regression. 
	\textit{Ann.\ Statist.} 17, 1001--1008.
	
	\bibitem{Lounici&Pontil&vandeGeer&Tsybakov:2011} 
	\textsc{Lounici, K.},\textsc{Pontil, M.}, \textsc{Tsybakov, A.B.} and \textsc{van de Geer, S.} (2011).
	Oracle inequalities and optimal inference under group sparsity.
	\textit{Ann.\ Statist.} 39, 2164--2204.
	
	\bibitem{Ma&Wu:2015} 
	\textsc{Ma, Z.} and  \textsc{Wu, Y.}   (2015).
	Volume Ratio, Sparsity, and Minimaxity Under Unitarily Invariant Norms.
	\textit{IEEE Trans. Inform. Theory } 61, 6939--6956.
	
	\bibitem{Martin&etal:2017}
	\textsc{Martin, R.}, \textsc{Mess, R.}  and \textsc{Walker, S.G.} (2017).
	Empirical Bayes posterior concentration in sparse high-dimensional linear models. 
	\textit{Bernoulli} 23, 1822--1847.
	
	\bibitem{Martin&Walker:2014}
	\textsc{Martin, R.} and \textsc{Walker, S.G.}  (2014).
	Asymptotically minimax empirical Bayes estimation of a sparse normal mean vector.
	\textit{Electron.\ J.\ Statist.} 8, 2188--2206.
	
	\bibitem{Nemirovski:2000}
	\textsc{Nemirovski, A.} (2000). Topics in Non-parametric Statistics. 
	\textit{Springer Lecture Notes in Mathematics}. 
		
	\bibitem{Nickl&vandeGeer:2013} 
	\textsc{Nickl, R.} and \textsc{van de Geer, S.} (2013). Confidence sets in sparse regression. 
	\textit{Ann.\ Statist.} 41, 2852--2876.
	
	\bibitem{Pinsker:1980}
	\textsc{Pinsker, M.} (1980). Optimal filtration of square-integrable signal in Gaussian white noise. 
	\textit{Problems Inform. Transmission}. 16, 120--133.

	
	\bibitem{Raskutti&Wainwright&Yu:2011}
	\textsc{Raskutti, G.}, \textsc{Wainwright, M. J.} and \textsc{Yu, B.} (2011).
	Minimax rates of estimation for high-dimensional linear regression over $\ell_q$-balls.
	\textit{IEEE Trans. Inform. Theory} 57, 6976--6994.
	
	
	\bibitem{Rigollet&Tsybakov:2011}
	\textsc{Rigollet, P.} and \textsc{Tsybakov, A.B.} (2011).
	Exponential Screening and optimal rates of sparse estimation.
	\textit{Ann.\ Statist.} 39, 731--771.
	
	\bibitem{Rivoirard&Rousseau:2012}
	\textsc{Rivoirard, V.} and \textsc{Rousseau, J.} (2012).
	Posterior concentration rates for infinite dimensional exponential families.
	\textit{Bayesian Analysis} 7, 311--334.
	
	\bibitem{Robins&vanderVaart:2006}
	\textsc{Robins, J.} and \textsc{van der Vaart, A.W.} (2006).
	Adaptive nonparametric confidence sets.
	\textit{Ann.\ Statist.} 34, 229--253.
	
	\bibitem{Szabo&vanderVaart&vanZanten:2013}
	\textsc{Szab\'{o}, B. T.}, \textsc{van der Vaart, A.W.} and \textsc{van Zanten, J.H.}  (2013).
	Empirical Bayes scaling of Gaussian priors in the white noise model. 
	\textit{Electron.\ J.\ Statist.} 7, 991--1018.
	
	\bibitem{Szabo&vanderVaart&vanZanten:2015}
	\textsc{Szab\'{o}, B. T.}, \textsc{van der Vaart, A.W.} and \textsc{van Zanten, J.H.}  (2015).
	Frequentist coverage of adaptive nonparametric Bayesian credible sets. 
	\textit{Ann.\ Statist.} 43, 1391--1428.
	
	\bibitem{Tsybakov:2014}
	\textsc{Tsybakov, A.B.} (2014).
	Aggregation and minimax optimality in high-dimensional estimation.
	\textit{Proceedings of the International Congress of Mathematicians}.
	
	\bibitem{vanderPas&Kleijn&vanderVaart:2014}
	\textsc{van der Pas, S.L.}, \textsc{Kleijn, B.J.K.} and \textsc{van der Vaart, A.W.} (2014).
	The horseshoe estimator: Posterior concentration around nearly black vectors.
	\textit{Electron.\ J.\ Stat.} 8, 2585--2618. 
	
	\bibitem{vanderPas&Szabo&vanderVaart:2017}
	\textsc{van der Pas, S.L.}, \textsc{Szab\'{o}, B. T.} and \textsc{van der Vaart, A.W.} (2017).
	Uncertainty quantification for the horseshoe (with discussion).
	\textit{Bayesian Analysis} 12, 1221--1274.
	
	\bibitem{vanderVaart&vanZanten:2008}
	\textsc{van der Vaart, A.W.} and \textsc{van Zanten, J.H.} (2008).
	Rates of contraction of posterior distributions based on Gaussian process priors.
	\textit{Ann.\ Statist.} 3, 1435--1463. 

\end{thebibliography}
\end{document}